\def\til#1{\widetilde{#1}}% tilde
\def\udl#1{\underline{#1}}% underline
\def\ovl#1{\overline{#1}}% overline
\def\dl{\langle\kern-.22em\langle} \def\dr{\rangle\kern-.22em\rangle}
\def\dal{\langle\kern-.22em\langle}
\def\dar{\rangle\kern-.22em\rangle} \def\dbl{[\kern-.12em[}
\def\dbr{]\kern-.12em]} \def\dpl{(\kern-.2em(} \def\dpr{)\kern-.2em)}
\def\dast{\ast\kern-.03em\ast}
\def\tast{\ast\kern-.26em\ast\kern-.26em\ast}
\def\longhookrightarrow{\lhook\joinrel\longrightarrow}
\def\bfmath#1{\textrm{{\boldmath $#1$}}}%mathbold+italic
\def\NN{\mathbb{N}} \def\ZZ{\mathbb{Z}} \def\QQ{\mathbb{Q}}
\def\RR{\mathbb{R}} \def\CC{\mathbb{C}} \def\FF{\mathbb{F}}
 \def\OO{\mathcal{O}} \def\mm{\mathfrak{m}}
\def\nn{\mathfrak{n}} \def\pp{\mathfrak{p}} \def\qq{\mathfrak{q}}
\def\alg{{\mathrm{alg}}} \def\tran{{\mathrm{tran}}}
\def\e{\varepsilon}\def\dt{\delta}
\def\cA{{\mathcal{A}}}\def\cB{{\mathcal{B}}}
\def\fR#1{\OO\dbl#1\dbr} \def\rR#1{\OO\dpl#1\dpr}
\def\fZ#1{\ZZ\dbl#1\dbr}  \def\ZR#1{\langle
  #1\rangle} \def\rig{\mathrm{rig}} \def\sp{\mathrm{sp}}
\def\cl{\mathrm{cl}} \def\an{\mathrm{an}}
\DeclareMathOperator{\Spec}{Spec}
\DeclareMathOperator{\Sp}{Sp}
\DeclareMathOperator{\Spf}{Spf}
\DeclareMathOperator{\Spm}{Spm}
\DeclareMathOperator{\Proj}{Proj}
\DeclareMathOperator{\Frac}{Frac}
\DeclareMathOperator{\Cont}{Cont}
\DeclareMathOperator{\Sym}{Sym}
\DeclareMathOperator{\Ed}{E}
\DeclareMathOperator{\Ve}{V}
\DeclareMathOperator{\mcd}{mcd}
\DeclareMathOperator{\red}{red}
\def\Af{\mathbf{Af}} \def\Ad{\mathbf{Ad}}
\def\udp#1{#1^{\natural}}
\def\gdp#1#2{#1^{\natural#2}}
\title{Rational points of rigid analytic sets: a Pila-Wilkie type
  theorem} \author{Gal Binyamini}\author{Fumiharu Kato} \thanks{The
  first author was supported by the ISRAEL SCIENCE FOUNDATION (grant
  No. 1167/17). This project has received funding from the European
  Research Council (ERC) under the European Union's Horizon 2020
  research and innovation programme (grant agreement No 802107). The
  second author was supported by JSPS KAKENHI Grant Number 17H02832.}
\begin{document}
\theoremstyle{plain} \newtheorem{thm}[subsubsection]{Theorem}
\newtheorem{prop}[subsubsection]{Proposition}
\newtheorem{lem}[subsubsection]{Lemma}
\newtheorem{cor}[subsubsection]{Corollary}
\newtheorem{probs}[subsubsection]{Problems}
\newtheorem{cla}{Claim}[subsubsection]
\theoremstyle{definition} \newtheorem{dfn}[subsubsection]{Definition}
\newtheorem{ntn}[subsubsection]{Notation}
\newtheorem{exa}[subsubsection]{Example}
\newtheorem{exas}[subsubsection]{Examples}
\newtheorem{assum}[subsubsection]{Assumption}
\newtheorem{sit}[subsubsection]{Situation}
\theoremstyle{remark} \newtheorem{rem}[subsubsection]{Remark}
\newtheorem{note}[subsubsection]{Note}

\begin{abstract}
  We establish a rigid analytic analog of the Pila-Wilkie counting
  theorem, giving subpolynomial upper bounds for the number of
  rational points in the transcendental part of a $\QQ_p$-analytic set,
  and the number of rational functions in a $\FF_q\dpl t\dpr$-analytic
  set.  For $\ZZ\dpl t\dpr$-analytic sets we prove such bounds
  uniformly for the specialization to every non-archimedean local
  field.
\end{abstract}

\subjclass[2010]{Primary 14G05,14G22,11G50}
\keywords{Pila-Wilkie theorem, Point counting, Rigid analytic spaces}

\maketitle

% \setcounter{tocdepth}{1}
% \tableofcontents

\setcounter{section}{0}
\section{Introduction}\label{sec-intro}
Our goal in this paper is to introduce a non-archimedean version of
the Pila-Wilkie counting theorem which applies uniformly to analytic
sets over local fields in the mixed-characteristic case $F=\QQ_p$ and
equicharacteristic case $F=\FF_q\dpl t \dpr$. We begin by recalling
the classical statement of the Pila-Wilkie counting theorem.

\subsection{Classical Pila-Wilkie}
For $X\subset\RR^n$, we define the \emph{algebraic part} $X^\alg$ to be
the union of all connected, positive-dimensional semialgebraic sets
contained in $X$. We set $X^\tran:=X\setminus X^\alg$. For $x\in\QQ^n$
we denote $H(x):=\max_iH(x_i)$, where $H(x_i)$ is the height of
$x_i$. We set
$$
  X(\QQ,H) := \{x\in X\cap\QQ^n \mid H(x)\le H\}.
$$

With these notations, Pila and Wilkie \cite{pw} proved the
following counting theorem.

\begin{thm}\label{thm:pw}
  Let $X$ be definable in an o-minimal structure. Then for every
  $\e>0$ there exists a constant $C(X,\e)$ such that
$$
    \#X^\tran(\QQ,H) \le C(X,\e)\cdot H^\e.
$$
\end{thm}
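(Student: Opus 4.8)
The plan is to follow the Bombieri--Pila--Pila circle of ideas and prove a sharper statement, from which Theorem~\ref{thm:pw} follows at once. After reducing from $\RR^n$ to $[0,1]^n$ by the usual finite cover of $\RR^n$ by height-preserving ``inversion'' charts, the goal becomes: the set $X(\QQ,H)$ is contained in $\ll_\e H^\e$ connected positive-dimensional semialgebraic subsets of $X$ (which lie in $X^{\alg}$) together with $\ll_\e H^\e$ further points. The technical core is a \emph{$C^r$-reparametrization theorem}: for every $r$, a definable $X \subset [0,1]^n$ with $\dim X = m$ is a finite union of images of definable maps $\phi \colon (0,1)^m \to X$ of class $C^r$ whose partial derivatives up to order $r$ are all bounded by $1$, the number of charts being uniform in definable families. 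I would prove this by induction on $m$ using o-minimal cell decomposition: first arrange that the defining functions have bounded first derivatives (elementary over an o-minimal structure), then control higher derivatives by a Gromov--Yomdin style subdivision and rescaling of the cube.

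Granting reparametrization, I would apply the interpolation-determinant method to a single chart $\phi$. Fix $d$, set $D = \binom{m+d}{m}$, and consider $D$ rational points of height $\le H$ lying in a subcube of $(0,1)^m$ of side $T^{-1}$. Taylor-expanding the $D$ monomials of degree $\le d$ along $\phi$ and using the derivative bounds shows the $D \times D$ interpolation determinant is, for $T$ large, smaller in absolute value than the reciprocal of a common denominator of its (rational) entries; hence it vanishes, so those points lie on a single algebraic hypersurface of degree $\le d$. Covering $(0,1)^m$ by $\ll T^m$ such subcubes and comparing with the range of $T$ allowed by the determinant inequality yields that $X(\QQ,H)$ meeting the image of $\phi$ lies on $\ll H^{\kappa}$ hypersurfaces of degree $\le d$, with $\kappa = \kappa(n,m,d,r) \to 0$ as $r \to \infty$; choosing $r$ large relative to $n,m,d$ makes $\kappa < \e$.

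It remains to pass to $X^{\tran}$, by intersecting $X$ with each of these hypersurfaces $Z$ and inducting on $\dim X$ (the base case $\dim X = 0$ being trivial). The delicate point is that $\dim(X \cap Z)$ need not be smaller than $\dim X$; the remedy is to carry everything out for definable \emph{families} of sets, so that the hypersurfaces produced above can be assembled into a single definable family and the inductive hypothesis applied fiberwise with constants uniform in the family. Any component of $X \cap Z$ of full dimension $\dim X$ is contained in a positive-dimensional semialgebraic subset of $X$, hence in $X^{\alg}$ and harmless; everything else is absorbed by the induction, contributing $\ll_\e H^\e$ in total over the $O(1)$ charts and the $\le \dim X$ levels. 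The main obstacle is the reparametrization theorem: obtaining uniform control of all derivatives up to order $r$, and uniformly across a family, is the heaviest technical step, and it is precisely this input that makes the o-minimal hypothesis indispensable in the stated generality.
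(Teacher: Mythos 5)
The paper does not prove Theorem~\ref{thm:pw}; it is quoted verbatim from Pila--Wilkie \cite{pw} as classical background, and the paper's actual contributions (Theorems~\ref{thm:main-L}--\ref{thm:main-uniform-pl}) are rigid analytic analogs proved by a different mechanism. Your sketch is, however, a correct high-level outline of the original Pila--Wilkie argument: reduction to $[0,1]^n$, the $C^r$-reparametrization theorem (Yomdin--Gromov extended to o-minimal families), the Bombieri--Pila interpolation determinant giving containment in $\ll H^\e$ hypersurfaces of bounded degree, and a dimension induction over definable families to pass from $X$ to $X^\tran$. The one place I would push back on the wording is the claim that reparametrization ``with first derivatives bounded'' is elementary over an o-minimal structure and higher derivatives follow by ``Gromov--Yomdin style subdivision and rescaling'': in the o-minimal setting this step is genuinely delicate (it is Theorems~2.3 and~2.5 of \cite{pw}, several pages of cell-decomposition and precomposition arguments), and the rescaling heuristic that works for algebraic or analytic functions does not transfer mechanically; one needs o-minimal uniform finiteness and a careful inductive scheme on both the dimension and the order $r$. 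Also, the step ``any component of $X\cap Z$ of full dimension is contained in a positive-dimensional semialgebraic subset of $X$'' deserves a word of justification (such a component is itself positive-dimensional and semialgebraic only after a further argument, since $Z$ is algebraic but $X$ is merely definable; what one actually uses is that a full-dimensional component of $X\cap Z$ is open in $X$, together with the induction on $\dim X$).

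It is worth noting, by way of comparison with the present paper's method, that the authors deliberately avoid reparametrization for their rigid analytic analog. As they explain in Section~1.4, they replace it by the ``Weierstrass polydisc'' idea of \cite{bn:analytic-pw}: instead of covering $X$ by smooth $C^r$ charts, one covers it by polydiscs on which a uniform Noether normalization is available (Theorem~\ref{thm-stratifiednormalization}), and runs the interpolation determinant relative to the resulting finite module structure (Proposition~\ref{prop-interpdet}). This buys two things over the reparametrization route: it works in positive characteristic (where derivative-based reparametrization breaks down), and it is uniform over all non-archimedean local fields, which is precisely what the uniform Theorem~\ref{thm:main-uniform} requires. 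If you are aiming to understand this paper rather than to reprove Pila--Wilkie, the analog of your ``reparametrization theorem'' box should be Noether normalization in families, and the analog of the subcube-covering step is the covering by polydiscs of radius $|t|$ in Section~\ref{proof-main-uniform}.
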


We refer the unfamiliar reader to \cite{scanlon:survey} for an
introduction to the notion of o-minimal
structures. Theorem~\ref{thm:pw} has found numerous applications in an
area of arithmetic geometry known as ``unlikely intersection
problems'' following a general strategy of Pila and Zannier. We also
refer the reader to \cite{scanlon:survey} for a survey of this
direction.

\subsection{Our main result}
We consider a rigid analytic analog of the Pila-Wilkie theorem, where
definable sets are replaced by the spectra of \emph{affinoid}
algebras.  For the precise definitions see
Section~\ref{sec-rigidgeom}.  For the purposes of this introduction,
we first introduce two types of affinoid algebras over local fields.

Set either $F=\QQ_p$ and $V=\ZZ_p$, or $F=\FF_q\dpl t\dpr$ and
$V=\FF_q\dbl t\dbr$. An affinoid $F$-algebra is an $F$-algebra of the
form $\cB:=B[1/p]$ (resp.\ $\cB:=B[1/t]$) where $B$ is a quotient
$V\dal x_1,\ldots,x_n\dar/\mathfrak{b}$ of a formal power series ring
(converging in the $p$-adic or $t$-adic topologies). 
The set of $F$-rational points of the rigid analytic spectrum of such an affinoid algebra
can naturally be interpreted as a subset of the unit polydisc $V^n$ in
$F^n$. We will be concerned with counting \emph{rational} points in
such a spectrum. In particular, for $F=\QQ_p$ we set
$$
(\Sp \cB)(\QQ,H) := \{ x\in\Sp \cB\cap\QQ^n \mid \max_i H(x_i) \le H\}
$$
as before, and for $F=\FF_q\dpl t\dpr$ we set
$$
(\Sp \cB)(\FF_q(t),H) := \{ x\in\Sp \cB\cap\FF_q(t)^n \mid \max_i \deg_t(x_i) \le \log_q H \}.
$$
We say that an irreducible closed subset of
$\Sp F\dal x_1,\ldots,x_n\dar$ is \emph{algebraic} if it is an
irreducible component of a closed set defined by a \emph{polynomial
  ideal} $I\subset F\dal x_1,\ldots,x_n\dar$, i.e. an ideal obtained
by extension from $F[x_1,\ldots,x_n]$. More generally we say that a
closed subset of $\Sp F\dal x_1,\ldots,x_n\dar$ is algebraic if its
irreducible components are algebraic. Finally we define
$(\Sp\cB)^\alg$ to be the union of all positive-dimensional closed
algebraic subsets of $\Sp\cB$, and $(\Sp\cB)^\tran$ to be its complement in $\Sp\cB$.
With these definitions in place, our
first analog of the Pila-Wilkie theorem is as follows.

\begin{thm}\label{thm:main-L}
  Let $\cB$ be an affinoid $F$-algebra as above. Then for every $\e>0$
  there exists a constant $C(\cB,\e)$ such that
$$
\#(\Sp\cB)^\tran(F_0,H) \le C(\cB,\e)\cdot H^\e, \qquad F_0=
\begin{cases}
  \QQ & (F=\QQ_p), \\
  \FF_q(t) & (F=\FF_q\dpl t\dpr).
\end{cases}
$$
\end{thm}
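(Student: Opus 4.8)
\emph{Strategy.} The plan is to transport the Bombieri--Pila determinant method to the rigid analytic setting. The one structural novelty one needs is the following remark, which replaces the Yomdin--Gromov reparametrization theorem used in the o-minimal proof of Theorem~\ref{thm:pw}: \emph{integrality of power-series coefficients already provides a ``mild parametrization with bounded derivatives'' for free}. Concretely, if $f\in V\langle y_1,\dots,y_d\rangle$, then every Hasse derivative of $f$ again lies in $V\langle y_1,\dots,y_d\rangle$ (since the relevant binomial coefficients are integers), so on each sub-polydisc $y_0+\varpi^kV^d$ (with $\varpi$ a uniformizer of $V$) one has $f(y_0+h)=\sum_{|\beta|\le T}f^{[\beta]}(y_0)h^\beta+O(|\varpi|^{k(T+1)})$ uniformly, with $f^{[\beta]}(y_0)\in V$. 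I would argue by induction on $d=\dim\cB$, proving the stronger, family-uniform statement: for a bounded family of affinoid $F$-algebras of dimension $\le d$ and every $\e>0$ there is a single constant $C$ with $\#(\Sp\cB)^\tran(F_0,H)\le C H^\e$ for all members of the family. The base case $d=0$ records that a zero-dimensional affinoid in a bounded family has uniformly boundedly many $F_0$-points.

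\emph{Geometric preparation.} For the inductive step, decompose $\Sp\cB$ into irreducible components: positive-dimensional algebraic components contribute nothing to the transcendental part, lower-dimensional components are dispatched by the inductive hypothesis, and it remains to treat a single $d$-dimensional component $W$ which is not algebraic. Let $I(W)\subset F[x_1,\dots,x_n]$ be the ideal of polynomials vanishing on $W$ and $\overline W=\Spec F[x]/I(W)$ its Zariski closure, which is irreducible. Since $W$ is not algebraic, a dimension count for irreducible affinoids (an irreducible affinoid of dimension $d$ cannot properly contain a closed subset of dimension $d$) forces $n':=\dim\overline W>d$; this strict inequality is the only place transcendentality of $W$ enters, and it is essential below. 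By Noether normalization for affinoid algebras and the Weierstrass division theorem one may, after a linear coordinate change, take $W$ finite of some degree $e$ over the polydisc $\DD^d$ in the first $d$ coordinates and cover the smooth locus of $W$ by finitely many graphs $\{(y,g(y)):y\in U'\}$ over sub-polydiscs $U'\subseteq\DD^d$, the component functions of $g$ lying automatically in $V\langle\cdot\rangle$ (they map into the unit polydisc); the singular locus of $W$ is a lower-dimensional affinoid, handled by induction. The same normalization shows that for $r\le\mathrm{const}(W)$, $W$ meets at most $e\cdot r^{-d}$ sub-polydiscs of $\DD^n$ of radius $r$: sub-polydiscs met by $W$ project into the $r^{-d}$ sub-polydiscs of $\DD^d$ of radius $r$, and over each of these $W$ splits into $\le e$ graphs of $V$-integral functions, hence lies in $\le e$ sub-polydiscs of $\DD^{n-d}$ of radius $r$.

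\emph{The determinant estimate.} Fix a large integer $D'$ and pick monomials $m_1,\dots,m_N$ in $x_1,\dots,x_n$ of degree $\le D'$ whose classes are linearly independent modulo $I(W)$ and span the degree-$\le D'$ part of $F[x]/I(W)$, so $N$ is of order $(D')^{n'}$. On a graph over a sub-polydisc, subdivide into sub-polydiscs of radius $r=|\varpi|^{k_0}$, and on one of them, $y_0+\varpi^{k_0}V^d$, suppose $F_0$-points $P_1,\dots,P_N$ of $W$ of height $\le H$ lie there; put $\Delta=\det\big(m_j(P_i)\big)$. Composing $m_j$ with the graph gives $\psi_j\in V\langle y\rangle$; expanding $\psi_j$ about $y_0$ and using multilinearity in the columns together with integrality of the Hasse derivatives, $|\Delta|_v\le|\varpi|^{k_0 E}$, where $v$ is the place of $F_0$ under the valuation of $F$ and $E=\sum_{j}|\beta_j|$ is the sum of degrees of the $N$ smallest $d$-variable multi-indices, so $E\sim\frac{d}{d+1}T_0N$ with $T_0$ of order $(N d!)^{1/d}\asymp(D')^{n'/d}$. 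On the other hand the entries of $\Delta$ are ratios of $v$-integers of height $\le H^{D'}$, so if $\Delta\ne 0$ then $|\Delta|_v\ge c\,H^{-c'ND'}$ (the product formula on $\QQ$, resp.\ on $\FF_q(t)$ with $\deg_t$ in place of $\log_q H$). Comparing, $\Delta\ne0$ is impossible once $k_0>\mathrm{const}\cdot\frac{ND'}{E}\log H$; since $\frac{ND'}{E}\asymp\frac{(d+1)}{d}(D')^{1-n'/d}$ and $n'>d$ makes the exponent of $D'$ \emph{negative}, it suffices to take $k_0=O\big((D')^{1-n'/d}\log H\big)$. With this choice every such determinant vanishes, so (ranging over all $N$-subsets) all height-$\le H$ points of $W$ in that sub-polydisc lie on a single hypersurface of degree $\le D'$; as the $m_j$ are independent modulo $I(W)$, this hypersurface does not contain $W$, so it meets $W$ in an affinoid of dimension $<d$.

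\emph{Conclusion and main obstacle.} Now $W$ meets at most $e\,r^{-d}=H^{O((D')^{1-n'/d})}$ sub-polydiscs of radius $r$; those containing $<N$ points contribute $\le N\cdot H^{O((D')^{1-n'/d})}$ points, and the remaining ones place all of $W^\tran(F_0,H)$ on at most $H^{O((D')^{1-n'/d})}$ affinoids of the form ($W$-transverse hypersurface)~$\cap\,W$, each of dimension $<d$ and ranging over a bounded family as the hypersurface ranges over degree-$\le D'$ hypersurfaces. A transcendental point of $\Sp\cB$ lying on such a slice is a transcendental point of that slice, so the inductive family-uniform hypothesis applies and gives $\#(\Sp\cB)^\tran(F_0,H)\le H^{O((D')^{1-n'/d})}\big(N+C(\e')H^{\e'}\big)$; choosing $D'$ large enough that the first exponent is $<\e/2$ and $\e'=\e/2$ yields the theorem, uniformly over the relevant family, and the cases $F=\QQ_p$ and $F=\FF_q\dpl t\dpr$ run in exact parallel. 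The point I expect to be the main obstacle is the determinant estimate together with the decay of the exponent: one must extract enough monomials whose jets along $W$ remain independent up to order $T_0\gg D'$, which is possible precisely because transcendentality of $W$ makes it Zariski-dense in a variety of strictly larger dimension $n'>d$ — and one must carry out the geometric normalization into $V$-integral graphs, the covering count, and this whole argument uniformly over the bounded families generated by the recursion.
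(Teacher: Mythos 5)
Your core observation is exactly the one that makes the paper work: in the non-archimedean setting, integrality of Tate-series coefficients (equivalently, integrality of Hasse derivatives) already furnishes the bounded-derivative control that the reparametrization theorem provides in the o-minimal case, so the Bombieri--Pila determinant estimate can be run without any reparametrization. The paper says exactly this (``Noether normalization provides a very direct analog of the Weierstrass polydisc construction''), and both proofs proceed by Noether normalization plus a determinant estimate plus an induction on fiber dimension. So the strategy is the same.

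The route you take through the determinant estimate is genuinely different from the paper's, and both are viable. You interpolate with degree-$\le D'$ monomials in all $n$ variables that are independent modulo $I(W)$, so $N\asymp (D')^{n'}$ with $n'=\dim\overline{W}$; you then exploit the strict inequality $n'>d$ (which you correctly deduce from non-algebraicity) to make the exponent $ND'/E\asymp (D')^{1-n'/d}$ tend to $0$. The paper instead interpolates only in $(d+1)$ coordinates at a time (with $\mu\asymp D^{d+1}$ monomials, so $\mu^{1+1/d}/\mu D\asymp D^{1/d}\to\infty$), intersecting over all $\binom{n}{d+1}$ choices to produce the universal family $\mathcal{H}_{D,n,d}$ of dimension $\le d$ (Lemma~\ref{lem-dimensionatmostd}), and shows by a dimension count that any $d$-equidimensional fiber contained in a fiber of $\mathcal{H}_{D,n,d}$ must be algebraic, rather than arguing directly that the hypersurface does not contain $W$. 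Your route is somewhat more elementary and explicitly identifies where non-algebraicity enters; the paper's route is better suited to families (it never needs to compute $n'$ or $I(W)$), which is why the paper uses it for the uniform Theorems~\ref{thm:main-uniform} and \ref{thm:main-uniform-family}.

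There is, however, a genuine gap in the geometric preparation: the claim that after Noether normalization one can ``cover the smooth locus of $W$ by finitely many graphs $\{(y,g(y)):y\in U'\}$ over sub-polydiscs $U'\subseteq\DD^d$'' is not justified, and the needed statement is false in general. The obstruction is not the singular locus of $W$ but the branch locus of the finite projection $W\to\DD^d$: over a sub-polydisc not meeting the branch locus, the restricted map is finite \'etale of degree $e$, but over a non-archimedean field with finite residue field, a finite \'etale cover of a polydisc need not split into sections (already unramified extensions of the residue field give nontrivial connected covers). So the decomposition into $V$-integral graphs does not exist in the generality you need, and your determinant estimate, which Taylor-expands each $\psi_j=m_j\circ g$ about $y_0$, presupposes exactly such a decomposition. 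The paper's Proposition~\ref{prop-interpdet} circumvents this by never forming graphs: it fixes module generators $v_1,\ldots,v_E$ of $B$ over $A\dl x_1,\ldots,x_d;\delta\dr$, writes $f_i=\sum_k f_{ik}v_k$ with $f_{ik}$ a Tate series with integral coefficients, and expands the determinant by multilinearity using the integrality $c_{i,k,\nu}\in t^{\nu\cdot\delta}V_K$. This replaces the ``number of graphs $\le e$'' in your covering count by ``number of module generators $E$'' in the exponent $C_dE^{-1/d}\mu^{1+1/d}$, and requires no trivialization of the cover. Your argument goes through if you substitute this module expansion for the graph covering (your Hasse-derivative observation then applies verbatim to the coefficients $f_{ik}$), but as written the graph-covering step is a hole. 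A secondary, smaller issue is that the inductive ``bounded family'' framework you gesture at is precisely what the paper's stratified rational diagrams (Theorem~\ref{thm-stratifiednormalization}) formalize; for a single fixed local field $F$ this can be done more naively, but you would still need to make it precise in order to control the constants when passing from $W$ to the hypersurface slices and to the ramification locus.
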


In the $\QQ_p$ case, our result is a special case of the result of
\cite{ccl:pw}, who proved a similar result more generally for
\emph{subanalytic} $\QQ_p$ sets. In the $\FF_q\dpl t\dpr$ case the
result is new.

\begin{rem}
  In \cite{demangos:pw,demangos:pw2}, Demangos claimed a proof of
  Theorem~\ref{thm:main-L} in both the $\QQ_p$ and the
  $\FF_q\dpl t\dpr$ cases. However it has been known to the experts
  for several years that the proof contains a substantial gap in both
  cases.
\end{rem}

Our main result is a uniform version of Theorem~\ref{thm:main-L} where
we allow the field $F$ to vary. Toward this end, we let $\OO$ denote
the ring of integer of an algebraic number field and replace $F$ by
$\rR t$ and $V$ by $\fR t$. We develop an analogous notion of affinoid
algebras in this context. For the purpose of this introduction, the
reader may consider an affinoid algebra to be an $\rR t$-algebra
$\cB:=B[1/t]$ where $B$ is a quotient
$B=\fR{t}\dal x_1,\ldots,x_n\dar/\mathfrak{b}$ of a power series ring
converging in the $t$-adic topology. 
Consider some local
non-archimedean field $F_\alpha$ with valuation ring $V_\alpha$, and an adic homomorphism $\alpha:\fR t\to V_\alpha$.
Let $\pi$ be a uniformizer of $V_\alpha$, $q_{\alpha}=p^f$ the number of elements of the residue field $V_\alpha/(\pi)$, and $r_{\alpha}\geq 1$ be the number given by $\alpha(t)=u\pi^{r_{\alpha}}$, where $u\in V^{\times}_{\alpha}$.
Then
%for any adic homomorphism $\alpha:\fR t\to V_\alpha$ 
the fiber
$(\Sp\cB)_\alpha$ is an $F_\alpha$-affinoid space. 
%Denote by $q_\alpha$ the cardinality of the residue field of $F_\alpha$. 
We set
$$
\sigma_{\alpha}=\begin{cases}[F_{\alpha}:\QQ_p]&(F_{\alpha}\supset\QQ_p),\\ r_{\alpha}&(F_{\alpha}\supset\FF_q\dpl t\dpr).\end{cases}
$$
Our main theorem is as follows.

\begin{thm}\label{thm:main-uniform}
  Let $\cB$ be an affinoid $\rR t$-algebra. Then for every $\e>0$
  there exists a constant $C(\cB,\e,\sigma_{\alpha})$
such that for every fiber
  $(\Sp\cB)_\alpha$ as above,
$$
\#(\Sp\cB)_\alpha^\tran(F_{\alpha,0},H) \le C(\cB,\e,\sigma_{\alpha})\cdot q_\alpha^n\cdot H^\e, \qquad F_{\alpha,0}=
\begin{cases}
  \QQ & (F_\alpha\supset\QQ_p), \\
  \FF_q(t) & (F_\alpha\supset\FF_q\dpl t\dpr).
\end{cases}
$$
\end{thm}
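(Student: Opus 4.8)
The plan is to deduce Theorem~\ref{thm:main-uniform} from Theorem~\ref{thm:main-L} applied to each individual fiber $(\Sp\cB)_\alpha$, the point being to keep track of how the constant depends on $\alpha$ — and to show it depends only on $\sigma_\alpha$ (together with $\cB$ and $\e$), not on $F_\alpha$ itself, and that the extra loss in passing between fibers is at most the factor $q_\alpha^n$. First I would recall that, by the machinery developed earlier in the paper, the fiber $(\Sp\cB)_\alpha$ is an $F_\alpha$-affinoid space of the shape controlled by Theorem~\ref{thm:main-L}: it is cut out inside the unit polydisc $V_\alpha^n$ by the base-changed ideal $\alpha_*\mathfrak{b}$. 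The essential mechanism behind Pila–Wilkie type bounds is a parametrization/determinant argument: one covers $(\Sp\cB)_\alpha^\tran$ by finitely many pieces each admitting an analytic parametrization with bounded derivatives, and on each piece a Bezout/Thue–Vinogradov type estimate shows that the rational points of height $\le H$ on the piece either lie on a proper algebraic hypersurface or are few in number. Iterating on the intersection with the hypersurface, and using Noetherian induction on $\dim\Sp\cB$, one eliminates the algebraic part and is left with the subpolynomial bound.

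The key quantitative step is therefore to locate, in the proof of Theorem~\ref{thm:main-L} (or in the underlying parametrization results it depends on), the precise source of the dependence of the constant on the field $F$. I expect two sources. The first is the \emph{number of parametrizing charts} and the \emph{norm bounds on the derivatives} of the parametrizations; these come from reduction-theoretic properties of the affinoid algebra $B$ — its defining ideal, the degrees and coefficient sizes of generators — and after base change along $\alpha$ these are controlled by $\cB$ alone plus the ramification and residue data of $V_\alpha$. The residue field size enters because covering the unit polydisc $V_\alpha^n$ by balls on which a power series is well-approximated by a polynomial requires on the order of $q_\alpha$ balls per coordinate, hence $q_\alpha^n$ balls in total; this is exactly the factor $q_\alpha^n$ appearing in the statement. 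The second source is the \emph{height/degree} arithmetic in the counting step: over $\QQ$ the relevant estimate involves $[F_\alpha:\QQ_p]$ (the local degree controls how the $p$-adic size of a rational number relates to its height), while over $\FF_q(t)$ the ramification index $r_\alpha$ plays the analogous role (it relates $\deg_t$ of a rational function to the $t$-adic valuation of its image). Packaging both into the single invariant $\sigma_\alpha$ is what makes the constant $C(\cB,\e,\sigma_\alpha)$ uniform.

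Concretely, the steps I would carry out are: (1) base-change $\cB$ along $\alpha$ and verify that $(\Sp\cB)_\alpha$ is an affinoid $F_\alpha$-algebra of the form in Theorem~\ref{thm:main-L}, with the combinatorial data (number of variables, degree/weight bounds on a generating set of the ideal) bounded in terms of $\cB$ independently of $\alpha$; (2) run the parametrization step, splitting the unit polydisc into $O(q_\alpha^n)$ sub-polydiscs on each of which the analytic functions defining $(\Sp\cB)_\alpha$ admit a $C^k$-parametrization (in the non-archimedean sense) with derivative norms bounded by a constant depending only on $\cB$ and $k$; (3) on each sub-polydisc, run the determinant/Bezout counting argument: the rational points of height $\le H$ either all lie on one hypersurface of controlled degree — in which case recurse on the lower-dimensional affinoid obtained by intersection, using Noetherian induction — or number at most $C(\cB,\e,\sigma_\alpha)H^\e$, where $\sigma_\alpha$ enters through the arithmetic comparing local size and global height ($[F_\alpha:\QQ_p]$ in mixed characteristic, $r_\alpha$ in equicharacteristic); (4) remove the algebraic part by showing that points contributing to $(\Sp\cB)_\alpha^\tran$ survive only finitely many steps of the recursion, with the number of steps bounded in terms of $\cB$; (5) sum the per-chart bounds over the $O(q_\alpha^n)$ charts to obtain the stated estimate.

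The main obstacle, I expect, is step (2)–(3) done uniformly: producing a non-archimedean parametrization of the transcendental part whose complexity (number of charts, derivative bounds, degree of the hypersurface arising in the counting step) is genuinely independent of the fiber, and in particular showing that the only place where the residue field size $q_\alpha$ enters is the elementary packing of the polydisc — so that it contributes exactly the benign factor $q_\alpha^n$ — while every other constant can be absorbed into $C(\cB,\e,\sigma_\alpha)$. This requires that the reduction mod $\pi$ of the base-changed affinoid, and the associated Weierstrass-type preparation used to set up the parametrization, behave uniformly in $\alpha$; controlling the denominators and degree growth under this reduction, uniformly over all specializations, is the technical heart of the argument.
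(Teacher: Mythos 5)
Your proposal misidentifies the logical structure and, more importantly, relies on a technique the paper deliberately avoids. First, the direction is backwards: in the paper, Theorem~\ref{thm:main-L} is a \emph{corollary} of Theorem~\ref{thm:main-uniform} (it is the special case $F_\alpha=\QQ_p$ or $F_\alpha=\FF_q\dpl t\dpr$), not an independently proved result one can specialize and then re-uniformize. There is no separate proof of Theorem~\ref{thm:main-L} into which one could ``go and locate the source of the dependence on the field.'' More seriously, your steps (2)--(3) invoke a non-archimedean $C^k$-parametrization (``reparametrization lemma'' in the sense of Yomdin--Gromov--Pila--Wilkie, or the cell-decomposition approach of Cluckers--Comte--Loeser). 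The introduction explicitly explains that this paper does \emph{not} follow that route, precisely because the reparametrization approach is problematic in positive characteristic and does not yield uniformity across all residue characteristics without excluding small primes, which is exactly what Theorem~\ref{thm:main-uniform} is designed to overcome. The actual technical engine here is the stratified Noether normalization in families (Theorem~\ref{thm-stratifiednormalization}), which plays the role of a ``Weierstrass polydisc'' cover, followed by the interpolation-determinant machinery of Section~\ref{sec-interpolation} and the hypersurface families $\mathcal{H}_{D,n,d}$. None of this appears in your outline, and replacing it with a $C^k$-parametrization is not a cosmetic change: it is the step that would break in equicharacteristic.

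The parts of your proposal that are on target are at the level of ``morally why should such a statement be true'': you correctly guess that the $q_\alpha^n$ factor arises from covering the unit polydisc by balls, that $\sigma_\alpha$ controls the arithmetic comparing local valuations with global heights (this is indeed what Proposition~\ref{prop-polyinterpdet} does, via $[F:\QQ_p]/r$ or $r$), and that one iterates on hypersurfaces of dropping dimension. But the concrete mechanism is different from what you describe. The paper first proves the overconvergent family version (Theorem~\ref{thm-main-family}), where the part of the space inside a polydisc of radius $|t^\delta|$ is counted uniformly with no $q_\alpha^n$ loss. To recover the full polydisc it then performs a substitution trick: write $x_i=y_i+tz_i$ and view $(y_1,\ldots,y_n)$ as new parameters, thereby realizing the $q_\alpha^n$ balls of radius $|t|$ covering $V_\alpha^n$ as fibers of a single family $B'$ over an enlarged base $A'=A\dal y_1,\ldots,y_n\dar$; Theorem~\ref{thm-main-family} applied once to $B'$ then yields a bound uniform over all $q_\alpha^n$ fibers, and the total count is the sum over these fibers. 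The packing you describe (``balls on which a power series is well-approximated by a polynomial'') is not the relevant one: the covering is forced by the fact that the overconvergent theorem only applies on polydiscs of radius $<1$, and the uniformity over the $q_\alpha^n$ pieces is inherited from the family theorem rather than re-proved per chart. As written, your proposal does not constitute a proof because the parametrization step it leans on is not available in this setting.
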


Theorem~\ref{thm:main-L} follows immediately from
Theorem~\ref{thm:main-uniform}. A similar result, which holds more
generally for $\fR t$-subanalytic sets, was established in
\cite[Theorem~B]{cfl:uniform-pw}.  However, the result in
\cite{cfl:uniform-pw} holds for local fields with residue
characteristic larger than some $N=N(X,\e)$.

More generally, consider a morphism of affinoid algebras $\cA\rightarrow\cB$,
and view $\Sp\cB\to\Sp\cA$ as a family of rigid analytic spaces. Then
any adic homomorphism $\alpha:A\to V_\alpha$ from a formal model $A$ of $\cA$ corresponds to a point of
$\Sp\cA$, and we have the following.

\begin{thm}\label{thm:main-uniform-family}
  Let $\cA,\cB$ be an affinoid $\rR t$-algebra. Then for any $\e>0$ and any positive integer $\sigma$
  there exists a constant $C(\cA,\cB,\e,\sigma)$ such that for every fiber
  $(\Sp\cB)_\alpha$ as above,
$$
\#(\Sp\cB)_\alpha^\tran(F_{\alpha,0},H) \le C(\cA,\cB,\e,\sigma_{\alpha})\cdot q_\alpha^n\cdot H^\e, \qquad F_{\alpha,0}=
\begin{cases}
  \QQ & (F_\alpha\supset\QQ_p), \\
  \FF_q(t) & (F_\alpha\supset\FF_q\dpl t\dpr).
\end{cases}
$$
\end{thm}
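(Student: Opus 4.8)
The plan is to prove Theorem~\ref{thm:main-uniform-family} — which contains Theorem~\ref{thm:main-uniform} as the special case $\cA=\rR t$ — by induction on the relative dimension $d$ of $\Sp\cB$ over $\Sp\cA$, by running the argument that proves Theorem~\ref{thm:main-uniform} relative to a formal model of $\cA$. Fix a formal model $B=\fR t\dal x_1,\ldots,x_n\dar/\mathfrak{b}$ of $\cB$ and a formal model $A$ of $\cA$ such that $\cA\to\cB$ lifts to $A\to B$; an adic homomorphism $\alpha\colon A\to V_\alpha$ then yields the fibre $(\Sp\cB)_\alpha\subseteq V_\alpha^n$, an $F_\alpha$-affinoid. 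Since $V_\alpha^n$ is the disjoint union of its $q_\alpha^{\,n}$ first-level residue polydiscs, it suffices to bound, on each of them separately, the number of $F_{\alpha,0}$-rational points of height $\le H$ lying in $(\Sp\cB)_\alpha^\tran$ by $C(\cA,\cB,\e,\sigma)\cdot H^\e$; summing over the residue polydiscs produces the factor $q_\alpha^{\,n}$. When $d=0$ the fibres are finite of cardinality bounded in terms of $\cB$, and there is nothing to prove.

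The first substantial step is a \emph{relative parametrization with fibre-independent complexity}: there should be a finite partition of $\Sp\cA$ into affinoid or Zariski-locally-closed pieces — of number and complexity bounded in terms of $(\cA,\cB)$ only — such that over each piece $\Sp\cA'$ the restricted family $\Sp\cB\times_{\Sp\cA}\Sp\cA'\to\Sp\cA'$ is covered, fibre by fibre, by finitely many analytic charts from relative unit polydiscs, of uniformly bounded Weierstrass complexity. I would obtain this from relative Noether normalization and Weierstrass preparation over $\cA$: after a rational subdivision, $\cB$ becomes finite over $\cA\langle z_1,\ldots,z_d\rangle$ generically over $\Sp\cA$; on the locus where the relative dimension drops one applies the inductive hypothesis in $d$, while on the (still $d$-dimensional) locus where the normalization degenerates one recurses by a nested Noetherian induction on $\dim\Sp\cA$, which terminates. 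The crucial point — exactly as in Theorem~\ref{thm:main-uniform} — is that all degree and complexity bounds are read off the $\fR t$-models $A,B$ and are preserved under reduction modulo $t$ and modulo the maximal ideal of $V_\alpha$; hence they are independent of $F_\alpha$, and in particular of its residue characteristic.

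With a uniform supply of charts in hand, I would apply, chart by chart inside the fixed residue polydisc, the non-archimedean determinant/interpolation estimate underlying the proof of Theorem~\ref{thm:main-uniform}: the $F_{\alpha,0}$-rational points of height $\le H$ on the image of a chart that do not lie on $(\Sp\cB)_\alpha^\alg$ are contained in at most $C_1(\cA,\cB,\e,\sigma)\cdot H^\e$ algebraic hypersurfaces of degree $\le C_2(\cB,\e)$; the hypothesis $\sigma_\alpha=\sigma$ is used precisely to control, uniformly over all such $\alpha$, the interplay between the height of a rational point and the $\pi$-adic valuation on $F_\alpha$. The points lying on the hypersurfaces are handled by enlarging the base: the intersections of $\Sp\cB$ with the hypersurfaces of degree $\le C_2(\cB,\e)$ form an affinoid family over $\cA\langle d_1,\ldots,d_N\rangle$, with $d=(d_j)$ the bounded-height coefficient vector, whose relative dimension is $<d$ away from the locus where $(\Sp\cB)_\alpha$ is itself contained in such a hypersurface — i.e. away from the contribution to $(\Sp\cB)_\alpha^\alg$, which we discard; note that $\sigma$ is unchanged by adjoining the $d_j$. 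Applying the inductive hypothesis in $d$ to this family — this is why one must prove the \emph{family} statement, and not Theorem~\ref{thm:main-uniform} alone — and summing over charts, pieces and residue polydiscs after rescaling $\e$, yields the asserted bound.

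The principal obstacle is the relative parametrization of the second paragraph, and specifically the demand that its complexity be bounded \emph{simultaneously} uniformly in the base point $\alpha$ and in the residue characteristic of $F_\alpha$. One must control at once the degeneration of the relative dimension and of the Weierstrass data over special loci of $\Sp\cA$ — so that the accompanying Noetherian induction on $\dim\Sp\cA$ terminates with bounds depending only on $(\cA,\cB)$ — and the behaviour of the normalizing data under reduction modulo $t$ and modulo $\pi$, which is the characteristic-free heart of Theorem~\ref{thm:main-uniform} and must now be carried through the extra base directions contributed by $\cA$. The remaining steps are a lengthy but routine bookkeeping of constants.
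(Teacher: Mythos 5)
Your proposal follows essentially the same strategy as the paper: cover $V_\alpha^n$ by the $q_\alpha^n$ residue polydiscs, carry out a uniform (stratified) Noether normalization over the base to produce finite maps onto unit polydiscs, use interpolation determinants \`a la Bombieri--Pila to confine rational points of bounded height to controlled hypersurfaces, enlarge the base by the parameter space of such hypersurfaces, and recurse by induction on the maximal fiber dimension --- and you correctly observe that one must prove the family statement to close the induction. The one point the paper handles more crisply than your sketch is the uniformity of the constant over the $q_\alpha^n$ polydiscs: rather than applying an overconvergent bound to each polydisc separately, the paper makes the covering itself part of the family by passing to $A'=A\dal y_1,\ldots,y_n\dar$ and the shift $x_i = y_i + t z_i$, so that the center of the polydisc becomes a parameter in the base and Theorem~\ref{thm-main-family} applies once; in your version you would need to verify that the constants of your relative parametrization are genuinely independent of which residue disc one restricts to, which amounts to exactly this enlargement. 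Also, the paper's normalization step proceeds by an induction on the maximal content degree $\mcd(F)$ via formal blow-ups organized in a rooted-tree transformation diagram, rather than the nested Noetherian induction on $\dim\Sp\cA$ you describe, but both serve the same purpose of producing a finite, model-level stratification with characteristic-independent bounds.
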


Theorem~\ref{thm:main-uniform-family} restricts to
Theorem~\ref{thm:main-uniform} where $\cA=\rR t$.

\subsection{Overconvergent versions}
Most of the technical work in the present paper is carried out with
the flavor of ``overconvergence''. That is, in the notations of the
previous section, we consider a formal model $B$ and affinoid algebra
$\cB:=B[1/t]$, but restrict attention to the part of $\Sp\cB$ that
belongs to the polydisc of radius $|t^\delta|$ for some $\delta\in\QQ$
with $\delta>0$.  Denote this by $\Sp \gdp{\cB}\delta$.  For this
overconvergent part, we have the following more uniform version of
Theorem~\ref{thm:main-uniform}.

\begin{thm}\label{thm:main-uniform-oc}
  Let $\cB$ be an affinoid $\rR t$-algebra.  Then for any $\e>0$ and any positive integer $\sigma$
  there exists a constant $C(\cB,\e,\delta,\sigma)$ such that for every fiber
  $(\Sp\cB)_\alpha$ as in Theorem~{\rm \ref{thm:main-uniform}},
$$
\#(\Sp\gdp\cB 1)_\alpha^\tran(F_{\alpha,0},H) \le C(\cB,\e,\delta,\sigma_{\alpha})\cdot H^\e, \qquad F_{\alpha,0}=
\begin{cases}
  \QQ & (F_\alpha\supset\QQ_p), \\
  \FF_q(t) & (F_\alpha\supset\FF_q\dpl t\dpr).
\end{cases}
$$
\end{thm}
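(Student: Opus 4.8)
The plan is to run the non-archimedean (Bombieri--Pila) determinant method on an \emph{overconvergent analytic parametrization} of $\Sp\gdp\cB 1$, carrying out every geometric construction over $\rR t$ before specializing, so that all combinatorial data are bounded in terms of $\cB$ alone. The entire gain over Theorem~\ref{thm:main-uniform} --- the disappearance of the factor $q_\alpha^n$ --- comes from overconvergence: on $\gdp\cB 1$ the generating series of $\cB$, after the obvious rescaling, converge on a polydisc strictly larger than the one in which we count, by a margin controlled by $\delta$ and $\sigma_\alpha$ only, so the usual subdivision of the closed unit polydisc into its $\sim q_\alpha^n$ residue discs is never needed. Concretely, fix a formal model $B=\fR t\dal x_1,\ldots,x_n\dar/\mathfrak b$ with $\cB=B[1/t]$ and set $d=\dim\Sp\cB$. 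Using Noether normalization and Weierstrass preparation over the excellent ring $\fR t$, I would decompose $\Sp\gdp\cB 1$ into finitely many charts, each realized as a finite cover of an admissible subpolydisc, and then, by a reparametrization step, subdivide each chart into finitely many pieces $\phi(\DD^d)$ with $\DD^d$ a unit $d$-polydisc and $\phi=(\phi_1,\ldots,\phi_n)$ a tuple of power series whose Taylor coefficients, after rescaling into the overconvergent regime, decay geometrically with a rate depending only on $\delta$ and $\sigma_\alpha$. The crucial point is that the number of charts, the number of pieces, and all auxiliary degrees are controlled by the generic object over $\rR t$, hence bounded by a function of $\cB$ alone, uniformly over all adic homomorphisms $\alpha$ and over both the mixed- and the equicharacteristic case.

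On a single piece $P=\phi(\DD^d)$, fix an integer $D_0=D_0(\e,d,\sigma_\alpha)$ and let $N$ be the number of monomials of degree $<D_0$ in $x_1,\ldots,x_n$. Given $N$ rational points $\xi_1,\ldots,\xi_N\in P$ of height $\le H$ whose preimages under $\phi$ lie in a common subpolydisc of $\DD^d$ of radius $\rho$, the $N\times N$ matrix $M=(m_i(\xi_j))$ formed from these monomials satisfies $|\det M|_\alpha\le \rho^{c_2 N^{1+1/d}}$ --- pull back through $\phi$ and Taylor-expand in the $d$ source coordinates --- whereas $\det M\in F_{\alpha,0}$ has height at most $H^{c_3 N}$, so if $\det M\ne 0$ then $|\det M|_\alpha\ge H^{-c_3\sigma_\alpha N}$ by the product formula. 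This last inequality is the one place $\sigma_\alpha$ intervenes, as the factor converting the height on $F_{\alpha,0}$ into the $\pi$-adic valuation of $F_\alpha$. Choosing $D_0$ large relative to $\e^{-1}d\sigma_\alpha$ and $\rho$ as small as the overconvergence margin of the previous paragraph permits forces $\det M=0$, so any $N$ --- hence, by the Bombieri--Pila combinatorial lemma, all --- height-$\le H$ rational points of any fixed small subpolydisc of $P$ lie on a bounded number of hypersurfaces of degree $<D_0$. Since $P$ is covered by at most $C(\cB,\e,\delta,\sigma_\alpha)\,H^\e$ subpolydiscs of the required radius, the height-$\le H$ rational points of $\Sp\gdp\cB 1$ are confined to $\le C\cdot H^\e$ algebraic hypersurfaces of degree $<D_0$.

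Intersecting $\Sp\cB$ with each of these hypersurfaces and iterating, a standard descent --- on $d=\dim\Sp\cB$, and, whenever an intersection fails to lower the dimension, on the number of ambient coordinates --- confines the surviving rational points to a bounded collection of lower-dimensional overconvergent affinoids; the pieces already contained in $(\Sp\cB)^\alg$ are simply discarded, using that the algebraic part is intrinsic, so that $(\Sp\cB\cap Z)^\alg\subseteq(\Sp\cB)^\alg$ for any algebraic $Z$. The inductive hypothesis here is the evident family refinement of the statement being proved --- the overconvergent analog of Theorem~\ref{thm:main-uniform-family}, with the coefficients of the degree-$<D_0$ equations furnishing the family parameters; the case $d=0$ is trivial. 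Summing the $\le C\cdot H^\e$ contributions over the boundedly many pieces and the boundedly many recursion levels yields the bound with a constant $C(\cB,\e,\delta,\sigma_\alpha)$.

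The heart of the argument, and the step I expect to be the main obstacle, is the uniform overconvergent parametrization of the first paragraph: one must produce, simultaneously for all adic homomorphisms $\alpha$ and in residue characteristics $0$ and $p$, a decomposition of $\Sp\gdp\cB 1$ into boundedly many pieces whose charts overconverge at a rate depending only on $\delta$ and $\sigma_\alpha$ --- which is precisely what removes the $q_\alpha^n$ of Theorem~\ref{thm:main-uniform}. This requires a Weierstrass-division and Noether-normalization package over $\fR t$ that is stable under specialization, quantitative control of the admissible formal blow-ups used to smooth the charts, and careful tracking of the interplay between the $t$-adic convergence of the coefficients of $B$ and their $\pi$-adic convergence after $\alpha$ is applied --- the relation $\alpha(t)=u\pi^{r_\alpha}$ with $r_\alpha\ge1$ supplying exactly the additional $t^\delta$-divisibility on which the whole estimate rests. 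Once this is in hand, the determinant and descent steps are a routine, if laborious, adaptation of the $p$-adic Pila--Wilkie machinery.
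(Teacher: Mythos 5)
Your plan diverges from the paper's at the key technical step, and the place where it diverges is precisely where a genuine gap opens up.

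You propose: Noether-normalize into finite covers, then \emph{reparametrize} each chart into pieces $\phi(\DD^d)$ with a tuple $\phi$ of power series whose Taylor coefficients decay at a uniform rate, and then Taylor-expand $\phi$ in the $d$ source coordinates to bound the interpolation determinant. You yourself flag the ``uniform overconvergent parametrization'' as the main obstacle. It is indeed the obstacle, and it is not one you resolve: producing such a $\phi$-parametrization simultaneously for all adic $\alpha$, including all small residue characteristics, is exactly the reparametrization problem that the existing literature (\cite{ccl:pw} over $\QQ_p$, \cite{cfl:uniform-pw} over $\ZZ\dbl t\dbr$) only solves in residue characteristic zero or in residue characteristic larger than some $N(\cB,\e)$. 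The whole point of this paper, announced in the introduction and realized in Theorem~\ref{thm-stratifiednormalization}, is to \emph{avoid} any such parametrization. As written, your argument is therefore circular at the bottleneck: it reduces the theorem to a lemma strictly harder than the one the paper substitutes for it.

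The paper's route is genuinely different in this respect. After the stratified Noether normalization, a chart is treated only as a \emph{finite module} over a rescaled polydisc algebra $A\dal x_1,\ldots,x_d;\delta\dar$ with a fixed finite set of generators $v_1,\ldots,v_E$; no section $\phi$ of the finite cover is ever chosen. The determinant estimate (Proposition~\ref{prop-interpdet}) expands each $f_i=\sum_k f_{ik}v_k$ and then Taylor-expands the \emph{coefficients} $f_{ik}$ in the $d$ base variables, using the overconvergence factor $|t^\delta|$ packaged into $\rho$ to get $|\Delta|\le\rho^{C_dE^{-1/d}\mu^{1+1/d}}$. This is precisely what replaces your Taylor expansion of a parametrizing map. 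The descent is also organized differently: not as an ad hoc induction on dimension, but as the tree-recursion of Theorem~\ref{thm-main-family}, which makes the uniformity over the family $\Sp\cB\to\Sp\cA$ transparent and keeps the bookkeeping of admissible blow-ups (type 1 edges) and irreducible decompositions (type 2 edges) under control. The actual deduction of Theorem~\ref{thm:main-uniform-oc} from Theorem~\ref{thm-main-family} is then a one-paragraph rescaling: substitute $x_i\mapsto tx_i$ to turn $\Sp\gdp\cB 1$ into the unit-polydisk part of a new formal model $B'$, apply Theorem~\ref{thm-main-family} with $A=\fR t$ and $\bfmath f=(tx_1,\ldots,tx_n)$, throw away the leaves with positive-dimensional algebraic fibers (these land in $(\Sp\cB)^\alg$), and observe the remaining leaves have zero-dimensional fibers of uniformly bounded cardinality. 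Your intuition about \emph{why} the factor $q_\alpha^n$ disappears in the overconvergent case --- the $|t^\delta|$ margin removes the need to tile by residue discs --- is correct and matches what happens in the proof of Proposition~\ref{prop-hypersurfaces} in the small-$H$ regime, and your identification of the role of $\sigma_\alpha$ (height-to-valuation conversion in Proposition~\ref{prop-polyinterpdet}) is also right. But these are the easier observations; the step on which everything rests is left as an unproved, and in fact characteristically hard, reparametrization lemma, so the proposal does not constitute a proof.
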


Note that here we avoid the extra $q_\alpha^n$-term, making the
result truly uniform over all $\QQ_p$- and $\FF_q\dpl t\dpr$-points.
The proof of Theorem~\ref{thm:main-uniform-oc} is given in
Section~\ref{proof-main-uniform-oc}. Theorem~\ref{thm:main-uniform}
is obtained from Theorem~\ref{thm:main-uniform-oc} by covering
$\Sp\cB$ by $q_\alpha^n$ polydiscs of radius
$|t|$, and applying Theorem~\ref{thm:main-uniform-oc} to each of
them. This is carried out in Section~\ref{proof-main-uniform}.

Theorem~\ref{thm:main-uniform} is a special case of
Theorem~\ref{thm-main-family}, which establishes a similar result
where the rigid analytic space $\Sp\cB$ is also allowed to vary in a
rigid analytic family, and the constant $C(\cB,\e,\delta,\sigma)$ is uniformly
bounded over the family. Similar statements have been obtained in the
original Pila-Wilkie setting \cite{pw} as well as in
\cite{ccl:pw,cfl:uniform-pw}.

\subsection{Comparison with earlier work}
The work of \cite{ccl:pw} on Pila-Wilkie counting for
$\QQ_p$-subanalytic sets is roughly analogous to the proof in the
classical case.  The key difficulty is to find a suitable replacement
for the the reparametrization lemma (proved by Yomdin
\cite{yomdin:lemma} and Gromov \cite{gromov:gy} in the algebraic
setting, and extended to the general o-minimal structure by Pila and
Wilkie \cite{pw}).  This is accomplished in \cite{ccl:pw} through a
systematic study of Lipschitz continuous cell decompositions. In
\cite{cfl:uniform-pw} this is extended to $\ZZ\dbl t\dbr$.  The model
theoretic machinery then allows one to specialize uniformly to every
non-archimedean local field of sufficiently high characteristic.

Our proofs follow a different approach to the Pila-Wilkie theorem
introduced in \cite{bn:analytic-pw}. This approach avoids the use of
the reparamterization theorem and replaces it by an argument in the
spirit of Weierstrass preparation: instead of covering an analytic set
by smooth charts as in the reparametrization lemma, one covers it by
\emph{Weierstrass polydiscs} where analytic functions can be
Weierstrass prepared.  It turns out, perhaps unsurprisingly, that
Noether normalizaton provides a very direct analog of the Weierstrass
polydisc construction in the rigid analytic setting.  Our main
technical result in this direction,
Theorem~\ref{thm-stratifiednormalization}, shows that Noether
normalization can be performed uniformly in families -- giving a
suitable replacement of the reparametrization lemma.  The main
advantage of this approach is that Noether normalization is much
easier to carry out in positive characteristics than the
reparameterization approach of \cite{ccl:pw,cfl:uniform-pw}, and it is
this feature that allows us to carry out our proofs over 
$\FF_q\dpl t\dpr$ and indeed uniformly over all characteristics.

\begin{rem}
  In this paper we restrict attention to the setting of affinoid
  spaces, which is less general than the subanalytic spaces considered
  in \cite{ccl:pw,cfl:uniform-pw}.  In the archimedean setting, the
  approach of \cite{bn:analytic-pw} was also used with relatively
  little effort to recover the subanalytic case from the analytic
  case.  It seems possible that an analogous approach would yield a
  ``subanalytic'' version of our result in the rigid analytic setting
  as well.  However, to our knowledge the corresponding notion of
  $\rR t$-subanalytic sets in rigid geometry has not yet been
  developed in the literature.  We do remark that the approach of
  Martin \cite{martin:subanalytic} seems quite suitable for our
  purposes, if carried out in the more general setting of
  $\rR t$-analytic spaces.
\end{rem}

\subsection{Toward polylogarithmic counting theorems}
The Weierstrass polydisc construction developed in
\cite{bn:analytic-pw} has played the central role in many further
developments around the Pila-Wilkie theorem, concerning questions of
effectivity \cite{me:noetherian-pw} as well as in the direction of the
Wilkie conjecture, i.e. the improvement of the asymptotic $O(H^\e)$ to
a polylogarithmic $(\log H)^\kappa$
\cite{bn:rest-wilkie,me:qfol-geometry}. In the non-archimedean context,
the same idea was used in \cite{bcn:Ct-wilkie} to prove a
polylogarithmic counting result for germs of analytic varieties
defined by Pfaffian or Noetherian functions. Namely, the number of
$\CC(t)$-rational curves in such a germ grows polynomially as a
function of the degree.

Since our approach here establishes a very direct analog of the
Weierstrass polydisc construction in rigid analytic geometry, it seems
likely that it could lead to similar improvements in the
rigid analytic setting. We intend to pursue these applications in
forthcoming work. To illustrate the potential of this approach we
prove the following polylogarithmic interpolation result, which can be
seen as a first step toward polylogarithmic counting.

\begin{thm}\label{thm:main-uniform-pl}
  Let $\cB$ be an affinoid $\rR t$-algebra. Then there exists a constant
$$
C=\begin{cases} C(\cB,\delta,[F:\QQ_p]/r_{\alpha})&(F_\alpha\supset\QQ_p), \\
  C(\cB,\delta) & (F_\alpha\supset\FF_q\dpl t\dpr),
  \end{cases}
$$
%and $\kappa(\cB)$ 
such that for every fiber $(\Sp\cB)_\alpha$ as
  in Theorem~{\rm \ref{thm:main-uniform}}, the set
  $(\Sp\gdp\cB\delta)_\alpha^\tran(F_{\alpha,0},H)$ is contained in an
  algebraic hypersurface of degree $C(\log_{q_{\alpha}} H)^{d}$.
\end{thm}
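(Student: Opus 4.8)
The plan is to run a Bombieri--Pila-style interpolation-determinant argument in the ``Weierstrass polydisc'' spirit of \cite{bn:analytic-pw} (cf.\ \cite{bcn:Ct-wilkie} for a non-archimedean precedent), with Noether normalization in place of the reparametrization lemma and with \emph{overconvergence} supplying the one feature that upgrades ``many hypersurfaces of bounded degree'' to ``one hypersurface of polylogarithmic degree'': since we work inside $\gdp\cB\delta$, all the rational points to be interpolated already lie in a single polydisc of radius $\rho:=|t^\delta|_\alpha<1$, so that no subdivision of the base is required. I would first apply the stratified uniform Noether normalization of Theorem~\ref{thm-stratifiednormalization} to $\Sp\gdp\cB\delta$: it exhibits this space as a finite union of locally closed strata, each of which --- after a coordinate change over $\fR t$ preserving the unit polydisc --- is finite over the polydisc in a subset of its coordinates, with the number of strata, the degrees of these finite maps, and the sup-norms of the resulting sheet functions all bounded in terms of $\cB$ alone, uniformly in $\alpha$. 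On a stratum of the top dimension $d$ (lower-dimensional strata being treated identically with a smaller exponent) each sheet is a graph $x_j=\psi_j(x_1,\dots,x_d)$ for $j>d$, with the $\psi_j$ analytic of sup-norm $\le1$ on the $d$-dimensional unit polydisc; one may assume $d\le n-1$, since otherwise $\Sp\cB$ is the whole polydisc and its transcendental part is empty, so in particular the coordinate $x_{d+1}$ is available.

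I would then fix such a sheet $Z$ and a degree $D$, to be taken of size $\asymp(\log_{q_\alpha}H)^{d}$, and let $\Sigma$ be the set of $F_{\alpha,0}$-points of height $\le H$ on $Z$. Let $m_1,\dots,m_L$, $L=\binom{D+d+1}{d+1}$, be the monomials of degree $\le D$ in $x_1,\dots,x_{d+1}$; restricted to $Z$ each becomes an analytic function $g_k=g_k(x_1,\dots,x_d)$ of sup-norm $\le1$ on the $d$-dimensional unit polydisc. If the matrix $\big(g_k(\xi)\big)_{k,\,\xi\in\Sigma}$ has rank $<L$, a nonzero linear relation among its rows is a nonzero polynomial of degree $\le D$ in $x_1,\dots,x_{d+1}$ vanishing on $\Sigma$, and we are done for this sheet. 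Otherwise I would pick $\xi_1,\dots,\xi_L\in\Sigma$ with $\Delta:=\det\big(g_k(\xi_l)\big)_{1\le k,l\le L}\neq0$ and bound $|\Delta|_\alpha$ two ways. For the \emph{upper bound}, the first $d$ coordinates of the $\xi_l$ all lie in the polydisc of radius $\rho$ about some centre $y_0$; expanding $g_k(y)=\sum_\beta c_{k,\beta}(y-y_0)^\beta$ with $|c_{k,\beta}|_\alpha\le1$ (the re-expansion of an integral series at an integral point, using divided powers) and applying Cauchy--Binet writes $\Delta=\sum_B\det(c_{k,\beta})_{k,\beta\in B}\cdot\det\big((y_l-y_0)^\beta\big)_{l,\beta\in B}$ over $L$-element sets $B$ of multi-indices, whence the ultrametric inequality (which kills all combinatorial and archimedean factors) gives $|\Delta|_\alpha\le\max_B\rho^{\sum_{\beta\in B}|\beta|}=\rho^{\,e(L,d)}\le\rho^{\,c_dL^{1+1/d}}$, where $e(L,d)$ is the sum of the total degrees of the $L$ smallest multi-indices in $\NN^{d}$ and $c_d>0$. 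For the \emph{lower bound}, each $g_k(\xi_l)$ is an element of $\QQ$ (resp.\ of $\FF_q(t)$) of height $\le H^{D}$ (resp.\ of $t$-degree $\le D\log_{q_\alpha}H$); clearing denominators one row at a time and invoking the product formula gives, since $\Delta\neq0$, a bound $|\Delta|_\alpha\ge q_\alpha^{-c\,\tau_\alpha DL\log_{q_\alpha}H}$ in which $\tau_\alpha=[F_\alpha:\QQ_p]$ in the mixed-characteristic case (because $|\cdot|_\alpha$ restricts on $\QQ$ to $|\cdot|_p^{[F_\alpha:\QQ_p]}$) and $\tau_\alpha=r_\alpha$ in the equicharacteristic case (because $\deg_t$ converts to $v_\alpha$ with ratio $r_\alpha$).

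Comparing the two bounds, $\Delta\neq0$ becomes impossible once $\rho^{\,c_dL^{1+1/d}}<q_\alpha^{-c\,\tau_\alpha DL\log_{q_\alpha}H}$; since $\log_{q_\alpha}(1/\rho)=\delta r_\alpha$ and $L\asymp D^{d+1}$, dividing by $DL$ and using $L^{1/d}\asymp D^{1+1/d}$ this becomes $c_d\,\delta r_\alpha\,D^{1/d}\gg\tau_\alpha\log_{q_\alpha}H$, i.e.\ $D\gg\big((\tau_\alpha/r_\alpha)\,\delta^{-1}\log_{q_\alpha}H\big)^{d}$ up to a constant, where $\tau_\alpha/r_\alpha=[F_\alpha:\QQ_p]/r_\alpha$ in mixed characteristic and $\tau_\alpha/r_\alpha=1$ in equicharacteristic (the $r_\alpha$ cancelling). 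Hence for $D=C(\log_{q_\alpha}H)^{d}$ with $C=C(\cB,\delta,[F_\alpha:\QQ_p]/r_\alpha)$, resp.\ $C=C(\cB,\delta)$, of the shape asserted, the matrix $\big(g_k(\xi)\big)$ has rank $<L$, so $\Sigma$ lies on a hypersurface of degree $\le D$ in $x_1,\dots,x_{d+1}$, hence in $x_1,\dots,x_n$. Taking the product of these hypersurfaces over the boundedly many sheets and strata, and adjoining the bounded-degree defining equations of any algebraic stratum, yields a single hypersurface of degree $C'(\log_{q_\alpha}H)^{d}$ containing $(\Sp\gdp\cB\delta)_\alpha^\tran(F_{\alpha,0},H)$.

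The main obstacle is really packaged into Theorem~\ref{thm-stratifiednormalization}: the whole scheme rests on Noether-normalizing with the number of strata, the number of sheets, the sheet sup-norms, and --- most delicately --- the normalizing coordinate change itself, all defined over $\fR t$ and controlled uniformly over every local field $F_\alpha$; this is exactly the rigid-analytic replacement for the reparametrization lemma. Granting it, the remaining work is bookkeeping: (i) chasing the normalizations of $|\cdot|_\alpha$, the ramification $[F_\alpha:\QQ_p]$ and the twist $r_\alpha$ through the two-sided estimate so that the constant picks up precisely the stated dependence --- in particular that $r_\alpha$ cancels in the equicharacteristic case; and (ii) observing that no separate transcendence hypothesis enters, since if $x_{d+1}|_Z$ is algebraic over $x_1,\dots,x_d$ the matrix $\big(g_k(\xi)\big)$ is automatically rank-deficient and the argument already returns a bounded-degree equation for $Z$, so sheets that are algebraic --- and hence do not meet the transcendental part --- cause no difficulty.
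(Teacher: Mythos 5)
Your overall strategy---stratified Noether normalization to reduce to finite maps over polydiscs, followed by a Bombieri--Pila interpolation-determinant argument whose overconvergence gain $\rho=|t^\delta|$ lets a single hypersurface suffice without any subdivision of the base---is the route the paper has set up, with Proposition~\ref{prop-hypersurfaces-pl} standing ready for exactly this use. However, there is a genuine gap at the middle step, and it is precisely the gap the paper's ``finite module'' formulation is designed to eliminate.

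You assert that after Noether normalization ``each sheet is a graph $x_j=\psi_j(x_1,\dots,x_d)$ for $j>d$, with the $\psi_j$ analytic of sup-norm $\le 1$ on the $d$-dimensional unit polydisc.'' That is false for a general finite map. What Theorem~\ref{thm-stratifiednormalization} produces at a leaf is a finite, generically \'etale ring extension $\cA_\lambda\dl y_1,\ldots,y_d;\delta\dr\hookrightarrow\cB_\lambda$ of some degree $E$, whose geometric realization is a \emph{branched} cover of the polydisc. Away from the branch locus the cover locally splits into sheets, but there is in general no collection of globally defined analytic functions $\psi_j$ on the whole polydisc realizing them, and the branch locus is non-empty unless the cover is trivial. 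Consequently the Taylor re-expansion $g_k(y)=\sum_\beta c_{k,\beta}(y-y_0)^\beta$ of restricted monomials ``on the sheet'' with $|c_{k,\beta}|\le 1$, on which your Cauchy--Binet upper bound for $|\Delta|$ rests, is simply not available. The paper circumvents this by never passing to sheets: in \S\ref{subsub-sit-interpolation} a system of module generators $v_1,\ldots,v_E$ of $B$ over $A\dl x_1,\ldots,x_d;\delta\dr$ is fixed, every $f\in\cB_\alpha$ is written as $f=\sum_{k=1}^E f_k v_k$, and only the \emph{coefficients} $f_k$---which genuinely are restricted power series on the polydisc---are Taylor-expanded. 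The integer $E$ then propagates into the determinant bound as the factor $E^{-1/d}$ of Proposition~\ref{prop-interpdet}, and into the degree constant of Proposition~\ref{prop-hypersurfaces-pl}, a dependence that a correct version of your upper bound must reproduce and that ``sheets'' cannot supply.

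The rest of your bookkeeping is sound and matches Propositions~\ref{prop-polyinterpdet} and~\ref{prop-hypersurfaces-pl}: the denominator-clearing lower bound with the product formula, the normalization-chasing that produces the factor $[F_\alpha:\QQ_p]/r_\alpha$ in mixed characteristic and the cancellation of $r_\alpha$ in equicharacteristic, and the closing step of multiplying hypersurfaces over the finitely many leaves of the normalization tree and pulling back along the $\ZZ$-rational coordinate changes. But ``cover by graphs'' is not a harmless paraphrase of ``finite over the polydisc''; swapping the reparametrization-style covering for a finite module presentation is exactly the paper's central technical move, and without it the upper estimate on $|\Delta|$ does not close.
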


Similar results have been obtained in \cite{ccl:pw} in the $\QQ_p$
setting and in \cite{cfl:uniform-pw} uniformly for sufficiently large
primes. However, experience from the classical $\RR$ context
\cite{bn:rest-wilkie} and the $\CC\dpl t\dpr$ context
\cite{bcn:Ct-wilkie} shows that the approach using Weierstrass
polydiscs makes it easier to extend such results into a full-fledged
polylogarithmic counting theorem.

\subsection{Organization of this paper}

This paper is organized as follows. In Section~\ref{sec-rigidgeom} we
develop some bacgkround material on rigid analytic geometry over
$\rR{t}$, where $\mathcal{O}$ is an integer ring of a number field. In
Section~\ref{sec-noether-normalization} we develop a uniform version
of the Noether normalization theorem in this context, which forms our
rigid analytic analog of the ``Weierstrass polydisc'' construction of
\cite{bn:analytic-pw}. In Section~\ref{sec-interpolation} we prove a
result on interpolation of rational points in rigid spaces using
algebraic hypersurfaces, assuming that the space is Noether
normalized. Finally in Section~\ref{sec-counting-proofs} we prove the
point counting theorems.

\subsection{Acknowledgements}

We would like to thank Raf Clckers for several corrections and many
important comments on the first version of this manuscript.

\subsection{Conventions}\label{sub-conv}
\begin{itemize}
\item We denote by $\NN$ the set of all non-negative integers.
\item For a local ring $R$ we denote by $\mm_R$ its maximal ideal.
% \item We use the following asymptotic notation: $Z=O_X(Y)$ (resp.\
%   $Z=\Omega_X(Y)$) means that $Z\leq C(X)\cdot Y$ (resp.\
%   $Z\geq C(X)\cdot Y$) where $C(X)$ is a positive function of $X$ that
%   is universally fixed (but may be different for each occurrence of
%   this notation in the text), and $Z=\Theta_X(Y)$ that both $Z=O_X(Y)$
%   and $Z=\Omega_X(Y)$.
\item For $v=a/b\in\QQ$ a reduced fraction, we denote
  $H(v)=\max\{|a|,|b|\}$. For $v=a(t)/b(t)\in\FF_q\dpl t\dpr$ a
  reduced fraction we denote $H(v)=\max\{q^{\deg a},q^{\deg b}\}$.
\end{itemize}

\section{Rigid geometry}\label{sec-rigidgeom}
\subsection{Admissible algebras}\label{sub-admissiblealgebras}
Let $F_0$ be a number field, fixed once for all, and
$\mathcal{O}=\mathcal{O}_{F_0}$ the integer ring of $F_0$.  We
consider the formal power series ring $\fR{t}$ with the $t$-adic
topology.  We denote by $\fR{t}\dl \bfmath{x}\dr$ (where
$\bfmath{x}=(x_1,\ldots,x_n)$) the {\em restricted power series ring
  over $\fR{t}$}, i.e., the $t$-adic completion of the polynomial ring
$\fR{t}[\bfmath{x}]$.

\begin{dfn}\label{dfn-admissiblealgebras}
  (1) A {\em topologically of finite type $\fR{t}$-algebra} is an
  $\fR{t}$-algebra $A$ that is isomorphic to an $\fR{t}$-algebra of
  the form $\fR{t}\dl \bfmath{x}\dr/\mathfrak{a}$, where
  $\mathfrak{a}\subset\fR{t}\dl \bfmath{x}\dr$ is an ideal.  By a {\em
    morphism} of topologically of finite type $\fR{t}$-algebras we
  mean an $\fR{t}$-algebra homomorphism.

  (2) A topologically of finite type $\fR{t}$-algebra is said to be
  {\em admissible} if it is $t$-torsion free.
\end{dfn}

Note that topologically of finite type $\fR{t}$-algebras are
Noetherian and $t$-adically complete, and every morphism between them
is adic.  We denote by $\Ad_{\fR{t}}$ the category of admissible
$\fR{t}$-algebras and $\fR{t}$-algebra homomorphisms.

The notion of topologically of finite type $\fR{t}$-algebras gives
rise to the notion of finite type formal schemes over $\fR{t}$.  Note
that, if $X$ is a finite type formal scheme over $\fR{t}$, then $X_0$
(the closed fiber by $t=0$) is a finite type $\mathcal{O}$-scheme,
hence is Jacobson.

\subsubsection{Restricted power series ring over $A$}
For any topologically of finite type $\fR{t}$-algebra $A$, we denote
by $A\dl\bfmath{x}\dr$ the $t$-adic completion of the polynomial ring
$A[\bfmath{x}]$, or what amounts to the same,
$$
A\dl\bfmath{x}\dr=\bigg\{\sum_{\nu\in\NN^n}a_{\nu}\bfmath{x}^{\nu}\in A\dbl\bfmath{x}\dbr\,\bigg|\,{\small \begin{minipage}{18em}for
    any $m\in\ZZ_{>0}$ there exists $M\in\ZZ_{>0}$ such that
    $|\nu|\geqq M\ \Rightarrow\ a_{\nu}\in t^mA$\end{minipage}}\bigg\}.
$$

\begin{lem}\label{lem-t-torsion}
  Let $A$ be a topologically of finite type $\fR{t}$-algebra, and
  $N\subset A$ its $t$-torsion part, i.e., the ideal consisting of the
  $t$-torsion elements.  Then the $t$-torsion part of
  $A\dl \bfmath{x}\dr$ is the ideal
$$
\bigg\{\sum_{\nu\in\NN^n}a_{\nu}\bfmath{x}^{\nu}\,\bigg|\,a_{\nu}\in N\ \textrm{for
  any}\ \nu\in\NN^n\bigg\}.\eqno{(\ast)}
$$
Thus we have
$$
A\dl\bfmath{x}\dr/(\textrm{$t$-torsion})\cong(A/(\textrm{$t$-torsion}))\dl \bfmath{x}\dr.
$$
In particular, if $A$ is admissible, then so is $A\dl\bfmath{x}\dr$.
\end{lem}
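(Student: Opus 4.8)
The plan is to dispatch the three assertions in turn, the only step with real content being the surjectivity hidden in the displayed isomorphism. The key preliminary remark is that a single power of $t$ annihilates $N$: since a topologically of finite type $\fR{t}$-algebra is Noetherian, $N$ is finitely generated, say $N=(n_1,\dots,n_r)$ with $t^{k_i}n_i=0$, and then $t^kN=0$ for $k:=\max_i k_i$. Granting this, the description of the $t$-torsion ideal is immediate: if $f=\sum_\nu a_\nu\bfmath{x}^\nu\in A\dl\bfmath{x}\dr$ has all coefficients $a_\nu\in N$, then $t^kf=\sum_\nu(t^ka_\nu)\bfmath{x}^\nu=0$, so $f$ is $t$-torsion; conversely, if $f=\sum_\nu a_\nu\bfmath{x}^\nu\in A\dl\bfmath{x}\dr\subset A\dbl\bfmath{x}\dbr$ satisfies $t^\ell f=0$ for some $\ell$, then comparing coefficients inside $A\dbl\bfmath{x}\dbr$ forces $t^\ell a_\nu=0$, i.e.\ $a_\nu\in N$, for every $\nu$. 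Hence the $t$-torsion part of $A\dl\bfmath{x}\dr$ is exactly the set $(\ast)$, understood as a subset of $A\dl\bfmath{x}\dr$.

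For the isomorphism I would introduce the coefficientwise reduction map $\phi\colon A\dl\bfmath{x}\dr\to(A/N)\dl\bfmath{x}\dr$, i.e.\ the restriction to $A\dl\bfmath{x}\dr$ of the reduction map $A\dbl\bfmath{x}\dbr\to(A/N)\dbl\bfmath{x}\dbr$. It is an $\fR{t}$-algebra homomorphism, and it is well defined on restricted power series because a coefficient lying in $t^mA$ reduces to one in $t^m(A/N)$, so the convergence condition defining the restricted power series rings is preserved. By the previous paragraph its kernel is precisely the $t$-torsion ideal, so the whole content of the isomorphism is the surjectivity of $\phi$.

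This surjectivity is the step requiring genuine (if routine) work. Given $\bar f=\sum_\nu\bar a_\nu\bfmath{x}^\nu\in(A/N)\dl\bfmath{x}\dr$, fix a strictly increasing sequence of positive integers $M_1<M_2<\cdots$ such that $|\nu|\ge M_j$ implies $\bar a_\nu\in t^j(A/N)$; such $M_j$ exist by the definition of $(A/N)\dl\bfmath{x}\dr$, and in particular $M_j\to\infty$. For each $\nu$ with $M_j\le|\nu|<M_{j+1}$ write $\bar a_\nu=\overline{t^jb}$ with $b\in A$ and set $a_\nu:=t^jb\in t^jA$; for the finitely many $\nu$ with $|\nu|<M_1$ take $a_\nu$ to be any lift of $\bar a_\nu$. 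Then $f:=\sum_\nu a_\nu\bfmath{x}^\nu$ has the property that, for every $m$, if $|\nu|\ge M_m$ then $|\nu|$ lies in $[M_j,M_{j+1})$ for some $j\ge m$, whence $a_\nu\in t^jA\subseteq t^mA$; thus $f\in A\dl\bfmath{x}\dr$, while $\phi(f)=\bar f$ by construction. Therefore $\phi$ induces $A\dl\bfmath{x}\dr/(t\text{-torsion})\xrightarrow{\ \sim\ }(A/N)\dl\bfmath{x}\dr$, which is the asserted identity since $A/(t\text{-torsion})=A/N$.

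Finally, if $A$ is admissible then $N=0$, so by the above the $t$-torsion part of $A\dl\bfmath{x}\dr$ vanishes; since $A\dl\bfmath{x}\dr$ is again topologically of finite type over $\fR{t}$ (writing $A=\fR{t}\dl\bfmath{y}\dr/\mathfrak{a}$, it is a quotient of $\fR{t}\dl\bfmath{y},\bfmath{x}\dr$), it is admissible. The only real obstacle in the argument is the surjectivity of $\phi$: one must lift the coefficients of a given series in a way compatible with the $t$-adic convergence condition, which is bookkeeping rather than a conceptual difficulty --- everything else follows formally once one knows that a single power of $t$ kills $N$.
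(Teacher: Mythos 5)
Your proof is correct and matches the paper's argument for the key point: identifying the $t$-torsion ideal via the observation that $A$ Noetherian implies $t^mN=0$ for some fixed $m$. The surjectivity of the reduction map $A\dl\bfmath{x}\dr\to(A/N)\dl\bfmath{x}\dr$, which you handle by careful coefficientwise lifting, is treated as immediate in the paper; it also follows at once from the fact that both rings are quotients of a common $\fR{t}\dl\bfmath{y},\bfmath{x}\dr$ once one writes $A=\fR{t}\dl\bfmath{y}\dr/\mathfrak{a}$, so your explicit construction, while sound, is more work than needed.
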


\begin{proof}
  Let $M$ be the $t$-torsion part of $A\dl \bfmath{x}\dr$.  It is
  clear that $M$ is contained in the ideal $(\ast)$.  Since $A$ is
  Noetherian, there exists sufficiently large $m>0$ such that
  $t^mN=0$.  Hence, if $F\in A\dl \bfmath{x}\dr$ belongs to the ideal
  $(\ast)$, then we have $t^mF=0$, and hence $F\in M$.
\end{proof}

\subsection{Classical points}\label{sub-classicalpoints}
We consider rigid spaces of finite type over
$\mathcal{S}=(\Spf\fR{t})^{\rig}$.  A finite type affinoid over
$\mathcal{S}$, for example, is a rigid space isomorphic to a rigid
space of the form $(\Spf A)^{\rig}$ given by a topologically of finite
type $\fR{t}$-algebra $A$.

Let $\mathcal{X}$ be a rigid space of finite type over $\mathcal{S}$.
Simply by a {\em point} of $\mathcal{X}$, we usually mean a point of
the associated Zariski-Riemann space $\ZR{\mathcal{X}}$.  A {\em
  classical point} of $\mathcal{X}$ is a retro-compact point-like
rigid subspace of $\mathcal{X}$ (\cite[{\bf II}, 8.2.8]{FK2}).  Note
that any classical point of $\mathcal{X}$ is closed (\cite[{\bf II},
8.2.9]{FK2}).  As usual, the set of all classical points of
$\mathcal{X}$ is denoted by $\ZR{\mathcal{X}}^{\cl}$.  If
$\mathcal{X}=(\Spf A)^{\rig}$, where $A$ is an admissible
$\fR{t}$-algebra, then classical points are in canonical one-to-one
correspondence with closed points of the Noetherian scheme
$\Spec\mathcal{A}$, where $\mathcal{A}=A[1/t]$ (\cite[{\bf II},
8.2.11]{FK2}).

\begin{rem}\label{rem-classicalrigpoints}
  The classical points in the situation of classical rigid geometry
  are often referred to as {\em rig-points}; cf.\
  \cite[\S8.3]{SBosch}.
\end{rem}

\begin{lem}\label{lem-classicalpoints1}
  Any classical point of a rigid space of finite type over
  $\mathcal{S}$ is isomorphic to $(\Spf V)^{\rig}$, where $V$ is a
  complete discrete valuation ring $V$ with finite residue field.
\end{lem}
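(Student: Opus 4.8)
The plan is to reduce to the affinoid case and then invoke the structure theory of admissible $\fR{t}$-algebras together with the cited correspondence between classical points and closed points of $\Spec\cA$. First I would note that the statement is local: a classical point $x$ of a rigid space $\mathcal X$ of finite type over $\mathcal S$ is, by definition, a retro-compact point-like rigid subspace, and since $\mathcal X$ is covered by finite type affinoids $(\Spf A)^{\rig}$ we may assume $\mathcal X=(\Spf A)^{\rig}$ with $A$ admissible (replacing $A$ by $A/(t\text{-torsion})$ via Lemma~\ref{lem-t-torsion} changes nothing at the rigid level). By \cite[{\bf II}, 8.2.11]{FK2} the classical point $x$ then corresponds to a closed point $\pp$ of the Noetherian scheme $\Spec\cA$, $\cA=A[1/t]$, and the residue field $\kappa(\pp)=\cA/\pp$ is a finite extension of $\Frac(\fR t)$, hence a complete discretely valued field whose residue field is finite over $\OO/\mm$ for some maximal ideal $\mm$ of $\OO$; in particular the residue field is finite.

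Next I would make the identification of the point-like subspace $x$ with $(\Spf V)^{\rig}$ explicit. Let $L=\kappa(\pp)$ be the residue field just described, a finite extension of $F:=\Frac(\fR t)=\rR t$ (or rather of the appropriate completion); it carries the $t$-adic valuation, and I take $V$ to be its valuation ring, which is a complete discrete valuation ring with finite residue field. The reduction $A\twoheadrightarrow A/\pp\cap A \hookrightarrow V$ (the image being an order in $V$, with the same generic fiber) gives an adic morphism $\Spf V\to\Spf A$ whose rigid-analytic fiber is precisely the point-like subspace $x$; retro-compactness of $x$ matches the fact that $\Spf V$ is a ``tube'' of a single point, and point-likeness is the statement $\dim\cA/\pp=0$ over $F$, i.e. $L/F$ finite. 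The only mild subtlety is that $A/\pp\cap A$ need not be integrally closed, but its $t$-adic completion inside $L$ is $V$ — or one simply normalizes — and this does not affect the associated rigid space by \cite[{\bf II}, 8.2.\,]{FK2} since passing to the normalization is a rig-isomorphism.

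The main obstacle, and the step deserving the most care, is verifying that the residue field $L=\kappa(\pp)$ is genuinely a discretely valued field with \emph{finite} residue field, rather than merely a finite-type field over $\OO$. This is where the hypothesis ``of finite type over $\mathcal S$'' — as opposed to an arbitrary rigid space — is used: a closed point of $\Spec\cA$ has residue field finite over $\Frac(\fR t)$ because $\cA$ is a Jacobson ring of finite type over $\Frac(\fR t)$, and any finite extension of $\Frac(\fR t)$ is complete for a discrete valuation extending the $t$-adic one with residue field finite over the number field $\OO/(\text{its residue characteristic})$, hence finite. Once this number-theoretic input is in place, the identification with $(\Spf V)^{\rig}$ follows formally from the dictionary between formal schemes over $\fR t$ and rigid spaces over $\mathcal S$.
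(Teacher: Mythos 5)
The central step in your argument contains a genuine error. You claim that the residue field $\kappa(\pp)=\cA/\pp$ at a closed point $\pp\subset\Spec\cA$ is a ``finite extension of $\Frac(\fR t)$,'' and later that $\cA$ is ``a Jacobson ring of finite type over $\Frac(\fR t)$.'' Neither is true: $\rR t=\fR t[1/t]=\OO\dpl t\dpr$ is not a field (when $\OO=\ZZ$ it has maximal ideals such as $(t-p)$), and the residue field $\kappa(\pp)$ arises as a \emph{quotient} of $\rR t$-algebras, not as a field extension of $\Frac(\fR t)$. Example~\ref{exa-classicalpoint1} already shows the point: the classical point $\fZ t\twoheadrightarrow\ZZ_p$, $t\mapsto p$, has residue field $\QQ_p$, and $\QQ_p$ is not a finite extension of $\Frac(\fZ t)$ in any sense. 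Because this intermediate claim is false, the conclusion that $\kappa(\pp)$ is a ``complete discretely valued field whose residue field is finite over $\OO/\mm$'' is not justified by your chain of reasoning; you have effectively assumed a version of what is to be proved.

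The paper's argument is shorter and sidesteps this entirely. It cites \cite[{\bf II}, 8.2.6]{FK2} (not 8.2.11) to obtain directly that a classical point is of the form $(\Spf V)^{\rig}$ with $V$ a \emph{complete discrete valuation ring}; what remains is only the finiteness of the residue field. For that, since $V$ is topologically of finite type over $\fR t$, the quotient $V/tV$ is an $\OO$-algebra of finite type, and the residue field of $V$, being a further quotient, is a field of finite type over the Jacobson ring $\OO$ and hence finite by the generalized Nullstellensatz. You do invoke the Jacobson property of $\OO$ at the very end, which is the correct tool, but it enters through a false intermediate statement; you should replace the ``finite extension of $\Frac(\fR t)$'' reasoning with the direct observation that the residue field of $V$ is finite type over $\OO$.
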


\begin{proof}
  By \cite[{\bf II}, 8.2.6]{FK2}, the classical points are of the form
  $(\Spf V)^{\rig}$ by a complete discrete valuation ring $V$.  Since
  $V$ is topologically of finite type over $\fR{t}$, $V/tV$ is of
  finite type over $\mathcal{O}$.  As the residue field of $V$ is of
  finite type over $\mathcal{O}$, it is a finite field.
\end{proof}

\begin{cor}\label{cor-classicalpoints1}
  If $A$ is a topologically of finite type $\fR{t}$-algebra, and $\mm$
  is a maximal ideal of $\mathcal{A}=A[1/t]$, then the residue field
  $\mathcal{A}/\mm$ is a non-archimedean local field, i.e., a complete
  discrete valuation field with finite residue field.
\end{cor}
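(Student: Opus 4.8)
The plan is to match the maximal ideal $\mm$ with a classical point of the rigid space $(\Spf A)^{\rig}$ and then invoke Lemma~\ref{lem-classicalpoints1} to read off the residue field.

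First I would reduce to the admissible case. Since $A$ is Noetherian, its $t$-torsion ideal $N$ is annihilated by a power $t^m$, so the localization map $A\to A[1/t]$ has kernel exactly $N$ and factors through an isomorphism $(A/N)[1/t]\xrightarrow{\ \sim\ }\mathcal A$. The quotient $A/N$ is topologically of finite type over $\fR t$ and, being the quotient by the full $t$-torsion ideal, is $t$-torsion free, hence admissible. Replacing $A$ by $A/N$ I may therefore assume $A$ admissible, and set $\mathcal X=(\Spf A)^{\rig}$, a finite type affinoid over $\mathcal S$ with $\Gamma(\mathcal X,\OO_{\mathcal X})=\mathcal A$.

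Next, $\mm$ is a closed point of the Noetherian scheme $\Spec\mathcal A$, so by \cite[{\bf II}, 8.2.11]{FK2} it corresponds to a classical point $x$ of $\mathcal X$. The feature of that correspondence I would rely on is that it identifies the residue field $\mathcal A/\mm$ with the ring of global functions $\Gamma(x,\OO_x)$ of $x$. By Lemma~\ref{lem-classicalpoints1}, $x\cong(\Spf V)^{\rig}$ for a complete discrete valuation ring $V$ with finite residue field, and $\Gamma(x,\OO_x)=V[1/t]$. Since $\mathcal A/\mm$ is a field, so is $V[1/t]$; as $V$ is a discrete valuation ring this forces $t$ to be a nonzero non-unit of $V$, whence $V[1/t]=\Frac V$. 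Therefore $\mathcal A/\mm\cong\Frac V$, the fraction field of a complete discrete valuation ring with finite residue field, i.e.\ a non-archimedean local field.

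The only genuinely non-formal input is the residue-field compatibility of the classical-point correspondence \cite[{\bf II}, 8.2.11]{FK2}; the remaining steps are routine. (Alternatively one can avoid invoking the identification of residue fields by working directly with the closed immersion $x\hookrightarrow\mathcal X$: on global functions it gives a surjection $\mathcal A\twoheadrightarrow\Gamma(x,\OO_x)=\Frac V$ whose kernel is $\mm$, since $\Frac V$ is a field.)
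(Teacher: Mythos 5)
Your proof is correct and follows the route the paper intends: the Corollary is stated without an explicit proof precisely because it is meant to follow from Lemma~\ref{lem-classicalpoints1} via the one-to-one correspondence (cited in \S\ref{sub-classicalpoints} from \cite[{\bf II}, 8.2.11]{FK2}) between closed points of $\Spec\mathcal A$ and classical points of $(\Spf A)^{\rig}$. Your additional steps — reducing to the admissible model $A/N$, identifying $\mathcal A/\mm$ with $\Gamma(x,\OO_x)=V[1/t]$, and noting that $V[1/t]$ being a field forces $t$ to be a nonzero non-unit so that $V[1/t]=\Frac V$ — are exactly the details the paper silently elides, and they are all sound.
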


\begin{exa}\label{exa-classicalpoint1}
  Let us consider the case $\OO=\ZZ$, and let $p$ be a prime number.

  (1) A classical point
  $(\Spf\ZZ_p)^{\rig}\hookrightarrow(\Spf\fZ{t})^{\rig}$ is given by
  the surjective homomorphism $\fZ{t}\rightarrow\ZZ_p$ that maps $t$
  to $p$.

  (2) A classical point
  $(\Spf\FF_p\dbl t\dbr)^{\rig}\hookrightarrow(\Spf\fZ{t})^{\rig}$ is
  given by the canonical homomorphism
  $\fZ{t}\rightarrow\FF_p\dbl t\dbr\cong\fZ{t}/p\fZ{t}$.
\end{exa}

\begin{exa}\label{exa-classicalpoint2}
  Consider the homomorphism
$$
\fZ{t}\longrightarrow R=\fZ{t}/(p^3-t^2).
$$
The normalization $\til{R}$ of $R$ is a ramified quadratic extension
of $\ZZ_p$.  In fact, $\til{R}$ can be obtained from the strict
transform of the admissible blow-up $X\rightarrow\Spf\fZ{t}$ along the
admissible ideal $J=(p,t)$; i.e.,
$$
X=\Spf\fZ{t}\dl t/p\dr\cup\Spf\fZ{t}\dl p/t\dr\longrightarrow\Spf\fZ{t},
$$
and the strict transform on $\Spf R$ is
$\Spf\til{R}\rightarrow\Spf R$, where
$\til{R}\cong\fZ{t}\dl t/p\dr/(p-(t/p)^2)$.  In particular, the closed
immersion $\Spf\til{R}\hookrightarrow X$ gives rise to a classical
point of $(\Spf\fZ{t})^{\rig}$.
\end{exa}

\begin{lem}\label{lem-classicalpoints1-3}
  Let $\phi\colon\fR{t}\rightarrow V$, where $V$ is as in Lemma {\rm
    \ref{lem-classicalpoints1}}, be an adic morphism arised from a
  classical point of a rigid space of finite type over $\mathcal{S}$,
  and set $\qq=\phi^{-1}(\mm_V)$ and $\pp=\qq\cap\OO$.
\begin{itemize}
\item[{\rm (a)}] $\qq$ $($resp.\ $\pp)$ is a maximal ideal of $\fR{t}$
  $($resp.\ $\OO)$, and $\qq=\pp\fR{t}+(t)$;
\item[{\rm (b)}] $V$ is finite over $\fR{t}$.
\end{itemize}
\end{lem}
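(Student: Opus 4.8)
The plan is to extract from the adicity of $\phi$ the one structural fact that is really needed --- that $\phi(t)$ is a nonzero element of $\mm_V$ generating an $\mm_V$-primary ideal --- and then to obtain (a) by a residue-field computation and (b) by a topological Nakayama argument over the complete ring $\fR t$. The only step requiring genuine care is this first one: one must invoke the definition of \emph{adic morphism} correctly, so that it really forces $\phi(t)\in\mm_V$ with $\phi(t)V$ being $\mm_V$-primary; everything afterwards is formal. Concretely: since $\phi$ is adic, $\phi(t)$ generates an ideal of definition of $V$, and as $V$ is a complete discrete valuation ring with its $\mm_V$-adic topology an ideal of definition is a nonzero proper ideal, so $\phi(t)\neq 0$ and $\phi(t)V=\mm_V^r$ for some integer $r\geq 1$.

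In particular $t\in\phi^{-1}(\mm_V)=\qq$, hence $(t)\subseteq\qq$ and $\qq$ is a prime of $\fR t$ lying over $(t)$. Under the canonical identification $\fR t/(t)\cong\OO$ (inclusion of constants followed by reduction) the ideal $\qq/(t)$ corresponds exactly to $\pp=\qq\cap\OO$; therefore $\qq$ is the full preimage of $\pp$ under $\fR t\to\OO$, i.e.\ $\qq=\pp\fR t+(t)$, and $\fR t/\qq\cong\OO/\pp$. This already gives the displayed equality in (a) and reduces the two maximality assertions to showing that $\pp$ is a maximal ideal of $\OO$.

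For that, I would compose $\phi$ with the residue map $V\to V/\mm_V$, obtaining a homomorphism $\fR t\to V/\mm_V$ with kernel $\qq$; thus $\OO/\pp\cong\fR t/\qq$ embeds into $V/\mm_V$, which is finite by Lemma~\ref{lem-classicalpoints1}. So $\OO/\pp$ is a finite integral domain, hence a field, and it is nonzero since $1=\phi(1)\notin\mm_V$ makes $\qq$ (hence $\pp$) proper. Therefore $\pp$ is a maximal ideal of $\OO$, and then $\qq=\pp\fR t+(t)$, whose residue ring is the field $\OO/\pp$, is a maximal ideal of $\fR t$. This proves (a).

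For (b), recall from Lemma~\ref{lem-classicalpoints1} (and its proof) that $V$ is a topologically of finite type $\fR t$-algebra, so $V/tV$ is of finite type over $\fR t/(t)=\OO$. Since $tV=\phi(t)V=\mm_V^r$, the ring $V/tV=V/\mm_V^r$ is Artinian of length $r$ over $V$, and in fact a finite set, its associated graded being $\bigoplus_{i=0}^{r-1}\mm_V^i/\mm_V^{i+1}$, a sum of copies of the finite field $V/\mm_V$; hence $V/tV$ is a finite $\OO$-module. Now $\fR t$ is $(t)$-adically complete and $V$ is $tV$-adically complete and separated (its $tV$-adic and $\mm_V$-adic topologies coincide), so I would lift a finite $\OO$-generating set of $V/tV$ to elements $b_1,\dots,b_m\in V$, put $M=\sum_i\fR t\,b_i$, note that $V=M+tV$ iterates to $V=M+t^kV$ for every $k$, and then run the standard successive-approximation argument --- expand an arbitrary $v\in V$ modulo successively higher powers of $t$ and sum the resulting coefficients using completeness of $\fR t$ --- to conclude $v\in M$, so $V=M$ is finite over $\fR t$. (Equivalently, this is an instance of the standard finiteness criterion for complete adic rings.)
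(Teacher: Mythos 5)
Your proof is correct and follows essentially the same route as the paper's: use the finite residue field of $V$ to force $\fR t/\qq$ (hence $\OO/\pp$) to be a finite field, obtain $\qq=\pp\fR t+(t)$, and deduce (b) from the finiteness of $V/tV$ together with the complete-Nakayama finiteness criterion (the paper cites Matsumura~8.4, which is exactly your successive-approximation argument). Your write-up merely makes explicit two points the paper leaves implicit --- that adicity forces $\phi(t)V=\mm_V^r$, and that this is why $V/tV$ is a finite ring --- and swaps the order in which maximality and the equality $\qq=\pp\fR t+(t)$ are established, but the content is the same.
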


\begin{proof}
  Since the residue field $k=V/\mm_V$ is finite, $\qq$ (resp.\ $\pp$)
  is a maximal ideal of $\fR{t}$ $($resp.\ $\OO)$.  Since
  $\phi(t)\in\mm_V$, we have $\pp\fR{t}+(t)\subset\qq$.  But, since
  $\fR{t}/(\pp\fR{t}+(t))\cong\OO/\pp$, $\pp\fR{t}+(t)$ is maximal,
  and hence we have $\pp\fR{t}+(t)=\qq$.  
  Since $V/tV$ is a finite ring, $V/tV$ is finite over $\fR{t}/t\fR{t}$.
  Then by \cite[8.4]{Matsu}, $V$ is finite over  $\fR{t}$.
\end{proof}

\begin{lem}[Functoriality of classical points]\label{lem-functoriality}
  Let $\mathcal{X}\rightarrow\mathcal{Y}$ be a morphism of rigid
  spaces of finite type over $\mathcal{S}$, and
  $(\Spf V)^{\rig}\hookrightarrow\mathcal{X}$ a classical point.  Then
  there exists uniquely a commutative diagram
$$
\xymatrix{(\Spf V)^{\rig}\,\ar@{^{(}->}[r]\ar[d]&\mathcal{X}\ar[d]\\ (\Spf W)^{\rig}\,\ar@{^{(}->}[r]&\mathcal{Y},}
$$
where $W\rightarrow V$ is finite and
$(\Spf W)^{\rig}\hookrightarrow\mathcal{Y}$ is a classical point.
Thus we have the map
$\ZR{\mathcal{X}}^{\cl}\rightarrow\ZR{\mathcal{Y}}^{\cl}$ between the
set of classical points.
\end{lem}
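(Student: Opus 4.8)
The plan is to reduce to the affinoid situation, where a classical point is nothing but a maximal ideal of an affinoid algebra, and then to recover the required classical point of $\mathcal{Y}$ as the kernel of the induced ring homomorphism; the only substantial input is the module-finiteness already established in Lemma~\ref{lem-classicalpoints1-3}. First I would note that, since $(\Spf V)^{\rig}$ is point-like, the image of the composite $f\colon(\Spf V)^{\rig}\hookrightarrow\mathcal{X}\to\mathcal{Y}$ is a single point $y\in\ZR{\mathcal{Y}}$. Choosing a finite-type affinoid open $\mathcal{V}=(\Spf B)^{\rig}$ containing $y$, with $B$ admissible (one may divide out the $t$-torsion by Lemma~\ref{lem-t-torsion} without affecting the rigid space), the morphism $f$ factors through the admissible open $\mathcal{V}\hookrightarrow\mathcal{Y}$, and by the correspondence between affinoids and admissible algebras over $\mathcal{S}$ (\cite[{\bf II}, \S8]{FK2}) the factored morphism is given by an $\rR t$-algebra homomorphism $\varphi\colon\mathcal{B}:=B[1/t]\to L$, compatible with the structure maps from $\rR t$, where $L:=V[1/t]$ is a non-archimedean local field by Lemma~\ref{lem-classicalpoints1}.

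The key step is to show that $\mm:=\Ker\varphi$ is a \emph{maximal} ideal of $\mathcal{B}$. By Lemma~\ref{lem-classicalpoints1-3}(b) the structure map $\fR t\to V$ is module-finite; writing $R_0\subseteq V$ for its image and using $V\hookrightarrow L$, we get that $L$ is module-finite over $R_0[1/t]$, so $R_0[1/t]$ is a field (a domain possessing a module-finite, hence integral, overring that is a field is itself a field). Since $R_0[1/t]=\varphi(\rR t)\subseteq\varphi(\mathcal{B})\subseteq L$ and $L$ is finite-dimensional over the field $R_0[1/t]$, the subring $\varphi(\mathcal{B})$ is a finite-dimensional $R_0[1/t]$-algebra and a domain, hence a field; thus $\mathcal{B}/\mm\cong\varphi(\mathcal{B})$ is a field, and by Corollary~\ref{cor-classicalpoints1} it is a non-archimedean local field $L_0\subseteq L$. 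Now $R_0[1/t]\subseteq L_0\subseteq L$ with $L$ module-finite over $R_0[1/t]$, so $[L:L_0]<\infty$; letting $W$ be the valuation ring of $L_0$, it is a complete discrete valuation ring with finite residue field, and the inclusion $W\subseteq V$ of valuation rings is module-finite. By \cite[{\bf II}, 8.2.11]{FK2} the maximal ideal $\mm$ corresponds to a classical point $(\Spf W)^{\rig}\hookrightarrow\mathcal{V}\hookrightarrow\mathcal{Y}$ (again a classical point of $\mathcal{Y}$ by \cite{FK2}), and tracing the ring maps shows that the field inclusion $W[1/t]=L_0\hookrightarrow L$ induces a morphism $(\Spf V)^{\rig}\to(\Spf W)^{\rig}$ making the required square commute, because the composite $\mathcal{B}\to L_0\hookrightarrow L$ is exactly $\varphi$.

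For uniqueness, suppose $(\Spf V)^{\rig}\to(\Spf W')^{\rig}\hookrightarrow\mathcal{Y}$ is another such square with $W'\to V$ finite and $(\Spf W')^{\rig}$ a classical point of $\mathcal{Y}$. Its composite to $\mathcal{Y}$ equals $f$, so $(\Spf W')^{\rig}$ is a classical point of $\mathcal{Y}$ supported at $y\in\mathcal{V}$, hence by \cite{FK2} a classical point of $\mathcal{V}$ corresponding to a maximal ideal $\mm'\subseteq\mathcal{B}$; but the composite $\mathcal{B}\to W'[1/t]\hookrightarrow L$ is again $\varphi$, whence $\mm'=\Ker\varphi=\mm$ and $(\Spf W')^{\rig}=(\Spf W)^{\rig}$. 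The remaining factorization $(\Spf V)^{\rig}\to(\Spf W)^{\rig}$ is then unique since the closed immersion $(\Spf W)^{\rig}\hookrightarrow\mathcal{Y}$ is a monomorphism. The assignment $(\Spf V)^{\rig}\mapsto(\Spf W)^{\rig}$ thus gives the desired map $\ZR{\mathcal{X}}^{\cl}\to\ZR{\mathcal{Y}}^{\cl}$.

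The step I expect to be the main obstacle is the maximality of $\Ker\varphi$, i.e.\ the fact that a morphism from a classical point to a finite-type rigid space over $\mathcal{S}$ again factors through a classical point. I have reduced this to the module-finiteness of $V$ over $\fR t$ (which rests on \cite[8.4]{Matsu}, exactly as in the proof of Lemma~\ref{lem-classicalpoints1-3}), but care is needed to make the argument work for the affinoid algebra $\mathcal{B}$, which is not of finite type over $\rR t$; passing through the intermediate field $R_0[1/t]$ is meant to circumvent this. Everything else — the reduction to the affinoid case, the identification of the classical point of $\mathcal{Y}$, and the uniqueness bookkeeping — should be routine once one invokes the compatibility of classical points with admissible open immersions from \cite{FK2}.
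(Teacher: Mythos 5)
Your proof is correct, and it takes a genuinely different and considerably shorter route than the paper's. The paper constructs the target classical point entirely inside the blow-up formalism: it passes to the image $R$ of $\widehat{\fR{t}_{\qq}}$ in $V$ and its normalization $\til{R}$, invokes the functoriality of classical points over a height-one valuation ring from \cite[{\bf II}, 8.2.14]{FK2}, and then laboriously extends the resulting admissible blow-up of $\Spf B_{\til{R}}$ back to an admissible blow-up of $\Spf B$ (via \cite[{\bf II}, 1.1.9--1.1.10]{FK2}) so as to exhibit $\Spf W$ as a locally closed formal subscheme of some model of $\mathcal{Y}$. You instead exploit that $\Spf V$ has no nontrivial admissible blow-ups (since $V$ is a DVR, every admissible ideal is principal and invertible), so the morphism $(\Spf V)^{\rig}\to(\Spf B)^{\rig}$ is literally an adic map $B\to V$; then, leaning on Lemma~\ref{lem-classicalpoints1-3} (finiteness of $V$ over $\fR{t}$, which the paper's own argument does not invoke), you show directly that $\Ker(\mathcal{B}\to L)$ is maximal and finish via the bijection of \cite[{\bf II}, 8.2.11]{FK2} between maximal ideals of $\mathcal{B}$ and classical points of $\Sp\mathcal{B}$. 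In effect you establish the content of Corollary~\ref{cor-functoriality} directly (in the case $B=V$) and then deduce the lemma from it, reversing the paper's logical order; what the paper's route buys is an explicit formal-model construction of $\Spf W\hookrightarrow Y$, while yours buys brevity and a cleaner isolation of the one nontrivial input (maximality of the kernel). One small point worth spelling out in your write-up: the finiteness $W\to V$ is perhaps most economically seen by noting $R_0\subseteq W\subseteq V$ with $V$ module-finite over $R_0$ (Lemma~\ref{lem-classicalpoints1-3}(b)) and $R_0$ Noetherian, so $W$ and $V$ are finite $R_0$-modules and hence $V$ is finite over $W$; this avoids any appeal to the structure theory of extensions of complete DVFs in possibly inseparable characteristic.
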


\begin{proof}
  We may assume that $\mathcal{X}=(\Spf A)^{\rig}$ and
  $\mathcal{Y}=(\Spf B)^{\rig}$, and that
  $\mathcal{X}\rightarrow\mathcal{Y}$ is induced from a morphism
  $B\rightarrow A$ of admissible $\fR{t}$-algebras.  Then any
  classical point of $\mathcal{X}$ corresponds to an adic morphism
  $A\rightarrow V$ to a complete discrete valuation ring $V$ with
  finite residue field.  Since classical points in our situation are
  closed, we may assume that $A\rightarrow V$ is surjective.

  Let $\phi\colon\fR{t}\rightarrow V$ be given by the composition
  $\fR{t}\rightarrow B\rightarrow A\rightarrow V$, and set
  $\qq=\phi^{-1}(\mm_V)$ and $\pp=\qq\cap\OO$.  
  Note that, since $\sum_{\nu\geq 0}a_{\nu}t^{\nu}\in\fR{t}$ does not belong to $\qq$ 
  if and only if $a_0\not\in\pp$, we have 
  $\widehat{\fR{t}_{\qq}}=\OO_{\pp}\dbl t\dbr$. 
  Let $R$ be the image of
  $\widehat{\fR{t}_{\qq}}$ in $V$, which is a
  $t$-adically complete local subring of $V$, and $\til{R}$ the
  normalization of $R$ in $V$, which is a $t$-adically complete
  discrete valuation subring of $V$.  Let $J\subset R$ be the
  admissible ideal such that $\til{R}$ is the admissible blow-up along
  $J$.

  Consider the base change
  $B_R=B\widehat{\otimes}_{\fR{t}}R\rightarrow A_R=A\widehat{\otimes}_{\fR{t}}R$
  and the strict transform $B_{\til{R}}\rightarrow A_{\til{R}}$ by
  $R\rightarrow\til{R}$.  Since $A_{\til{R}}\rightarrow V$ is
  surjective, it defines a classical point of
  $\mathcal{X}_{\til{R}}=(\Spf A_{\til{R}})^{\rig}$.  By the functoriality
  of classical points over valuation rings of height $1$ (\cite[{\bf
    II}, 8.2.14]{FK2}), one has a commutative diagram of the form
$$
\xymatrix{\Spf V\,\ar@{^{(}->}[r]\ar[d]&\Spf A_{\til{R}}\ar@/^1.5pc/[dd]\\
  \Spf W\, \ar@{^{(}->}[r]^(.6){i}&Z\ar[d]_{\pi}\\ &\Spf B_{\til{R}},}
$$
where $W$ is a complete discrete valuation ring with finite residue
field, $W\rightarrow V$ is finite, $i$ is a closed immersion, and
$\pi$ is an admissible blow-up.

Let $\pi'\colon Z\rightarrow\Spf B_R$ be the composition
$Z\stackrel{\pi}{\rightarrow}\Spf B_{\til{R}}\rightarrow\Spf B_R$ of
two admissible blow-ups, which is again an admissible blow-up
(\cite[{\bf II}, 1.1.10]{FK2}), and $J'\subset B_R$ the corresponding
admissible ideal.  Let
$J_1\subset B\widehat{\otimes}_{\fR{t}}\fR{t}_{\qq}$ be the pull-back
of $J'$ by the surjection
$B\widehat{\otimes}_{\fR{t}}\fR{t}_{\qq}\rightarrow B_R$.  Then $J_1$
is an admissible ideal of $B\widehat{\otimes}_{\fR{t}}\fR{t}_{\qq}$,
and the corresponding admissible blow-up
$\pi_1\colon Z_1\rightarrow\Spf B\widehat{\otimes}_{\fR{t}}\fR{t}_{\qq}$
gives rise to $\pi'$ by passage to the strict transform.  Note that
$Z\hookrightarrow Z_1$ is a closed immersion.  Suppose $t^N\in J_1$.
Then $J_1$ corresponds to a finitely generated ideal $\ovl{J}_1$ of
$B\widehat{\otimes}_{\fR{t}}\fR{t}_{\qq}/(t^N)=B\widehat{\otimes}_{\fR{t}}\OO_{\pp}\dbl t\dbr/(t^N)$,
which extends to a finitely generated ideal $\ovl{J}_2$ of
$B\otimes_{\fR{t}}\OO[1/a][t]/(t^N)$ for some $a\in\OO\setminus\pp$.
Then the pull-back
$J_2\subset B\widehat{\otimes}_{\fR{t}}\OO[1/a]\dbl t\dbr$ of
$\ovl{J}_2$ gives rise to the admissible blow-up
$\pi_2\colon Z_2\rightarrow\Spf B\widehat{\otimes}_{\fR{t}}\OO[1/a]\dbl t\dbr$
that extends $\pi_1$.  Now, due to \cite[{\bf II}, 1.1.9]{FK2}, we can
further extend $\pi_2$ to an admissible blow-up
$\pi_3\colon Y\rightarrow\Spf B$.  Thus we have a locally closed
immersion $\Spf W\rightarrow Y$, which defines a classical point of
$\mathcal{Y}=Y^{\rig}$ (cf.\ \cite[{\bf II}, 8.2.9 (2)]{FK2}), and
fits in with the following commutative diagram
$$
\xymatrix{\Spf V\,\ar@{^{(}->}[r]\ar[d]&\Spf A_R\ar@/^1.5pc/[dd]\,\ar@{^{(}->}[r]&\Spf A\ar@/^1.5pc/[dd]\\
  \Spf W\, \ar@{^{(}->}[r]^(.6){i}&Z\ar[d]_{\pi}\,\ar@{^{(}->}[r]|(.28)\hole&Y\ar[d]\\ &\Spf B_R\,\ar@{^{(}->}[r]&\Spf B.}
$$
The uniqueness follows from \cite[{\bf II}, 8.2.11]{FK2}.
\end{proof}

\begin{cor}\label{cor-functoriality}
  Let $A\rightarrow B$ be a morphism of topologically of finite type
  $\fR{t}$-algebras.  Then $\Spec B[1/t]\rightarrow\Spec A[1/t]$ maps
  closed points to closed points.
\end{cor}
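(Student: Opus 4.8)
The plan is to combine the functoriality of classical points (Lemma~\ref{lem-functoriality}) with the dictionary recalled in \S\ref{sub-classicalpoints}: for an admissible $\fR t$-algebra $A$, the closed points of $\Spec A[1/t]$ are in canonical bijection with the classical points of $(\Spf A)^{\rig}$ (\cite[{\bf II}, 8.2.11]{FK2}). First I would reduce to the admissible case. If $N_A\subset A$ and $N_B\subset B$ denote the $t$-torsion ideals, then $A[1/t]=(A/N_A)[1/t]$ and $B[1/t]=(B/N_B)[1/t]$, the morphism $A\to B$ carries $N_A$ into $N_B$ and hence induces $A/N_A\to B/N_B$, the induced morphism $\Spec B[1/t]\to\Spec A[1/t]$ is unchanged, and $A/N_A$, $B/N_B$ are again topologically of finite type and now admissible. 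So assume $A,B\in\Ad_{\fR t}$ and write $\mathcal Y=(\Spf A)^{\rig}$, $\mathcal X=(\Spf B)^{\rig}$.

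Given a closed point $\mm\subset B[1/t]$, it corresponds to a classical point of $\mathcal X$, which (classical points being closed, \cite[{\bf II}, 8.2.9]{FK2}, together with Lemma~\ref{lem-classicalpoints1}) may be realized as an adic surjection $B\twoheadrightarrow V$ onto a complete discrete valuation ring with finite residue field, with $\kappa(\mm)=V[1/t]$. Applying Lemma~\ref{lem-functoriality} to $\mathcal X\to\mathcal Y$ produces a classical point $(\Spf W)^{\rig}\hookrightarrow\mathcal Y$ with $W\to V$ finite, through which the map $(\Spf V)^{\rig}\to\mathcal Y$ factors. Chasing the commutative diagrams of Lemma~\ref{lem-functoriality} identifies the closed point of $\Spec A[1/t]$ that \cite[{\bf II}, 8.2.11]{FK2} attaches to $(\Spf W)^{\rig}$ with $\ker\bigl(A[1/t]\to W[1/t]\hookrightarrow V[1/t]\bigr)=\ker\bigl(A[1/t]\to B[1/t]\to\kappa(\mm)\bigr)=\mm\cap A[1/t]$, i.e.\ with the image of $\mm$ under $\Spec B[1/t]\to\Spec A[1/t]$. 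Hence that image is a closed point.

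The only delicate point is the last diagram chase: one must check that the bijection of \cite[{\bf II}, 8.2.11]{FK2} intertwines the classical-point map furnished by Lemma~\ref{lem-functoriality} with $\Spec B[1/t]\to\Spec A[1/t]$. To avoid relying on the internal form of that bijection, I would in fact prefer the following self-contained endgame. By Lemma~\ref{lem-classicalpoints1-3}(b) (equivalently, by the proof of Corollary~\ref{cor-classicalpoints1}) one has $L:=\kappa(\mm)=V[1/t]$ with $V$ finite over $\fR t$, so $L$ is a finite module over $\rR t:=\fR t[1/t]$, which is Noetherian. Put $\nn=\mm\cap A[1/t]$ and $R=A[1/t]/\nn$; the quotient map identifies $R$ with a subring of $L$, so $R$ is a domain, and being a submodule of the finite $\rR t$-module $L$ it is a finite $\rR t$-module. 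Let $\pp=\ker(\rR t\to R)=\ker(\rR t\to L)$, so $\rR t/\pp$ embeds in $R$, hence in $L$; since $L$ is a finite $(\rR t/\pp)$-module and a field while $\rR t/\pp$ is a domain, $\rR t/\pp$ is a field (in an integral extension of domains the smaller ring is a field once the larger one is). Then $R$ is a domain that is finite-dimensional as a vector space over the field $\rR t/\pp$, hence itself a field, so $\nn$ is maximal. This routes the proof entirely through commutative algebra, leaving only the input ``$L$ is finite over $\rR t$'' to be imported from the earlier results.
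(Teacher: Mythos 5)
Your proposal is correct. The first argument you sketch---reduce to the admissible case, match closed points of $\Spec B[1/t]$ with classical points of $(\Spf B)^{\rig}$ via \cite[{\bf II}, 8.2.11]{FK2}, then invoke Lemma~\ref{lem-functoriality}---is precisely the route the paper implicitly takes, the corollary being stated immediately after that lemma with no separate proof. Your ``self-contained endgame'' is a genuinely different and more elementary route: instead of Lemma~\ref{lem-functoriality} (a substantial piece of machinery whose proof constructs and spreads out admissible blow-ups over $\fR t$), it imports only the finiteness statement of Lemma~\ref{lem-classicalpoints1-3}(b), and then concludes by pure commutative algebra: $R=A[1/t]/(\mm\cap A[1/t])$ is a domain embedded in the field $L=\kappa(\mm)$, which is module-finite over the Noetherian ring $\rR t$; the image of $\rR t$ in $L$ is forced to be a field by the standard integral-extension lemma, whence $R$ is a finite-dimensional domain over that field, hence itself a field and $\mm\cap A[1/t]$ is maximal. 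This is cleaner and sidesteps the diagram chase you rightly flag as delicate. What Lemma~\ref{lem-functoriality} buys is the stronger, genuinely geometric statement that there is a well-defined functorial map $\ZR{\mathcal X}^{\cl}\to\ZR{\mathcal Y}^{\cl}$ realized by finite morphisms $W\to V$ of formal models, which is needed elsewhere in the paper (e.g.\ in Lemma~\ref{lem-finiteinjection}); for the corollary as stated, your lighter-weight argument suffices.
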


\subsection{Affinoid algebras}\label{sub-affinoidalgebras}
Set $\rR{t}=\fR{t}[1/t]$.
\begin{dfn}\label{dfn-affinoidalgebras}
  An {\em affinoid $\rR{t}$-algebra} is an $\rR{t}$-algebra that is
  isomorphic to an $\rR{t}$-algebra of the form $A[1/t]$ by a
  topologically of finite type $\fR{t}$-algebra $A$.  By a {\em
    morphism} of affinoid $\rR{t}$-algebras we mean an
  $\rR{t}$-algebra homomorphism.
\end{dfn}

\begin{dfn}\label{dfn-formalmodel}
  (1) A {\em formal model} of an affinoid $\rR{t}$-algebra
  $\mathcal{A}$ is a topologically of finite type $\fR{t}$-algebra $A$
  with an isomorphism $A[1/t]\stackrel{\sim}{\rightarrow}\mathcal{A}$
  of $\rR{t}$-algebras.

  (2) A formal model is said to be {\em admissible} if it is given by
  an admissible $\fR{t}$-algebra.
\end{dfn}

If $A$ is a formal model of $\mathcal{A}$, then
$A/(\textrm{$t$-torsion})$ gives an admissible formal model.  In
particular, any affinoid $\rR{t}$-algebra $\mathcal{A}$ admits an
admissible formal model.  Note that a formal model $A$ of
$\mathcal{A}$ is admissible if and only if the map
$A\rightarrow\mathcal{A}$ is injective.

We denote by $\Af_{\rR{t}}$ the category of affinoid $\rR{t}$-algebras
and $\rR{t}$-algebra homomorphisms.  There is a functor
$$
(\cdot)[1/t]\colon \Ad_{\fR{t}}\longrightarrow\Af_{\rR{t}},\qquad A\longmapsto A[1/t].
$$

\begin{prop}\label{prop-jacobson1}
  Any affinoid $\rR{t}$-algebra is Jacobson.
\end{prop}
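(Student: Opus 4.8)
The plan is to reduce the statement to the fact that a finite-type algebra over a Jacobson ring is Jacobson, together with a localization argument at $t$. Concretely, write $\mathcal{A}=A[1/t]$ for some topologically of finite type $\fR{t}$-algebra $A$, say $A=\fR{t}\dl\bfmath{x}\dr/\mathfrak{a}$. Since quotients and localizations of Jacobson rings are Jacobson, it suffices to prove that $\rR{t}\dl\bfmath{x}\dr=\fR{t}\dl\bfmath{x}\dr[1/t]$ is Jacobson, and for that it is in turn enough to show that $\fR{t}\dl\bfmath{x}\dr$ itself is Jacobson (a localization of a Jacobson ring is Jacobson). So the crux is: the restricted power series ring $\fR{t}\dl\bfmath{x}\dr$ over the integer ring $\OO$ of a number field is a Jacobson ring.

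First I would recall that $\fR{t}\dl\bfmath{x}\dr$ is Noetherian and $t$-adically complete, that its ideal $(t)$ is contained in the Jacobson radical modulo nothing useful — rather, the key structural input is that the closed fiber $\fR{t}\dl\bfmath{x}\dr/(t)\cong\OO[\bfmath{x}]$ is a finite type algebra over the Jacobson ring $\OO$, hence Jacobson, and in fact $\OO[\bfmath{x}]$ has Krull dimension $n+1$. To show $R:=\fR{t}\dl\bfmath{x}\dr$ is Jacobson I must show every prime ideal $\pp$ is an intersection of maximal ideals; equivalently, that $\mathrm{rad}(I)=\bigcap_{\mm\supseteq I}\mm$ for every ideal $I$, or that for $R/\pp$ with $\pp$ prime, the Jacobson radical of $R/\pp$ is zero whenever $R/\pp$ is not a field... more precisely one shows that if $f\notin\pp$ then there is a maximal ideal containing $\pp$ but not $f$. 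Here I would use the completeness: the maximal ideals of $R$ are exactly those lying over maximal ideals of $\OO[\bfmath{x}]=R/(t)$ that contain $t$ — this is essentially the content already extracted in Corollary~\ref{cor-classicalpoints1} and Lemma~\ref{lem-classicalpoints1-3}, which identify maximal ideals of $\mathcal{A}=A[1/t]$ with classical points and show that the corresponding residue fields are local fields; dually, maximal ideals of $R$ itself correspond to the complete DVR quotients. One then invokes that $R$ is a quotient of $\OO\dbl t\dbr\dl\bfmath{x}\dr$ and — crucially — that such restricted power series rings over complete DVRs (or over $\OO$) are known to be Jacobson; a clean reference route is to observe that $\fR{t}\dl\bfmath{x}\dr$ is, after inverting $t$, a Tate algebra over the various completions, but the cleanest self-contained argument is: $\fR{t}\dl\bfmath{x}\dr$ is a Noetherian Jacobson ring because it is $t$-adically complete with Jacobson closed fiber $\OO[\bfmath{x}]$ and the primes not containing $t$ are handled by passing to $\fR{t}\dl\bfmath{x}\dr[1/t]$ and noting every such prime survives to a maximal ideal by Corollary~\ref{cor-classicalpoints1}.

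Let me restructure into the argument I would actually write. Step 1: reduce to showing $\rR{t}\dl\bfmath{x}\dr$ is Jacobson, using that $\mathcal{A}$ is a quotient of it. Step 2: take a prime $\pp\subset\rR{t}\dl\bfmath{x}\dr$ and $f\notin\pp$; I want a maximal ideal containing $\pp$ and avoiding $f$. Pull back to $B:=\fR{t}\dl\bfmath{x}\dr$: let $\mathfrak{q}=\pp\cap B$, which is a prime of $B$ with $t\notin\mathfrak{q}$, and $f$ has an image $\bar f\notin\mathfrak{q}$ after clearing denominators. Step 3: consider the domain $D=B/\mathfrak{q}$, a topologically of finite type $\fR{t}$-algebra, $t$-torsion free, so $\mathcal{D}=D[1/t]$ is an affinoid $\rR{t}$-algebra which is a domain, nonzero, and in which the image of $f$ is nonzero. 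Step 4: it now suffices to find a maximal ideal of $\mathcal{D}$ avoiding $f$; but a domain which is a topologically-of-finite-type $\fR{t}$-algebra after inverting $t$ has plenty of classical points — invoke that $(\Spf D)^{\rig}$ is a nonempty rigid space of finite type over $\mathcal{S}$ and by Lemma~\ref{lem-classicalpoints1} has a classical point, which by Corollary~\ref{cor-classicalpoints1} gives a maximal ideal of $\mathcal{D}$ with local-field residue field; to make it avoid $f$, localize $\mathcal{D}$ at $f$ first (this is still topologically of finite type after inverting $t$ only if $f$ is a unit — instead work with $D[1/f']$ for a formal lift $f'$, which is again topologically of finite type over $\fR{t}$) and apply the same existence of a classical point to the nonempty rigid space $(\Spf D\dl 1/f'\dr)^{\rig}$, provided it is nonempty. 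The main obstacle is precisely this nonemptiness/existence of classical points with prescribed avoidance: one must know that a nonzero affinoid $\rR{t}$-algebra has a classical point, i.e. a maximal ideal — this follows from the valuative/adic theory of \cite{FK2} used in Lemma~\ref{lem-classicalpoints1} applied to the affine rigid space $(\Spf D')^{\rig}$, which is nonempty because $D'$ is $t$-torsion free and nonzero so $\Spf D'$ has a point in its generic fiber. I expect the bookkeeping of formal models — ensuring that after localizing at $f$ and passing to quotients we remain within the class of topologically of finite type $\fR{t}$-algebras so that the classical-point machinery applies — to be the only real subtlety; the Jacobson descent through quotients and localizations is formal.
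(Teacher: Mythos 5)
Your opening reduction is a dead end: $\fR{t}\dl\bfmath{x}\dr$ is \emph{not} Jacobson. Being $t$-adically complete, it has $t$ in its Jacobson radical (every $1+at$ is a unit), yet it is a domain with zero nilradical, so the Jacobson radical strictly contains the nilradical. The implication ``$\fR{t}\dl\bfmath{x}\dr$ Jacobson $\Rightarrow$ $\rR{t}\dl\bfmath{x}\dr$ Jacobson'' is fine (open subscheme of Jacobson scheme), but its hypothesis is false, so nothing follows. Inverting $t$ is precisely what destroys the Jacobson radical coming from completeness; you cannot pass the Jacobson property through the completion first.

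Your restructured argument (find a classical point of an affinoid domain $\mathcal{D}$ avoiding a given nonzero $f$) reaches the correct crux but then stalls at exactly the point you flag as ``the only real subtlety.'' The Laurent-domain construction $D\dl 1/f'\dr$ cuts out the locus $|f'|=1$, and this can easily be empty for nonzero $f'$: take $\OO=\ZZ$, $D=\fZ t\dl x\dr$, $f'=tx$; then $|f'|\le|t|<1$ on the whole unit disc, so $(\Spf D\dl 1/f'\dr)^\rig=\emptyset$ while $f'\neq 0$ at almost every classical point. Rescaling $f'$ by $t^{-N}$ to force the Laurent domain to be nonempty requires computing the spectral norm of $f'$, which in turn rests on the existence of classical points attaining it -- circular with what you are trying to prove. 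There is also no existence statement in Lemma~\ref{lem-classicalpoints1}; it only describes the form of classical points. The real work of producing a classical point of $\mathcal{D}$ inside $D(f)$ is exactly what is left unproved.

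The paper takes a genuinely different and shorter route that sidesteps this: reduce to an affinoid domain $\mathcal{A}$ and a nonzero $f$, look at $\phi\colon\Spec\mathcal{A}\to\Spec\OO$. If $\ker(\OO\to\mathcal{A})\neq 0$ then $\mathcal{A}$ is already a classical affinoid algebra over a CDV field $(\OO/\pp)\dpl t\dpr$ and one cites the classical Jacobson property. If the kernel is zero, density plus $\OO$ being Jacobson gives a closed point $x\in\Spec\OO$ with $\phi^{-1}(x)\cap D(f)\neq\emptyset$; the fiber $\mathcal{A}/\qq\mathcal{A}$ is again a classical affinoid over $(\OO/\qq)\dpl t\dpr$, hence Jacobson, producing the desired closed point of $\Spec\mathcal{A}$ in $D(f)$. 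In short, the paper reduces to the known Jacobson property fiberwise over $\Spec\OO$, which is exactly the ingredient your localization argument is missing.
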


\begin{proof}
  It suffices to show that, for any affinoid $\rR{t}$-domain
  $\mathcal{A}$, the intersection of all maximal ideals of
  $\mathcal{A}$ is zero.  Let $\pp$ be the kernel of
  $\OO\rightarrow\mathcal{A}$, which is a prime ideal of $\OO$.  If
  $\pp\neq(0)$, then $\pp$ is a maximal ideal of $\OO$, and
  $\mathcal{A}$ is an affinoid algebra over the complete discrete
  valuation field $(\OO/\pp)\dpl t\dpr$.  Hence $\mathcal{A}$ is known
  to be Jacobson.  Suppose $\pp=(0)$.  For any non-zero $f\in A$,
  consider the non-empty open subset $D(f)=\Spec\mathcal{A}_f$ of
  $\Spec\mathcal{A}$.  We need to show that $D(f)$ contains a closed
  point of $\Spec\mathcal{A}$.  Since the image of
  $\phi\colon\Spec\mathcal{A}\rightarrow\Spec\OO$ is dense and $\OO$ is Jacobson, there
  exists a closed point $x$ of $\Spec\OO$ such that
  $\phi^{-1}(x)\cap D(f)\neq\emptyset$.  The fiber $\phi^{-1}(x)$ is
  given by $\Spec\mathcal{A}/\qq\mathcal{A}$, where $\qq$ is the
  maximal ideal of $\OO$ corresponding to $x$, and
  $\mathcal{A}/\qq\mathcal{A}$ is an affinoid algebra over the
  complete discrete valuation field $(\OO/\qq)\dpl t\dpr$.  Since
  $\mathcal{A}/\qq\mathcal{A}$ is known to be Jacobson, there exists a
  closed point of the closed subset
  $\phi^{-1}(x)\subset\Spec\mathcal{A}$ in $\phi^{-1}(x)\cap D(f)$.
\end{proof}

\subsubsection{Tate algebra}
Let $A$ be an admissible $\fR{t}$-algebra, and $\mathcal{A}=A[1/t]$ the associated affinoid $\rR{t}$-algebra.
We set 
\begin{equation*}
  \begin{split}
    \mathcal{A}\dl\bfmath{x}\dr&:=A\dl\bfmath{x}\dr[1/t]\\
    &=\bigg\{\sum_{\nu\in\NN^n}a_{\nu}\bfmath{x}^{\nu}\in\mathcal{A}\dbl\bfmath{x}\dbr\,\bigg|\,{\small \begin{minipage}{18em}for
        any $m\in\ZZ_{>0}$ there exists $M\in\ZZ_{>0}$ such that
        $|\nu|\geqq M\ \Rightarrow\ a_{\nu}\in t^mA$\end{minipage}}\bigg\},
  \end{split}
\end{equation*}
which is an affinoid $\rR{t}$-algebra, called the {\em Tate algebra}
over $\mathcal{A}$.

\subsection{Norms on affinoid algebras}\label{sub-norms}
The ring $\rR{t}$ is equipped with a non-archimedean norm
$$
|\cdot|\colon\rR{t}\longrightarrow\RR_{\geq 0}
$$
defined as follows: for $a(t)=\sum_{k\geq n}a_kt^k$ with $a_n\neq 0$,
we set $|a(t)|=e^{-n}$, where $e>1$ is a real number fixed once for
all, and $|0|=0$.  Note that $\rR{t}$ is complete with respect to this
norm.

For any maximal ideal $\mm$ of $\rR{t}$, the residue field
$K=\rR{t}/\mm$ is a non-archimedean local field (Lemma
\ref{cor-classicalpoints1}), and has the unique norm $|\cdot|_K$ such
that $|\ovl{t}|_K=e^{-1}$, where $\ovl{t}$ is the image of $t$ in $K$.
Note that, for any $a(t)\in\rR{t}$, we have
$$
|\ovl{a}(\ovl{t})|_K\leq|a(t)|,
$$
where $\ovl{a}(\ovl{t})$ is the image of $a(t)$ in $K$.

\subsubsection{Gauss norm}\label{sub-gaussnorm}
The Tate algebra $\rR{t}\dl \bfmath{x}\dr$ (where
$\bfmath{x}=(x_1,\ldots,x_n)$) is equipped with the {\em Gauss norm}
$\|\cdot\|$, which is defined, as usual, as follows:
$$
\bigg\|\sum_{\nu}a_{\nu}\bfmath{x}^{\nu}\bigg\|=\max_{\nu}|a_{\nu}|.
$$
As usual, one can show that the Gauss norm is multiplicative, i.e.,
$\|FG\|=\|F\|\cdot\|G\|$, and that the Tate algebra
$\rR{t}\dl \bfmath{x}\dr$ is complete with respect to the Gauss norm.

For any maximal ideal $\mm$ of $\rR{t}$, the Tate algebra
$\rR{t}\dl \bfmath{x}\dr$ is specialized to the usual Tate algebra
$K\dl \bfmath{x}\dr$ over $K=\rR{t}/\mm$, and if $\|\cdot\|_K$ denotes
the usual Gauss norm on $K\dl \bfmath{x}\dr$ relative to the norm
$|\cdot|_K$ on $K$ as above, we have
$$
\|\ovl{F}(\bfmath{x})\|_K\leq\|F(\bfmath{x})\|
$$
for any $F(\bfmath{x})\in\rR{t}\dl \bfmath{x}\dr$, where
$\ovl{F}(\bfmath{x})$ denotes the image of $F(\bfmath{x})$ in
$K\dl \bfmath{x}\dr$.

\subsubsection{Residue norm}\label{subsub-residuenorm}
For an affinoid $\rR{t}$ -algebra $\mathcal{A}$, with a presentation
$\mathcal{A}\cong\rR{t}\dl\bfmath{x}\dr/\mathfrak{a}$, one has the
{\em residue norm} $\|\cdot\|_{\mathcal{A}}$ defined by
$$
\|f\|_{\mathcal{A}}=\inf\{\|F\|\mid\textrm{$F\in \rR{t}\dl\bfmath{x}\dr$
  and $(F\, \mathrm{mod}\, \mathfrak{a})=f$}\}
$$
for $f\in\mathcal{A}$.

For any maximal ideal $\mm$ of $\rR{t}$, the presentation
$\mathcal{A}\cong\rR{t}\dl\bfmath{x}\dr/\mathfrak{a}$ is specialized
to a presentation
$\mathcal{A}_K\cong K\dl\bfmath{x}\dr/\mathfrak{a}K\dl\bfmath{x}\dr$
of the usual affinoid algebra $\mathcal{A}_K$ over $K=\rR{t}/\mm$, and
thus gives rise to a residue norm $\|\cdot\|_{\mathcal{A}_K}$ on
$\mathcal{A}_K$ (induced from the Gauss norm on $K\dl\bfmath{x}\dr$
defined as above).  We have
$$
\|\ovl{f}\|_{\mathcal{A}_K}\leq\|f\|_{\mathcal{A}}\eqno{(\ast)}
$$
for any $f\in\mathcal{A}$, where $\ovl{f}$ is the image of $f$ in
$\mathcal{A}$.

\subsection{Topology on affinoid algebras}\label{sub-topologyaffinoid}
Any residue norm on an affinoid algebra $\mathcal{A}$ defines a
complete topology on $\mathcal{A}$.  We now show that this topology
does not depend on the choice of the residue norm.

\begin{lem}\label{lem-affinoidcontinuous}
  Let $\mathcal{A}$ and $\mathcal{B}$ be affinoid $\rR{t}$-algebras
  considered with arbitrary residue norms.  Then any $\rR{t}$-algebra
  homomorphism $\varphi\colon\mathcal{A}\rightarrow\mathcal{B}$ is
  continuous.
\end{lem}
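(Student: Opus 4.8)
The strategy is to reduce to the classical fact that homomorphisms of affinoid algebras over a complete non-archimedean field are continuous (\cite{SBosch}), by passing through the maximal ideals of $\mathcal{B}$, whose residue fields are non-archimedean local fields by Corollary~\ref{cor-classicalpoints1}.

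\emph{Reduction to a Tate algebra.} Any residue norm on $\mathcal{A}$ is attached to a presentation $\mathcal{A}\cong\rR{t}\dl\bfmath{x}\dr/\mathfrak{a}$, and the quotient map $\pi\colon\rR{t}\dl\bfmath{x}\dr\to\mathcal{A}$ is then continuous ($\|\pi(F)\|_{\mathcal{A}}\le\|F\|$) and open (it carries each open ball onto an open ball, directly from the description of the residue norm as a quotient norm). Hence $\varphi$ is continuous if and only if $\varphi\circ\pi$ is, and we may assume $\mathcal{A}=\rR{t}\dl\bfmath{x}\dr$; fix likewise a residue norm on $\mathcal{B}$. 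It remains to bound $\|\varphi(f)\|_{\mathcal{B}}$ by a fixed multiple of the Gauss norm $\|f\|$.

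\emph{Controlling the spectral seminorm.} I claim $|\varphi(f)|_{\sup}\le\|f\|$ for every $f$, where $|g|_{\sup}:=\sup_{\mm\in\Spm\mathcal{B}}|g\bmod\mm|$ and $\mathcal{B}/\mm$ is given its canonical local field absolute value. Fix $\mm$, put $L=\mathcal{B}/\mm$; by Corollary~\ref{cor-classicalpoints1} and Lemma~\ref{lem-classicalpoints1-3}(b), $L$ is a non-archimedean local field which is finite over the image $K_{\mm}$ of $\rR{t}$ in $L$, so $K_{\mm}$ is itself a local field. The composite $\rR{t}\dl\bfmath{x}\dr\xrightarrow{\varphi}\mathcal{B}\to L$ kills the kernel of $\rR{t}\to L$ and so factors as $\rR{t}\dl\bfmath{x}\dr\to K_{\mm}\dl\bfmath{x}\dr\to L$; the second map is a $K_{\mm}$-algebra homomorphism into a finite extension of $K_{\mm}$, hence has a maximal kernel, hence by the Nullstellensatz for Tate algebras over the complete field $K_{\mm}$ (\cite{SBosch}) it is evaluation at a point of the unit polydisc. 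Therefore $|\varphi(f)\bmod\mm|\le\|\ovl{f}\|_{K_{\mm}}\le\|f\|$, where $\ovl{f}$ is the image of $f$ in $K_{\mm}\dl\bfmath{x}\dr$ and the second inequality holds because reduction modulo $\mm\cap\rR{t}$ does not increase the Gauss norm (\S\ref{sub-norms}). Since this is uniform in $\mm$, the claim follows.

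\emph{From the spectral seminorm to the residue norm.} If $\mathcal{B}$ is reduced, then $|\cdot|_{\sup}$ is a norm (it vanishes only on the nilradical, using Proposition~\ref{prop-jacobson1} and the Nullstellensatz), and the maximum modulus principle for reduced affinoid algebras makes it equivalent to the residue norm $\|\cdot\|_{\mathcal{B}}$; granting that, $\|\varphi(f)\|_{\mathcal{B}}\le C\,|\varphi(f)|_{\sup}\le C\|f\|$, so $\varphi$ is continuous. For general $\mathcal{B}$ one reduces to the reduced case by d\'evissage along the finitely generated nilpotent nilradical $\mathcal{N}$: the layers $\mathcal{N}^{k}/\mathcal{N}^{k+1}$ are finitely generated $\mathcal{B}/\mathcal{N}$-modules, on which the residue norm is comparable to the norm pulled back from $\mathcal{B}/\mathcal{N}$, and induction on the nilpotency index reduces continuity of $\varphi$ to that of $\mathcal{A}\to\mathcal{B}/\mathcal{N}$. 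I expect the main obstacle to be precisely the relative maximum modulus principle invoked here — equivalently, the boundedness of the power-bounded subring of a reduced affinoid $\rR{t}$-algebra in any residue norm; this is classical over a complete non-archimedean field (\cite{SBosch}) and in the present relative setting is of the kind the foundations of \cite{FK2} are built to provide. (An alternative, perhaps closer to the paper's spirit: $\varphi$ induces a morphism of rigid spaces over $\mathcal{S}$ which, after admissible blow-ups, is induced by an adic — hence continuous — morphism of formal models, and one concludes using the quasi-compactness of $\Sp\mathcal{B}$; this too rests on \cite{FK2}.)
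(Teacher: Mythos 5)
Your route is genuinely different from the paper's. The paper proves the lemma with the closed graph theorem for metrically complete groups (Pettis): to show $\varphi$ is continuous it suffices to check the graph is closed, i.e.\ that $f_n\to 0$ and $\varphi(f_n)\to g$ force $g=0$. This is done ``softly'': for each maximal ideal $\mm$ of $\mathcal{B}$ the composite $\mathcal{A}\to\mathcal{B}/\mm^l$ factors through a finite-dimensional vector space over the local field $K=\rR{t}/\nn$, so is continuous and forces $g\in\mm^l$ for all $l$; Krull's intersection theorem then gives $g=0$. No quantitative norm bound is ever produced. Your route instead attempts a direct norm estimate $\|\varphi(f)\|_{\mathcal{B}}\le C\|f\|$ by first controlling the spectral seminorm and then upgrading to the residue norm. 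The spectral estimate $|\varphi(f)|_{\sup}\le\|f\|$ in your second step is fine and is indeed the same mechanism the paper later uses in Lemma~\ref{lem-spectralbdd}, with the same reduction modulo $\mm$ to a local field.

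The gap you flag yourself is a real one and is exactly where the argument stalls. The upgrade from $|\cdot|_{\sup}$ to $\|\cdot\|_{\mathcal{B}}$ (for reduced $\mathcal{B}$) is the relative maximum modulus principle, i.e.\ boundedness of the power-bounded subring in a residue norm. Nothing in the paper up to Lemma~\ref{lem-affinoidcontinuous} gives this: Lemma~\ref{lem-spectralbdd} only yields the \emph{easy} direction $|\cdot|_{\sp}\le\|\cdot\|_{\mathcal{A}}$, and the classical proof of the converse needs Noether normalization over $\rR{t}$, which here only appears in Section~\ref{sec-noether-normalization}, long after this lemma. So you are invoking a theorem that is strictly harder than the one you are trying to prove, and whose proof in this relative setting would have to be built from scratch at this point. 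The parenthetical ``alternative'' at the end of your sketch (pass to formal models and use adic morphisms) is outright circular: the corresponding statement here, Corollary~\ref{cor-formalmodelmorphism}, is itself deduced from continuity (it forms $\widehat{\otimes}$ using ``the continuous morphism $A\widehat{\otimes}_{\fR{t}}B\to\mathcal{B}$''), so it presupposes the lemma. The d\'evissage for non-reduced $\mathcal{B}$ is also left unjustified (why is the residue norm on $\mathcal{N}^k/\mathcal{N}^{k+1}$ comparable to the one pulled back from $\mathcal{B}/\mathcal{N}$?), though this is a secondary issue. In short: the reduction to a Tate algebra and the spectral seminorm bound are correct, but the key step you would need is not available at this stage of the development, and the paper's closed-graph argument is precisely designed to avoid needing it.
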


\begin{proof}
  Due to closed graph theorem for metrically complete topological
  groups \cite{Pettis}, we only need to show the following: if a
  sequence $\{f_n\}$ in $\mathcal{A}$ converges to $0$ and
  $\{\varphi(f_n)\}$ converges to an element $g$ in $\mathcal{B}$,
  then $g=0$.  Take any maximal ideal $\mm\subset\mathcal{B}$, and let
  $\nn\subset\rR{t}$ be the image of $\mm$ by
  $\Spec\mathcal{B}\rightarrow\Spec\rR{t}$, which is a maximal ideal
  of $\rR{t}$ due to Corollary \ref{cor-functoriality}.  Set
  $K=\rR{t}/\nn$ (cf.\ Corollary \ref{cor-classicalpoints1}).  For any
  $l\geq 1$, $\varphi_l\colon\mathcal{A}\rightarrow\mathcal{B}/\mm^l$
  is continuous, since
  $\mathcal{A}/\ker(\varphi_l)\hookrightarrow\mathcal{B}/\mm^l$ is a
  mapping between finite dimensional $K$-linear spaces, and the
  topology of $\mathcal{A}/\ker(\varphi_l)$ is the induced one from
  that of $\mathcal{A}$.  Hence we have $g\in\mm^l$ for any $l\geq 1$.
  Then by Krull's theorem (\cite[Chap.\ III, \S3.2, Cor]{Bourb1}), the
  ideal $\mathrm{Ann}(g)=\{x\in\mathcal{B}\mid xg=0\}$ is not
  contained in any maximal ideal of $\mathcal{B}$.  Hence
  $1\in\mathrm{Ann}(g)$, i.e., $g=0$.
\end{proof}

\begin{cor}\label{cor-affinoidcontinuous}
  The residue norm on an affinoid algebra $\mathcal{A}$ gives a
  well-defined topology on $\mathcal{A}$; i.e., the induced topology
  does not depend on the choice of a presentation
  $\mathcal{A}\cong\rR{t}\dl\bfmath{x}\dr/\mathfrak{a}$.
\end{cor}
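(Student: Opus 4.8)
The statement is an immediate consequence of Lemma~\ref{lem-affinoidcontinuous}, and the plan is simply to apply that lemma to identity maps. Suppose $\mathcal{A}$ is given two presentations, say $\mathcal{A}\cong\rR{t}\dl\bfmath{x}\dr/\mathfrak{a}$ and $\mathcal{A}\cong\rR{t}\dl\bfmath{y}\dr/\mathfrak{b}$, producing residue norms $\|\cdot\|_1$ and $\|\cdot\|_2$ and the corresponding metric topologies $\tau_1$ and $\tau_2$ on $\mathcal{A}$. First I would record that each $\tau_i$ is complete, as noted at the start of Subsection~\ref{sub-topologyaffinoid}, so that Lemma~\ref{lem-affinoidcontinuous} is applicable. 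Then, viewing the identity $\id_{\mathcal{A}}$ as an $\rR{t}$-algebra homomorphism $(\mathcal{A},\|\cdot\|_1)\to(\mathcal{A},\|\cdot\|_2)$, Lemma~\ref{lem-affinoidcontinuous} says it is continuous, i.e.\ $\tau_2\subseteq\tau_1$. Applying the lemma again to $\id_{\mathcal{A}}\colon(\mathcal{A},\|\cdot\|_2)\to(\mathcal{A},\|\cdot\|_1)$ gives $\tau_1\subseteq\tau_2$, whence $\tau_1=\tau_2$.

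The key point making this work is precisely that Lemma~\ref{lem-affinoidcontinuous} is stated for affinoid algebras equipped with \emph{arbitrary} residue norms: this is exactly what lets us put the two different norms coming from the two presentations on the source and target of one and the same map. Consequently there is no genuine obstacle at this stage — all the substance has already been absorbed into Lemma~\ref{lem-affinoidcontinuous} (and hence, ultimately, into the Jacobson property of affinoid $\rR{t}$-algebras, Proposition~\ref{prop-jacobson1}, together with the closed graph theorem). One could alternatively run the argument through an open mapping theorem for complete metrizable topological groups, bypassing the second application of the lemma, but invoking the lemma twice is cleaner and self-contained. I would also note, as a free byproduct, that the same reasoning shows any two residue norms on $\mathcal{A}$ are equivalent up to multiplicative constants, so that the resulting ``affinoid topology'' on $\mathcal{A}$ is canonical and functorial by Lemma~\ref{lem-affinoidcontinuous}.
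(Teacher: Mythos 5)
Your proposal is correct and is exactly the argument the paper leaves implicit: Corollary~\ref{cor-affinoidcontinuous} is stated with no separate proof because it follows immediately from applying Lemma~\ref{lem-affinoidcontinuous} to $\id_{\mathcal{A}}$ in both directions, precisely as you describe. The side remark about equivalence of the norms up to multiplicative constants is also fine here (even though $\rR{t}$ is only a normed ring, not a field) because one can scale by powers of the unit $t$ to promote continuity at $0$ to a Lipschitz bound.
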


Hence in the sequel, we will always consider affinoid
$\rR{t}$-algebras with the {\em canonical} topology as above.  Any
$\rR{t}$-algebra homomorphism between affinoid algebras is continuous
with respect to the canonical topology.

\begin{lem}\label{lem-affinoidtopologyadic}
  Let $\mathcal{A}$ be an affinoid $\rR{t}$-algebra, and $A$ an
  admissible formal model of $\mathcal{A}$.  Then the restriction to
  $A$ of the canonical topology on $\mathcal{A}$ coincides with the
  $t$-adic topology.
\end{lem}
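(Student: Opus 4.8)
The plan is to show the two topologies on $A$ — the $t$-adic one and the one induced from the canonical (residue) topology on $\mathcal{A}$ — have the same systems of neighborhoods of $0$, using the fact that $A$ is $t$-adically complete, $\mathcal{A}=A[1/t]$, and by Corollary \ref{cor-affinoidcontinuous} we may compute the canonical topology with any convenient residue norm. First I would fix a presentation $A\cong\fR{t}\dl\bfmath{x}\dr/\mathfrak{a}$ with $\mathfrak{a}=\mathfrak{b}[1/t]\cap\fR{t}\dl\bfmath{x}\dr$ (so that $A$ is precisely the image of $\fR{t}\dl\bfmath{x}\dr$ in $\mathcal{A}$, using admissibility), inducing $\mathcal{A}\cong\rR{t}\dl\bfmath{x}\dr/\mathfrak{a}\rR{t}\dl\bfmath{x}\dr$, and take the residue norm $\|\cdot\|_{\mathcal{A}}$ attached to this presentation.

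For one inclusion — that the $t$-adic topology is finer, i.e. $t^mA$ is a canonical-neighborhood of $0$ in $A$ — I would observe that $\|t^m f\|_{\mathcal{A}}\le |t^m|\cdot\|f\|_{\mathcal{A}}\le e^{-m}\sup_{f\in A}$... more precisely, every element of $t^m A$ has residue norm $\le e^{-m}\|f\|_{\mathcal A}$; since the image of $\fR{t}\dl\bfmath{x}\dr$ in $\mathcal{A}$ consists of elements of residue norm $\le 1$, we get $t^m A\subset\{g\in\mathcal{A}:\|g\|_{\mathcal{A}}\le e^{-m}\}\cap A$, so the $t$-adic filtration is carried into the canonical one. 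The reverse inclusion is the substantive point: I must show that for every $m$ there is $k$ with $\{g\in A:\|g\|_{\mathcal{A}}\le e^{-k}\}\subset t^m A$. Equivalently, an element $a\in A$ lifting to some $F\in\fR{t}\dl\bfmath{x}\dr$ with Gauss norm controlled — no, rather $a\in\mathcal{A}$ with small residue norm — lies in $t^m A$. The clean way is to invoke that $A$ is $t$-adically complete and Noetherian (hence $t^m A$ is closed in $A$ for the $t$-adic topology), and to reduce to the power series ring: if $\|a\|_{\mathcal{A}}\le e^{-k}$ then $a$ has a lift $F\in\rR{t}\dl\bfmath{x}\dr$ with $\|F\|\le e^{-k+1}$, i.e. $F\in t^{k-1}\fR{t}\dl\bfmath{x}\dr$; its image in $A$ lies in $t^{k-1}A$, but we also need it to equal $a$ — which it does by construction — so taking $k-1\ge m$ gives $a\in t^m A$.

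The main obstacle I anticipate is precisely the bookkeeping around \emph{which} lift of $a$ one controls: the residue norm is an infimum, so from $\|a\|_{\mathcal{A}}\le e^{-k}$ one only gets lifts $F$ with $\|F\|$ slightly larger, say $\le e^{-k+1}$; I should check that $\|F\|\le e^{-k+1}$ forces $F\in t^{k-1}\fR{t}\dl\bfmath{x}\dr$ (immediate from the definition of the Gauss norm and $|\cdot|$ on $\rR{t}$), and that the image of $t^{k-1}\fR{t}\dl\bfmath{x}\dr$ in $A$ equals $t^{k-1}A$ (immediate since $A$ is the image of $\fR{t}\dl\bfmath{x}\dr$). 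A subtler point worth a line is that the resulting comparison is a priori presentation-dependent, but Corollary \ref{cor-affinoidcontinuous} already guarantees the canonical topology on $\mathcal{A}$ is presentation-independent, so it suffices to verify the claim for one presentation. With these remarks the two topologies have cofinal neighborhood bases of one another at $0$, hence coincide on $A$.
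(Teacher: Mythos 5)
Your proof is correct and follows the same approach as the paper: the paper's argument is a single terse sentence, observing that the residue-norm topology restricted to $A\cong\fR{t}\dl\bfmath{x}\dr/\mathfrak{a}$ is the quotient topology of the Gauss-norm (equivalently $t$-adic) topology on $\fR{t}\dl\bfmath{x}\dr$, hence is $t$-adic. You simply unpack the two inclusions that justify the word ``quotient'' there, and your handling of the lifting step is the correct way to do so — in fact, since the Gauss norm takes values in the discrete set $\{e^{-n}\}\cup\{0\}$, the infimum defining $\|a\|_{\mathcal{A}}$ is attained (or is $0$), so one can even take a lift $F$ with $\|F\|\le e^{-k}$ directly and avoid the $k\mapsto k-1$ adjustment, though your version works just as well.
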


\begin{proof}
  The assertion is clear if $\mathcal{A}=\rR{t}\dl\bfmath{x}\dr$ and
  $A=\fR{t}\dl\bfmath{x}\dr$, i.e., the topology by the Gauss norm on
  $\fR{t}\dl\bfmath{x}\dr$ coincides with the $t$-adic topology.  In
  general, take a presentation
  $A\cong\fR{t}\dl\bfmath{x}\dr/\mathfrak{a}$, and the induced
  presentation
  $\mathcal{A}\cong\rR{t}\dl\bfmath{x}\dr/\mathfrak{a}\rR{t}\dl\bfmath{x}\dr$.
  Then the topology on $A$, which is the restriction of the topology
  by the induced residue norm, is the quotient topology of the
  $t$-adic topology, hence is the $t$-adic topology.
\end{proof}

\begin{cor}\label{cor-formalmodelmorphism}
  Let $\varphi\colon\mathcal{A}\rightarrow\mathcal{B}$ be a morphism
  of affinoid $\rR{t}$-algebras, and $A$ an admissible formal model of
  $\mathcal{A}$.  Then there exists an admissible formal model $B$ of
  $\mathcal{B}$ and a morphism $\phi\colon A\rightarrow B$ such that
  $\varphi=\phi[1/t]$.
\end{cor}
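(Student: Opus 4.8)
The plan is to produce $B$ as an enlargement of an arbitrary admissible formal model of $\mathcal{B}$, chosen large enough to absorb the images of the topological generators of $A$. First I would fix a presentation $A\cong\fR{t}\dl\bfmath{x}\dr/\mathfrak{a}$ and let $a_1,\ldots,a_n$ denote the images of the coordinates, so that $\fR{t}[a_1,\ldots,a_n]$ is $t$-adically dense in $A$. I would also fix any admissible formal model $B_0$ of $\mathcal{B}$ (such a model exists, since passing to the quotient by the $t$-torsion turns any formal model into an admissible one, as recalled above). Set $b_i:=\varphi(a_i)\in\mathcal{B}$.

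The crucial observation is that each $b_i$ is power-bounded in $\mathcal{B}$. Indeed, the powers $a_i^j$ all lie in $A$, and $A$ has residue norm $\leq 1$ (its elements lift to $\fR{t}\dl\bfmath{x}\dr$, which has Gauss norm $\leq 1$), hence is a bounded subring of $\mathcal{A}$. By Lemma~\ref{lem-affinoidcontinuous} the map $\varphi$ is continuous; since $t$ is a unit in both algebras one has $\|t^j y\|_{\mathcal B}=e^{-j}\|y\|_{\mathcal B}$, and a short argument from continuity at $0$ then shows that $\varphi$ carries bounded subsets to bounded subsets. Thus $\{b_i^j\}_{j\geq 0}$ is bounded, i.e.\ there is $M$ with $b_i^j\in t^{-M}B_0$ for all $i,j$. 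Now set $B:=B_0[b_1,\ldots,b_n]\subseteq\mathcal{B}$, the $\fR{t}$-subalgebra generated by $B_0$ and the $b_i$. Because every monomial in the $b_i$ lies in $t^{-Mn}B_0$ regardless of its degree, we get $B\subseteq t^{-Mn}B_0$; hence $B$ is a $B_0$-submodule of a finitely generated module over the Noetherian ring $B_0$, so it is a finitely generated $B_0$-module, in particular $t$-adically complete; it is $t$-torsion free because $t$ is invertible in $\mathcal{B}$; and it is topologically of finite type over $\fR{t}$, since power-boundedness of the $b_i$ yields a continuous surjection $B_0\dl z_1,\ldots,z_n\dr\to B$ sending $z_i\mapsto b_i$, and $B_0\dl z_1,\ldots,z_n\dr$ is topologically of finite type. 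Finally $B\supseteq B_0$ gives $B[1/t]=\mathcal{B}$, so $B$ is an admissible formal model of $\mathcal{B}$ containing $b_1,\ldots,b_n$.

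It remains to check $\varphi(A)\subseteq B$ and to conclude. Any $f\in A$ is a $t$-adic limit in $A$ of polynomials $P_k(a_1,\ldots,a_n)$ with coefficients in $\fR{t}$; applying $\varphi$ and using continuity, $\varphi(f)$ is the $t$-adic limit in $\mathcal{B}$ of the elements $P_k(b_1,\ldots,b_n)\in B$. Since the $t$-adic topology on $B$ is the restriction of the canonical topology on $\mathcal{B}$ by Lemma~\ref{lem-affinoidtopologyadic} and $B$ is $t$-adically complete, this limit already lies in $B$, so $\varphi(A)\subseteq B$. Then $\phi:=\varphi|_A\colon A\to B$ is an $\fR{t}$-algebra homomorphism, i.e.\ a morphism of topologically of finite type $\fR{t}$-algebras, and $\phi[1/t]=\varphi$ because both are $\rR{t}$-algebra homomorphisms $\mathcal{A}=A[1/t]\to\mathcal{B}$ that restrict to $\varphi|_A$.

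The main obstacle is the boundedness bookkeeping in the second paragraph: one must verify that the $b_i$ are power-bounded (which genuinely uses that $A$ is a restricted power series quotient, and not merely an abstract $\fR{t}$-subalgebra of $\mathcal{A}$), since power-boundedness is exactly what keeps the enlarged ring $B$ bounded; an unbounded $B$ could never serve as a formal model. Everything else — the module-theoretic completeness of $B$, its topological finite typeness, and the passage $\phi[1/t]=\varphi$ — is routine once this point is in place.
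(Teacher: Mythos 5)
Your proof is correct and takes essentially the same approach as the paper: start from an arbitrary admissible formal model $B_0$ of $\mathcal{B}$, use continuity of $\varphi$ (Lemma~\ref{lem-affinoidcontinuous}) to control the image of the bounded ring $A$, and enlarge $B_0$ to contain $\varphi(A)$. Where the paper compresses this into one line by taking $B'$ to be the image of the continuous map $A\widehat{\otimes}_{\fR{t}}B_0\to\mathcal{B}$ (so that topological finite typeness and $t$-torsion-freeness are immediate from $B'$ being a quotient of a topologically of finite type ring inside $\mathcal{B}$), you unwind it by adjoining the finitely many power-bounded generators $b_i=\varphi(a_i)$ and verifying by hand that $B_0[b_1,\ldots,b_n]$ is a finite $B_0$-module, hence $t$-adically complete and topologically of finite type, and that it absorbs all of $\varphi(A)$ by density — the two constructions yield the same subring of $\mathcal{B}$, so the difference is one of exposition rather than of substance.
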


\begin{proof}
  Take an admissible formal model $B$ of $\mathcal{B}$, and let $B'$
  be the image of the continuous morphism
  $A\widehat{\otimes}_{\fR{t}}B\rightarrow\mathcal{B}$.  Since $B'$
  contains $B$, $B'$ is a formal model of $\mathcal{B}$,
  which admits $A\rightarrow B'$, as desired.
\end{proof}

\begin{cor}\label{cor-formalmodelmorphism1}
  Let $\mathcal{A}\rightarrow\mathcal{B}$ be a morphism of affinoid
  $\rR{t}$-algebras.  Then
  $\Spec\mathcal{B}\rightarrow\Spec\mathcal{A}$ maps closed points to
  closed points.
\end{cor}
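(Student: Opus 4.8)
The plan is to reduce immediately to the statement for formal models, Corollary~\ref{cor-functoriality}, using Corollary~\ref{cor-formalmodelmorphism} to descend the given morphism. So, given a morphism $\varphi\colon\mathcal{A}\to\mathcal{B}$ of affinoid $\rR t$-algebras, I would first choose an admissible formal model $A$ of $\mathcal{A}$; such a model exists, since for any formal model $A'$ of $\mathcal{A}$ the quotient $A'/(\textrm{$t$-torsion})$ is an admissible formal model. Then Corollary~\ref{cor-formalmodelmorphism} produces an admissible formal model $B$ of $\mathcal{B}$ together with a morphism $\phi\colon A\to B$ of topologically of finite type $\fR t$-algebras such that $\varphi=\phi[1/t]$.

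With this in hand, the scheme morphism $\Spec\mathcal{B}\to\Spec\mathcal{A}$ is identified with $\Spec(\phi[1/t])\colon\Spec B[1/t]\to\Spec A[1/t]$, and Corollary~\ref{cor-functoriality} asserts precisely that this map sends closed points to closed points. That completes the argument.

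Since both ingredients are already established in the excerpt, I do not anticipate any genuine obstacle. The only point needing a word of care is the direction in which the reduction is run: one must start from a formal model of the \emph{source} $\mathcal{A}$ and then obtain a compatible model of the target $\mathcal{B}$, as in Corollary~\ref{cor-formalmodelmorphism}; fixing a formal model of $\mathcal{B}$ first would not in general give rise to a morphism of formal models, so it is the asymmetry of Corollary~\ref{cor-formalmodelmorphism} that is being exploited here.
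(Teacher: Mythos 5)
Your proof is correct and is exactly the argument the paper intends; the paper's own proof is the one-line remark ``Immediate from Corollary~\ref{cor-formalmodelmorphism} and Corollary~\ref{cor-functoriality},'' and you have filled in precisely the right details, including the correct observation that one must fix a formal model of the source first and then produce a compatible model of the target.
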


\begin{proof}
  Immediate from Corollary \ref{cor-formalmodelmorphism} and Corollary
  \ref{cor-functoriality}.
\end{proof}

\subsection{Reduction map}\label{sub-reductionmap}
Let $\mathcal{A}$ be an affinoid $\rR{t}$-algebra, and
$A\subset\mathcal{A}$ an admissible formal model of $\mathcal{A}$.
Set $A_0=A/tA$.  Then we have the so-called {\em reduction map}
$$
\red_A\colon\Spm\mathcal{A}\longrightarrow\Spm A_0
$$
from the maximal spectrum of $\cA$ to the maximal spectrum of $A_0$, defined as follows.  Any
closed point $x\in\Spec\mathcal{A}$ corresponds to a classical point
$(\Spf V)^{\rig}\rightarrow(\Spf A)^{\rig}$, where $V$ is a complete
discrete valuation ring with finite residue field, whence a finite
morphism $\Spf V\rightarrow\Spf A$ of $t$-adic formal schemes.  The
last morphism induces a finite morphism $\Spec k\rightarrow\Spec A_0$,
where $k$ is the residue field of $V$, and hence a closed point of
$\Spec A_0$, which we define to be $\red_A(x)$.

The reduction map can be defined in a little more general situation.
Let $X'\rightarrow X=\Spf A$ be an admissible blow-up.  Then the
finite adic map $\Spf V\rightarrow\Spf A$ has a unique lift
$\Spf V\rightarrow X'$, which gives rise to a finite morphism
$\Spec k\rightarrow X'_0$, where $X'_0$ is the closed subscheme of
$X'$ defined by $t=0$, which is a finite type $\OO$-scheme.  We thus
have a reduction map
$$
\red_{X'}\colon\Spm\mathcal{A}\longrightarrow (X'_0)^{\cl},
$$
to the set of closed points of $X'_0$.

\begin{lem}\label{lem-reductionmap}
  The reduction map $\red_{X'}$ is surjective.
\end{lem}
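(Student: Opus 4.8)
The plan is to reduce the surjectivity of $\red_{X'}$ to the classical statement about surjectivity of reduction maps for rigid spaces over a fixed non-archimedean local field, which is available in the literature (e.g.\ \cite[{\bf II}, 8.3]{FK2}). First I would fix a closed point $y$ of $X'_0$. Since $X'_0$ is a finite type $\OO$-scheme, $y$ lies over a closed point of $\Spec\OO$, i.e.\ a maximal ideal $\pp\subset\OO$; let $\qq=\pp\fR{t}+(t)$ be the corresponding maximal ideal of $\fR{t}$ (as in Lemma~\ref{lem-classicalpoints1-3}), and set $K=(\OO/\pp)\dpl t\dpr$, a non-archimedean local field. The key observation is that the formation of $X'\to X=\Spf A$ as an admissible blow-up, and of its closed fiber $X'_0$, is compatible with base change along $\fR{t}\to(\OO/\pp)\dbl t\dbr$ (completion at $\qq$); thus $y$ corresponds to a closed point $\bar y$ of the closed fiber of the induced admissible blow-up $X'_K\to\Spf(A\widehat{\otimes}_{\fR{t}}(\OO/\pp)\dbl t\dbr)$ over $\Spf(\OO/\pp)\dbl t\dbr$.

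Next I would invoke the classical surjectivity of the reduction map in rigid geometry over the complete discrete valuation field $K$: the reduction map $\red_{X'_K}\colon\Spm\mathcal{A}_K\to(X'_{K,0})^{\cl}$ is surjective, so there is a closed point $x_K$ of $\Spm\mathcal{A}_K$ with $\red_{X'_K}(x_K)=\bar y$. Such a point corresponds to a classical point $(\Spf V)^{\rig}\to(\Spf A_K)^{\rig}$ with $V$ a complete DVR finite over $(\OO/\pp)\dbl t\dbr$, hence with finite residue field. Composing with the closed immersion $\Spf(A\widehat{\otimes}_{\fR{t}}(\OO/\pp)\dbl t\dbr)\hookrightarrow\Spf A$ — or rather, observing that an adic morphism $A\to V$ killing $\pp$ is the same as an adic morphism $A_K\to V$ — this classical point of $(\Spf A_K)^{\rig}$ is canonically a classical point of $(\Spf A)^{\rig}$, i.e.\ a closed point $x$ of $\Spm\mathcal{A}$. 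By construction the unique lift $\Spf V\to X'$ factors through $X'_K$, and the induced map on closed fibers $\Spec k\to X'_0$ hits $y$ because it does so after the faithfully flat (indeed, over a field) base change — concretely, $\red_{X'}(x)=\bar y=y$ as a point of $X'_0$.

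The main obstacle I expect is the bookkeeping of base change and of the two notions of ``closed fiber'': one must check carefully that (i) admissible blow-ups and their closed fibers commute with the completion $\fR{t}\to\OO_{\pp}\dbl t\dbr$ and with the reduction $\pp$ (this is where one uses that $X'_0$ is Jacobson and of finite type over $\OO$, so its closed points are exactly those lying over closed points of $\Spec\OO$ with residue field finite over $\OO/\pp$), and (ii) the identification of a classical point over $K$ with one over $\rR{t}$ is compatible with the lifting $\Spf V\to X'$ used to define $\red_{X'}$ — this is essentially the functoriality of classical points (Lemma~\ref{lem-functoriality}) together with the uniqueness in \cite[{\bf II}, 8.2.11]{FK2}. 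Once these compatibilities are in place, surjectivity follows formally from the classical case. One should also note that every closed point of $X'_0$ does lie over a closed point of $\Spec\OO$: since $X'_0$ is a Jacobson scheme of finite type over the Jacobson ring $\OO$, a closed point of $X'_0$ maps to a point of $\Spec\OO$ with residue field a finite extension of a field of the form $\OO/\pp$, hence to a closed point of $\Spec\OO$, which is exactly what makes the reduction to a fixed local field possible.
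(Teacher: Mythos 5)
Your proof is correct and follows essentially the same strategy as the paper's: decompose $(X'_0)^{\cl}$ over $\Spm\OO$, identify the fiber over a maximal ideal $\pp\subset\OO$ with a classical reduction map over the local field $(\OO/\pp)\dpl t\dpr$ with respect to the formal model $A/\pp A$, and invoke the known surjectivity of the reduction map in classical rigid geometry. One small terminological slip: the parenthetical ``(completion at $\qq$)'' mislabels the map $\fR{t}\to(\OO/\pp)\dbl t\dbr$ --- the $\qq$-adic completion of $\fR{t}$ is $\widehat{\OO}_{\pp}\dbl t\dbr$, not $(\OO/\pp)\dbl t\dbr$; what you actually use (and what the paper uses) is the \emph{reduction} modulo $\pp\fR{t}$, so $A\widehat{\otimes}_{\fR{t}}(\OO/\pp)\dbl t\dbr\cong A/\pp A$, which is the correct target. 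Since the displayed ring is right and the argument only uses that, this does not affect the proof.
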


\begin{proof}
  First, note that there exists a map $(X'_0)^{\cl}\rightarrow\Spm\OO$, since $X'_0$ is of finite type
  over $\OO$, which is Jacobson. Hence $(X'_0)^{\cl}$ is disjoint union of the fibers
  of this map.  The fiber over $x\in\Spm\OO$ of the
  reduction map $\red_{X'}$ is the reduction map
  $\Spm\mathcal{A}/\pp_x\mathcal{A}\rightarrow (X'_{0,x})^{\cl}$ in
  classical rigid geometry, where $\pp_x\subset\OO$ is the maximal
  ideal corresponding to $x$; note that $\mathcal{A}/\pp_x\mathcal{A}$
  is an affinoid algebra over the complete discrete valuation field
  $(\OO/\pp_x)\dpl t\dpr$, and the reduction map is considered with
  respect to the formal model $A/\pp_x A$ of
  $\mathcal{A}/\pp_x\mathcal{A}$.  As the reduction map in classical
  situation is known to be surjective (e.g.\ \cite[\S8.3, Prop.\
  8]{SBosch}), the assertion follows.
\end{proof}

\subsection{Power-bounded elements}\label{sub-spectralseminorms}
Let $\mathcal{A}$ be an affinoid $\rR{t}$-algebra.  For any closed
point $x\in\Spm\mathcal{A}$, the residue field $K_x$ at $x$ is a
non-archimedean local field (Corollary \ref{cor-classicalpoints1}),
and has the unique non-archimedean norm $|\cdot|_x$ defined as in
\S\ref{sub-norms}.  One has a map
$$
\mathcal{A}\longrightarrow\RR_{\geq 0},\quad f\longmapsto |f(x)|:=|f|_x,
$$
and, for any $f\in\mathcal{A}$, the so-called {\em spectral seminorm}
$$
|f|_{\sp}=\sup\{|f(x)|\mid x\in\Spm\mathcal{A}\},
$$
which is power-multiplicative, i.e., $|f^n|_{\sp}=(|f|_{\sp})^n$ for
any $n\geq 0$.

\begin{lem}\label{lem-spectralbdd}
  The spectral seminorm is bounded by any residue norm, i.e.,
  $|f|_{\sp}\leq\|f\|_{\mathcal{A}}$ for any $f\in\mathcal{A}$, where
  $\|\cdot\|_{\mathcal{A}}$ is a residue seminorm relative to an
  arbitrary presentation
  $\mathcal{A}\cong\rR{t}\dl\bfmath{x}\dr/\mathfrak{a}$.
\end{lem}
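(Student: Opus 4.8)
The plan is to reduce the inequality $|f|_{\sp}\leq\|f\|_{\mathcal{A}}$ to the corresponding classical statement over each non-archimedean local field $K_x$, using the specialization inequality $(\ast)$ for residue norms established in \S\ref{subsub-residuenorm}. First I would fix a presentation $\mathcal{A}\cong\rR{t}\dl\bfmath{x}\dr/\mathfrak{a}$ and the associated residue norm $\|\cdot\|_{\mathcal{A}}$, and let $x\in\Spm\mathcal{A}$ be an arbitrary closed point. By Corollary~\ref{cor-formalmodelmorphism1} (or directly by Corollary~\ref{cor-functoriality}), the image $\mm$ of the point $x$ under $\Spec\mathcal{A}\to\Spec\rR{t}$ is a maximal ideal, so $K=\rR{t}/\mm$ is a non-archimedean local field carrying its canonical norm $|\cdot|_K$ from \S\ref{sub-norms}, and the chosen presentation specializes to a presentation $\mathcal{A}_K\cong K\dl\bfmath{x}\dr/\mathfrak{a}K\dl\bfmath{x}\dr$ of the ordinary affinoid $K$-algebra $\mathcal{A}_K$, with residue norm $\|\cdot\|_{\mathcal{A}_K}$.

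Next I would observe that $x$ factors through $\Spm\mathcal{A}_K$: the closed point $x$ of $\Spm\mathcal{A}$ lying over $\mm$ corresponds to a closed point of $\Spm\mathcal{A}_K$, and its residue field $K_x$ together with its norm $|\cdot|_x$ is exactly the residue field and norm of that point in the classical affinoid setting (the norm being the unique one extending $|\cdot|_K$, which by \S\ref{sub-norms} is the same as the intrinsic one normalized by $|\ovl t|_K=e^{-1}$). Consequently, for the image $\ovl f$ of $f$ in $\mathcal{A}_K$ we have $|f(x)|=|\ovl f(x)|$, where the right-hand side is the value of $\ovl f$ at $x$ computed inside $\mathcal{A}_K$.

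Now I would invoke the classical fact that the spectral seminorm on an ordinary affinoid algebra is bounded above by any residue norm: $|\ovl f(x)|\leq|\ovl f|_{\sp,K}\leq\|\ovl f\|_{\mathcal{A}_K}$, which is standard (e.g.\ \cite[\S6.2.1, Prop.~4 and \S6.2.4]{BGR}, or \cite{SBosch}). Combining this with the specialization inequality $(\ast)$ of \S\ref{subsub-residuenorm}, namely $\|\ovl f\|_{\mathcal{A}_K}\leq\|f\|_{\mathcal{A}}$, yields $|f(x)|\leq\|f\|_{\mathcal{A}}$. Since $x\in\Spm\mathcal{A}$ was arbitrary, taking the supremum over all closed points gives $|f|_{\sp}\leq\|f\|_{\mathcal{A}}$.

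The main point requiring care — the only real obstacle — is the compatibility of the normalizations of the absolute value at a closed point $x$: one must check that the norm $|\cdot|_x$ on $K_x$ as defined in \S\ref{sub-norms} (via the uniformization $|\ovl t|_{K_x}=e^{-1}$) agrees with the norm on $K_x$ used in the classical theory of the affinoid algebra $\mathcal{A}_K$, which is normalized relative to $|\cdot|_K$. Both are forced to be the unique extension of the norm on $K$ that sends $\ovl t$ to $e^{-1}$, so they coincide; once this bookkeeping is in place the rest is a formal concatenation of $(\ast)$ with the classical bound. Everything else — Jacobson-ness ensuring $\Spm$ is ``big enough'', and the factorization of $x$ through $\mathcal{A}_K$ — follows from the results already assembled in this section (Corollaries~\ref{cor-classicalpoints1}, \ref{cor-functoriality}, \ref{cor-formalmodelmorphism1}).
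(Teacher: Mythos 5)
Your proposal is correct and follows essentially the same route as the paper: specialize the presentation to the classical affinoid algebra $\mathcal{A}_K$ over the residue field $K$ at the image closed point, invoke the classical bound $|\ovl f(x)|\leq\|\ovl f\|_{\mathcal{A}_K}$, and conclude via the specialization inequality $(\ast)$ of \S\ref{subsub-residuenorm}. Your write-up is more detailed about the normalization of norms, but the argument is identical.
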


\begin{proof}
  For any closed point $x\in\Spm\mathcal{A}$, let $y\in\Spm\rR{t}$ be
  its image (cf.\ Corollary \ref{cor-formalmodelmorphism1}), and $K$
  the residue field at $y$.  Then $|f(x)|=|\ovl{f}(x)|$, where
  $\ovl{f}$ is the image of $f$ in $\mathcal{A}_K$.  It is known in
  the classical rigid geometry that
  $|\ovl{f}(x)|\leq\|\ovl{f}\|_{\mathcal{A}_K}$, and by $(\ast)$ in
  \S\ref{subsub-residuenorm}, we have $|f(x)|\leq\|f\|_{\mathcal{A}}$.
\end{proof}

\begin{prop}\label{prop-powerbounded}
  Let $\mathcal{A}$ be an affinoid $\rR{t}$-algebra.  Then the
  following conditions for $f\in\mathcal{A}$ are equivalent.
  \begin{itemize}
  \item[{\rm (a)}] $f$ is {\em power-bounded}, i.e., the set $\{f^n\}$
    is bounded by a residue norm $|\cdot|_{\mathcal{A}}$ on
    $\mathcal{A}$ $($note that the boundedness does not depend on the
    choice of $|\cdot|_{\mathcal{A}})$;
  \item[{\rm (b)}] $|f|_{\sp}\leq 1$;
  \item[{\rm (c)}] $f$ is integral over any admissible formal model
    $A$ of $\mathcal{A}$;
  \item[{\rm (d)}] there exists an admissible formal model $A$ of
    $\mathcal{A}$ that contains $f$.
  \end{itemize}
\end{prop}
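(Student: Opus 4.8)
The plan is to establish the cycle of implications $\mathrm{(a)}\Rightarrow\mathrm{(b)}\Rightarrow\mathrm{(c)}\Rightarrow\mathrm{(d)}\Rightarrow\mathrm{(a)}$, which gives the equivalence of all four conditions. Three of the four links are essentially formal. For $\mathrm{(a)}\Rightarrow\mathrm{(b)}$: if $\|f^{n}\|_{\mathcal A}\le C$ for all $n$ and some residue norm, then Lemma~\ref{lem-spectralbdd} together with power-multiplicativity of the spectral seminorm gives $|f|_{\sp}^{n}=|f^{n}|_{\sp}\le\|f^{n}\|_{\mathcal A}\le C$, and letting $n\to\infty$ yields $|f|_{\sp}\le 1$. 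For $\mathrm{(c)}\Rightarrow\mathrm{(d)}$: if $f$ satisfies a monic polynomial of degree $d$ over an admissible formal model $A$, then $A[f]=A+Af+\dots+Af^{d-1}$ is a finite $A$-module, hence a topologically of finite type $\fR{t}$-algebra; it is $t$-torsion free since it is contained in $\mathcal A$, and $A[f][1/t]=\mathcal A$, so $A[f]$ is an admissible formal model containing $f$. For $\mathrm{(d)}\Rightarrow\mathrm{(a)}$: any admissible formal model $A$ is a bounded subset of $\mathcal A$ --- choosing a presentation $A\cong\fR{t}\dl\bfmath{x}\dr/\mathfrak{b}$ that exhibits $A$ as a formal model, $A$ is precisely the unit ball of the corresponding residue norm (by discreteness of the Gauss norm), and since all residue norms on $\mathcal A$ induce the same topology (Corollary~\ref{cor-affinoidcontinuous}), $A$ is squeezed between two balls of any fixed residue norm and is therefore bounded; so if $f\in A$ then $\{f^{n}\}\subseteq A$ is bounded. (The same argument via the classical point underlying each $x\in\Spm\mathcal A$, whose coordinate ring is a discrete valuation ring, shows more generally that every element of an admissible formal model satisfies $|\cdot|_{\sp}\le 1$ --- a fact I use below.)

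The one substantial implication is $\mathrm{(b)}\Rightarrow\mathrm{(c)}$: from $|f|_{\sp}\le 1$ one must produce, for \emph{every} admissible formal model $A$, a monic polynomial over $A$ killed by $f$. I would reduce this in three steps. First, reduce to the case where $\mathcal A$ is reduced: the spectral seminorm factors through $\mathcal A_{\red}=\mathcal A/\mathrm{nil}(\mathcal A)$, so the image $\bar f$ still satisfies $|\bar f|_{\sp}\le 1$, and the image of $A$ in $\mathcal A_{\red}$ (divided by its $t$-torsion if necessary) is again an admissible formal model; given a monic equation for $\bar f$ over it, lifting the coefficients to $A$ gives a monic $P(X)\in A[X]$ with $P(f)\in\mathrm{nil}(\mathcal A)$, and since $\mathcal A$ is Noetherian its nilradical is nilpotent, so $P(X)^{N}$ for suitable $N$ is a monic equation for $f$ over $A$. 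Second, for $\mathcal A$ reduced, consider $\mathcal A^{\circ}=\{\,g\in\mathcal A : |g|_{\sp}\le 1\,\}$: it is a subring of $\mathcal A$ (ultrametric inequality plus power-multiplicativity), it contains every admissible formal model, and it is integrally closed in $\mathcal A$ (if $g$ is integral over $\mathcal A^{\circ}$ then at each closed point $g(x)$ is integral over a subring of the discrete valuation ring $V_{x}$, hence $|g(x)|\le 1$). Granting that $\mathcal A^{\circ}$ is topologically of finite type over $\fR{t}$ --- and therefore, being $t$-torsion free with $\mathcal A^{\circ}[1/t]=\mathcal A$, an admissible formal model --- the previous paragraph shows $A$ and $\mathcal A^{\circ}$ are commensurable, so $\mathcal A^{\circ}$ is a finite module over the Noetherian ring $A$ and hence integral over $A$; combined with integral-closedness this forces $\mathcal A^{\circ}$ to coincide with the integral closure of $A$ in $\mathcal A$. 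Thus $|f|_{\sp}\le 1$ means exactly $f\in\mathcal A^{\circ}$, i.e. $f$ is integral over $A$. Third, the finiteness of $\mathcal A^{\circ}$ over $\fR{t}$ (for $\mathcal A$ reduced) I would get by specializing to fibers: for each maximal ideal $\mm\subset\rR{t}$ the quotient $\mathcal A/\mm\mathcal A$ is a classical affinoid algebra over the non-archimedean local field $\rR{t}/\mm$ (Corollary~\ref{cor-classicalpoints1}), every closed point of $\Spm\mathcal A$ lies in such a fiber (Corollary~\ref{cor-formalmodelmorphism1}), and the spectral and residue norms are compatible with these specializations (see the inequalities $(\ast)$ of \S\ref{subsub-residuenorm} and the normalization of residue-field norms in \S\ref{sub-norms}); one then invokes the classical structure theory of power-bounded elements in a reduced affinoid algebra over a complete non-archimedean field (\cite{SBosch}) in its form over the base $\rR{t}$.

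The third step is where the genuine difficulty lies. Because $\rR{t}$ is not a field, the classical theorem does not apply verbatim, and extracting a \emph{single} integral equation for $f$ over $A$ --- as opposed to a fiber-dependent equation for each $\mm$ --- is exactly a uniformity statement over $\Spm\rR{t}$, essentially a Noether normalization of $\mathcal A$ uniform in $\mm$. For the present proposition it is enough to cite the corresponding finiteness result from the foundations of rigid geometry over $\fR{t}$ (\cite{FK2}); the sharper stratified form of Noether normalization that is needed for the counting applications is developed separately in Section~\ref{sec-noether-normalization}.
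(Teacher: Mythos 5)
Your implications (a)$\Rightarrow$(b), (c)$\Rightarrow$(d), and (d)$\Rightarrow$(a) are essentially the same (short, formal) arguments as in the paper. The problem is (b)$\Rightarrow$(c), where you identify the difficulty but do not resolve it.

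Your plan for (b)$\Rightarrow$(c) is to introduce $\mathcal A^{\circ}=\{g:|g|_{\sp}\le1\}$, argue it is an admissible formal model, conclude it is finite over $A$ by commensurability, and hence deduce that $f\in\mathcal A^{\circ}$ is integral over $A$. The load-bearing step — showing $\mathcal A^{\circ}$ is topologically of finite type over $\fR{t}$ (equivalently, finite over $A$) — is exactly what you cannot complete. As you say yourself, the classical theorem about $\mathcal A^{\circ}$ in reduced affinoid algebras over a complete non-archimedean \emph{field} does not apply, because $\rR{t}$ is not a field, and passing fiber-by-fiber over $\Spm\rR{t}$ gives only an integral equation depending on the fiber; the needed uniformity is precisely the substance of the proposition. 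Deferring this to a citation of \cite{FK2} is not justified: the general foundations there do not give the finiteness of the power-bounded subring in this relative setting, and the paper does not invoke any such result. In effect, your outline for (b)$\Rightarrow$(c) assumes a stronger statement than the one being proved.

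The paper resolves (b)$\Rightarrow$(c) by a completely different and self-contained argument that sidesteps any global structure theory of $\mathcal A^{\circ}$. Given (b), fix an admissible formal model $A$, choose $N$ with $g:=t^{N}f\in A$, and consider the admissible blow-up $X'\to X=\Spf A$ along $J=(t^{N},g)$. The blow-up is covered by the two standard charts $U^{+}=\Spf A\dl g/t^{N}\dr/(t\text{-torsion})$ and $U^{-}=\Spf A\dl t^{N}/g\dr/(g\text{-torsion})$. Hypothesis (b) says every classical point $\Spf V_{x}\to\Spf A$ lifts into $U^{+}$ (because $f(x)\in V_{x}$), so the image of the reduction map $\Spm\mathcal A\to(X'_{0})^{\cl}$ lies in $U^{+}_{0}$. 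Surjectivity of the reduction map (Lemma~\ref{lem-reductionmap}) together with the Jacobson property of $X'_{0}$ then forces $X'=U^{+}$, so the blow-up is proper and affine, hence finite, and $\Gamma(X,\pi_{*}\OO_{X'})=A[f]$ is a finite $A$-module. This produces the integral equation for $f$ over $A$ directly, with no appeal to finiteness of $\mathcal A^{\circ}$. You should replace your Step 3 (and the whole reduction through $\mathcal A^{\circ}$) with this blow-up argument; the tools it needs — the reduction map and its surjectivity, and the Jacobson property — are already established earlier in the paper.

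A minor remark on (d)$\Rightarrow$(a): the detour through \emph{both} inclusions between balls of different residue norms is unnecessary. All you need is that if $g\in A$ then $\|g\|_{\mathcal A}\le1$ for the residue norm attached to a presentation of $A$, which is immediate; then $\{f^{n}\}\subset A$ gives boundedness directly.
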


\begin{proof}
  The spirit of the proof is just the same as that of the
  corresponding theorem in the classical rigid geometry.  However, we
  include the proof here for the reader's convenience.

  Let us first show (a) $\Rightarrow$ (b).  Suppose that $f$ is
  power-bounded, and $|f|_{\sp}>1$.  Since $|\cdot|_{\sp}$ is
  power-multiplicative, $\{|f^n|_{\sp}\}$ is not bounded, and hence
  $\{|f^n|_{\mathcal{A}}\}$ for any residue norm
  $|\cdot|_{\mathcal{A}}$ is not bounded, either.

  Second we show (c) $\Rightarrow$ (d).  Suppose $f$ is integral over
  an admissible formal model $A$ of $\mathcal{A}$.  Then $A[f]$ is
  finite over $A$, hence is topologically of finite type, which gives
  an admissible formal model of $\mathcal{A}$.

  Next we show (d) $\Rightarrow$ (a).  Take an admissible formal model
  $A$ such that $f\in A$, and a presentation
  $A\cong\fR{t}\dl\bfmath{x}\dr/\mathfrak{a}$.  Then we have
  $\mathcal{A}=\rR{t}\dl\bfmath{x}\dr/\mathfrak{a}\rR{t}\dl\bfmath{x}\dr$,
  and one has the residue norm $|\cdot|_{\mathcal{A}}$ on
  $\mathcal{A}$ with respect to this presentation.  Since
  $|g|_{\mathcal{A}}\leq 1$ whenever $g\in A$, we have (a).

  Finally, let us show (b) $\Rightarrow$ (c).  Let $A$ be an
  admissible formal model of $\mathcal{A}$.  Take sufficiently large
  $N\geq 0$ such that $g=t^Nf\in A$, and consider the admissible ideal
  $J=(t^N,g)$ of $A$.  Let $X'\rightarrow X=\Spf A$ be the admissible
  blow-up along $J$.  We have $X=U^+\cup U^-$, where
  \begin{equation*}
    \begin{split}
      U^+&=\Spf A\dl g/t^n\dr/(\textrm{$t$-torsion}),\\
      U^-&=\Spf A\dl t^n/g\dr/(\textrm{$g$-torsion}).
    \end{split}
  \end{equation*}
  We are going to show $X'=U^+$.  What to show is that the closed
  subscheme $Z_0$ of $Z=X'\setminus U^+$ defined by $t=0$ is empty.
  First note that $Z_0$ is a finite type scheme over $\OO$.  By our
  assumption (b), for any $x\in\Spm\mathcal{A}$, the image $f(x)$ of
  $f$ in the residue field $K_x$ at $x$ belongs to the valuation ring
  $V_x$.  Now the map $\Spf V_x\rightarrow\Spf A$ lifts to
  $\Spf V_x\rightarrow X'$ whose image lies in $U^+$, since $g=t^nf$.
  This means that the image of the reduction map
  $\Spm\mathcal{A}\rightarrow X'_0$ lies in $U^+_0$.  Since the
  reduction map $\Spm\mathcal{A}\rightarrow (X'_0)^{\cl}$ is
  surjective by Lemma \ref{lem-reductionmap}, we deduce that $Z_0$ has
  no closed point of $X'_0$.  Since $X'_0$ is of finite type over
  $\OO$, and hence is Jacobson, this means $Z_0=\emptyset$.

  Now, since $\pi\colon X'\rightarrow X=\Spf A$ is proper and affine,
  it is finite.  In particular, $\Gamma(X,\pi_{\ast}\OO_{X'})=A[f]$ is
  finite over $A$, as desired.
\end{proof}

\begin{cor}\label{cor-powerbounded1}
  Let $\mathcal{A}$ be an affinoid $\rR{t}$-algebra, and
  $f_1,\ldots,f_n\in\mathcal{A}$ power-bounded elements.  Then there
  exists an admissible formal model $A$ of $\mathcal{A}$ that contains
  $f_1,\ldots,f_n$.
\end{cor}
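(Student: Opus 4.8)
The plan is to iterate Proposition~\ref{prop-powerbounded} together with its constructive content. First I would apply the proposition to $f_1$: by the equivalence (b)~$\Leftrightarrow$~(c) (or rather its proof), since $f_1$ is power-bounded it is integral over any given admissible formal model $A^{(0)}$ of $\mathcal{A}$, and then as in the proof of (c)~$\Rightarrow$~(d) the ring $A^{(1)} := A^{(0)}[f_1]$ is finite over $A^{(0)}$, hence topologically of finite type over $\fR{t}$, and is $t$-torsion free because it is a subring of $\mathcal{A}$ (recall that a formal model is admissible iff it injects into $\mathcal{A}$). Thus $A^{(1)}$ is an admissible formal model of $\mathcal{A}$ containing $f_1$.

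Next I would iterate: $f_2$ is power-bounded, so by Proposition~\ref{prop-powerbounded} (applied now with the admissible formal model $A^{(1)}$) it is integral over $A^{(1)}$, and $A^{(2)} := A^{(1)}[f_2]$ is again an admissible formal model, now containing both $f_1$ and $f_2$. Continuing in this way, after $n$ steps one obtains an admissible formal model $A^{(n)} = A^{(0)}[f_1,\ldots,f_n]$ of $\mathcal{A}$ that contains all of $f_1,\ldots,f_n$, which is the desired $A$.

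The one point requiring a word of care — and the only place anything could go wrong — is the claim that each $A^{(i)} = A^{(i-1)}[f_i]$ is still $t$-torsion free. This is immediate here, however, because $A^{(i)}$ is by construction a subring of the affinoid algebra $\mathcal{A}$, which is itself $t$-torsion free (indeed $t$ is invertible in $\mathcal{A}$), so no nonzero element of $A^{(i)}$ can be $t$-torsion; equivalently the inclusion $A^{(i)} \hookrightarrow \mathcal{A} = A^{(i)}[1/t]$ shows $A^{(i)}$ is an admissible formal model. The rest is the routine observation that the finiteness of each extension propagates: a finite extension of a topologically of finite type $\fR{t}$-algebra is again topologically of finite type (being a finite module, it is a quotient of a polynomial ring in finitely many variables, hence of a restricted power series ring after $t$-adic completion, which is automatic since such algebras are $t$-adically complete). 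Hence no real obstacle arises; the corollary is a direct finite iteration of the proposition.
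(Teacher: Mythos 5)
Your proposal is correct and is exactly the argument the paper intends — the paper states this as an immediate corollary of Proposition~\ref{prop-powerbounded} with no separate proof, and you have filled in the routine iteration of (a)~$\Rightarrow$~(c)~$\Rightarrow$~(d). The two points you flag (each $A^{(i)}$ is $t$-torsion free as a subring of $\mathcal{A}$, and a finite extension of a topologically-of-finite-type $\fR{t}$-algebra is again such) are indeed the only places needing a word, and your handling of them is correct. One small remark: the iteration can be collapsed into a single step, since all $f_i$ are integral over the initial admissible model $A^{(0)}$, so $A := A^{(0)}[f_1,\ldots,f_n]$ is finite over $A^{(0)}$ and admissible, containing all the $f_i$ at once; but your stepwise version is equally valid.
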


\begin{cor}\label{cor-powerbounded2}
  Let $\varphi\colon\mathcal{A}\rightarrow\mathcal{B}$ be a morphism
  of affinoid $\rR{t}$-algebras, and $g_1,\ldots,g_n\in\mathcal{B}$
  power-bounded elements of $\mathcal{B}$.  Then $\varphi$ lifts
  uniquely to a morphism
  $\mathcal{A}\dl x_1,\ldots,x_n\dr\rightarrow\mathcal{B}$ such that
  $x_i\mapsto g_i$ for $i=1,\ldots,n$.
\end{cor}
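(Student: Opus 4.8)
The plan is to obtain this as the universal property of the Tate algebra, bootstrapped from the universal property of $A\dl\bfmath{x}\dr$ over a formal model $A$, using the results on power-bounded elements.

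First I would dispose of uniqueness. Note that $\mathcal{A}[x_1,\ldots,x_n]$ is dense in $\mathcal{A}\dl\bfmath{x}\dr$ for the canonical topology: writing $\mathcal{A}=A[1/t]$ with $A$ an admissible formal model, any $F=\sum_\nu a_\nu\bfmath{x}^\nu\in A\dl\bfmath{x}\dr$ is the $t$-adic limit of its partial sums (using that almost all $a_\nu$ lie in $t^mA$ for each $m$, together with Lemma~\ref{lem-affinoidtopologyadic} identifying the subspace topology on $A\dl\bfmath{x}\dr$ with the $t$-adic one), and every element of $\mathcal{A}\dl\bfmath{x}\dr$ has such an $F$ as a $t$-power multiple. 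Any two lifts of $\varphi$ sending $x_i\mapsto g_i$ agree on this dense polynomial subalgebra, and both are continuous by Lemma~\ref{lem-affinoidcontinuous}; hence they coincide.

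For existence I would proceed as follows. By Corollary~\ref{cor-powerbounded1}, choose an admissible formal model $B$ of $\mathcal{B}$ with $g_1,\ldots,g_n\in B$. By Corollary~\ref{cor-formalmodelmorphism}, there is an admissible formal model $A$ of $\mathcal{A}$ and a morphism $\phi\colon A\to B$ with $\phi[1/t]=\varphi$. Define $\Phi\colon A\dl\bfmath{x}\dr\to B$ by
$$
\Phi\Big(\sum_{\nu\in\NN^n}a_\nu\bfmath{x}^\nu\Big)=\sum_{\nu\in\NN^n}\phi(a_\nu)\,g_1^{\nu_1}\cdots g_n^{\nu_n}.
$$
This series converges $t$-adically in $B$: given $m$, all but finitely many $a_\nu$ lie in $t^mA$, so the corresponding terms lie in $t^mB$, and $B$ is $t$-adically complete. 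One checks directly that $\Phi$ is an $\fR t$-algebra homomorphism extending $\phi$, the only nontrivial point being multiplicativity, which reduces to rearranging the $t$-adically convergent double sum defining $\Phi(FG)$. Now $A\dl\bfmath{x}\dr$ is admissible by Lemma~\ref{lem-t-torsion} and is a formal model of $\mathcal{A}\dl\bfmath{x}\dr$ by the definition of the Tate algebra, so inverting $t$ gives a morphism $\Phi[1/t]\colon\mathcal{A}\dl\bfmath{x}\dr\to\mathcal{B}$; it restricts to $\varphi$ on $\mathcal{A}$ and sends $x_i$ to $g_i$, as required.

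The genuinely routine part is the convergence bookkeeping for $\Phi$ together with verifying it respects multiplication; the only place demanding a little care is making sure the density statement used for uniqueness is phrased in the presentation-independent canonical topology, which is exactly what Lemma~\ref{lem-affinoidtopologyadic} and Corollary~\ref{cor-affinoidcontinuous} provide. No serious obstacle is expected.
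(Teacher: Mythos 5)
The paper states this as a corollary of Proposition~\ref{prop-powerbounded} and Corollary~\ref{cor-powerbounded1} without supplying a proof, so there is nothing to match your argument against; on its own merits your proof is sound and is precisely the reasoning the authors clearly intend (reduce to formal models and use that $A\dl\bfmath{x}\dr$ is the $t$-adic completion of $A[\bfmath{x}]$, which forces both existence and uniqueness). The uniqueness half is stated carefully and correctly, and the existence half amounts to the universal property of $t$-adic completion, with the convergence and multiplicativity bookkeeping being genuinely routine as you say.

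The one point you have slightly backwards is the invocation of Corollary~\ref{cor-formalmodelmorphism}: that corollary takes a formal model $A$ of $\mathcal{A}$ as \emph{input} and produces a model $B$ of $\mathcal{B}$ with $\phi\colon A\to B$; it does not let you start from a chosen $B$ and find an $A$ mapping into it. This is a one-line fix: pick any admissible formal model $A$ of $\mathcal{A}$, apply Corollary~\ref{cor-formalmodelmorphism} to get $B_1$ and $\phi\colon A\to B_1$, and then enlarge $B_1$ to $B_1[g_1,\ldots,g_n]$, which is again an admissible formal model (the $g_i$ are integral over $B_1$ by Proposition~\ref{prop-powerbounded}) and contains both the image of $\phi$ and the $g_i$. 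Alternatively, one can note that the $B'$ constructed in the proof of Corollary~\ref{cor-formalmodelmorphism} contains any fixed starting model of $\mathcal{B}$, so beginning that proof from the $B$ supplied by Corollary~\ref{cor-powerbounded1} does the job. Either patch makes the argument complete.
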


\begin{cor}\label{cor-powerbounded3}
  Let $\varphi\colon\mathcal{A}\rightarrow\mathcal{B}$ be a morphism
  of affinoid $\rR{t}$-algebras, and $A$ $($resp.\ $B)$ an admissible
  formal model of $\mathcal{A}$ $($resp.\ $\mathcal{B})$.  Then there
  exists an admissible formal model $B'$ of $\mathcal{B}$ such that
  \begin{itemize}
  \item[(a)] $B'$ admits a homomorphism $\phi\colon A\rightarrow B'$
    of admissible $\fR{t}$-algebras such that $\phi[1/t]=\varphi$;
  \item[(b)] $B\subset B'$, and $B'$ is finite over $B$;
  \item[(c)] $\Spf B'\rightarrow\Spf B$ is an admissible blow-up.
  \end{itemize}
\end{cor}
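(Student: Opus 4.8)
The plan is to construct $B'$ directly as the subring of $\mathcal{B}$ generated by $B$ together with the $\varphi$-images of a set of topological generators of $A$, and then to identify the finite morphism $\Spf B'\to\Spf B$ as an iterated admissible blow-up by reusing the computation inside the proof of Proposition~\ref{prop-powerbounded}. Fix a presentation $A\cong\fR{t}\dl\bfmath{x}\dr/\mathfrak{a}$, let $\bar x_1,\dots,\bar x_n\in A$ be the images of the coordinates (so $A$ is topologically generated by them over $\fR{t}$), and set $g_i:=\varphi(\bar x_i)\in\mathcal{B}$. Each $\bar x_i$ lies in the admissible formal model $A$, hence is power-bounded in $\mathcal{A}$ (Proposition~\ref{prop-powerbounded}); since $\varphi$ carries closed points to closed points (Corollary~\ref{cor-formalmodelmorphism1}) compatibly with the residue-field norms of \S\ref{sub-norms}, we get $|g_i|_{\sp}\le|\bar x_i|_{\sp}\le 1$, so the $g_i$ are power-bounded in $\mathcal{B}$ and therefore integral over $B$ (Proposition~\ref{prop-powerbounded} again, applied to the formal model $B$). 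Put $B':=B[g_1,\dots,g_n]\subset\mathcal{B}$. This is finite over $B$, hence $t$-adically complete (finite module over the complete Noetherian ring $B$), hence topologically of finite type over $\fR{t}$, and it is $t$-torsion free as a subring of $\mathcal{B}$; thus $B'$ is an admissible formal model of $\mathcal{B}$ (note $B'[1/t]=\mathcal{B}$ since already $B[1/t]=\mathcal{B}$), it contains $B$, and it is finite over $B$. This proves (b).

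For (a) I would extend $x_i\mapsto g_i$ to an $\fR{t}$-algebra homomorphism $\fR{t}\dl\bfmath{x}\dr\to B'$, which exists because $B'$ is $t$-adically complete and the $g_i$ lie in $B'$. Composing with the inclusion $B'\hookrightarrow\mathcal{B}$ gives the same continuous $\fR{t}$-algebra map as $\fR{t}\dl\bfmath{x}\dr\twoheadrightarrow A\to\mathcal{A}\xrightarrow{\varphi}\mathcal{B}$, because the two agree on the topological generators $x_i$. Since $A$ is admissible, $\mathfrak{a}$ is precisely the kernel of $\fR{t}\dl\bfmath{x}\dr\to\mathcal{A}$, so $\mathfrak{a}$ maps to $0$ in $\mathcal{B}$, and injectivity of $B'\hookrightarrow\mathcal{B}$ then forces $\mathfrak{a}$ to map to $0$ in $B'$. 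Hence the homomorphism factors through $\phi\colon A\to B'$ with $\phi(\bar x_i)=g_i$, and $\phi[1/t]=\varphi$ because both sides are continuous and agree on the topological generators $\bar x_i$ of $\mathcal{A}$.

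The main obstacle is (c). Here the plan is to factor $\Spf B'\to\Spf B$ through the tower $B\hookrightarrow B[g_1]\hookrightarrow\cdots\hookrightarrow B[g_1,\dots,g_n]=B'$, to show that each step is an admissible blow-up, and then to conclude by \cite[{\bf II}, 1.1.10]{FK2}, since a composition of admissible blow-ups is again an admissible blow-up. Writing $B_{i-1}:=B[g_1,\dots,g_{i-1}]$, at the $i$-th step $g_i$ is a power-bounded element of $B_{i-1}[1/t]=\mathcal{B}$ and $B_{i-1}$ is an admissible formal model of $\mathcal{B}$; choosing $N_i$ with $t^{N_i}g_i\in B_{i-1}$, I would appeal to the computation in the proof of Proposition~\ref{prop-powerbounded}: the admissible blow-up $X'\to\Spf B_{i-1}$ along the admissible ideal $(t^{N_i},t^{N_i}g_i)$ satisfies $X'=U^+$ is affine with $\Gamma(X',\OO_{X'})=B_{i-1}[g_i]=B_i$, so $\Spf B_i\to\Spf B_{i-1}$ is an admissible blow-up. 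The delicate point is exactly this identification --- that the blow-up is affine with coordinate ring $B_{i-1}[g_i]$ --- which is the ``$X'=U^+$, $\Gamma(X',\OO_{X'})=A[f]$'' step of that proof (itself resting on the surjectivity of the reduction map, Lemma~\ref{lem-reductionmap}); granting it, the rest of (c) is formal bookkeeping. Alternatively one could quote directly from \cite{FK2} a characterization of finite admissible modifications as admissible blow-ups, but the tower argument keeps the proof self-contained.
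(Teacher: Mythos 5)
Your proof is correct, and parts (a) and (b) follow essentially the same route as the paper: take a presentation $A\cong\fR{t}\dl\bfmath{x}\dr/\mathfrak{a}$, push the images of the coordinates through $\varphi$, observe they are power-bounded, and set $B'=B[g_1,\dots,g_n]$. Where you diverge is in part (c). The paper performs a \emph{single} admissible blow-up of $\Spf B$ along the admissible ideal $J=(t^N,t^Ng_1,\dots,t^Ng_n)$, identifies the resulting formal scheme $X''$ as covered by $n+1$ affine patches $\Spf B_i$, and checks directly that $B_0=B'$ while each $B_i$ for $i\ge 1$ is the complete localization $B'_{\{g_i\}}$, so that $X''=\Spf B'$. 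You instead build the tower $B\hookrightarrow B[g_1]\hookrightarrow\cdots\hookrightarrow B'$ where each inclusion is an admissible blow-up along the single-element ideal $(t^{N_i},t^{N_i}g_i)$ — which is exactly the ``$X'=U^+$'' computation in the proof of Proposition~\ref{prop-powerbounded} — and then invokes \cite[{\bf II}, 1.1.10]{FK2} to say the composition is again an admissible blow-up. Both arguments are sound. Your route is more modular (reusing the one-element case verbatim and deferring to the composition lemma), at the cost of an extra citation; the paper's route is more self-contained within this corollary but requires checking the $n+1$ patches of a single blow-up at once.

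One small remark on your justification that $|g_i|_{\sp}\le 1$: you invoke compatibility with residue-field norms under $\Spm\mathcal{B}\to\Spm\mathcal{A}$. This is fine, but the shorter route the paper leaves implicit is that for any closed point $y$ of $\Spm\mathcal{B}$ with image $x=\varphi^*(y)$, the residue field $K_x$ embeds in $K_y$ and the norm $|\cdot|_y$ restricts to $|\cdot|_x$ (both being the unique norm with $|\bar t|=e^{-1}$), so $|g_i(y)|=|f_i(x)|\le|f_i|_{\sp}\le 1$. Worth being aware that the norm on $K_x$ has a unique extension to the finite extension $K_y$, which is what makes this a one-line computation.
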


\begin{proof}
  Take a presentation $A=\fR{t}\dl x_1,\ldots,x_n\dr/\mathfrak{a}$,
  and set $f_i=(x_i\ \mathrm{mod}\ \mathfrak{a})$ for $i=1,\ldots,n$.
  Set $g_i=\varphi(f_i)$ for $i=1,\ldots,n$.  Then, since
  $|f_i|_{\sp}\leq 1$, we have $|g_i|_{\sp}\leq 1$, and thus
  $B'=B[g_1,\ldots,g_n]$ is an admissible formal model of
  $\mathcal{B}$, which is finite over $B$.

  We need to show that $\Spf B'\rightarrow\Spf B$ is an admissible
  blow-up.  To this end, take sufficiently large $N\geq 0$ such that
  $t^N g_i\in B$ for any $i=1,\ldots,n$, and consider the admissible
  blow-up $X''\rightarrow\Spf B$ along the admissible ideal
  $J=(t^N,t^Ng_1,\ldots,t^Ng_n)$.  Then $X''$ is covered by $\Spf B_i$
  for $i=0,1,\ldots,n$, where
$$
B_0=B\bigg\langle\!\!\!\bigg\langle\frac{t^Ng_1}{t^N},\ldots,\frac{t^Ng_n}{t^N}\bigg\rangle\!\!\!\bigg\rangle/(\textrm{$t$-torsion}),
$$
which is nothing but $B'$, and for $i=1,\ldots,n$,
$$
B_i=B\bigg\langle\!\!\!\bigg\langle\frac{t^N}{t^Ng_i},\frac{t^Ng_1}{t^Ng_i},\ldots,\frac{t^Ng_n}{t^Ng_i}\bigg\rangle\!\!\!\bigg\rangle/(\textrm{$t^Ng_i$-torsion}),
$$
which is isomorphic to the complete localization $B'_{\{g_i\}}$ of
$B'$.  Hence we have $X''=\Spf B'$, and thus we have shown that
$\Spf B'\rightarrow\Spf B$ is an admissible blow-up, as desired.
\end{proof}

\begin{cor}\label{cor-powerbounded5}
  Let $\mathcal{A}$ be an affinoid $\rR{t}$-algebra, and $A$ and $A'$
  admissible formal models of $\mathcal{A}$ such that $A\subset A'$.
  Then $A'$ is finite over $A$.  Moreover, $\Spf A'\rightarrow\Spf A$
  is an admissible blow-up.
\end{cor}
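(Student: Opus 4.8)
The plan is to deduce the entire statement from Corollary~\ref{cor-powerbounded3} applied to the identity morphism $\id_{\mathcal{A}}\colon\mathcal{A}\to\mathcal{A}$, taking $A'$ as the formal model of the source and $A$ as the formal model of the target. The point is that $\id_{\mathcal{A}}$, read between these two models, is exactly the inclusion $A\hookrightarrow A'$, and Corollary~\ref{cor-powerbounded3} already packages the ``blow-up domination'' of one formal model by another.

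Concretely, I would invoke Corollary~\ref{cor-powerbounded3} with $\varphi=\id_{\mathcal{A}}$, with ``$A$'' (the source model) taken to be our $A'$ and ``$B$'' (the target model) taken to be our $A$. It produces an admissible formal model $B'$ of $\mathcal{A}$ such that: (a) there is an $\fR{t}$-algebra homomorphism $A'\to B'$ with $(A'\to B')[1/t]=\id_{\mathcal{A}}$; (b) $A\subseteq B'$ and $B'$ is finite over $A$; (c) $\Spf B'\to\Spf A$ is an admissible blow-up. From (a), since $A'$ and $B'$ both embed into $\mathcal{A}=A'[1/t]=B'[1/t]$ and the composite $A'\to B'\hookrightarrow\mathcal{A}$ is the canonical inclusion, we get $A'\subseteq B'$ as subrings of $\mathcal{A}$. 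For the reverse inclusion I would use the explicit description of $B'$ from the proof of Corollary~\ref{cor-powerbounded3}: there $B'=A[g_1,\dots,g_n]$, where the $g_i$ are the images in $\mathcal{A}$ of the generators of a presentation of the source model; since the source model is now $A'$, we have $g_i\in A'$, hence $B'=A[g_1,\dots,g_n]\subseteq A'$. Combining the two inclusions gives $B'=A'$, and then (b) and (c) read off exactly that $A'$ is finite over $A$ and that $\Spf A'\to\Spf A$ is an admissible blow-up.

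Should one prefer a self-contained argument that does not reach into the proof of Corollary~\ref{cor-powerbounded3}, the same facts can be obtained directly. Fix a presentation $A'=\fR{t}\dl y_1,\dots,y_m\dr/\mathfrak{b}$ and let $g_j\in A'\subseteq\mathcal{A}$ be the image of $y_j$. Each $g_j$ lies in the admissible formal model $A'$, hence is integral over the admissible formal model $A$ by Proposition~\ref{prop-powerbounded} (condition (d) $\Rightarrow$ condition (c)). Therefore $A[g_1,\dots,g_m]$ is module-finite over the Noetherian, $t$-adically complete ring $A$, so it is itself an admissible formal model of $\mathcal{A}$ and, by Lemma~\ref{lem-affinoidtopologyadic}, is closed in $\mathcal{A}$ for the canonical topology. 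It contains the $t$-adically dense subring $\fR{t}[g_1,\dots,g_m]$ of $A'$ and is contained in $A'$, whence $A[g_1,\dots,g_m]=A'$ is finite over $A$; the blow-up assertion then follows by running the explicit admissible blow-up of $\Spf A$ along the ideal $(t^N,t^Ng_1,\dots,t^Ng_m)$ (with $N$ large enough that $t^Ng_j\in A$) exactly as in the proof of Corollary~\ref{cor-powerbounded3}.

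I do not expect a genuine obstacle here, given Corollary~\ref{cor-powerbounded3} and Proposition~\ref{prop-powerbounded}. The one step that needs a little care is the bookkeeping identifying $B'$ (equivalently $A[g_1,\dots,g_m]$) with $A'$: one must check that $A[g_1,\dots,g_m]$ really is an admissible formal model of $\mathcal{A}$ ($t$-torsion-free because it is a subring of $\mathcal{A}$, topologically of finite type because it is module-finite over $A$), invoke Lemma~\ref{lem-affinoidtopologyadic} to see that its subspace topology is $t$-adic so that $t$-adic completeness forces closedness in $\mathcal{A}$, and note that the chosen topological generators $g_j$ of $A'$ already lie in $A[g_1,\dots,g_m]$, so that density plus closedness yields the equality. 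Everything else is a direct citation of the results of this section.
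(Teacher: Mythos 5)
Your proof is correct, and in fact you give two valid routes.  Your self-contained variant is essentially identical to the paper's own argument: take topological generators $g_1,\dots,g_m$ of $A'$, use Proposition~\ref{prop-powerbounded} ((d)\,$\Rightarrow$\,(c)) to see each $g_j$ is integral over $A$, conclude $A[g_1,\dots,g_m]$ is module-finite over $A$ and hence $t$-adically complete, and identify it with $A'$ via density of $\fR{t}[g_1,\dots,g_m]$ plus closedness; the blow-up assertion is handled, as in the paper, by running the blow-up along $(t^N,t^Ng_1,\dots,t^Ng_m)$ exactly as in the proof of Corollary~\ref{cor-powerbounded3}.  Your primary route (apply Corollary~\ref{cor-powerbounded3} to $\id_\cA$ with source model $A'$ and target model $A$, then match the resulting $B'$ with $A'$) takes a slightly different path: instead of redoing the Noether-normalization-style argument, you harvest both the finiteness and the blow-up statement at once from the already-proved corollary, paying only the cost of peeking inside its proof to see that $B'=A[g_1,\dots,g_n]\subseteq A'$.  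That is a perfectly legitimate shortcut, and it is somewhat tidier as a citation; the paper instead reproves the finiteness directly and defers only the blow-up step, which is a matter of taste since both ultimately reduce to the same computation.  The one bookkeeping point you rightly flag --- that $A[g_1,\dots,g_m]$ is an admissible formal model and is closed in $\cA$ via Lemma~\ref{lem-affinoidtopologyadic} --- is indeed the only place needing care, and you handle it correctly.
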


\begin{proof}
  Since $A'$ is topologically of finite type over $A$, one can write
  $A'=A\dl f_1,\ldots,f_n\dr$ for some $f_1,\ldots,f_n\in A'$.  By
  Proposition \ref{prop-powerbounded}, each $f_i$ is integral over
  $A$, and hence $A'=A[f_1,\ldots,f_n]$, which is finite over $A$.
  The other part can be shown similarly to the proof of Corollary
  \ref{cor-powerbounded3}.
\end{proof}

\begin{cor}\label{cor-powerbounded4}
  Let $\mathcal{A}$ be an affinoid $\rR{t}$-algebra, and $A$ and $A'$
  admissible formal models of $\mathcal{A}$.  Then there exists an
  admissible formal model $A''$ of $\mathcal{A}$ such that
  \begin{itemize}
  \item[(a)] $A''$ contains both $A$ and $A'$, and $A''$ is finite
    over $A$ and $A'$;
  \item[(b)] $\Spf A''\rightarrow\Spf A$ and
    $\Spf A''\rightarrow\Spf A'$ are admissible blow-ups.
  \end{itemize}
\end{cor}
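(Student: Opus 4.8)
The plan is to reduce everything to Corollary~\ref{cor-powerbounded3} applied to the identity morphism, and then to quote Corollary~\ref{cor-powerbounded5}.

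First, I would apply Corollary~\ref{cor-powerbounded3} to the identity $\varphi=\id_{\mathcal{A}}\colon\mathcal{A}\to\mathcal{A}$, taking $A$ as the admissible formal model of the source and $A'$ as the admissible formal model of the target. This produces an admissible formal model $A''$ of $\mathcal{A}$ together with a homomorphism $\phi\colon A\to A''$ of admissible $\fR{t}$-algebras such that $\phi[1/t]=\id_{\mathcal{A}}$, and such that moreover $A'\subset A''$, $A''$ is finite over $A'$, and $\Spf A''\to\Spf A'$ is an admissible blow-up. Since $A$ and $A''$ are admissible, both inject into $\mathcal{A}$ (as recorded after Definition~\ref{dfn-formalmodel}), and the identity $\phi[1/t]=\id_{\mathcal{A}}$ forces $\phi$ to be the inclusion $A\hookrightarrow A''$ of subrings of $\mathcal{A}$. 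Hence $A''$ contains both $A$ and $A'$, which takes care of half of (a) and of the $A'$-parts of (a) and (b).

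It then remains only to check that $A''$ is finite over $A$ and that $\Spf A''\to\Spf A$ is an admissible blow-up. But this is exactly Corollary~\ref{cor-powerbounded5} applied to the pair of admissible formal models $A\subset A''$ of $\mathcal{A}$. Combining the two inputs yields both (a) and (b).

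I do not expect a genuine obstacle here; the only subtlety is the identification of the abstractly produced map $\phi\colon A\to A''$ with an honest inclusion of subrings of $\mathcal{A}$, which follows from the injectivity of admissible formal models into their generic fibre. Alternatively, one can be fully explicit: writing $A=\fR{t}\dl x_1,\ldots,x_n\dr/\mathfrak{a}$ and letting $f_i\in A\subset\mathcal{A}$ denote the images of the $x_i$, these elements are power-bounded, hence integral over $A'$ by Proposition~\ref{prop-powerbounded}, so that $A'':=A'[f_1,\ldots,f_n]$ is finite over $A'$ and therefore an admissible formal model of $\mathcal{A}$; since $A''$ is $t$-adically complete and contains $\fR{t}[f_1,\ldots,f_n]$, whose $t$-adic closure in $\mathcal{A}$ is $A$ by Lemma~\ref{lem-affinoidtopologyadic}, one gets $A\subset A''$, and concludes again with Corollary~\ref{cor-powerbounded5}.
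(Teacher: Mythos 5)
Your proposal is correct and follows essentially the paper's own route. Your explicit alternative at the end is literally the paper's proof: take a presentation of $A$, note that the topological generators $f_i$ are power-bounded, set $A''=A'[f_1,\ldots,f_n]$ (the paper observes that this also equals $A[f'_1,\ldots,f'_m]$, making the symmetry manifest), and conclude (b) via Corollary~\ref{cor-powerbounded5}. Your primary route through Corollary~\ref{cor-powerbounded3} applied to $\id_{\mathcal{A}}$ is the same construction packaged one level up, since the proof of Corollary~\ref{cor-powerbounded3} in the case $\varphi=\id$ produces exactly $A'[f_1,\ldots,f_n]$; your observation that $\phi$ must be the inclusion because both models inject into $\mathcal{A}$ is correct and is the small glue needed to run that route.
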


\begin{proof}
  Take a presentation $A=\fR{t}\dl x_1,\ldots,x_n\dr/\mathfrak{a}$
  (resp.\ $A'=\fR{t}\dl x'_1,\ldots,x'_m\dr/\mathfrak{a}'$), and set
  $f_i=(x_i\ \mathrm{mod}\ \mathfrak{a})$ for $i=1,\ldots,n$ (resp.\
  $f'_j=(x'_j\ \mathrm{mod}\ \mathfrak{a}')$ for $j=1,\ldots,m$).
  Then $f_i$ and $f'_j$ are power-bounded elements, and set
  $A''=A[f'_1,\ldots,f'_m]=A'[f_1,\ldots,f_n]$.  Then this $A''$
  satisfies the condition (a).  The condition (b) follows from Corollary \ref{cor-powerbounded5}.
\end{proof}

\begin{rem}\label{rem-rigidification}
  What we have seen in Corollary \ref{cor-powerbounded3} and Corollary
  \ref{cor-powerbounded4} sums up to say in the language of \cite[{\bf
    II}, \S A.4.(g)]{FK2} that affinoid $\rR{t}$-algebras, like
  classical affinoid algebras, have the so-called {\em canonical
    rigidification}.
\end{rem}

\subsection{Rigid analytic geometry over $\rR{t}$}\label{sub-Sp}
By Corollary \ref{cor-powerbounded4}, for any $\rR{t}$-affinoid
algebra $\mathcal{A}$, the affinoid space $(\Spf A)^{\rig}$ does not
depend on the choice of the admissible formal model $A$ of
$\mathcal{A}$.  Hence we can write, as in the classical rigid geometry,
$$
\Sp\mathcal{A}:=(\Spf A)^{\rig}.
$$

\begin{rem}\label{rem-harmless}
  Note that this definition of $\Sp\mathcal{A}$ differs (harmlessly)
  from the one in the classical situation, where affinoid spaces in
  the classical setting are defined as the analytic spaces supported
  on the set of all maximal ideals (i.e., classical points) of
  $\mathcal{A}$.
\end{rem}

By Corollary \ref{cor-powerbounded3}, any $\rR{t}$-algebra
homomorphism $\mathcal{A}\rightarrow\mathcal{B}$ between affinoid
$\rR{t}$-algebras induces a well-defined morphism
$\Sp\mathcal{B}\rightarrow\Sp\mathcal{A}$ of rigid spaces over
$\rR{t}$.

\subsubsection{$K$-valued points}\label{subsub-valuedpoints}
Let $\fR{t}\rightarrow K$ be a finite $\rR{t}$-algebra, where $K$ is a non-archimedean local field. 
Then, for any affinoid space $\Sp\mathcal{A}$, one can consider a {\em $K$-valued point} $\alpha\colon\Sp K\rightarrow\Sp\mathcal{A}$ as a morphism between rigid spaces over $\rR{t}$.
By Corollary \ref{cor-formalmodelmorphism1}, $\alpha$ factors as 
$$
\xymatrix@C-3ex{\Sp K\ar[rr]^{\alpha}\ar[dr]&&\Sp\mathcal{A}\rlap{,}\\ &\Sp L\ar@{^{(}->}[ur]_{\beta}}
$$
where $K/L$ is a finite extension of non-archimedean local fields, and $\beta$ is a classical point.

\subsubsection{Specialization to the classical rigid geometry}\label{subsub-specialization}
Consider a $K$-valued point $\Sp K\hookrightarrow\Sp\fR{t}$, where $K$ is
a non-archimedean local field.  Then any affinoid space
$\Sp\mathcal{A}$ over $\rR{t}$ can be specialized to the affinoid
space $\Sp\mathcal{A}_K$ over $K$ in the sense of classical rigid
geometry, where $\mathcal{A}_K=\mathcal{A}\otimes_{\rR{t}}K$.

More generally, for any finite type rigid space $\mathcal{X}$ over
$\rR{t}$, one has the base change $\mathcal{X}_K$, which is a finite
type rigid space over $K$.

\subsection{Extension by roots of $t$}\label{sub-rootoft}
For a positive integer $N$, one has the finite extension
$\fR{t}\rightarrow\fR{t}[t^{1/N}]=\fR{t,s}/(s^N-t)$.  Since the latter
ring is $t$-adically complete, it coincides with its completion
$\fR{t^{1/N}}$.  Note that $\fR{t^{1/N}}=\fR{t}[t^{1/N}]$ is
faithfully flat over $\fR{t}$, and that
$\fR{t^{1/N}}[1/t]=\rR{t^{1/N}}$.  Note also that, if $A$ is
$t$-adically complete, then
$A\otimes_{\fR{t}}\fR{t^{1/N}}=A\widehat{\otimes}_{\fR{t}}\fR{t^{1/N}}$,
since the left-hand side is already $t$-adically complete.

Let $\mathcal{A}$ be an affinoid integral domain over $\rR{t}$.
Consider the simple field extension $\Frac(\mathcal{A})(t^{1/N})$ of
$\Frac(\mathcal{A})$ generated by an $N$th root of $t$.  We denote by
$$
\mathcal{A}[t^{1/N}]
$$
the subring of $\Frac(\mathcal{A})(t^{1/N})$ generated by
$\mathcal{A}$ and $t^{1/N}$.  This is an affinoid
$\rR{t^{1/N}}$-algebra, since there exists a surjection
$\mathcal{A}\otimes_{\rR{t}}\rR{t^{1/N}}\rightarrow\mathcal{A}[t^{1/N}]$
and
$\mathcal{A}\otimes_{\rR{t}}\rR{t^{1/N}}=\mathcal{A}\widehat{\otimes}_{\rR{t}}\rR{t^{1/N}}$
is an affinoid $\rR{t^{1/N}}$-algebra.

If $A$ is an admissible formal model of $\mathcal{A}$, one can
similarly define
$$
A[t^{1/N}]
$$
to be the subring of $\Frac(A)(t^{1/N})$ generated by $A$ and
$t^{1/N}$, which is the surjective image of the canonical map
$A\otimes_{\fR{t}}\fR{t^{1/N}}=A\widehat{\otimes}_{\fR{t}}\fR{t^{1/N}}\rightarrow A[t^{1/N}]$.
Clearly, $A[t^{1/N}]$ is an admissible formal model of
$\mathcal{A}[t^{1/N}]$.

\subsection{Affinoid algebra with a convergence condition}
In the sequel, by the {\em denominator} of a rational number $\dt$, we
mean the smallest positive integer $d(\dt)$ such that
$d(\dt)\cdot\dt\in\ZZ$.  Similarly, the denominator of a vector
$\bfmath{\dt}=(\dt_1,\ldots,\dt_n)\in\QQ^n$ of rational numbers is the
smallest positive integer $d(\bfmath{\dt})$ such that
$d(\bfmath{\dt})\cdot\bfmath{\dt}\in\ZZ^n$.  We write
$$
t^{\bfmath{\dt}}=(t^{\dt_1},\ldots,t^{\dt_n})\quad\textrm{and}\quad t^{\bfmath{\dt}}\bfmath{x}=(t^{\dt_1}x_1,\ldots,t^{\dt_n}x_n),
$$
and for $\nu=(\nu_1,\ldots,\nu_n)\in\NN^n$,
$$
\bfmath{\dt}\cdot\nu=\dt_1\nu_1+\cdots+\dt_n\nu_n
$$

\subsubsection{Affinoid algebra with a convergence condition}
For $\bfmath{\dt}=(\dt_1,\ldots,\dt_n)\in(\QQ_{\geq 0})^n$ and an
affinoid algebra $\mathcal{A}=A[1/t]$, where $A$ is an admissible
$\fR{t}$-algebra, we set
\begin{equation*}
  \begin{split}
    &\mathcal{A}\dl\bfmath{x};\bfmath{\dt}\dr=\bigg\{\sum_{\nu\in\NN^n}a_{\nu}\bfmath{x}^{\nu}\in\mathcal{A}\dbl\bfmath{x}\dbr\,\bigg|\,{\small
      \begin{minipage}{11.5em}$|t|^{-\bfmath{\dt}\cdot\nu}|a_{\nu}|\rightarrow 0$
        as $|\nu|\rightarrow\infty$\end{minipage}}\bigg\}
  \end{split}
\end{equation*}
(not depending on the choice of a residue norm $|\cdot|$ on
$\mathcal{A}$), and
\begin{equation*}
  \begin{split}
    &A\dl\bfmath{x};\bfmath{\dt}\dr=\bigg\{\sum_{\nu\in\NN^n}a_{\nu}\bfmath{x}^{\nu}\in\mathcal{A}\dl\bfmath{x};\bfmath{\dt}\dr\,\bigg|\,{\small |a_{\nu}|\leqq |t|^{\bfmath{\dt}\cdot\nu}}\bigg\},
  \end{split}
\end{equation*}
which is an $A$-algebra.  Note that the condition
$|a_{\nu}|\leqq |t|^{\bfmath{\dt}\cdot\nu}$ is equivalent to
$a^{d(\bfmath{\dt})}_{\nu}\in t^{d(\bfmath{\dt})(\bfmath{\dt}\cdot\nu)}A$,
hence is independent of choice of the residue norm $|\cdot|$.

If $\dt_i=m_i/d_i$ is the irreducible fraction with $d_i>0$ for
$i=1,\ldots,n$, then
$$
A\dl\bfmath{x};\bfmath{\dt}\dr\cong A\dl\bfmath{x},\bfmath{y}\dr/\mathfrak{a},
$$
where $\bfmath{y}=(y_1,\ldots,y_n)$ and $\mathfrak{a}$ is the ideal
generated by $t^{m_i}x^{d_i}_i-y^{d_i}_i$ for $i=1,\ldots,n$.  In
particular, $A\dl\bfmath{x};\bfmath{\dt}\dr$ is an admissible
$\fR{t}$-algebra, and hence
$\mathcal{A}\dl\bfmath{x};\bfmath{\dt}\dr=A\dl\bfmath{x};\bfmath{\dt}\dr[1/t]$
is an affinoid $\rR{t}$-algebra.  Moreover, it is easy to see that, if
$\bfmath{x}'=(x'_1,\ldots,x'_m)$ and
$\bfmath{\dt}'=(\dt'_1,\ldots,\dt'_m)\in(\QQ_{\geq 0})^m$, then
\begin{equation*}
  \begin{split}
    \mathcal{A}\dl\bfmath{x},\bfmath{x}';(\bfmath{\dt},\bfmath{\dt}')\dr&\cong\mathcal{A}\dl\bfmath{x};\bfmath{\dt}\dr\widehat{\otimes}_{\mathcal{A}}\mathcal{A}\dl\bfmath{x}';\bfmath{\dt}'\dr,\\
    A\dl\bfmath{x},\bfmath{x}';(\bfmath{\dt},\bfmath{\dt}')\dr&\cong A\dl\bfmath{x};\bfmath{\dt}\dr\widehat{\otimes}_AA\dl\bfmath{x}';\bfmath{\dt}'\dr,
  \end{split}
\end{equation*}
where
$(\bfmath{\dt},\bfmath{\dt}')=(\dt_1,\ldots,\dt_n,\dt'_1,\ldots,\dt'_m)$.

Note that $\mathcal{A}\dl\bfmath{x};\bfmath{\dt}\dr$ is an affinoid
algebra corresponding to the polydisk of polyradius
$|t|^{-\bfmath{\dt}}=(|t|^{-\dt_1},\ldots,|t|^{-\dt_n})$, i.e.,
\begin{equation*}
  \begin{split}
    A\dl\bfmath{x};\bfmath{\dt}\dr\otimes_{\fR{t}}\fR{t^{1/N}}&\cong(A\otimes_{\fR{t}}\fR{t^{1/N}})\dl t^{\bfmath{\dt}}\bfmath{x}\dr,\\
    \mathcal{A}\dl\bfmath{x};\bfmath{\dt}\dr\otimes_{\rR{t}}\rR{t^{1/N}}&\cong(\mathcal{A}\otimes_{\fR{t}}\fR{t^{1/N}})\dl t^{\bfmath{\dt}}\bfmath{x}\dr,
  \end{split}
\end{equation*}
and, if $\mathcal{A}$ is an integral domain,
\begin{equation*}
  \begin{split}
    A\dl\bfmath{x};\bfmath{\dt}\dr[t^{1/N}]\cong A[t^{1/N}]\dl t^{\bfmath{\dt}}\bfmath{x}\dr,\ \mathcal{A}\dl\bfmath{x};\bfmath{\dt}\dr[t^{1/N}]\cong \mathcal{A}[t^{1/N}]\dl t^{\bfmath{\dt}}\bfmath{x}\dr.
  \end{split}
\end{equation*}
where $N$ is a multiple of $d(\bfmath{\dt})$.

\begin{lem}\label{lem-openimmersion}
Let $\bfmath{\dt}=(\dt_1,\ldots.\dt_n),\bfmath{\dt}'=(\dt'_1,\ldots,\dt'_n)\in(\QQ_{>0})^n$, and suppose $\bfmath{\dt}'\leq\bfmath{\dt}$, i.e., $\dt'_i\leq\dt_i$ for $i=1,\ldots,n$.
Then $\mathcal{A}\dl\bfmath{x};\bfmath{\dt}\dr\rightarrow\mathcal{A}\dl\bfmath{x};\bfmath{\dt'}\dr$ is flat.
\end{lem}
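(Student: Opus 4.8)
The plan is to recognise this homomorphism as the coordinate-ring map attached to the inclusion of a Weierstrass subdomain, and then to invoke the flatness of such inclusions.

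\emph{Reduction to integer exponents.} First I would reduce to the case $\bfmath{\dt},\bfmath{\dt}'\in\NN^n$. Pick a common multiple $N$ of $d(\bfmath{\dt})$ and $d(\bfmath{\dt}')$, so that $\rR{t^{1/N}}=\rR{t}[s]/(s^N-t)$, $s=t^{1/N}$, is finite free, hence faithfully flat, over $\rR{t}$ (§\ref{sub-rootoft}). Since flatness of a ring map may be checked after a faithfully flat base change, and $\mathcal{A}\dl\bfmath{x};\bfmath{\dt}\dr\to\mathcal{A}\dl\bfmath{x};\bfmath{\dt}\dr\otimes_{\rR{t}}\rR{t^{1/N}}$ is (finite free, hence) faithfully flat, it suffices to show that the base changed map is flat. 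By the isomorphisms of §\ref{sub-rootoft}, writing $\mathcal{A}'=\mathcal{A}\otimes_{\rR{t}}\rR{t^{1/N}}$, $y_i=t^{\dt_i}x_i$ and $w_i=t^{\dt'_i}x_i$, this base changed map is identified with a homomorphism of genuine Tate algebras
$$
\mathcal{A}'\dl\bfmath{y}\dr\longrightarrow\mathcal{A}'\dl\bfmath{w}\dr,\qquad y_i\longmapsto s^{\ell_i}w_i\quad(\ell_i=N(\dt_i-\dt'_i)\in\NN).
$$
Note $s\in(\mathcal{A}')^{\times}$ since $s^N=t$, and $|s|\le 1$ since $|s|^N=|t|<1$. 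So it is enough to treat a Tate-algebra map of this shape.

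\emph{Identification as a Weierstrass localization.} Set $\mathcal{D}:=\mathcal{A}'\dl\bfmath{y}\dr\dl\bfmath{w}\dr/(s^{\ell_i}w_i-y_i\mid i=1,\dots,n)$; geometrically this is the coordinate ring of the Weierstrass subdomain $\{\,|s^{-\ell_i}y_i|\le 1\,\}$ of $\Sp\mathcal{A}'\dl\bfmath{y}\dr$, which makes sense because $s$ is a unit. Each $s^{\ell_i}w_i-y_i$ is $y_i$-distinguished of degree one, its coefficient of $y_i$ being $-1$, of Gauss norm $1\ge|s|^{\ell_i}=\|s^{\ell_i}w_i\|$. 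Hence, eliminating the $y_i$ via the power-bounded substitution $y_i=s^{\ell_i}w_i$ (legitimate by Corollary~\ref{cor-powerbounded2}, or by iterated Weierstrass division), one gets a canonical isomorphism $\mathcal{D}\cong\mathcal{A}'\dl\bfmath{w}\dr$ under which the structure homomorphism $\mathcal{A}'\dl\bfmath{y}\dr\to\mathcal{D}$ becomes exactly $y_i\mapsto s^{\ell_i}w_i$. Thus the map under consideration is, up to this identification, the inclusion $\mathcal{A}'\dl\bfmath{y}\dr\to\mathcal{D}$ of a Weierstrass localization.

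\emph{Conclusion, and the main obstacle.} It remains to know that the inclusion of a Weierstrass subdomain (indeed of any affinoid subdomain) induces a flat map of coordinate rings. Over a non-archimedean field this is classical (\cite[§8.3]{SBosch}), and in the rigid-analytic setting over $\rR{t}$ it is part of the general formalism of \cite{FK2}; applying it to $\mathcal{A}'\dl\bfmath{y}\dr\to\mathcal{D}$ and descending along the faithfully flat base change gives the flatness of $\mathcal{A}\dl\bfmath{x};\bfmath{\dt}\dr\to\mathcal{A}\dl\bfmath{x};\bfmath{\dt}'\dr$. The reductions above are routine bookkeeping with convergence conditions; the one genuinely non-formal ingredient is the flatness of the Weierstrass localization. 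As in the classical case, this is the hard point: it comes down to exhibiting the localized algebra as an appropriate metric completion and to the flatness of such completions over Noetherian Banach algebras, so in a self-contained write-up I would either cite \cite{FK2} for it or reproduce that argument in the $\rR t$-setting.
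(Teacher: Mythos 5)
Your proof is correct and follows essentially the same route as the paper's: both proofs reduce to integer exponents by the faithfully flat base change $\rR{t}\to\rR{t^{1/N}}$, then recognize the resulting map as the coordinate-ring map of an open immersion of rigid spaces and invoke the flatness of such maps from \cite{FK2} (the paper cites {\bf II}, 6.6.1(1)). The only difference is cosmetic: the paper reduces to $n=1$ and exhibits the target as the affine patch $\{|t^{\dt-\dt'}/y|\ge 1\}$ of the admissible blow-up of $\Spf A\dl x;\dt\dr$ along $(t^{\dt-\dt'},t^{\dt}x)$, whereas you describe the same open immersion directly as a Weierstrass subdomain $\{|s^{-\ell_i}y_i|\le 1\}$ without the $n=1$ reduction; these are two descriptions of the same rational subdomain, so the ``one genuinely non-formal ingredient'' you isolate at the end is exactly the FK2 reference the paper uses.
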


\begin{proof}
We may assume that $n=1$, i.e., it suffices to show that $\mathcal{A}\dl x;\dt\dr\rightarrow\mathcal{A}\dl x;\dt'\dr$ for $\delta'\leq\delta$ is flat.
Since the extension $\fR{t}\rightarrow\fR{t^{1/N}}$ is faithfully flat, we may replace $\mathcal{A}$ by $\mathcal{A}\otimes_{\fR{t}}\fR{t^{1/N}}$, and thus we may assume that $\delta,\delta'\in\ZZ$.
Then $\Sp\mathcal{A}\dl x;\dt'\dr\rightarrow\Sp\mathcal{A}\dl x;\dt\dr$ is an open immersion, since $\Spf A\dl x;\dt'\dr\rightarrow\Spf A\dl x;\dt\dr$ is one the affine patches of the admissible blow-up of $\Spf A\dl x;\dt\dr$ along the admissible ideal $(t^{\dt-\dt'},x)$.
Then the desired flatness follows from \cite[{\bf II}, 6.6.1 (1)]{FK2}. 
\end{proof}

\subsubsection{Unit-polydisk part}\label{subsub-unitpolydiskpart}
If $\bfmath{\dt}=(\dt_1,\ldots,\dt_n),\bfmath{\dt}'=(\dt'_1,\ldots,\dt'_n)\in(\QQ_{>0})^n$ with $\bfmath{\dt}'\leq\bfmath{\dt}$, then
$\mathcal{A}\dl\bfmath{x};\bfmath{\dt}\dr$ is contained in
$\mathcal{A}\dl\bfmath{x};\bfmath{\dt'}\dr$.  In this situation, if $\mathcal{B}$ is
an affinoid algebra over $\mathcal{A}\dl\bfmath{x};\bfmath{\dt}\dr$,
then we define
$$
\gdp{\mathcal{B}}{\bfmath{\dt}'}=\mathcal{B}\widehat{\otimes}_{\mathcal{A}\dl\bfmath{x};\bfmath{\dt}\dr}\mathcal{A}\dl\bfmath{x};\bfmath{\dt}'\dr.
$$
Especially, $\gdp{\cB}{0}$ will be called the {\em unit-polydisk part} of $\mathcal{B}$, which we denote by $\udp{\cB}$.

\subsection{Irreducible
  decomposition}\label{sub-irreducibledecomposition}
\begin{lem}\label{lem-irreducible1}
  Let $\mathcal{X}$ be a rigid space of finite type over $\rR{t}$, and
  $\mathcal{N}_{\mathcal{X}}$ the ideal sheaf of $\OO_{\mathcal{X}}$
  consisting of local nilpotent sections.  Then
  $\mathcal{N}_{\mathcal{X}}$ is a coherent ideal of
  $\OO_{\mathcal{X}}$.
\end{lem}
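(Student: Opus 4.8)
The plan is to reduce the statement to the affinoid case and then to a known statement about Noetherian rings. Since coherence of an ideal sheaf is a local question on $\mathcal{X}$, and $\mathcal{X}$ is locally of the form $\Sp\mathcal{A}$ for an affinoid $\rR t$-algebra $\mathcal{A}$ (with admissible formal model $A$), it suffices to show that the nilradical sheaf $\mathcal{N}_{\Sp\mathcal{A}}$ is associated to a finitely generated ideal of $\mathcal{A}$, namely the nilradical $\sqrt{0}\subset\mathcal{A}$. First I would recall that $\mathcal{A}$ is Noetherian (it is a quotient of the Noetherian ring $\rR t\dl\bfmath x\dr$, which is Noetherian since $\fR t\dl\bfmath x\dr$ is), so $\sqrt{0}$ is a finitely generated ideal, and $\mathcal{A}_{\mathrm{red}}:=\mathcal{A}/\sqrt 0$ is again an affinoid $\rR t$-algebra with a closed immersion $\Sp\mathcal{A}_{\mathrm{red}}\hookrightarrow\Sp\mathcal{A}$ corresponding to it.

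Next I would check that the coherent sheaf $\mathcal{I}$ on $\Sp\mathcal{A}$ associated to $\sqrt 0\subset\mathcal{A}$ (via the standard dictionary between coherent sheaves on an affinoid space and finite modules over the affinoid algebra) is indeed the sheaf of locally nilpotent sections $\mathcal{N}_{\Sp\mathcal{A}}$. One inclusion is immediate: a section of $\mathcal{I}$ over an admissible open is, locally, a section whose image in $\mathcal{A}$ lies in $\sqrt 0$, hence is nilpotent, so $\mathcal{I}\subseteq\mathcal{N}_{\Sp\mathcal{A}}$ — here one uses that the structure sheaf of an affinoid admissible open is again affinoid, so ``nilpotent after restriction to a covering'' already implies ``nilpotent'' by gluing finitely many nilpotency exponents (quasi-compactness of affinoids). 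For the reverse inclusion $\mathcal{N}_{\Sp\mathcal{A}}\subseteq\mathcal{I}$, it is enough, again by quasi-compactness, to work on an affinoid admissible open $\Sp\mathcal{B}\subseteq\Sp\mathcal{A}$ with $\mathcal{B}$ flat (or at least such that the relevant localization statement holds) over $\mathcal{A}$, and to note that a nilpotent element of $\mathcal{B}$ lies in $\sqrt 0_{\mathcal{B}}$, while $\sqrt 0_{\mathcal{B}}$ is generated by the image of $\sqrt 0_{\mathcal{A}}$ — this last point is the compatibility of the formation of the nilradical with the flat (e.g. localization-type, or rational-domain) maps appearing in an affinoid covering, which for the rational-subdomain maps used here follows from their flatness (cf.\ the flatness statements in \cite{FK2} already invoked in Lemma~\ref{lem-openimmersion}) together with the fact that over a Noetherian ring $\sqrt 0$ commutes with flat base change.

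I expect the main obstacle to be precisely this compatibility of the nilradical with the admissible-open restriction maps: one must ensure that the maps appearing in an affinoid covering of $\Sp\mathcal{A}$ — rational subdomains and their analogues in this $\rR t$-setting — are flat and that ``$\sqrt 0$ commutes with the base change'' holds for them, so that the association $\Sp\mathcal{A}\rightsquigarrow\sqrt 0_{\mathcal{A}}$ glues to a genuine coherent subsheaf rather than merely a presheaf. Once that compatibility is in hand, coherence is automatic from the affinoid dictionary (the sheaf associated to the finite module $\sqrt 0_{\mathcal{A}}$ over the Noetherian affinoid $\mathcal{A}$ is coherent), and patching over a finite affinoid cover of $\mathcal{X}$ yields the coherence of $\mathcal{N}_{\mathcal{X}}$ globally.
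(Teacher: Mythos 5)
Your plan identifies exactly the right crux --- whether ``$\sqrt{0}$ commutes with the rational-subdomain restriction maps'' --- but the justification you offer for it is incorrect, and this is a genuine gap. The assertion ``over a Noetherian ring $\sqrt{0}$ commutes with flat base change'' is false. For an algebraic localization $R\to R_f$ this is true, but the restriction map to a rational subdomain in rigid geometry is a \emph{completed} localization, and flatness alone does not ensure that it preserves reducedness. The classical obstruction: there exist Noetherian local domains $R$ whose $\mm$-adic completion $\widehat R$ is not reduced; $R\to\widehat R$ is faithfully flat and $\sqrt 0_R=0$, yet $\sqrt 0_{\widehat R}\ne 0=\sqrt 0_R\cdot\widehat R$. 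What one actually needs is that the flat map have geometrically reduced fibers; for the completion maps occurring here this is a statement about the \emph{formal fibers} of the affinoid algebra, and it does not follow from flatness but from \emph{excellence}.

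This is precisely where the paper's (very brief) proof differs from yours: it cites the theorem of Gabber that topologically of finite type $\fR t$-algebras are excellent, and then refers to the argument of \cite[\textbf{II}, 8.2.9]{FK2}, which is exactly the mechanism that upgrades ``flat'' to ``flat with geometrically regular (hence reduced) formal fibers''. Excellence is the missing ingredient you would need to insert at the step you flagged as the main obstacle; once it is available, the rest of your outline (reduction to affinoids, finite generation of $\sqrt 0$ over a Noetherian ring, gluing over a finite admissible cover using quasi-compactness to bound the nilpotency exponent) is sound and agrees in spirit with the reference. Without invoking excellence, the claim that the nilradical presheaf actually sheafifies to the sheaf of locally nilpotent sections is unproved.
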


\begin{proof}
  This can be shown similarly to \cite[{\bf II}, 8.2.9]{FK2}, using
  the fact that any topologically of finite type $\fR{t}$-algebra is
  excellent (due to Gabber; see \cite{KS}).
\end{proof}

With the above lemma, one finds that all the rest of \cite[{\bf II},
\S8.3.(b)]{FK2} are valid for rigid spaces of finite type over
$\rR{t}$.  In particular, an affinoid
$\Sp\mathcal{A}=(\Spf A)^{\rig}$, where $\mathcal{A}$ is an affinoid
$\rR{t}$-algebra and $A$ is an admissible formal model of
$\mathcal{A}$, is reduced (resp.\ irreducible) if and only if so is
the affine scheme $\Spec\mathcal{A}$ (cf.\ \cite[{\bf II},
8.3.5]{FK2}).  Thus one has the irreducible decomposition of
$\mathcal{X}=\Sp\mathcal{A}$; if
$$
\Spec\mathcal{A}=\bigcup^m_{i=1} X_i
$$
is the irreducible decomposition of the Noetherian scheme
$\Spec\mathcal{A}$, then
$$
\mathcal{X}=\bigcup^m_{i=1}\mathcal{X}_i,
$$
where $\mathcal{X}_i=\Sp\mathcal{A}/\mathfrak{q}_i$
($\cong(\Spf A/\mathfrak{q}_i\cap A)^{\rig}$) and $\mathfrak{q}_i$ is
the minimal prime ideal of $\mathcal{A}$ corresponding to $X_i$, gives
the irreducible decomposition of $\mathcal{X}$.

\subsection{Dimension}\label{sub-dimension}
We refer to \cite[{\bf II}, \S10]{FK2} for the general notion of dimension of rigid spaces.  
First of all, for a rigid space $\mathcal{X}$ and a point $x\in\ZR{\mathcal{X}}$, the dimension of $\mathcal{X}$ at $x$, denoted by $\dim_x(\mathcal{X})$, is the Krull dimension of the local ring $\OO_{\mathcal{X},x}$.  
The dimension $\dim(\mathcal{X})$ of $\mathcal{X}$ is then defined to be the supremum of $\dim_x(\mathcal{X})$ for all $x\in\ZR{\mathcal{X}}$.
Note that, if $\mathcal{X}$ is a rigid space of finite type over $\rR{t}$, then $\dim(\mathcal{X})$ is the supremum of $\dim_x(\mathcal{X})$ for all classical points $x$ of $\mathcal{X}$ (cf.\ \cite[{\bf II}, 10.1.9]{FK2}).  
Moreover, if $\mathcal{X}=\Sp\mathcal{A}$, where $\mathcal{A}$ is an affinoid $\rR{t}$-algebra, then $\dim_x(\mathcal{X})$ at any classical point $x$ coincides with $\dim_{s(x)}(\Spec\mathcal{A})$, where $s(x)$ is the closed point of $\Spec\mathcal{A}$ corresponding to $x$. In particular, we have
$$
{\textstyle \dim(\mathcal{X})=\dim(\mathcal{A})}.
$$

\begin{lem}[Fiber dimension theorem]\label{lem-fiberdim}
  Let $\varphi\colon\mathcal{X}\rightarrow\mathcal{Y}$ be a morphism
  between rigid spaces of finite type over $\rR{t}$,
  $y=\varphi(x)\in\ZR{\mathcal{Y}}^{\cl}$ a classical point, and
  $x\in\ZR{\mathcal{X}}^{\cl}$ a classical point over $y$, i.e.,
  $y=\varphi(x)$.  Then we have
$$
\dim_x(\mathcal{X})-\dim_y(\mathcal{Y})\leq\dim_x(\mathcal{X}_y).
$$
\end{lem}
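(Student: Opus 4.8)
The plan is to reduce the statement to a purely algebraic fact about local rings by using the comparison, established in \S\ref{sub-dimension}, between the dimension of a rigid space of finite type over $\rR{t}$ at a classical point and the dimension of the corresponding affine scheme at the associated closed point. First I would localize: working in the Zariski--Riemann space it suffices to prove the inequality in a neighbourhood of $x$, so I may assume $\mathcal{X}=\Sp\mathcal{B}$ and $\mathcal{Y}=\Sp\mathcal{A}$ are affinoid over $\rR{t}$, with $\varphi$ induced by a morphism $\mathcal{A}\rightarrow\mathcal{B}$ of affinoid $\rR{t}$-algebras. Let $\mathfrak{n}\subset\mathcal{B}$ and $\mathfrak{m}\subset\mathcal{A}$ be the maximal ideals corresponding to $x$ and $y=\varphi(x)$; by Corollary \ref{cor-formalmodelmorphism1}, $\mathfrak{m}$ is indeed the contraction of $\mathfrak{n}$, so the map $\mathcal{A}_{\mathfrak{m}}\rightarrow\mathcal{B}_{\mathfrak{n}}$ is a local homomorphism of Noetherian local rings. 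The fiber $\mathcal{X}_y$ is the rigid space $\Sp(\mathcal{B}\otimes_{\mathcal{A}}K_y)$, where $K_y=\mathcal{A}/\mathfrak{m}$ is the residue field, and $\dim_x(\mathcal{X}_y)$ equals the Krull dimension of $(\mathcal{B}\otimes_{\mathcal{A}}K_y)_{\mathfrak{n}}=\mathcal{B}_{\mathfrak{n}}/\mathfrak{m}\mathcal{B}_{\mathfrak{n}}$.

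Once everything is phrased in terms of $\Spec\mathcal{B}\rightarrow\Spec\mathcal{A}$, the inequality $\dim_{s(x)}(\Spec\mathcal{B})-\dim_{s(y)}(\Spec\mathcal{A})\leq\dim_{s(x)}(\Spec\mathcal{B}_y)$ is the standard fiber dimension inequality for a morphism of finite type between Noetherian schemes, or rather its local-ring form: for a local homomorphism $R\to S$ of Noetherian local rings one has $\dim S\leq\dim R+\dim(S/\mathfrak{m}_R S)$. The key point that makes this applicable is that $\mathcal{A}$ and $\mathcal{B}$ are Noetherian (indeed excellent, by Gabber's theorem invoked in Lemma \ref{lem-irreducible1}) and that $\mathcal{B}$ is of finite type over $\mathcal{A}$ as an $\rR{t}$-affinoid algebra, so the relevant local rings are Noetherian; the inequality $\dim S \le \dim R + \dim(S/\mathfrak m_R S)$ holds for arbitrary Noetherian local rings with $R\to S$ local, with no flatness or finiteness hypothesis, and this is exactly what we need. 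I would cite this from a standard reference (e.g.\ \cite[Theorem 15.1]{Matsu} or the analogous statement in \cite{Bourb1}), applied to $R=\mathcal{A}_{\mathfrak{m}}$ and $S=\mathcal{B}_{\mathfrak{n}}$.

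The main obstacle I anticipate is purely bookkeeping: making sure that all three dimensions appearing in the statement — $\dim_x(\mathcal{X})$, $\dim_y(\mathcal{Y})$, $\dim_x(\mathcal{X}_y)$ — are correctly identified with the Krull dimensions of the corresponding localizations of the affinoid algebras, and that the fiber $\mathcal{X}_y$ in the rigid category (a base change along $\Sp K_y\to\Sp\mathcal{A}$, as in \S\ref{subsub-specialization}) really does have coordinate ring $\mathcal{B}\otimes_{\mathcal{A}}K_y$ with the expected local ring at $x$. For $\dim_x(\mathcal{X})$ and $\dim_y(\mathcal{Y})$ this is precisely the content of the displayed compatibility $\dim_x(\mathcal{X})=\dim_{s(x)}(\Spec\mathcal{A})$ recorded just before the lemma, so the only genuinely new verification is for the fiber; one checks that $K_y$ is a finite extension of a residue field of $\rR{t}$ hence a non-archimedean local field (Corollary \ref{cor-classicalpoints1}), that $\mathcal{B}\otimes_{\mathcal{A}}K_y$ is again an affinoid $K_y$-algebra, and that $s(x)$ lies over $s(y)$ so that the scheme-theoretic fiber of $\Spec\mathcal{B}\to\Spec\mathcal{A}$ over $s(y)$ has local ring at $s(x)$ equal to $\mathcal{B}_{\mathfrak{n}}/\mathfrak{m}\mathcal{B}_{\mathfrak{n}}$. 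After these identifications the lemma is immediate from the algebraic inequality.
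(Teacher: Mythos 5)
Your proposal is correct and takes essentially the same route as the paper: reduce to the local-ring inequality $\dim S\le\dim R+\dim(S/\mathfrak m_R S)$ for a local homomorphism of Noetherian local rings and cite \cite[15.1]{Matsu}. The only cosmetic difference is that the paper applies this directly to the stalks $\OO_{\mathcal{X},x}$, $\OO_{\mathcal{Y},y}$ on the Zariski--Riemann space (which are Noetherian by the general theory), whereas you route through the localizations $\mathcal{B}_{\mathfrak n}$, $\mathcal{A}_{\mathfrak m}$ of the affinoid algebras and use the dimension comparison recorded in \S\ref{sub-dimension}; both paths land on the same algebraic fact.
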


\begin{proof}
  What we need to show is
$$
\dim\OO_{\mathcal{X},x}-\dim\OO_{\mathcal{Y},y}\leq\dim\OO_{\mathcal{X},x}/\mm_y\OO_{\mathcal{X},x},
$$
where $\mm_y$ is the maximal ideal of $\OO_{\mathcal{Y},y}$.  Since
the local rings $\OO_{\mathcal{X},x}$ and $\OO_{\mathcal{Y},y}$ are
Noetherian, this follows from \cite[15.1]{Matsu}.
\end{proof}

The following lemma will be used later.
\begin{lem}\label{lem-finiteinjection}
  Let
  $\phi\colon\mathcal{A}\dl y_1,\ldots,y_d;\dt_1,\ldots,\dt_d\dr\rightarrow\mathcal{B}$
  be a finite morphism of affinoid $\rR{t}$-algebras with nilpotent kernel. %such that $\ker(\phi)$ is contained in the nilpotent radical.  
  Then, for any
  morphism $\mathcal{A}\rightarrow\mathcal{R}$ of affinoid
  $\rR{t}$-algebras, the induced map
$$
\phi_{\mathcal{R}}\colon\mathcal{R}\dl y_1,\ldots,y_d;\dt_1,\ldots,\dt_d\dr\longrightarrow\mathcal{B}_{\mathcal{R}}=\mathcal{B}\widehat{\otimes}_{\mathcal{A}}\mathcal{R}
$$
is finite and $\ker(\phi_{\mathcal{R}})$ is contained in the nilpotent
radical.
\end{lem}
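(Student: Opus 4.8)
The plan is to reduce the statement to the case where all the $\dt_i$ are integers, and then to the case $d=0$, where it becomes a purely formal statement about completed tensor products preserving finiteness and (up to nilpotents) injectivity. First I would dispose of the convergence data: writing $\dt_i = m_i/d_i$ in lowest terms and passing to a common multiple $N$ of the $d(\dt_i)$, the faithfully flat extension $\fR t \to \fR{t^{1/N}}$ (see \S\ref{sub-rootoft}) and the isomorphisms $\mathcal{A}\dl \bfmath y;\bfmath\dt\dr \otimes_{\rR t}\rR{t^{1/N}} \cong (\mathcal{A}\otimes_{\rR t}\rR{t^{1/N}})\dl t^{\bfmath\dt}\bfmath y\dr$ recorded in \S\ref{sub-rootoft} let me replace $\mathcal{A}$ by $\mathcal{A}\otimes_{\rR t}\rR{t^{1/N}}$; since finiteness and ``nilpotent kernel'' can be checked after a faithfully flat base change, and since $t^{\bfmath\dt}\bfmath y$ is just a rescaling of the coordinates, I may assume $\bfmath\dt = 0$, i.e.\ that the source is the ordinary Tate algebra $\mathcal{A}\dl y_1,\ldots,y_d\dr$. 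Then, since $\mathcal{B}$ is finite over $\mathcal{A}\dl\bfmath y\dr$ and hence $\mathcal{B}$ is a finite $\mathcal{A}$-algebra together with the images of the $y_i$ (which are power-bounded, as integral elements over any admissible formal model, by Proposition~\ref{prop-powerbounded}), I can factor $\mathcal{A}\dl\bfmath y\dr \to \mathcal{B}$ and treat the two issues — base change of $\mathcal{A}\dl\bfmath y\dr$ along $\mathcal{A}\to\mathcal{R}$, which is exactly $\mathcal{R}\dl\bfmath y\dr$, and base change of a finite morphism — separately.

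For finiteness: if $\mathcal{B}$ is generated as a module over $\mathcal{A}\dl\bfmath y\dr$ by $b_1,\ldots,b_r$, then $\mathcal{B}_{\mathcal{R}} = \mathcal{B}\,\widehat\otimes_{\mathcal{A}}\mathcal{R}$ is generated over $\mathcal{R}\dl\bfmath y\dr = \mathcal{A}\dl\bfmath y\dr\,\widehat\otimes_{\mathcal{A}}\mathcal{R}$ by the images of the $b_i$; this is formal once one knows that completed tensor product is right exact on the relevant categories, which follows from the description of affinoid algebras as quotients of Tate algebras and the fact (\S\ref{sub-topologyaffinoid}) that the topology is canonical. Concretely I would pick admissible formal models, say $B$ finite over $A\dl\bfmath y\dr$ after a blow-up (Corollary~\ref{cor-powerbounded3}), form $B\,\widehat\otimes_{A\dl\bfmath y\dr} (A\dl\bfmath y\dr\,\widehat\otimes_A R) = B\,\widehat\otimes_A R$, and observe it is a finite $R\dl\bfmath y\dr$-module because a finite module over a Noetherian adic ring is automatically complete and its completed tensor product equals its ordinary tensor product; inverting $t$ gives the finiteness of $\phi_{\mathcal{R}}$.

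For the kernel: the hypothesis is $\ker\phi \subset \mathrm{nil}(\mathcal{A}\dl\bfmath y;\bfmath\dt\dr)$, equivalently (Lemma~\ref{lem-irreducible1} and the remarks following it, identifying reducedness of an affinoid with that of its $\Spec$) $\phi$ induces a closed immersion on reduced affine schemes that is dominant, hence $\Spec\mathcal{B}_{\mathrm{red}} \to \Spec\mathcal{A}\dl\bfmath y;\bfmath\dt\dr_{\mathrm{red}}$ is a finite surjection. Replacing everything by its reduction, I want: for a finite surjection $\mathcal{A}\dl\bfmath y\dr \hookrightarrow \mathcal{B}$ of reduced affinoids, the base change $\mathcal{R}\dl\bfmath y\dr \to \mathcal{B}_{\mathcal{R}}$ has nilpotent kernel. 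The clean way is via formal models over $\fR t$: choose an admissible model $B$ finite over $A\dl\bfmath y\dr$ with $A\dl\bfmath y\dr \hookrightarrow B$ injective (this is where admissibility = $t$-torsion-freeness is used, via ``injective iff admissible'' from \S\ref{sub-affinoidalgebras}); then $R\dl\bfmath y\dr \to B\,\widehat\otimes_{A\dl\bfmath y\dr} R\dl\bfmath y\dr$ need not be injective, but its kernel is killed by a power of $t$ — because $B/A\dl\bfmath y\dr$ is a finitely generated module whose $\mathrm{Tor}_1$ against $R\dl\bfmath y\dr$ is a finitely generated, hence $t$-adically separated and bounded-torsion, module over the Noetherian ring $R\dl\bfmath y\dr$ — so after inverting $t$ the map $\phi_{\mathcal{R}}$ is injective on the images of $A\dl\bfmath y\dr$-generators, and any kernel element becomes nilpotent. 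I expect this last point — controlling the $t$-torsion of $\mathrm{Tor}_1$ of the completed tensor product, i.e.\ making ``flatness fails only by bounded $t$-power'' precise — to be the main obstacle; the standard device is that $B$ is, after a further admissible blow-up (Corollary~\ref{cor-powerbounded3}, Corollary~\ref{cor-powerbounded4}), of a sufficiently explicit form, or alternatively one argues on classical points: by Lemma~\ref{lem-functoriality} and the fiber-dimension lemma~\ref{lem-fiberdim}, a classical point of $\Spec\mathcal{R}\dl\bfmath y\dr$ in the image pulls back to a nonempty finite fibre in $\mathcal{B}_{\mathcal{R}}$, and a function vanishing on $\mathcal{B}_{\mathcal{R}}$ vanishes at all such points hence lies in the nilradical by the Jacobson property (Proposition~\ref{prop-jacobson1}). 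I would present the classical-point argument as the main line and keep the formal-model computation in reserve.
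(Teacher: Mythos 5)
Your reduction to $\bfmath{\dt}=0$ via the faithfully flat extension $\fR{t}\rightarrow\fR{t^{1/N}}$ and rescaling of coordinates is exactly what the paper does, and your treatment of finiteness, though more elaborate than the paper's brief assertion, is sound. The passage to reduced affinoids to convert ``nilpotent kernel'' into ``injective'' is also a legitimate normalization. The problem is with your treatment of the kernel.

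Your ``main line'' argument has a gap precisely where you sense trouble. You assert that every classical point of $\Sp\mathcal{R}\dl\bfmath{y}\dr$ pulls back to a nonempty fibre in $\Sp\mathcal{B}_{\mathcal{R}}$, and you cite Lemma~\ref{lem-functoriality} and the fiber-dimension Lemma~\ref{lem-fiberdim} for this. Neither of these gives you nonemptiness: Lemma~\ref{lem-functoriality} pushes classical points \emph{forward}, it does not lift them; and Lemma~\ref{lem-fiberdim} gives a lower bound on $\dim_x(\mathcal{X}_y)$ \emph{only after you are already handed a classical point $x$ of $\mathcal{X}$ over $y$}, so it cannot be used to produce one. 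Your own hedge ``a classical point \ldots\ in the image pulls back to a nonempty fibre'' is a tautology and proves nothing about points outside the image. Thus the crucial surjectivity claim (image of $\Sp\mathcal{B}_{\mathcal{R}}\to\Sp\mathcal{R}\dl\bfmath{y}\dr$ contains all classical points) is not established, and the Jacobson argument that follows has nothing to latch onto. The first route (bounding the $t$-torsion of $\operatorname{Tor}_1$ via admissible blow-ups) you acknowledge as incomplete, so the proposal as a whole does not close the gap.

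The paper argues in the contrapositive and sidesteps the surjectivity issue. Take a non-nilpotent $f\in\ker(\phi_{\mathcal{R}})$. By the Jacobson property (Proposition~\ref{prop-jacobson1}) there is a classical point $x$ of $\Sp\mathcal{R}\dl\bfmath{y}\dr$ with $f\notin\mm_x$; let $y$ be its image in $\Sp\mathcal{R}$ (a classical point by Lemma~\ref{lem-functoriality}) and $K=\mathcal{R}/\mm_y$. Specializing, $\phi_K\colon K\dl\bfmath{y}\dr\to\mathcal{B}_K$ is finite and, since $\bar f\neq 0$ lies in its kernel and $K\dl\bfmath{y}\dr$ is a domain, $\dim\Sp\mathcal{B}_K<d$. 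But $\mathcal{B}_K$ is (a base change to $K$ of) a fibre of $\Sp\mathcal{B}\to\Sp\mathcal{A}$ over a classical point, and since the original $\phi$ is finite with nilpotent kernel, there is a classical point of $\Sp\mathcal{B}$ above it at which Lemma~\ref{lem-fiberdim} gives $\dim\Sp\mathcal{B}_K\geq d$ — contradiction. Note how the fiber-dimension lemma is applied: on a classical point of $\Sp\mathcal{B}$ that exists by lying-over for the \emph{original} finite injection $\mathcal{A}\dl\bfmath{y}\dr\hookrightarrow\mathcal{B}$, not on a hypothetical point of $\Sp\mathcal{B}_{\mathcal{R}}$ whose existence is the very thing in question. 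If you want to repair your direct argument rather than switching to the contrapositive, the fix is the same observation: the fibre of $\Sp\mathcal{B}_{\mathcal{R}}\to\Sp\mathcal{R}\dl\bfmath{y}\dr$ over a classical point $w$ is the base change by the field extension $k(\tilde w)\hookrightarrow k(w)$ of the fibre of $\Sp\mathcal{B}\to\Sp\mathcal{A}\dl\bfmath{y}\dr$ over the image $\tilde w$, which is nonempty by lying-over because $\phi$ has nilpotent kernel; this gives surjectivity without appeal to Lemma~\ref{lem-fiberdim} at all.
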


\begin{proof}
First, since the extension of the form $\fR{t}\rightarrow\fR{t^{1/N}}$ is faithfully flat, one can first reduce to the case where $\dt_1,\ldots,\dt_d\in\ZZ$ by faithfully flat descent, and then to the case $\dt_1=\cdots=\dt_d=0$, by replacing each $y_i$ by $t^{\delta_i}y_i$ for $i=1,\ldots,d$.

Second, as the map in question is clearly finite, we only need to show that
  $\phi_{\mathcal{R}}$ consists of nilpotent elements.  Suppose not,
  and take a non-nilpotent $f\in\ker(\phi_{\mathcal{R}})$.  By
  Proposition \ref{prop-jacobson1}, there exists a classical point
  $x=\mm_x$ of $\Sp\mathcal{R}\dl y_1,\ldots,y_d\dr$ ($\mm_x$ is the
  corresponding maximal ideal of $\mathcal{R}\dl y_1,\ldots,y_d\dr$)
  such that $f\not\in\mm_x$.  Let $y=\mm_y$ be the image of $x$ in
  $\Sp\mathcal{R}$, which is a classical point of $\Sp\mathcal{R}$
  (Lemma \ref{lem-functoriality}).  Let $K=\mathcal{R}/\mm_y$, and
  consider the induced diagram
$$
K\longrightarrow K\dl y_1,\ldots,y_d\dr\stackrel{\phi_K}{\longrightarrow}\mathcal{B}_K,
$$
where $\phi_K$ is, by our construction, finite but not injective.  It
is clear that
$$
\dim\Sp\mathcal{B}_K=\dim K\dl y_1,\ldots,y_d\dr/\ker(\phi_K)<d.
$$
On the other hand, by Lemma \ref{lem-fiberdim},
$$
\dim\Sp\mathcal{B}_K\geq \dim_x\Sp\mathcal{B}-\dim_y\Sp\mathcal{A}=d,
$$
which is absurd.
\end{proof}

\subsection{Formal blow-up of affinoids}\label{sub-formalblowup}
Let $\mathcal{A}$ be an affinoid $\rR{t}$-algebra, and $A$ an
admissible formal model of $\mathcal{A}$.  Let $J=(f_0,\ldots,f_r)$ be
an ideal of $A$.  We want to discuss the {\em formal blow-up} of
$\Sp\mathcal{A}$ along the ideal $J$.  This includes the notion of
admissible blow-ups as the case where $J$ is an admissible ideal.

Consider the algebraic blow-up
$Y=\Proj\bigoplus_{k=0}^{\infty}J^k\rightarrow\Spec A$ along $J$.
Then
$$
Y=\bigcup^r_{i=0}\Spec A_i,\quad
A_i=A\bigg[\frac{f_0}{f_i},\ldots,\frac{f_r}{f_i}\bigg]/\textrm{$f_i$-torsion}.
$$
Taking formal completion,
$$ \widehat{Y}=\bigcup^r_{i=0}\Spf\widehat{A}_i,\quad\widehat{A}_i=A\bigg\langle\!\!\!\bigg\langle\frac{f_0}{f_i},\ldots,\frac{f_r}{f_i}\bigg\rangle\!\!\!\bigg\rangle/\textrm{$f_i$-torsion},
$$
and the associated rigid spaces, we have the desired formal blow-up
$\mathcal{Y}\rightarrow\Sp\mathcal{A}$, where
$$ \mathcal{Y}=\widehat{Y}^{\rig}=\bigcup^r_{i=0}\Sp\mathcal{A}_i,\quad\mathcal{A}_i=\widehat{A}_i[1/t]=\mathcal{A}\bigg\langle\!\!\!\bigg\langle\frac{f_0}{f_i},\ldots,\frac{f_r}{f_i}\bigg\rangle\!\!\!\bigg\rangle/\textrm{$f_i$-torsion}.
$$

\subsubsection{Essential center}
Let us say that the ideal $\mathcal{J}=J\mathcal{A}$ is the {\em
  essential center} of the formal blow-up
$\mathcal{Y}=\bigcup_{i=0}^r\Sp\mathcal{A}_i\rightarrow\Sp\mathcal{A}$.
Note that, if $J$ is an admissible ideal, then
$\mathcal{J}=\mathcal{A}$, i.e., the essential center is ``empty'' in
the affinoid space $\Sp\mathcal{A}$.

If $\Sp\mathcal{B}\rightarrow\Sp\mathcal{A}$ is a morphism of finite
type, take an admissible formal model $A\rightarrow B$ of
$\mathcal{A}\rightarrow\mathcal{B}$.  Then
$\mathcal{X}=\widehat{X}^{\rig}$, where $X$ is the formal blow-up of
$\Spec B$ along $JB$, and the map $X\rightarrow Y$ from the
universality of blow-up gives rise to the morphism
$\mathcal{X}\rightarrow\mathcal{Y}$, which we call the {\em strict
  transform} of the formal blow-up
$\mathcal{Y}\rightarrow\Sp\mathcal{A}$ by
$\Sp\mathcal{B}\rightarrow\Sp\mathcal{A}$.

\begin{lem}\label{lem-blow-up}
  Let $\Sp\mathcal{A}$ be an irreducible affinoid, and
  $\mathcal{Y}\rightarrow\Sp\mathcal{A}$ the formal blow-up along
  $J=(f_0,\ldots,f_r)\subset A$.  Suppose $J$ is not nilpotent.

  {\rm (1)} The morphism $\mathcal{Y}\rightarrow\Sp\mathcal{A}$ is
  surjective.  Moreover, any classical point
  $\Sp K\rightarrow\Sp\mathcal{A}$, where $K$ is a non-archimedean
  local field, lifts to a classical point
  $\Sp K\rightarrow\mathcal{Y}$ of $\mathcal{Y}$.

  {\rm (2)} If $\mathcal{A}$ is an integral domain, then so is
  $\mathcal{A}_i$ for $i=0,\ldots,r$.
\end{lem}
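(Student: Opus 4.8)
The plan is to deduce both parts from properties of the algebraic blow-up $Y=\Proj\bigoplus_{k\ge 0}J^k$ of $\Spec A$ (for an admissible formal model $A$ of $\mathcal A$), together with the properness of the formal blow-up $\widehat{Y}\to\Spf A$ and the functoriality of classical points (Lemma~\ref{lem-functoriality}). Throughout one uses that $\mathcal A$ a domain forces $A$ to be a domain, so that the hypothesis that $J$ is not nilpotent just means $J\neq 0$; hence $J_\pp\neq 0$ for every prime $\pp\subset A$, and therefore every fiber of $Y\to\Spec A$, hence of $\widehat{Y}\to\Spf A$, is nonempty.

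For (1): since $\widehat{Y}\to\Spf A$ is proper, so is $\mathcal Y=\widehat{Y}^{\rig}\to\Sp\mathcal A$. Let $\Sp K\to\Sp\mathcal A$ be a classical point; by Lemmas~\ref{lem-classicalpoints1} and~\ref{lem-classicalpoints1-3} it arises from an adic surjection $A\to V$ onto a complete discrete valuation ring $V$ with $K=\Frac V$ and $t\in\mm_V$, so $V$ is $t$-adically complete. If the images of $f_0,\dots,f_r$ in $V$ are not all $0$, I would choose $i$ with $|f_i(\cdot)|=\max_j|f_j(\cdot)|$; then each $f_j/f_i$ lies in $V$, so $A_i\to V$ is defined, extends $t$-adically to $\widehat{A}_i\to V$, and the resulting closed immersion $\Spf V\hookrightarrow\Spf\widehat{A}_i\subset\widehat{Y}$ gives a classical point of $\mathcal Y$ over $\Sp K$. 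If all $f_j$ vanish in $V$, then the fiber $\mathcal Y\times_{\Sp\mathcal A}\Sp K$ is the exceptional fiber over the point; it is nonempty by the remark above and, on each chart, is a unit polydisc over $K$ up to nilpotents, hence has a $K$-point, again yielding a classical point of $\mathcal Y$ over $\Sp K$. Surjectivity on the Zariski--Riemann space then follows since the image of the proper morphism $\mathcal Y\to\Sp\mathcal A$ is closed and contains the very dense set of classical points.

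For (2): by \S\ref{sub-irreducibledecomposition} it suffices to show that $\Spec\mathcal A_i$ is reduced and irreducible. Reducedness is the easy half: $A_i=A[f_0/f_i,\dots,f_r/f_i]$ is a subring of $\Frac A$ (the $f_i$-torsion vanishes), hence a domain, and it is excellent, being of finite type over the excellent ring $A$; therefore its $t$-adic completion $\widehat{A}_i$ is reduced, and so is $\mathcal A_i=\widehat{A}_i[1/t]$. For irreducibility, note that $f_i$ is a nonzerodivisor in $A_i$, hence (flatness of completion) in $\widehat{A}_i$, hence in $\mathcal A_i$; thus the minimal primes of $\mathcal A_i$ correspond to those of the localization $\mathcal A_i[1/f_i]$. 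The plan is then to identify $\Spec\mathcal A_i[1/f_i]$ with the part of $\mathcal Y$ lying over $\Sp\mathcal A\setminus V(\mathcal J)$ inside the chart $\Sp\mathcal A_i$, where $\mathcal J=J\mathcal A$ is the essential center, and to use that over this locus the blow-up is an isomorphism; since $\Spec\mathcal A$ is irreducible, $\Spec\mathcal A_i[1/f_i]$ is then irreducible, and a reduced ring with a dense irreducible open in its spectrum is itself irreducible --- so $\mathcal A_i$ is an integral domain.

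The hard part will be this last identification. Passing to the $t$-adic completion $\widehat{A}_i$ can a priori create new analytic branches of $\Spec\widehat{A}_i$ along $t=0$, so one genuinely has to exploit that $A_i$ is a chart of the blow-up of an integral scheme (so $Y=\Proj\bigoplus_k J^k$ is integral with nowhere-dense exceptional locus) and that the essential center $\mathcal J$ is a proper nonzero ideal, in order to see that only the branch dominating $\Spec\mathcal A$ persists after inverting $t$. Concretely, the technical crux is to check that $\widehat{A}_i[1/f_i]$ agrees with the $t$-adic completion of $A_i[1/f_i]=A[1/f_i]$ and that this completion, being of finite type over the excellent domain $A$ and an isomorphism away from $V(\mathcal J)$, is irreducible --- the step where excellence of $A$ is really used, and the one I expect to demand the most care.
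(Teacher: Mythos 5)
The paper's proof is much shorter than yours, and the two approaches diverge in both halves.

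\textbf{Part (1).} The paper argues in one stroke, directly through the universal property of the blow-up: for any rigid point $\alpha:\Spf W\to\Spf A$ with $W$ a $t$-adically complete valuation ring, the extension ideal $JW$ is finitely generated in a valuation ring, hence principal, hence invertible, and the factorization $\Spec W\to Y$ is then immediate; formal completion gives the lift, and the classical-point statement is a special case. Your case analysis is not wrong, but it is needlessly split. Moreover, your treatment of the branch where all $f_j$ vanish in $V$ overclaims: the fiber of $Y\to\Spec A$ over such a point is a nonempty closed subscheme of a projective space over $K$, \emph{not} (even up to nilpotents) a polydisc, and a nonempty projective $K$-scheme need not have a $K$-rational point (think of a pointless conic). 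The paper avoids this by never dropping into the exceptional fiber. (One should note, though, that the paper's slogan ``$JW$ is invertible'' silently excludes $JW=0$; when the kernel of $A\to W$ contains $J$ this also needs attention, and neither argument fully spells it out.)

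\textbf{Part (2).} Here the paper is extremely brief: $A$ is a domain because $\mathcal A$ is and $A$ is $t$-torsion free; each algebraic chart $A_i$ is a domain by EGA~II~(8.1.4); then \emph{``since $A_i$ are excellent, $\widehat{A}_i$ are integral domains''}, whence $\mathcal A_i$ is a domain. You were right to be suspicious of exactly that step, and the instinct is sound: excellence of a ring $R$ does \emph{not} in general imply that the $I$-adic completion of a domain is a domain (the local ring of a nodal curve at the node is an excellent domain whose completion has two branches). The paper gives no further justification, and this is genuinely the hard content of the statement. However, your proposed repair also does not close the gap. You reduce to showing $\mathcal A_i[1/f_i]$ is a domain and want to identify it with the ring of functions on an open piece of $\Sp\mathcal A$ where the blow-up is an isomorphism. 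But $\widehat{A}_i[1/f_i]$ is the localization of a completion, not the completion of a localization, and these are genuinely different: the completion can create new components that do \emph{not} disappear after inverting $f_i$. (The clearest way to see the difficulty is the case of an admissible center with, say, $f_i=t$: then $\mathcal A_i[1/f_i]=\mathcal A_i$, the chart is merely an affinoid subdomain of the irreducible $\Sp\mathcal A$, and affinoid subdomains of irreducible affinoids need not be irreducible.) So your reduction is essentially a reformulation of the original problem, not a solution. You flagged this yourself as ``the technical crux \dots the one I expect to demand the most care,'' and that assessment is accurate: this is precisely where both your proof and the paper's one-line appeal to excellence stop short, and resolving it would require either additional structural input about the blow-up charts (e.g., arguments via formal GAGA / Grothendieck existence and the irreducibility of $\widehat{Y}$ as a whole, then descending carefully to charts) or a strengthening of the hypotheses on $J$.
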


\begin{proof}
  (1) For any rigid point $\alpha\colon\Spf W\rightarrow\Spf A$, where
  $W$ is a $t$-adically complete valuation ring, the extension ideal
  $JW$ is invertible (since $J$ is finitely generated), and so there
  exists a factoring map $\Spec W\rightarrow Y$, which gives, by
  formal completion, a rigid point above $\alpha$.  The second
  assertion can be shown similarly.

  (2) If $\mathcal{A}$ is an integral domain, then so is $A$, since
  $A$ is $t$-torsion free.  Then $A_i$ ($i=0,\ldots,r$) are integral
  domains (\cite[$\mathbf{II}$, (8.1.4)]{EGA}).  Since $A_i$ are
  excellent, $\widehat{A}_i$ are integral domains, and so are
  $\mathcal{A}_i$ for $i=0,\ldots,r$.
\end{proof}

\section{Noether normalization for rigid spaces over $\rR{t}$}
\label{sec-noether-normalization}

\subsection{Diagram modeled on a rooted tree}
\subsubsection{Rooted tree}
In what follows, simply by a rooted tree, we mean a finite directed
rooted tree with the orientation away from the root.  As usual, we
will often regard a rooted tree $T$ as a category with the unique
minimal object.  The set of all vertices is denoted by $\Ve_T$, and
for any $v\in\Ve_T$, the set of all edges outgoing from $v$ is denoted
by $\Ed^+_T(v)$.  If $v\rightarrow u$ is a directed edge of $T$, we
say $v$ is a {\em parent} of $u$, and $u$ is a {\em child} of $v$.
Note that the set $\Ed^+_T(v)$ can be identified with the set of all
children of $v$.  We say a vertex $w$ is a {\em descendant} of $v$,
and $v$ is an {\em ancestor} of $w$, if there exists a directed path
from $v$ to $w$.  A maximal vertex, i.e., vertex with no children,
will be called a {\em leaf}.

A {\em basic tree} is a height $1$ rooted tree, i.e., a rooted tree
consisting of root and its children.  Any rooted tree can be
decomposed into the union of basic trees.

\subsubsection{Diagrams} 
We will consider functors of the form $T\rightarrow\Ad_{\fR{t}}$ or
$T\rightarrow\Af_{\rR{t}}$ from a rooted tree, which we shall simply
call {\em diagrams}.  A diagram of admissible algebras
$A_{\ast}\colon T\rightarrow\Ad_{\fR{t}}$ induces by composition with
the functor $(\cdot)[1/t]\colon\Ad_{\fR{t}}\rightarrow\Af_{\rR{t}}$
(cf.\ \S\ref{sub-affinoidalgebras}) the diagram
$T\rightarrow\Af_{\rR{t}}$ of affinoid algebras, which we denote by
$A_{\ast}[1/t]$.  {\em Basic diagrams} are diagrams modeled on a basic
tree.

Let $A_{\ast}$ be a diagram modeled on a rooted tree.  An {\em ideal}
$\mathfrak{a}_{\ast}$ of $A_{\ast}$ consists of ideals
$\mathfrak{a}_v\subset A_v$ for any vertex $v\in\Ve_T$ such that, for
any edge $v\rightarrow u$ of $T$,
$\mathfrak{a}_vA_u\subset \mathfrak{a}_u$.  In this situation, one can
construct the diagram $A_{\ast}/\mathfrak{a}_{\ast}$ given by
$A_v/\mathfrak{a}_v$ for $v\in\Ve_T$.

\begin{exa}\label{exa-basediagram}
  We will often use the diagrams of the following form, denoted by
$$
\fR{t^{1/N_{\ast}}}\qquad{\textrm{or}}\qquad\rR{t^{1/N_{\ast}}}\ (=\fR{t^{1/N_{\ast}}}[1/t]),
$$
defined as follows.  The diagram $\fR{t^{1/N_{\ast}}}$ modeled on a
rooted tree $T$ consists of $\fR{t^{1/N_v}}$ for vertices $v$, where
$N_v$ are positive integers, such that, for any edge $v\rightarrow u$
of $T$, $N_u$ is a multiple of $N_v$, and
$$
\fR{t^{1/N_v}}\longhookrightarrow\fR{t^{1/N_u}}
$$
is the unique morphism determined by
$t^{1/{N_v}}\mapsto (t^{1/{N_u}})^{N_u/N_v}$.
\end{exa}

\begin{dfn}\label{dfn-coveringdiagram}
  A {\em transformation diagram modeled on a rooted tree $T$} is a
  chain of morphisms
$$
\fR{t^{1/N_{\ast}}}\longrightarrow A_{\ast}\longrightarrow A_{\ast}\dl\bfmath{x}_{\ast};\bfmath{\dt}_{\ast}\dr\longrightarrow B_{\ast}=A_{\ast}\dl\bfmath{x}_{\ast};\bfmath{\dt}_{\ast}\dr/\mathfrak{a}_{\ast}\eqno{(\ast)}
$$
of diagrams of admissible $\fR{t}$-algebras modeled on $T$ satisfying
the following conditions:
\begin{itemize}
\item[(a)] for $v\in\Ve_T$, $\bfmath{x}_v=(x_{v,1},\ldots,x_{v,n})$
  and $\bfmath{\dt}_v=(\dt_{v,1},\ldots,\dt_{v,n})\in(\QQ_{>0})^n$,
  where the number $n$ of variables is fixed and does not depend on
  $v$;
\item[(b)] for any directed edge $v\rightarrow u$ of $T$, we have
  $\bfmath{\dt}_v\geq\bfmath{\dt}_u$, i.e., $\dt_{v,i}\geq\dt_{u,i}$
  for $i=1,\ldots,n$;
\item[(c)] for any directed edge $v\rightarrow u$,
  $\mathfrak{a}_v A_u\dl\bfmath{x}_u;\bfmath{\dt}_u\dr\subset\mathfrak{a}_u$.
\end{itemize}
\end{dfn}

In the following, we will write $\mathcal{A}_{\ast}=A_{\ast}[1/t]$ and
$\mathcal{B}_{\ast}=B_{\ast}[1/t]$.  The induced diagram
$$
\rR{t^{1/N_{\ast}}}\longrightarrow\mathcal{A}_{\ast}\longrightarrow\mathcal{A}_{\ast}\dl\bfmath{x}_{\ast};\bfmath{\dt}_{\ast}\dr\longrightarrow\mathcal{B}_{\ast}=\mathcal{A}_{\ast}\dl\bfmath{x}_{\ast};\bfmath{\dt}_{\ast}\dr/\mathfrak{a}_{\ast}\mathcal{A}_{\ast}\dl\bfmath{x}_{\ast};\bfmath{\dt}_{\ast}\dr\eqno{(\ast)[1/t]}
$$
will be called the {\em transformation diagram of affinoid
  $\rR{t}$-algebras modeled on $T$}.

\subsection{Rational diagrams}
Let us say that a coordinate $\bfmath{y}=(y_1,\ldots,y_n)$ is a {\em
  $\ZZ$-rational coordinate transform} of
$\bfmath{x}=(x_1,\ldots,x_n)$ if the coordinate transformation is
given by polynomials over $\ZZ$, i.e., $y_i=F_i(\bfmath{x})$ by
$F_i\in\ZZ[\bfmath{x}]$ and $x_j=G_j(\bfmath{y})$ by
$G_j\in\ZZ[\bfmath{y}]$ ($i,j=1,\ldots,n$).  Note that $\ZZ$-rational
coordinate transformation preserves the unit-polydisk, since any
polynomial mapping maps points in the unit-polydisk to a point in the
unit-polydisk.

\begin{dfn}\label{dfn-rationaldiagram1}
  (1) A {\em rational diagram modeled on a rooted tree $T$} is a
  transformation diagram of admissible $\fR{t}$-integral domains
  modeled on $T$ as in $(\ast)$ of Definition
  \ref{dfn-coveringdiagram} satisfying the following conditions:
  \begin{itemize}
  \item[(a)] every vertex $v$ is equipped with an ideal
    $J_v=(c_0,\ldots,c_{r_v})$ of $A_v$ with fixed generators, and the
    set $\Ed^+_T(v)$ of edges outgoing from $v$ consists of exactly
    $r_v+1$ elements $v\rightarrow u_i$ ($i=0,\ldots,r_v$) such that
    each $A_{u_i}$ is of the form $A_{u_i}=A_i[t^{1/d_i}]$, where
    $d_i$ is a positive integer, and
$$
A_i=A_v\bigg\langle\!\!\!\bigg\langle\frac{c_0}{c_i},\ldots,\frac{c_{r_v}}{c_i}\bigg\rangle\!\!\!\bigg\rangle/\textrm{$c_i$-torsion},
$$
i.e., $A_{u_i}$ is the $i$th affine patch of the formal blow-up (cf.\
Lemma \ref{lem-blow-up} (2)) of $A$ with respect to
$J_v=(c_0,\ldots,c_{r_v})$ with a base extension by a root of $t$;
$N_{u_i}$ is the least common multiple of $N_v$ and $d_i$;
\item[(b)] for any directed edge $v\rightarrow u$ of $T$,
  $\bfmath{x}_u=(x_{u,1},\ldots,x_{u,n})$ is a $\ZZ$-rational
  coordinate transform of $\bfmath{x}_v=(x_{v,1},\ldots,x_{v,n})$;
\item[(c)] each $B_{u_i}$ is of the form $B_{u_i}=B_i[t^{1/d_i}]$,
  where $d_i$ is as in (a), and $B_i$ is the strict transform
$$
B_i=\big(B_v\widehat{\otimes}_{A_v\dl\bfmath{x}_v;\bfmath{\dt}_v\dr}A_i\dl\bfmath{x}_{u_i};\bfmath{\dt}_{u_i}\dr\big)/\textrm{$c_i$-torsion}.
$$
\end{itemize}

(2) A {\em stratified rational diagram} modeled on a rooted tree $T$
is a transformation diagram of admissible $\fR{t}$-algebras modeled on
$T$ as in $(\ast)$ of Definition \ref{dfn-coveringdiagram} that
satisfies the following conditions:
\begin{itemize}
\item[(d)] for any $v\in\Ve_T$, the set $\Ed^+_T(v)$ of edges outgoing
  from $v$ is partitioned into two parts
$$
\Ed^+_T(v)=\Ed^{+,1}_T(v)\amalg\Ed^{+,2}_T(v),
$$
which we call {\em type $1$} and {\em type $2$} edges, respectively;

\item[(e)] if $\Ed^{+,1}_T(v)\neq\emptyset$, then $A_v$ and $B_v$ are
  integral domains, and the conditions (a) $\sim$ (c) in (1) with
  $\Ed^+_T(v)$ replaced by $\Ed^{+,1}_T(v)$ is satisfied; i.e.,
  $\Ed^{+,1}_T(v)$ consists of exactly $r_v+1$ elements
  $v\rightarrow u_i$ ($i=0,\ldots,r_v$), and $A_v\rightarrow A_{u_i}$
  are of the form as in (a) constructed from a formal blow-up of $A_v$
  along an ideal $J_v$;

\item[(f)] the type $2$ edges are of the form
  $\Ed^{+,2}_T(v)=\{v\rightarrow w_{ij}\mid i=1,\ldots,l,j=1,\ldots,m_i\}$
  such that
  \begin{itemize}
  \item[(f1)] $N_{w_{ij}}=N_v$, $\bfmath{x}_{w_{ij}}=\bfmath{x}_v$ and
    $\bfmath{\dt}_{w_{ij}}=\bfmath{\dt}_v$ for all $(i,j)$;
  \item[(f2)] for any $i=1,\ldots,l$, $A_{w_{ij}}$ are all equal to an
    $A_i$;
  \item[(f3)] $\mathcal{A}_i=A_i[1/t]$ and
    $\mathcal{B}_{w_{ij}}=B_{w_{ij}}[1/t]$ are given by the
    irreducible decompositions (\S\ref{sub-irreducibledecomposition})
$$
\Sp\mathcal{A}_v/J_v=\bigcup^l_{i=1}\Sp\mathcal{A}_i,\qquad \Sp\mathcal{B}_v\otimes_{\mathcal{A}_v}\mathcal{A}_i=\bigcup^{m_i}_{j=1}\Sp\mathcal{B}_{w_{ij}};
$$
if $\mathfrak{p}_i$ is the kernel of
$\mathcal{A}_v\rightarrow\mathcal{A}_i$ then
$A_i=A_v/\mathfrak{p}_i\cap A_v$; if $\mathfrak{q}_{ij}$ is the kernel
of
$\mathcal{A}_i\dl \bfmath{x}_v;\bfmath{\dt}_v\dr\rightarrow \mathcal{B}_v\otimes_{\mathcal{A}}\mathcal{A}_i\rightarrow\mathcal{B}_{w_{ij}}$,
then
$\mathfrak{a}_{w_{ij}}=\mathfrak{q}_{ij}\cap A_i\dl \bfmath{x}_v;\bfmath{\dt}_v\dr$,
and
$B_{w_{ij}}=A_i\dl \bfmath{x}_v;\bfmath{\dt}_v\dr/\mathfrak{a}_{w_{ij}}$.
\end{itemize}
\end{itemize}
In other words, the type $2$ edges comprise the basic diagram by
irreducible decomposition of the induced family over the essential
blow-up center.
\end{dfn}

\begin{rem}\label{rem-rationaldiagram0}
  Note that, if $A_{v}$ and $B_{v}$ is integral domains at a vertex
  $v$ of $T$, then by Lemma \ref{lem-blow-up} (2), $A_{w}$ and $B_{w}$
  for any descendants $w$ of $v$ are integral domains.
\end{rem}

\begin{rem}\label{rem-rationaldiagram1}
  If
  $\fR{t^{1/N_{\ast}}}\rightarrow A_{\ast}\rightarrow A_{\ast}\dl \bfmath{x}_{\ast};\bfmath{\dt}_{\ast}\dr\rightarrow B_{\ast}=A_{\ast}\dl \bfmath{x}_{\ast};\bfmath{\dt}_{\ast}\dr/\mathfrak{a}_{\ast}$
  is a stratified rational diagram (resp.\ rational diagram) modeled
  on a rooted tree $T$, and $S\subset T$ is a subtree consisting of
  descendants of a fixed vertex $v$, then the restriction
$$
\fR{t^{1/N_{\ast}}}|_S\rightarrow A_{\ast}|_S\rightarrow A_{\ast}\dl \bfmath{x}_{\ast};\bfmath{\dt}_{\ast}\dr|_S\rightarrow B_{\ast}|_S=A_{\ast}\dl \bfmath{x}_{\ast};\bfmath{\dt}_{\ast}\dr/\mathfrak{a}_{\ast}|_S
$$
on $S$ is a stratified rational diagram (resp.\ rational diagram)
modeled on $S$.
\end{rem}

\begin{prop}\label{prop-rationaldiagram3}
  Let $N$ be a multiple of all $N_v$'s.

  {\rm (1)} Consider the morphism
$$
\coprod_{\lambda}\Sp\mathcal{A}_{\lambda}[t^{1/N}]\longrightarrow\Sp\mathcal{A}_{v_0}[t^{1/N}],
$$
where $\lambda$ runs through all leaves of $T$.  Then any classical
point $\alpha\colon\Sp K\rightarrow\Sp\mathcal{A}_{v_0}[t^{1/N}]$
lifts to a classical point of the left-hand side.

{\rm (2)} Consider the morphism
$$
\coprod_{\lambda}\udp{\Sp(\mathcal{B}_{\lambda}[t^{1/N}])}\longrightarrow\udp{\Sp(\mathcal{B}_{v_0}[t^{1/N}])},
$$
where $\lambda$ runs through all leaves of $T$ and $\udp{(\cdot)}$
denotes the unit-polydisk part {\rm
  (\S\ref{subsub-unitpolydiskpart})}.  Then any classical point
$\alpha\colon\Sp K\rightarrow\udp{\Sp(\mathcal{B}_{v_0}[t^{1/N}])}$
valued in a subfield $F\subset K$ lifts to a classical point of the
left-hand side valued in $F$.
\end{prop}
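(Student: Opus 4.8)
The plan is to argue by induction on the height of the rooted tree $T$. Decompose $T$ into the basic tree at its root $v_0$ together with, for each child $u$ of $v_0$, the subtree $T_u$ of descendants of $u$; by Remark~\ref{rem-rationaldiagram1} the restriction of the diagram to $T_u$ is again a (stratified) rational diagram, and $N$ remains a common multiple of the $N_w$ for $w\in T_u$, so the inductive hypothesis applies to $T_u$. Composing lifts, it then suffices in each of (1) and (2) to lift a classical point from $v_0$ to one of its children. The one substantive input is Lemma~\ref{lem-blow-up}~(1): a formal blow-up of an irreducible affinoid along a non-nilpotent ideal is surjective on classical points, and a classical point valued in a local field $K$ lifts to one valued in the \emph{same} $K$. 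We also use that $\fR t\to\fR{t^{1/N}}$ is finite and faithfully flat (\S\ref{sub-rootoft}), so that forming formal blow-ups, strict transforms and unit-polydisk parts commutes with $(\cdot)[t^{1/N}]$, and that $\Sp\mathcal{A}_v[t^{1/N}]$ and $\Sp\mathcal{B}_v[t^{1/N}]$ stay irreducible whenever $\mathcal{A}_v$ and $\mathcal{B}_v$ are integral domains.

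So let $v_0$ be a non-leaf root (if it is a leaf there is nothing to prove). In a rational diagram the algebras at every vertex are integral domains by definition; in a stratified rational diagram a non-leaf vertex $v_0$ has $\Ed^{+,1}_T(v_0)\neq\emptyset$ --- the type-$2$ edges at $v_0$ are defined through an ideal $J_{v_0}$ attached to a formal blow-up and hence presuppose a type-$1$ edge --- so $\mathcal{A}_{v_0}$ and $\mathcal{B}_{v_0}$ are integral domains by Definition~\ref{dfn-rationaldiagram1}~(e). In either case write $u_0,\dots,u_r$ for the type-$1$ children and $J_{v_0}=(c_0,\dots,c_r)\subset A_{v_0}$ for the corresponding blow-up ideal. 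Since $d_i\mid N$, the root of $t$ in $A_{u_i}=A_i[t^{1/d_i}]$ is absorbed after the base change to $\rR{t^{1/N}}$, so $\Sp\mathcal{A}_{u_i}[t^{1/N}]$ is the $i$-th chart of the formal blow-up of $\Sp\mathcal{A}_{v_0}[t^{1/N}]$ along $J_{v_0}$; and $J_{v_0}$ is not nilpotent in $\mathcal{A}_{v_0}[t^{1/N}]$ because the latter is an integral domain.

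Part~(1) now follows at once: Lemma~\ref{lem-blow-up}~(1) lifts a classical point $\alpha\colon\Sp K\to\Sp\mathcal{A}_{v_0}[t^{1/N}]$ to a classical point of some $\Sp\mathcal{A}_{u_i}[t^{1/N}]$, and one applies the inductive hypothesis to $T_{u_i}$. For part~(2), by Definition~\ref{dfn-rationaldiagram1}~(c) the union $\coprod_i\Sp\mathcal{B}_{u_i}[t^{1/N}]$ is, up to the $\ZZ$-rational change of coordinates between $\bfmath{x}_{v_0}$ and $\bfmath{x}_{u_i}$ and the root of $t$, the strict transform of $\Sp\mathcal{B}_{v_0}[t^{1/N}]$ under that blow-up, i.e.\ (by the definition of strict transform in \S\ref{sub-formalblowup}) the formal blow-up of $\Sp\mathcal{B}_{v_0}[t^{1/N}]$ along $J_{v_0}\mathcal{B}_{v_0}[t^{1/N}]$. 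The coordinate change and its inverse both have integer coefficients, hence are automorphisms of the ambient unit polydisk, so $\coprod_i\udp{\Sp(\mathcal{B}_{u_i}[t^{1/N}])}$ is precisely the formal blow-up of the open unit-polydisk part $\udp{\Sp(\mathcal{B}_{v_0}[t^{1/N}])}$ along $J_{v_0}$ --- on these unit-polydisk parts the decreasing radii $\bfmath{\dt}_{v_0}\geq\bfmath{\dt}_{u_i}$ (Definition~\ref{dfn-coveringdiagram}~(b)) play no role. Provided $J_{v_0}$ has nonzero image in $\mathcal{B}_{v_0}$ --- which holds whenever $\mathcal{A}_{v_0}\to\mathcal{B}_{v_0}$ is injective; otherwise $\Sp\mathcal{B}_{v_0}$ lies over the essential center and $\beta$ lifts directly to a type-$2$ child by Definition~\ref{dfn-rationaldiagram1}~(f) --- this center is not nilpotent, so Lemma~\ref{lem-blow-up}~(1) lifts a classical point $\beta$ of $\udp{\Sp(\mathcal{B}_{v_0}[t^{1/N}])}$, over the same $K$, to a classical point of some $\udp{\Sp(\mathcal{B}_{u_i}[t^{1/N}])}$. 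Its $\bfmath{x}_{u_i}$-coordinates are the images of the $\bfmath{x}_{v_0}$-coordinates of $\beta$ under the integer polynomial map defining the coordinate change; they therefore lie in the unit polydisk, and in $F$ whenever $\beta$ is valued in $F\subset K$. One applies the inductive hypothesis to $T_{u_i}$. (Type-$2$ edges otherwise do not enter the argument: every non-leaf vertex has a type-$1$ child, so one may always descend to a leaf along type-$1$ edges.)

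The one genuinely technical point --- all else being a routine unwinding of the definitions --- is the identification used in part~(2): that after the $\ZZ$-rational coordinate changes and the passage to unit-polydisk parts, $\coprod_i\udp{\Sp(\mathcal{B}_{u_i}[t^{1/N}])}$ becomes the formal blow-up of $\udp{\Sp(\mathcal{B}_{v_0}[t^{1/N}])}$ along $J_{v_0}$. Making this precise involves tracking the roots of $t$, the convergence radii $|t|^{-\bfmath{\dt}}$ --- whose contribution vanishes on unit-polydisk parts --- and the $c_i$-torsion quotients occurring in the strict transform, and uses that the blow-up centre $J_{v_0}$ lies in the base $A_{v_0}$, so that blowing up commutes with restriction to the unit polydisk. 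Granting this, surjectivity on classical points is immediate from Lemma~\ref{lem-blow-up}~(1), and preservation of both the unit-polydisk condition and the field $F$ is immediate from the integrality of the coordinate transforms.
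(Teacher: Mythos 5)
Your overall architecture agrees with the paper's: reduce to a single basic tree by induction, cite Lemma~\ref{lem-blow-up}(1) for the blow-up, and observe that the $\ZZ$-rational coordinate changes preserve both the unit-polydisk condition and ``valued in $F$'' pointwise. That pointwise observation is in fact the paper's proof of~(2) in its entirety --- you do reach it at the end, but only after a detour through the claim that $\coprod_i\udp{\Sp(\mathcal{B}_{u_i}[t^{1/N}])}$ \emph{is} the formal blow-up of $\udp{\Sp(\mathcal{B}_{v_0}[t^{1/N}])}$, which is stronger than needed and which you yourself flag as the ``genuinely technical point'' without actually carrying it out. You do not need it: lift the classical point on $\Sp\mathcal{B}_{v_0}[t^{1/N}]$ (no unit-polydisk restriction) via Lemma~\ref{lem-blow-up}(1), then check the lift lands in the unit polydisk because the new coordinates are integer polynomials in the old ones.

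The genuine gap is the assertion that ``in a stratified rational diagram a non-leaf vertex $v_0$ has $\Ed^{+,1}_T(v_0)\neq\emptyset$ --- the type-$2$ edges at $v_0$ \ldots presuppose a type-$1$ edge,'' and the concluding parenthetical that one may always descend to a leaf along type-$1$ edges. This is false as stated: Definition~\ref{dfn-rationaldiagram1}(2)(e) is only a one-way implication, and in the proof of Theorem~\ref{thm-stratifiednormalization} the root is \emph{constructed} with $J_{v_0}=(0)$, giving $\Ed^{+,1}_T(v_0)=\emptyset$ and type-$2$ children given by the irreducible decomposition of $\Sp\mathcal{B}$; moreover at such a vertex $A_{v_0}$ and $B_{v_0}$ need not be integral domains, so Lemma~\ref{lem-blow-up} does not even apply. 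The paper's proof is organized around a pointwise case split precisely to handle this: a classical point whose image lies over the essential center lifts to a type-$2$ child. Your global test on whether $J_{v_0}\mathcal{B}_{v_0}\neq 0$ does cover the intermediate vertices (where $\mathcal{B}_{v_0}$ is a domain, so Lemma~\ref{lem-blow-up}(1) lifts every classical point of $\Sp\mathcal{B}_{v_0}$ --- including those over the center --- once the ideal is nonzero), but the inductive step at a vertex with \emph{only} type-$2$ edges is simply missing. It is an easy fix --- every classical point lies in some irreducible component --- but since Proposition~\ref{prop-rationaldiagram3} is invoked in Proposition~\ref{prop-inductivestep1} on exactly these stratified normalization trees, the case cannot be declared not to occur.
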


\begin{proof}
  (1) It suffices to show that the morphism
$$
\coprod_{v\rightarrow w}\Sp\mathcal{A}_w[t^{1/N}]\longrightarrow\Sp\mathcal{A}_v[t^{1/N}],
$$
where $v$ is a vertex of $T$ and $w$ runs through all children of $v$,
has a similar property.  If the image of a classical point
$\alpha\colon\Sp K\rightarrow\Sp\mathcal{A}_v[t^{1/N}]$ lies outside
of the essential center, then the assertion follows from Lemma
\ref{lem-blow-up} (1).  If not, it lifts to the component
corresponding to a type 2 edge.

(2) The existence of the lifting can be shown similarly to (1). Note
that, if $w$ is a child of $v$, since the coordinate
$\bfmath{x}_w=(x_{w,1},\ldots,x_{w,n})$ is a $\ZZ$-rational coordinate
change of $\bfmath{x}_v=(x_{v,1},\ldots,x_{v,n})$, both
``unit-polydisk part'' and ``valued in $F$'' are preserved by the
morphism
$\Sp\mathcal{B}_w[t^{1/N}]\longrightarrow\Sp\mathcal{B}_v[t^{1/N}]$.
\end{proof}

\subsection{Noether normalization}
\begin{prop}\label{prop-normalization}
  Let
  $\bfmath{\dt}=(\dt_1,\ldots,\dt_n),\bfmath{\dt}'=(\dt'_1,\ldots,\dt'_n)\in\QQ^n$
  be vectors of rational numbers such that
  $0<\bfmath{\dt}'<\bfmath{\dt}$ $($i.e., $0<\dt'_i<\dt_i$ for
  $i=1,\ldots,n)$. Let $\mathcal{A}$ be an affinoid $\rR{t}$-integral
  domain, and
  $\mathcal{B}=\mathcal{A}\dl\bfmath{x};\bfmath{\dt}\dr/\mathfrak{a}$,
  where $\bfmath{x}=(x_1,\ldots,x_n)$, and $\mathfrak{a}$ is a prime
  ideal of $\mathcal{A}\dl\bfmath{x};\bfmath{\dt}\dr$.  Let $A$ be an
  admissible formal model of $\mathcal{A}$, and set
  $B=A\dl \bfmath{x};\bfmath{\dt}\dr/\udl{\mathfrak{a}}$, where
  $\udl{\mathfrak{a}}=\mathfrak{a}\cap A\dl\bfmath{x};\bfmath{\dt}\dr$,
  which is an admissible formal model of $\mathcal{B}$.  Then there
  exists a rational diagram
  $\fR{t^{1/N_{\ast}}}\longrightarrow A_{\ast}\rightarrow A_{\ast}\dl\bfmath{x}_{\ast};\bfmath{\dt}_{\ast}\dr\rightarrow B_{\ast}=A_{\ast}\dl\bfmath{x}_{\ast};\bfmath{\dt}_{\ast}\dr/\udl{\mathfrak{a}}_{\ast}$
  modeled on a rooted tree $T$ such that the following conditions are
  satisfied:
  \begin{itemize}
  \item[(a)] let $v_0\in\Ve_T$ be the root of $T$; then $A_{v_0}=A$,
    $\bfmath{x}_{v_0}=\bfmath{x}$, $\bfmath{\dt}_{v_0}=\bfmath{\dt}$,
    and $\udl{\mathfrak{a}}_{v_0}=\udl{\mathfrak{a}}$ $($hence
    $B_{v_0}=B)$;
  \item[(b)] for any leaf $\lambda$ of $T$, we have
    $\bfmath{\dt}'<\bfmath{\dt}_{\lambda}<\bfmath{\dt}$, and the map
    $A_{\lambda}\rightarrow B_{\lambda}$ factors as
$$
A_{\lambda}\longrightarrow A_{\lambda}\dl x_{\lambda,1},\ldots,x_{\lambda,d_{\lambda}};\dt_{\lambda,1},\ldots,\dt_{\lambda,d_{\lambda}}\dr\longhookrightarrow B_{\lambda},
$$
where $d_{\lambda}\geq 0$ is a non-negative integer, and the
right-hand map is injective and finite.
Moreover, one can take $\bfmath{\dt}_{\lambda}$ such that $\dt_{\lambda,1}=\cdots=\dt_{\lambda,d_{\lambda}}$.
\end{itemize}
\end{prop}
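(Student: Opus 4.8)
\emph{Overall plan.} The plan is to argue by induction on the number $n$ of variables $\bfmath{x}$, imitating the classical proof of Noether normalization for affinoid algebras (peel off one variable at a time by a Weierstrass-type coordinate change) but relative to the base $\mathcal{A}$. Two points make the relative version go through. Over a base that is not a field the leading coefficient of a distinguished element need not be a unit; replacing $\Sp\mathcal A$ by a tree of formal blow-ups of $A$ (together with root-of-$t$ extensions) is precisely what turns the ideal of leading-coefficient candidates into a principal ideal on each chart. Moreover the overconvergence datum $\bfmath{\dt}$ furnishes room: after shrinking $\bfmath{\dt}$ slightly one may ignore all but finitely many monomials of the chosen $f$, and — this is the feature that lets one get away with a plain (non-stratified) rational diagram — the portion of $\Sp\mathcal B$ sitting at the outer boundary of the polydisc, which is where a degenerating leading coefficient would obstruct finiteness, simply falls off the overconvergent part; so after finitely many blow-ups every chart is either genuinely normalizable or has zero (or nilpotent) coordinate ring. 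I would work throughout with formal models $A$ and $B = A\dl\bfmath{x};\bfmath{\dt}\dr/\udl{\mathfrak{a}}$, and may assume $\mathfrak{a}\cap\mathcal A = 0$. The case $\mathfrak{a} = 0$ is settled by the trivial tree (identity coordinate change, trivial blow-up, a single child shrinking $\bfmath{\dt}$ to a suitable $\bfmath{\dt}_\lambda\in(\bfmath{\dt}',\bfmath{\dt})$), with $d_\lambda = n$.

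\emph{The inductive step.} Assuming $\mathfrak{a}\neq 0$, I would pick $0\neq f\in\udl{\mathfrak{a}}$ and fix $\bfmath{\dt}^\flat$ with $\bfmath{\dt}' < \bfmath{\dt}^\flat < \bfmath{\dt}$. As $f$ already converges on the larger $\bfmath{\dt}$-polydisc, only finitely many monomials $\bfmath{x}^\nu$ attain the $\bfmath{\dt}^\flat$-Gauss norm of $f$; let $J\subset A$ be the ideal (nonzero, hence non-nilpotent) generated by their coefficients. The first layer of outgoing edges is then the formal blow-up of $A$ along $J$ (trivial root-of-$t$ extensions), giving the charts $A_i = A\dl\ldots,c_j/c_i,\ldots\dr/(c_i\text{-torsion})$ and the strict transforms $B_i$ as demanded by Definition~\ref{dfn-rationaldiagram1}. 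On $A_i$ one has $JA_i = (a_{\nu_i})$, so all $\bfmath{\dt}^\flat$-dominant coefficients of $f$ are divisible by $a_{\nu_i}$; a $\ZZ$-rational change of Weierstrass type $x_j\mapsto x_j + x_n^{c_j}$ (exponents chosen so $\bfmath{x}^{\nu_i}$ produces a strictly larger power of $x_n$ than the remaining dominant monomials) then renders $f$, after removing the factor $a_{\nu_i}$ and passing to the strict transform, an $x_n$-distinguished element $f_i$ of $A_i\dl\bfmath{x}_{u_i};\bfmath{\dt}^\flat\dr$ of some degree $s_i\geq 0$ — the exponents $c_j$ being absorbed because the passage from $\bfmath{\dt}$ to $\bfmath{\dt}^\flat$ pushes the $x_n$-disc far enough inward, and with the new convergence vector still inside $(\bfmath{\dt}',\bfmath{\dt})$; the case $s_i = 0$ means $f_i$ is a unit and $\mathcal B_i = 0$. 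Weierstrass division over the complete base $A_i$ then exhibits $A_i\dl\bfmath{x};\bfmath{\dt}^\flat\dr/(f_i)$ as finite over $A_i\dl x_1,\ldots,x_{n-1};\bfmath{\dt}^\flat\dr$, whence $\mathcal B_i$ (a quotient of it) is finite over $\mathcal C_i := \mathcal A_i\dl x_1,\ldots,x_{n-1};\bfmath{\dt}^\flat\dr/\mathfrak{a}'_i$ for $\mathfrak{a}'_i$ the appropriate contracted ideal, which is prime since $\mathcal B_i$ is a domain by Lemma~\ref{lem-blow-up}~(2) and Remark~\ref{rem-rationaldiagram0}.

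\emph{Recursion and assembly.} Then I would apply the inductive hypothesis to $\mathcal C_i$ (with $n-1$ variables and base $\mathcal A_i$), carrying $\mathcal B_i$ along via strict transforms; finiteness of $\mathcal B_i$ over $\mathcal C_i$ survives the base changes appearing in the diagram by Lemma~\ref{lem-finiteinjection}. Grafting the resulting trees onto the first-layer charts produces $T$; at a leaf $\lambda$, $\mathcal B_\lambda$ is finite over $\mathcal A_\lambda\dl x_1,\ldots,x_{d_\lambda};\ldots\dr$ as a composite of finite maps, and one final child performing a trivial blow-up and a further shrinking of $\bfmath{\dt}$ makes $\dt_{\lambda,1}=\cdots=\dt_{\lambda,d_\lambda}$ a common value in $(\bfmath{\dt}',\bfmath{\dt})$, finiteness and injectivity persisting by Lemmas~\ref{lem-openimmersion} and~\ref{lem-finiteinjection}. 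Each step lowers $n$, so the recursion and hence $T$ are finite. The requirement that every vertex carry the same number $n$ of variables is kept by never literally discarding an eliminated variable: it is retained as an integral generator, the defining ideal keeping a monic-in-$x_n$ relation.

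\emph{Main obstacle.} The crux is the middle step — rendering one well-chosen $f$ simultaneously $x_n$-distinguished over all of the base after a single formal blow-up and a single $\ZZ$-rational coordinate change — while (i) keeping the convergence vector inside $(\bfmath{\dt}',\bfmath{\dt})$ through the coordinate change, and (ii) fitting the blow-up into the rigid combinatorial form of a rational diagram. The genuinely delicate points are the choice of the center $J$ from the $\bfmath{\dt}^\flat$-dominant coefficients of $f$, the trade-off between the coordinate-change exponents $c_j$ and the amount of shrinking, and the verification that the overconvergence really does annihilate every residual chart on which finiteness would otherwise fail. The rest — relative Weierstrass division, descent of finiteness under base change, and termination — should be routine given the results already in place.
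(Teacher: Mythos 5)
Your overall architecture is recognizably the same as the paper's: pick a nonzero $f\in\udl{\mathfrak{a}}$, principalize its coefficient ideal by a formal blow-up (the first layer of edges), shrink $\bfmath{\dt}$ to gain overconvergence, apply a Weierstrass-type $\ZZ$-rational coordinate change, and conclude finiteness by Weierstrass division.  However, the crucial middle step — getting from ``blow up along the dominant-coefficient ideal $J$'' to ``$f/a_{\nu_i}$, after the coordinate change, is $x_n$-distinguished in $A_i\dl\bfmath{x};\bfmath{\dt}^\flat\dr$'' — is where your plan has a genuine gap, and the paper's proof is built precisely to fill it.

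\textbf{The gap.}  After blowing up along $J$ and passing to the chart where $J A_i = (a_{\nu_i})$, you know that the other \emph{$J$-generators} are divisible by $a_{\nu_i}$ in $A_i$, but the subdominant coefficients $a_\nu$ of $f$ are not: $a_\nu/a_{\nu_i}$ need not lie in $A_i$ at all, and when it does it need not be $t$-divisible.  Your assertion that these terms are ``absorbed'' by passing to the smaller polydisc $\bfmath{\dt}^\flat$ is a metric heuristic, but the shrink does not change the coefficients algebraically; and the residue norm on $A$ is not multiplicative, so $|a_\nu/a_{\nu_i}|$ is not controlled by $|a_\nu|/|a_{\nu_i}|$.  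Consequently the phrase ``the dominant coefficients'' of $f$ relative to $\bfmath{\dt}^\flat$ is itself not a canonical notion over a non-field base, and there is no reason the resulting $f_i$ is distinguished.  The paper avoids this by working with the \emph{content ideal} $\Cont(F)$ (generated by \emph{all} coefficients of $F$, finitely many by Noetherianity) rather than just a dominant slice, and it runs a genuinely two-layer process: first blow up along $\Cont(F)$; on the ``top'' chart $A_s$ where the content becomes principal and the $\bfmath{x}^{\nu_s}$-coefficient is a unit, \emph{then} perform the explicit scaling $\til{\bfmath{x}}=t^{-\bfmath{\e}}\bfmath{x}$, with $\bfmath{\e}=(1/M^{M^{n-1}},\ldots,1/M)$ for $M$ chosen to beat the degree $|\nu_0|$, after which the reversed-lex-higher coefficients are genuinely $t^{1/N}$-divisible; \emph{then} blow up again along $\Cont(\til F)$.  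Only at that point is the strict transform on the top chart of the form $f_r(\til{\bfmath y})+t^{1/N}G$ with $f_r$ a monic polynomial, and the famous coordinate change $x_k=y_n^{M^{n-k}}+y_k$ applies.

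\textbf{Termination.}  The second serious omission is a termination argument for the iterated blow-ups.  Your plan inducts only on $n$, but a single blow-up layer is not enough to reach the distinguished form; the paper's iteration is organized by a secondary induction on the ``maximal content degree'' $\mcd(F)$ with respect to the reversed lexicographic order on $\NN^n$, which strictly decreases on the lower charts of both blow-up layers, and on the top chart after scaling either $\mcd$ drops or the process succeeds.  Without such a well-ordering your construction does not obviously terminate.  Your remarks about the routine parts (relative Weierstrass division, descent of finiteness through base change using Lemma~\ref{lem-finiteinjection}, flattening $\bfmath{\dt}_\lambda$ to a constant vector via Lemma~\ref{lem-openimmersion}, keeping $n$ coordinates by retaining integral generators) are correct and match the paper's treatment, and the observation that a plain rational diagram suffices here, with stratification deferred to Theorem~\ref{thm-stratifiednormalization}, is also right.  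The missing ingredients are the content-ideal/$\mcd$ machinery and the explicit $t^{-\bfmath{\e}}$-scaling, and without them the core claim — that one layer of blow-up plus a coordinate change already produces a distinguished element — fails.
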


\begin{proof}
  The proposition follows by induction from the following:
  \begin{itemize}
  \item[$(\ast)$] If $\mathfrak{a}\neq(0)$, then for any
    $\bfmath{\dt}''\in(\QQ_{>0})^n$ such that
    $\bfmath{\dt}''<\bfmath{\dt}$, there exists a rational diagram as
    above satisfying the condition (a) and
    \begin{itemize}
    \item[(b$)'$] for any leaf $\lambda$ of $T$, we have
      $\bfmath{\dt}''<\bfmath{\dt}_{\lambda}<\bfmath{\dt}$ for all
      $i=1,\ldots,n$, and there exists a finite injection of the form
$$
A_{\lambda}\dl x_{\lambda,1},\ldots,x_{\lambda,n-1};\dt_{\lambda,1},\ldots,\dt_{\lambda,n-1}\dr/\mathfrak{a}'_{\lambda}\longhookrightarrow B_{\lambda},
$$
where $\dt_{\lambda,1}=\cdots=\dt_{\lambda,n-1}$; moreover, we have $\dim A_{\lambda}=A$ and $\dim B_{\lambda}=\dim B$.
\end{itemize}
\end{itemize}
Indeed, the proposition follows by repeated application of $(\ast)$,
each time replacing $A$ by $A_{\lambda}$ and $B$ by
$A_{\lambda}\dl y_1,\ldots,y_{n-1}\dr/\mathfrak{a}'_{\lambda}$, as
long as $\mathfrak{a}'_{\lambda}\neq (0)$.

We consider the {\em reversed lexicographical ordering} on $\NN^n$,
i.e., for $\nu=(\nu_1,\ldots,\nu_n),\mu=(\mu_1,\ldots,\mu_n)\in\NN^n$,
$\nu<\mu$ if and only if $\nu\neq\mu$ and $\nu_k<\mu_k$ where
$k=\max\{i\mid \nu_i\neq\mu_i\}$.

Suppose $\mathfrak{a}\neq(0)$.  Take a non-zero
$F\in\udl{\mathfrak{a}}=\mathfrak{a}\cap A\dl \bfmath{x};\bfmath{\dt}\dr$,
and write
$$
F(\bfmath{x})=\sum_{\nu\in\NN^n}c_{\nu}\bfmath{x}^{\nu},\quad c_{\nu}\in A.
$$
Consider the content ideal of $F$
$$
\Cont(F)=(c_{\nu}\mid\nu\in\NN^n).
$$
This is a non-zero ideal of a Noetherian ring
$A\dl\bfmath{x};\bfmath{\dt}\dr$, and is of the form
$(c_{\nu_0},\ldots,c_{\nu_s})$, where we suppose that
$\nu_0<\cdots<\nu_s$, and that the set of generators has been chosen
such that $\nu_s$ is the smallest possible.  Let us call such an
$\nu_s$ the {\em maximal content degree} of $F$, denoted by $\mcd(F)$.
The proof of $(\ast)$ will be done by induction on $\mcd(F)$, and
suppose that the claim is already proved for smaller $\mcd(F)$.

\medskip\noindent {\sc Step 1.}  We consider the formal blow-up
$\{A\rightarrow A_i\mid i=0,\ldots,s\}$ along the content ideal, i.e.,
$$
A_i=A\bigg\langle\!\!\!\bigg\langle\frac{c_{\nu_0}}{c_{\nu_i}},\ldots,\frac{c_{\nu_r}}{c_{\nu_i}}\bigg\rangle\!\!\!\bigg\rangle/\textrm{$c_{\nu_i}$-torsion}.
$$
For each $i=0,\ldots,s$, $F(\bfmath{x})$ in
$A_i\dl\bfmath{x};\bfmath{\dt}\dr$ is of the form $c_{\nu_i}F_i$ for
$F_i\in A_i\dl\bfmath{x};\bfmath{\dt}\dr$.  We have $c_{\nu_i}F_i=0$
in $B_i$, the strict transform of $B$ by $A\rightarrow A_i$, and since
$B_i$ is $c_{\nu_i}$-torsion free, we have $F_i=0$ in $B_i$, and so
$F_i\in\udl{\mathfrak{a}}A_i\dl\bfmath{x};\bfmath{\dt}\dr$.  For
$i<s$, $F_i(\bfmath{x})$ has a strictly smaller maximal content degree
since $c_{\nu_s}=(c_{\nu_s}/c_{\nu_i})c_{\nu_i}$, and hence the claim
$(\ast)$ for $A_i$ follows by induction.  The last patch $A_s$ will be
treated in the next step (with $A_s$ replaced by $A$).

\medskip\noindent {\sc Step 2.}  We discuss the case where
$\mcd(F)=\nu_0$ and $c_{\nu_0}=1$.  Take a natural number $M$ such
that
\begin{itemize}
\item $M$ is not divisible by the characteristic of $A$;
\item $M>|\nu_0|+1$ and $M^{M-1}>n|\nu_0|$;
\item if we set
$$
\bfmath{\e}=(1/M^{M^{n-1}},\ldots,1/M^{M^2},1/M^{M},1/M),
$$
then $\bfmath{\dt}''<\bfmath{\dt}-\bfmath{\e}$.
\end{itemize}
Set $N=M^{M^{n-1}}$ and replace $A$ by $A[t^{1/N}]$.  Note that
\begin{itemize}
\item[(i)] if $\nu_0<\mu$, then
  $\bfmath{\e}\cdot\nu_0<\bfmath{\e}\cdot\mu$.
\end{itemize}

Indeed, if $\mu=(\mu_1,\ldots,\mu_n)$ and
$\nu_0=(\nu_{0,1},\ldots,\nu_{0,n})$ with
$\mu_n=\nu_{0,n},\ldots,\mu_{k+1}=\nu_{0,k+1}$ and $\mu_k>\nu_{0,k}$
for $1\leq k\leq n$, then
\begin{equation*}
  \begin{split}
    \bfmath{\e}\cdot(\mu-\nu_0)&=M^{-M^{n-k}}(\mu_k-\nu_{0,k})+M^{-M^{n-k+1}}(\mu_{k+1}-\nu_{0,k+1})+\cdots+M^{-M^{n-1}}(\mu_1-\nu_{0,1})\\
    &\geqq M^{-M^{n-k}}-M^{-M^{n-k+1}}|\nu_0|-\cdots-M^{-M^{n-1}}|\nu_0|\\
    &\geqq M^{-M^{n-k}}\bigg\{1-\frac{(k-1)|\nu_0|}{M^{M-1}}\bigg\}\geqq M^{-M^{n-k}}\bigg\{1-\frac{n|\nu_0|}{M^{M-1}}\bigg\}> 0.
  \end{split}
\end{equation*}

We need to consider the scaling of the coordinate
$\til{\bfmath{x}}=t^{-\bfmath{\e}}\bfmath{x}$.  Note that
$A\dl\til{\bfmath{x}};\bfmath{\dt}\dr=A\dl\bfmath{x};\bfmath{\dt}-\bfmath{\e}\dr$
($\hookleftarrow A\dl\bfmath{x};\bfmath{\dt}\dr$).  If we set
$\til{F}(\til{\bfmath{x}})=F(\bfmath{x})$, we have
$$
\til{F}(\til{\bfmath{x}})=\sum_{\nu\in\NN^n}\til{c}_{\nu}\til{\bfmath{x}}^{\nu},\quad \til{c}_{\nu}=t^{\bfmath{\e}\cdot\nu}c_{\nu}.
$$

By (i) above, we have,
\begin{itemize}
\item[(ii)] if $\nu_0<\mu$, then
  $\til{c}_{\nu_0}=t^{\bfmath{\e}\cdot\nu_0}\ |\ t^{\bfmath{\e}\cdot\nu_0}c_{\mu}\ |\ t^{\bfmath{\e}\cdot\mu}c_{\mu}=\til{c}_{\mu}$;
  a fortiori, $\til{c}_{\mu}/\til{c}_{\nu_0}$ is divisible by
  $t^{1/N}$.
\end{itemize}

Indeed, $\til{c}_{\mu}/\til{c}_{\nu_0}$ is divisible by
$t^{\bfmath{\e}\cdot(\mu-\nu_0)}$, where
$\bfmath{\e}\cdot(\mu-\nu_0)>0$ by (i), and, in the notation as in the
proof of (i) above,
\begin{equation*}
  \begin{split}
    \bfmath{\e}\cdot(\mu-\nu_0)&=M^{-M^{n-1}}\{M^{M^{n-1}-M^{n-k}}(\mu_k-\nu_{0,k})+\cdots+(\mu_1-\nu_{0,1})\}\\
    &=\frac{1}{N}\cdot(\textrm{positive integer}).
  \end{split}
\end{equation*}

In particular, we have $\mcd(\til{F})\leqq\nu_0=\mcd(F)$.  Let
$$
\Cont(\til{F}(\til{\bfmath{x}}))=(\til{c}_{\mu_0},\ldots,\til{c}_{\mu_r})
$$
($\mu_0<\cdots<\mu_r$) be the content ideal of $\til{F}$ in $A'$,
where $\mu_r=\mcd(\til{F})$.  By (ii) we have $\mu_r\leqq\nu_0$.  If
$\mu_r<\nu_0$, then we can finish by induction.  So we assume
$\mu_r=\nu_0$.  We perform the formal blow-up along the content ideal
of $\til{F}(\til{\bfmath{x}})$, obtaining affine patches $\til{A}_j$
for $j=0,\ldots,r$ as before.  Similarly as before, we may finish by
induction for the patches $\til{A}_j$ for $j<r$.

On the patch $\til{A}_r$, the strict transform $\til{F}_r$ of
$\til{F}$ is of the form
$$
\til{F}_r(\til{\bfmath{x}})=f_r(\til{\bfmath{x}})+t^{1/N}G(\til{\bfmath{x}}),
$$
where $f_r(\til{\bfmath{x}})=\cdots+\til{\bfmath{x}}^{\nu_0}$ is a
monic polynomial and all terms in $G(\til{\bfmath{x}})$ are of degree
strictly larger than $\nu_0$.

Now we consider the famous coordinate change
$$
\begin{cases}
  x_n=y_n,\\
  x_k=y^{M^{n-k}}_n+y_k&(k=1,\ldots,n-1).
\end{cases}
$$
By our choice of $\bfmath{\e}$, the above coordinate change gives rise
to a coordinate change
$$
\begin{cases}
  \til{x}_n=\til{y}_n,\\
  \til{x}_k=\til{y}^{M^{n-k}}_n+\til{y}_k&(k=1,\ldots,n-1)
\end{cases}
$$
from $\til{\bfmath{x}}$ to
$\til{\bfmath{y}}=t^{-\bfmath{\e}}\bfmath{y}$, and we have
$$
\til{F}_r(\til{\bfmath{y}})=f_r(\til{\bfmath{y}})+t^{1/N}G(\til{\bfmath{y}})
$$
where
\begin{itemize}
\item[(iii)] $f_r(\til{\bfmath{y}})=u\til{y}^L_n+(\textrm{lower})$,
  where $u$ is a unit and
  $L=M^{n-1}\nu_{0,1}+M^{n-2}\nu_{0,2}+\cdots+\nu_{0,n}$;
\item[(iv)] all terms in $G(\til{\bfmath{y}})$ are of degree strictly
  larger than $\nu_0$.
\end{itemize}

Now let $B'_r$ be the strict transform of $B$ by
$A\rightarrow\til{A}_r$, and set
$\til{B}_r=B'_r\widehat{\otimes}_{\til{A}_r\dl\bfmath{y};\bfmath{\dt}\dr}\til{A}_r\dl\til{\bfmath{y}};\bfmath{\dt}\dr$.
Consider the surjection
$$
\til{A}_r\dl\til{\bfmath{y}};\bfmath{\dt}\dr\longrightarrow\til{B}_r
$$
whose kernel is
$\udl{\mathfrak{a}}_r:=\udl{\mathfrak{a}}\til{A}_r\dl\til{\bfmath{y}};\bfmath{\dt}\dr$.
Let $\til{B}'_r$ be the image of
$$
\til{A}_r\dl\til{\bfmath{y}}';\til{\bfmath{\dt}}'\dr:=\til{A}_r\dl \til{y}_1,\ldots,\til{y}_{n-1};\dt_1,\ldots,\dt_{n-1}\dr\longrightarrow\til{B}_r.
$$
We have $\til{B}_r=\til{B}'_r\dl\til{y}_n;\dt_n\dr$.  Note that
$\til{A}_r\dl\til{\bfmath{y}};\bfmath{\dt}\dr=\til{A}_r\dl\bfmath{y};\bfmath{\dt}-\bfmath{\e}\dr$,
$\til{A}_r\dl\til{\bfmath{y}}';\til{\bfmath{\dt}}'\dr=\til{A}_r\dl\bfmath{y}';\bfmath{\dt}'-\bfmath{\e}'\dr$,
where $\bfmath{\e}'=(\e_1,\ldots,\e_{n-1})$, and hence
$\til{B}_r=\til{B}'_r\dl y_n;\dt_n-\e_n\dr$.

We need to show that $\til{B}_r$ is finite over $\til{B}'_r$.  It
suffices to show that $(\til{B}_r)_0=\til{B}_r/t^{1/N}\til{B}_r$ is
finite over $(\til{B}'_r)_0=\til{B}'_r/t^{1/N}\til{B}'_r$
(\cite[8.4]{Matsu}).  But this follows from (iii) and (iv).

To conclude the proof, let $\lambda$ be a leaf, and
$A_{\lambda}=A_r$, $\bfmath{x}_{\lambda}=\bfmath{y}$,
$\bfmath{\dt}_{\lambda}=\bfmath{\dt}-\bfmath{\e}$, and
$B_{\lambda}=\til{B}_r$.
Moreover, we claim that we can put $\dt_{\lambda,1}=\cdots=\dt_{\lambda,n-1}$.
To this end, define $\dt=\min\{\dt_{\lambda,i}\mid i=1,\ldots,n-1\}$, and let $\bfmath{x}'_{\lambda}=(x_{\lambda,1},\ldots,x_{\lambda,n-1})$ and $\bfmath{\dt}'_{\lambda}=(\dt_{\lambda,1},\ldots,\dt_{\lambda,n-1})$.
We take the base change of the finite injective map
$$
A_{\lambda}\dl \bfmath{x}'_{\lambda};\bfmath{\dt}'_{\lambda}\dr/\mathfrak{a}'_{\lambda}\longhookrightarrow B_{\lambda}=A_{\lambda}\dl \bfmath{x}_{\lambda};\bfmath{\dt}_{\lambda}\dr/\mathfrak{a}_{\lambda}\cong(A_{\lambda}\dl\bfmath{x}'_{\lambda};\bfmath{\dt}'_{\lambda}\dr\widehat{\otimes}_{A_{\lambda}}A_{\lambda}\dl x_n;\dt_{\lambda,n}\dr)/\mathfrak{a}_{\lambda}\eqno{(\ast)}
$$
by $A_{\lambda}\dl\bfmath{x}'_{\lambda};\bfmath{\dt}'_{\lambda}\dr\rightarrow A_{\lambda}\dl\bfmath{x}'_{\lambda};\dt\dr$, followed by killing $t$-torsions.
Here, by an abuse of notation, $\dt$ in the right-hand side stands for the vector $(\delta,\ldots,\delta)$ of size $n-1$.
We claim that the resulting map
$$
A_{\lambda}\dl\bfmath{x}'_{\lambda};\dt\dr/\mathfrak{a}''_{\lambda}\longrightarrow B'_{\lambda}\cong A_{\lambda}\dl\bfmath{x}'_{\lambda},x_n;\dt,\dt_{\lambda,n}\dr/\mathfrak{a}'''_{\lambda}
$$
is again finite and injective.
Finiteness is obvious.
To show it is injective, it suffices to show that its localization by $1/t$
$$
\cA_{\lambda}\dl\bfmath{x}'_{\lambda};\dt\dr/\mathfrak{a}''_{\lambda}\cA_{\lambda}\dl\bfmath{x}'_{\lambda};\dt\dr\longrightarrow \cB'_{\lambda}\eqno{(\dast)}
$$
is injective.
Consider the base change diagram 
$$
\xymatrix{\cA_{\lambda}\dl\bfmath{x}'_{\lambda};\dt\dr/\mathfrak{a}''_{\lambda}\cA_{\lambda}\dl\bfmath{x}'_{\lambda};\dt\dr\ar[r]&\cB_{\lambda}\otimes_{\cA_{\lambda}\dl\bfmath{x}'_{\lambda};\bfmath{\dt}'_{\lambda}\dr}\cA_{\lambda}\dl\bfmath{x}'_{\lambda};\dt\dr\\ \cA_{\lambda}\dl \bfmath{x}'_{\lambda};\bfmath{\dt}'_{\lambda}\dr/\mathfrak{a}'_{\lambda}\cA_{\lambda}\dl \bfmath{x}'_{\lambda};\bfmath{\dt}'_{\lambda}\dr\ar[u]\ar@{^{(}->}[r]&\cB_{\lambda}\rlap{,}\ar[u]}
$$
by $\cA_{\lambda}\dl\bfmath{x}'_{\lambda};\bfmath{\dt}'_{\lambda}\dr\rightarrow\cA_{\lambda}\dl\bfmath{x}'_{\lambda};\dt\dr$, where the lower horizontal arrow is the localization by $1/t$ of $(\ast)$, which is finite and injective.
The upper horizontal arrow is injective, since $\cA_{\lambda}\dl\bfmath{x}'_{\lambda};\bfmath{\dt}'_{\lambda}\dr\rightarrow\cA_{\lambda}\dl\bfmath{x}'_{\lambda};\dt\dr$ is flat due to Lemma \ref{lem-openimmersion}.
Moreover, by finiteness of the horizontal arrows, $\cB_{\lambda}\otimes_{\cA_{\lambda}\dl\bfmath{x}'_{\lambda};\bfmath{\dt}'_{\lambda}\dr}\cA_{\lambda}\dl\bfmath{x}'_{\lambda};\dt\dr$ is already complete, and hence coincides with $\cB'_{\lambda}$, i.e., the upper horizontal arrow coincides with $(\dast)$, whence the desired injectivity.
Now, we finish the proof by replacing $A_{\lambda}\dl \bfmath{x}_{\lambda};\bfmath{\dt}_{\lambda}\dr$ by $A_{\lambda}\dl \bfmath{x}'_{\lambda},x_{\lambda,n};\dt,\dt_{\lambda,n}\dr$ and $B_{\lambda}$ by $B'_{\lambda}$.
\end{proof}

Let us call the rational diagram
$\fR{t^{1/N_{\ast}}}\rightarrow A_{\ast}\rightarrow A_{\ast}\dl\bfmath{x}_{\ast};\bfmath{\dt}_{\ast}\dr\rightarrow B_{\ast}=A_{\ast}\dl\bfmath{x}_{\ast};\bfmath{\dt}_{\ast}\dr/\mathfrak{a}_{\ast}$
modeled on $T$ as in Proposition \ref{prop-normalization} a {\em
  normalization diagram}.  Note that, if $S\subset T$ is a subtree of
descendants of a fixed vertex, then
$\fR{t^{1/N_{\ast}}}|_S\rightarrow A_{\ast}\dl\bfmath{x}_{\ast};\bfmath{\dt}_{\ast}\dr|_S\rightarrow B_{\ast}|_S=A_{\ast}\dl\bfmath{x}_{\ast};\bfmath{\dt}_{\ast}\dr/\mathfrak{a}_{\ast}|_S$
is again a normalization diagram.

\begin{thm}[Stratified Noether normalization]\label{thm-stratifiednormalization}
  Let
  $\bfmath{\dt}=(\dt_1,\ldots,\dt_n),\bfmath{\dt}'=(\dt'_1,\ldots,\dt'_n)\in\QQ^n$
  be vectors of rational numbers such that
  $0<\bfmath{\dt}'<\bfmath{\dt}$ $($i.e., $0<\dt'_i<\dt_i$ for
  $i=1,\ldots,n)$. Let $\mathcal{A}$ be an affinoid $\rR{t}$-algebra,
  and
  $\mathcal{B}=\mathcal{A}\dl\bfmath{x};\bfmath{\dt}\dr/\mathfrak{a}$,
  where $\bfmath{x}=(x_1,\ldots,x_n)$ and $\mathfrak{a}$ is an ideal
  of $\mathcal{A}\dl\bfmath{x};\bfmath{\dt}\dr$.  Let $A$ be an
  admissible formal model of $\mathcal{A}$, and set
  $B=A\dl \bfmath{x};\bfmath{\dt}\dr/\udl{\mathfrak{a}}$, where
  $\udl{\mathfrak{a}}=\mathfrak{a}\cap A\dl\bfmath{x};\bfmath{\dt}\dr$,
  which is an admissible formal model of $\mathcal{B}$.  Then there
  exists a stratified rational diagram
  $\fR{t^{1/N_{\ast}}}\rightarrow A_{\ast}\rightarrow A_{\ast}\dl\bfmath{x}_{\ast};\bfmath{\dt}_{\ast}\dr\rightarrow B_{\ast}=A_{\ast}\dl\bfmath{x}_{\ast};\bfmath{\dt}_{\ast}\dr\mathfrak{a}_{\ast}$
  modeled on a rooted tree $T$ such that the conditions {\rm (a)} and
  {\rm (b)} in Proposition {\rm \ref{prop-normalization}} are
  satisfied.
\end{thm}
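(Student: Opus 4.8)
The plan is to deduce the stratified statement from the irreducible case, Proposition~\ref{prop-normalization}, by performing a single irreducible decomposition at the top which supplies the type~$2$ edges out of the root, and then attaching to each such type~$2$ child a normalization diagram produced by Proposition~\ref{prop-normalization}. The key structural observation that makes this work is that, once one is in the integral case, no further stratification is ever needed: every operation in the proof of Proposition~\ref{prop-normalization} is a formal blow-up of an integral domain, or a strict transform of one, which stays integral by Lemma~\ref{lem-blow-up}(2), and by Remark~\ref{rem-rationaldiagram0} this persists at all descendants. Thus a single level of stratification, sitting directly below the root, suffices.

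First I would build the root $v_0$ of $T$ with $A_{v_0}=A$, $\bfmath{x}_{v_0}=\bfmath{x}$, $\bfmath{\dt}_{v_0}=\bfmath{\dt}$, $\mathfrak{a}_{v_0}=\udl{\mathfrak{a}}$ (so $B_{v_0}=B$), $N_{v_0}=1$, and with $\Ed^{+,1}_T(v_0)=\emptyset$ and essential center $J_{v_0}=(0)$; since there are no type~$1$ edges at $v_0$, clause~(e) of Definition~\ref{dfn-rationaldiagram1}(2) is vacuous there. Using the irreducible decomposition of affinoids (\S\ref{sub-irreducibledecomposition}), I would write $\Sp\mathcal{A}=\bigcup_{i=1}^{l}\Sp\mathcal{A}_i$ with $\mathcal{A}_i=\mathcal{A}/\mathfrak{p}_i$, where $\mathfrak{p}_i$ are the minimal primes of $\mathcal{A}$, and for each $i$ the irreducible decomposition $\Sp(\mathcal{B}\otimes_{\mathcal{A}}\mathcal{A}_i)=\bigcup_{j=1}^{m_i}\Sp\mathcal{B}_{ij}$ with $\mathcal{B}_{ij}=\mathcal{A}_i\dl\bfmath{x};\bfmath{\dt}\dr/\mathfrak{q}_{ij}$, $\mathfrak{q}_{ij}$ prime. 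As in \S\ref{sub-affinoidalgebras}, $A_i:=A/(\mathfrak{p}_i\cap A)$ is an admissible formal model of $\mathcal{A}_i$ (it is $t$-torsion free since $t\notin\mathfrak{p}_i$) and $B_{ij}:=A_i\dl\bfmath{x};\bfmath{\dt}\dr/(\mathfrak{q}_{ij}\cap A_i\dl\bfmath{x};\bfmath{\dt}\dr)$ is an admissible formal model of $\mathcal{B}_{ij}$. I would then install the type~$2$ edges $v_0\to w_{ij}$ ($1\leq i\leq l$, $1\leq j\leq m_i$) with $A_{w_{ij}}=A_i$, $N_{w_{ij}}=1$, $\bfmath{x}_{w_{ij}}=\bfmath{x}$, $\bfmath{\dt}_{w_{ij}}=\bfmath{\dt}$ and $\mathfrak{a}_{w_{ij}}=\mathfrak{q}_{ij}\cap A_i\dl\bfmath{x};\bfmath{\dt}\dr$; by construction these satisfy clause~(f) of Definition~\ref{dfn-rationaldiagram1}(2) (with $J_{v_0}=(0)$, so that $\Sp\mathcal{A}_{v_0}/J_{v_0}$ is just $\Sp\mathcal{A}$), and the compatibilities in Definition~\ref{dfn-coveringdiagram}(b)--(c) hold since $\udl{\mathfrak{a}}A_i\dl\bfmath{x};\bfmath{\dt}\dr\subset\mathfrak{q}_{ij}$.

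Next, I would apply Proposition~\ref{prop-normalization} to each pair $(\mathcal{A}_i,\mathcal{B}_{ij})$ — legitimate because $\mathcal{A}_i$ is an affinoid $\rR{t}$-integral domain and $\mathfrak{q}_{ij}$ is prime — with the same $\bfmath{\dt}$ and $\bfmath{\dt}'$. This yields a normalization diagram modeled on a rooted tree $T_{ij}$ whose root carries exactly the data $(A_i,B_{ij},\bfmath{x},\bfmath{\dt})$, which I identify with the vertex $w_{ij}$ above. Setting $T=\{v_0\}\cup\bigcup_{i,j}T_{ij}$ — the edges being the new ones $v_0\to w_{ij}$ together with those of the $T_{ij}$, the latter all declared type~$1$ — gives the desired tree. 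It is a stratified rational diagram: at each vertex of each $T_{ij}$ the algebras are integral domains (Remark~\ref{rem-rationaldiagram0}), so clause~(e) of Definition~\ref{dfn-rationaldiagram1}(2) holds, while clauses~(a)--(c) of Definition~\ref{dfn-rationaldiagram1}(1) are exactly what a rational diagram provides; clause~(d) is trivial, and clause~(f) holds at $v_0$ as arranged. Condition~(a) of Proposition~\ref{prop-normalization} holds by the choice of $v_0$, and condition~(b) holds at every leaf $\lambda$ of $T$: such a $\lambda$ is a leaf of some $T_{ij}$, so Proposition~\ref{prop-normalization} applied there with the same $\bfmath{\dt},\bfmath{\dt}'$ gives $\bfmath{\dt}'<\bfmath{\dt}_\lambda<\bfmath{\dt}$ together with the finite injective factorization $A_\lambda\to A_\lambda\dl x_{\lambda,1},\ldots,x_{\lambda,d_\lambda};\dt_{\lambda,1},\ldots,\dt_{\lambda,d_\lambda}\dr\hookrightarrow B_\lambda$ with all the $\dt_{\lambda,k}$ equal.

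I expect the only real obstacle to be organizational rather than conceptual: one must check that $A_i$, $B_{ij}$ and the kernels $\mathfrak{p}_i$, $\mathfrak{q}_{ij}$ meet the letter of Definition~\ref{dfn-rationaldiagram1}(2)(f3) (in particular that $B_{ij}$ is $t$-torsion free and that $\mathcal{B}_v\otimes_{\mathcal{A}_v}\mathcal{A}_i=\mathcal{A}_i\dl\bfmath{x};\bfmath{\dt}\dr/\bar{\mathfrak{a}}$ with the $\mathfrak{q}_{ij}$ its minimal primes), and that the covering property needed later (Proposition~\ref{prop-rationaldiagram3}) survives — which it does, since the type~$1$ blow-ups inside the $T_{ij}$ are already surjective on classical points by Lemma~\ref{lem-blow-up}(1), while the initial decomposition and the fibral decompositions account for everything over $J_{v_0}=(0)$. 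Degenerate cases are disposed of directly: if $\mathcal{B}=0$ take $T=\{v_0\}$ and read condition~(b) vacuously; if $\mathcal{A}$ and all the $\mathcal{A}_i\dl\bfmath{x};\bfmath{\dt}\dr$ are already integral with $\mathfrak{a}=(0)$, the construction collapses to the trivial diagram produced at the end of the proof of Proposition~\ref{prop-normalization}.
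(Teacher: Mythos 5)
Your construction performs irreducible decomposition only at the root $v_0$, and then appends to each branch a \emph{rational} diagram coming from Proposition~\ref{prop-normalization} with all of its internal edges declared type~$1$. This does not produce a stratified rational diagram, and the defect is not merely organizational: it is exactly the phenomenon that Definition~\ref{dfn-rationaldiagram1}(2)(f) and Proposition~\ref{prop-rationaldiagram3}(2) are designed to handle. In the proof of Proposition~\ref{prop-normalization}, the blow-ups are performed along content ideals $J_v=\Cont(F)$, which in general are \emph{not} admissible (they need not contain any power of $t$), so the essential center $\Sp\mathcal{A}_v/J_v$ is nonempty. At any such vertex $v$, condition~(f3) forces a nonempty $\Ed^{+,2}_T(v)$ determined by the irreducible decompositions of $\Sp\mathcal{A}_v/J_v$ and of $\Sp\mathcal{B}_v\otimes_{\mathcal{A}_v}\mathcal{A}_i$; a vertex with a blow-up edge and $\Ed^{+,2}_T(v)=\emptyset$ simply violates the definition. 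Your claim that ``clause~(f) holds at $v_0$ as arranged'' is true at the root, but clause~(f) must hold at \emph{every} vertex, and at the interior vertices of your appended $T_{ij}$ it fails.

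This is not a cosmetic issue. The strict transform $B_w=(B_v\widehat{\otimes}\cdots)/(\text{$c_i$-torsion})$ kills precisely the portion of the family sitting over the essential center. For a classical point of $\udp{\Sp\mathcal{B}_v}$ whose image in $\Sp\mathcal{A}_v$ lands on $V(J_v)$, there need not be a lift to any of the type~$1$ children $\Sp\mathcal{B}_w$; the type~$2$ children exist precisely to recover these points, and Proposition~\ref{prop-rationaldiagram3}(2) invokes them in the case ``if not, it lifts to the component corresponding to a type~$2$ edge.'' Your argument that integrality propagates along descendants (Remark~\ref{rem-rationaldiagram0}) is correct but addresses only condition~(e); it does not excuse you from the type~$2$ edges required by condition~(f). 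The paper's proof is therefore genuinely recursive: after the initial decomposition at the root, one runs the normalization of Proposition~\ref{prop-normalization}, and \emph{at each vertex} where a blow-up occurs one installs type~$2$ edges for the irreducible components of the induced family over the essential center, then recursively builds normalization diagrams on those; termination follows because the base dimension strictly drops on each essential center. This cascade is what ``joining at each vertex the normalization diagrams from irreducible components of the induced families over the essential center of the formal blow-ups'' refers to in the paper, and it is the step your proposal omits.
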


\begin{proof} First perform the formal blow-up of $A$ along $(0)$,
  which gives the empty set of type $1$ edges, and the type $2$ edges
  corresponds to the irreducible decomposition of $\Sp\mathcal{B}$.
  Then one can construct the desired stratified normalization diagram
  by the normalization diagrams as in Proposition
  \ref{prop-normalization} starting from the irreducible components,
  joining at each vertex the normalization diagrams from irreducible
  components of the induced families over the essential center of the
  formal blow-ups.
\end{proof}

The stratified rational diagram as above will be called the {\em
  stratified normalization diagram}.  Note that, if $S\subset T$ is a
subtree of descendants of a fixed vertex, then
$A_{\ast}|_S\rightarrow A_{\ast}\dl\bfmath{x}_{\ast};\bfmath{\dt}_{\ast}\dr|_S\rightarrow B_{\ast}|_S=A_{\ast}\dl\bfmath{x}_{\ast};\bfmath{\dt}_{\ast}\dr/\mathfrak{a}_{\ast}|_S$
is again a stratified normalization diagram.

\section{Interpolation of algebraic points}\label{sec-interpolation}
\subsection{Interpolation determinants}\label{sub-interpolationdet}
\subsubsection{Situation}\label{subsub-sit-interpolation}

Throughout this section, we fix $\delta>0$, an admissible
$\fR{t}$-algebra $A$, and $B=A\dl\bfmath{x};\delta\dr/\mathfrak{a}$,
where $\bfmath{x}=(x_1,\ldots,x_n)$ and $\delta$ stands for the row vector
$(\delta,\ldots,\delta)$.  Moreover, we assume that there
exists a factoring map
$$
A\longrightarrow A\dl x_1,\ldots,x_d;\delta\dr\stackrel{\phi}{\longrightarrow}B\eqno{(\star)}
$$
such that
\begin{itemize}
\item $d\leq n$; %, and $(y_1,\ldots,y_d)$ is a part of a coordinate obtained by a $\ZZ$-rational coordinate change of $\bfmath{x}=(x_1,\ldots,x_n)$;
\item $\phi$ is finite with nilpotent kernel. %and $\ker(\phi)$ is contained in the nilpotent radical.
\end{itemize}

We set $\mathcal{A}=A[1/t]$ and $\mathcal{B}=B[1/t]$, and fix a
minimal generators $v_1,\ldots,v_E$ of $B$ as an
$A\dl x_1,\ldots,x_d;\delta\dr$-module.

We moreover fix a $K$-valued point
$\alpha\colon\Sp K\rightarrow\Sp\cA$, where $K$ is a non-archimedean local field finite over $\rR{t}$, and let
$\Sp\cB_{\alpha}\rightarrow\Sp K$ be the fiber of
$\Sp\cB\rightarrow\Sp\cA$ over $\alpha$, i.e.,
$\cB_{\alpha}=\cB\widehat{\otimes}_{\cA}K\ (=\cB\otimes_{\cA}K)$, which is an affinoid
algebra over $K$.  Note that $\mathcal{B}_{\alpha}$ has
$B_{\alpha}=B\widehat{\otimes}_AV_K/(\textrm{$t$-torsion})\ (=B\otimes_AV_K/(\textrm{$t$-torsion}))$ as an
admissible formal model, where $V_K$ is the valuation ring of $K$.  We
have the base change of $(\star)$
$$
V_K\longrightarrow V_K\dl x_1,\ldots,x_d;\delta\dr\stackrel{\phi_{\alpha}}{\longrightarrow} B_{\alpha},\eqno{(\star)_{\alpha}}
$$
where $\phi_{\alpha}$ is finite with nilpotent kernel due to Lemma
\ref{lem-finiteinjection}.

Note that $K$ has the unique norm $|\cdot|$ such that $|t|=e^{-1}$,
where $e>1$ is the real number fixed in \S\ref{sub-norms}.

\subsubsection{Interpolation determinants}\label{subsub-interpolationdet}
Let $\alpha_i\colon\Sp K\rightarrow\Sp\cB_{\alpha}$ for
$i=1,\ldots,\mu$ be $K$-valued points over $\Sp K$, i.e., sections of $\Sp\cB_{\alpha}\rightarrow\Sp K$, and
$p_i\colon B_{\alpha}\rightarrow V_K$ the $V_K$-algebra homomorphisms corresponding to
$\alpha_i$.

Set $\bfmath{p}=(p_1,\ldots,p_{\mu})$.  For
$\bfmath{f}=(f_1,\ldots,f_{\mu})\in B^{\mu}_{\alpha}$,
we consider the {\em interpolation determinant}
$$
\Delta(\bfmath{f},\bfmath{p})=\det\begin{bmatrix}p_j(f_i))\mid i,j=1,\ldots,\mu\end{bmatrix}.
$$
We want to estimate the norm of $\Delta(\bfmath{f},\bfmath{p})$ in
terms of
$$
\rho=\rho(\bfmath{p})=|t^\delta|\cdot\max_{{j,j'=1,\ldots,\mu}\atop {i=1,\ldots,d}}\begin{vmatrix}p_j(x_i)-p_{j'}(x_i)\end{vmatrix}
$$
i.e., the radius of the smallest ball containing the projections of
the points $\alpha_1,\ldots,\alpha_{\mu}$ to the $y$-coordinates,
normalized by a factor of $|t^\delta|$. The following estimate of
$|\Delta(\bfmath{f},\bfmath{p})|$ is the key to our approach, which
goes back to an idea by Bombieri-Pila \cite{bp}.
Cluckers-Comte-Loeser \cite{ccl:pw} have used a similar
non-archimedean analogue in their work using smooth parametrizations
\cite{ccl:pw}, but we require a somewhat different statement as we
replace parametrizing maps by finite modules.  In the complex analytic
context, a similar argument using finite modules was used in
Binyamini-Novikov \cite{bn:analytic-pw}.

\begin{prop}\label{prop-interpdet}
  We have
$$
\big|\Delta(\bfmath{f},\bfmath{p})\big|\leq \rho^{C_dE^{-1/d}\mu^{1+1/d}},
$$
where $C_d$ is a positive constant depending only on $d$.  $($Recall
that $E$ is the number of minimal generators of $B$ as an
$A\dl x_1,\ldots,x_d;\delta\dr$-module; cf.\ {\rm
  \S\ref{subsub-sit-interpolation}}$).$
\end{prop}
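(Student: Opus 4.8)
The plan is to run the Bombieri--Pila determinant method in the non-archimedean setting, but to organize the column expansion of $\Delta(\bfmath{f},\bfmath{p})$ around the module structure of $B_\alpha$ over $V_K\dl x_1,\dots,x_d;\delta\dr$ with its $E$ generators; this is what will produce the exponent $\mu^{1+1/d}$ (with $d$ rather than $n$ or $d+E$) together with the saving factor $E^{-1/d}$. First I would replace $K$ by a finite extension containing $t^{1/d(\delta)}$ --- which changes neither $|\Delta(\bfmath{f},\bfmath{p})|$ nor $\rho$ --- and pass to the content-normalized coordinates $\til{\bfmath{x}}=t^\delta\bfmath{x}$. Then $V_K\dl x_1,\dots,x_d;\delta\dr$ becomes the Tate algebra $V_K\dl\til x_1,\dots,\til x_d\dr$, the projection $\til z_j=(p_j(\til x_1),\dots,p_j(\til x_d))$ of each point $\alpha_j$ lies in $V_K^d$, and $\rho=\max_{i,j,j'}|\til z_{j,i}-\til z_{j',i}|\le1$. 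I may assume $\rho>0$: otherwise all the $\til z_j$ coincide, so $[p_j(f_i)]$ has rank $\le E$ and vanishing determinant once $\mu>E$, which is the only range of interest. So $\rho=e^{-r}$ with $r>0$; fix the center $z_0=\til z_1$, so that $|\til z_j-z_0|\le\rho$ in each coordinate.

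Next I would expand the functions along the generators. Since the base change $(\star)_\alpha$ is finite, the images of $v_1,\dots,v_E$ generate $B_\alpha$ over $V_K\dl\til x_1,\dots,\til x_d\dr$, so each $f_i\in B_\alpha$ is $f_i=\sum_{k=1}^Eg_{ik}v_k$ with $g_{ik}$ of Gauss norm $\le1$. Applying the substitution $\til{\bfmath{x}}\mapsto\til{\bfmath{x}}+z_0$ --- an isometric automorphism of $V_K\dl\til{\bfmath{x}}\dr$ because $z_0\in V_K^d$ --- I expand $g_{ik}=\sum_{\alpha\in\NN^d}c_{ik\alpha}(\til{\bfmath{x}}-z_0)^\alpha$ with all $c_{ik\alpha}\in V_K$, $|c_{ik\alpha}|\le1$, whence $p_j(f_i)=\sum_{k,\alpha}c_{ik\alpha}(\til z_j-z_0)^\alpha p_j(v_k)$. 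Expanding $\Delta(\bfmath{f},\bfmath{p})$ by multilinearity in the $\mu$ columns, indexed by the pairs $(k,\alpha)\in\{1,\dots,E\}\times\NN^d$, yields
$$\Delta(\bfmath{f},\bfmath{p})=\sum_{j\mapsto(k_j,\alpha_j)}\Big(\prod_{j=1}^\mu(\til z_j-z_0)^{\alpha_j}p_j(v_{k_j})\Big)\det\big[c_{i,k_j,\alpha_j}\big]_{1\le i,j\le\mu}.$$
A term vanishes unless the pairs $(k_j,\alpha_j)$ are pairwise distinct (else two columns of $[c_{i,k_j,\alpha_j}]$ coincide), and for a surviving term $|\det[c_{i,k_j,\alpha_j}]|\le1$, $|p_j(v_{k_j})|\le1$, and $|(\til z_j-z_0)^{\alpha_j}|\le\rho^{|\alpha_j|}$. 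Therefore $|\Delta(\bfmath{f},\bfmath{p})|\le\rho^{N}$ with $N=\min\sum_{j=1}^\mu|\alpha_j|$ over tuples of pairwise distinct pairs $(k_j,\alpha_j)$.

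It then remains to bound $N$ from below. Grouping columns by the common colour $v$ of $k_j$, with $m_v$ columns of colour $v$ and $\sum_{v=1}^Em_v=\mu$, the $\alpha_j$ in a colour class are distinct, so $N\ge\sum_{v=1}^ET_d(m_v)$, where $T_d(m)$ is the sum of degrees of the $m$ lowest-degree monomials in $d$ variables. An elementary count gives $T_d(m)\ge c_d\,m^{1+1/d}$ for a constant $c_d>0$; since the increments $T_d(m)-T_d(m-1)$ are non-decreasing, $\sum_vT_d(m_v)$ is minimized at the balanced profile, and with the convexity of $m\mapsto m^{1+1/d}$ this gives $N\ge E\cdot T_d(\lfloor\mu/E\rfloor)\ge C_d\,E^{-1/d}\mu^{1+1/d}$. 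As $\rho\le1$, I conclude $|\Delta(\bfmath{f},\bfmath{p})|\le\rho^N\le\rho^{C_dE^{-1/d}\mu^{1+1/d}}$.

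The main obstacle I expect is the bookkeeping in this last step. A naive determinant argument treating $v_1,\dots,v_E$ as $E$ further analytic variables gives only the exponent $\mu^{1+1/(d+E)}$, which degrades badly as $E$ grows. The point that rescues the estimate is that $f_i$ is a \emph{module} element --- of degree $\le1$ in the $v_k$ --- so that the colour $k_j$ can be assigned freely column by column, and only the $d$ genuine analytic variables $\til x_1,\dots,\til x_d$ contribute powers of $\rho$, the $E$ colours merely enlarging the supply of monomials of each degree. A secondary point requiring care is that the recentering at $z_0$ must be done by the unit-ball-preserving substitution above, not via a Taylor formula (which would introduce inverse factorials, harmful in mixed characteristic), so that the $c_{ik\alpha}$ remain in $V_K$.
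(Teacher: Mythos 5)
Your proposal is correct and follows essentially the same approach as the paper's proof: both normalize by a translation in $V_K^d$, expand each $f_i$ over the $E$ module generators $v_k$ and over monomials in the $d$ base coordinates, expand the determinant multilinearly, observe vanishing whenever two chosen pairs $(k,\nu)$ coincide, bound surviving terms by $\rho^{\sum|\nu_i|}$, and count via the $E$-fold multiplicity of each monomial. The only cosmetic differences are that you first rescale to the unit Tate algebra $V_K\dl\til{\bfmath{x}}\dr$ rather than working directly in the weighted algebra $V_K\dl\bfmath{x};\delta\dr$, and that you attach the index choice $(k_j,\alpha_j)$ to the column $j$ rather than to the row $i$ as the paper does --- both give the same cofactor structure after transposing. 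Your explicit treatment of $\rho=0$ via the rank argument, and your statement of the counting step through the convexity of $T_d$ and Jensen, are clean ways of packaging the same estimates the paper writes more loosely.
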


\begin{proof}
  We first claim that we may assume, without loss of generality, that
  $$
  \rho=|t^\delta|\cdot\max_{{j=1,\ldots,\mu}\atop {i=1,\ldots,d}}\begin{vmatrix}p_j(x_i)\end{vmatrix}.\eqno{(\ast)}
  $$
  To do this, we consider an automorphism (parallel translation)
  $\Phi$ of $V_K\dl x_1,\ldots,x_d;\delta\dr$ of the form
  $\Phi(x_i)=x_i-s_i$ for $i=1,\ldots,d$, where $s_i\in V_K$ are
  chosen such that $(s_i=)$ $p_1(s_i)=p_1(x_i)$.  Pulling back by
  $\Phi$, we have similar situation where $B_{\alpha}$ is replaced by
  an isomorphic copy
  $\Phi^{\ast}B_{\alpha}=B_{\alpha}\otimes_{V_K\dl x_1,\ldots,x_d;\delta\dr,\Phi}V_K\dl x_1,\ldots,x_d;\delta\dr$.
  Note that, on $\Phi^{\ast}B_{\alpha}$, we have
  $p_1(x_i)=0$ for $i=1,\ldots,d$.  Then the desired inequality is
  equivalent to the corresponding one on
  $\Phi^{\ast}B_{\alpha}$, in which the $\rho$ is
  given as in $(\ast)$.

  For $i=1,\ldots,\mu$, write
  $$
  f_i=\sum^E_{k=1}f_{ik}v_k,\quad f_{ik}\in V_K\dl x_1,\ldots,x_d;\delta\dr,
  $$
  and further expand
  $$
  f_{ik}=\sum_{\nu\in\NN^d}c_{i,k,\nu}\bfmath{y}^{\nu},\quad c_{i,k,\nu}\in t^{\nu\cdot\delta}V_K,
  $$
where $\bfmath{y}=(x_1,\ldots,x_d)$.
  We now expand $\Delta(\bfmath{f},\bfmath{p})$, by multilinearity
  with respect to each column, as a sum of interpolation determinants
  of the form $\Delta(\bfmath{f}',\bfmath{p})$ where
  $f'_i=c_{i,k_i,\nu_i}\bfmath{y}^{\nu_i}v_{k_i}$, for some choice of
  $k_i,\nu_i$ for each $i=1,\ldots,\mu$.  It will be enough to
  estimate each of such $\Delta(\bfmath{f}',\bfmath{p})$ separately.
  
  Now comes the main idea of Bombieri-Pila: if for two different
  columns corresponding to $f'_q$ and $f'_r$ one makes the same choice
  $k=k_q=k_r$ and $\nu=\nu_q=\nu_r$, then
  $$
  \bfmath{p}(f'_q)=c_{q,k,\nu}\cdot\bfmath{p}(\bfmath{y}^{\nu}v_k),\quad \bfmath{p}(f'_r)=c_{r,k,\nu}\cdot\bfmath{p}(\bfmath{y}^{\nu}v_k),
  $$
  and the corresponding columns thus agree up to a constant from $K$,
  hence the determinant vanishes.
  
  So, in order for the determinant $\Delta(\bfmath{f}',\bfmath{p})$ to
  be non-zero, one has to choose each pair $(k_i,\nu_i)$ at most one
  $c_{i,k_i,\nu_i}\bfmath{y}^{\nu_i}v_{k_i}$.  For any such choice,
  and any $p_j$, we have
  $$
  |p_j(c_{i,k_i,\nu_i}\bfmath{y}^{\nu_i}v_{k_i})|\le |t|^{\delta\nu_i} \cdot|p_j(\bfmath{y}^{\nu_i})| \le \rho^{|\nu_i|},
  $$
  and therefore
  $$
  \big|\Delta(\bfmath{f}',\bfmath{p})\big|\leq\rho^{\sum_{i=1,\ldots,\mu}|\nu_i|}.\eqno{(\dast)}
  $$
  It is clear that the largest value for the right-hand side will be
  obtained if we choose as many monomials $\bfmath{y}^{\nu_i}$ as possible with
  $|\nu_i|=0$, then as many as possible with $|\nu_i|=1$, etc.  There
  are at most $\Theta_d(k^{d-1})$ monomials $\bfmath{y}^{\nu_i}$ with
  $|\nu_i|=k$, and each can appear at most $E$ times for different
  choices of $k_i=1,\ldots,E$.  Since the number of summands is $\mu$,
  we solve
  $$
  \sum^N_{k=1}k^{d-1}E=\mu\quad\Longrightarrow\quad N=\Omega_d\bigg(\frac{\mu}{E}\bigg)^{1/d},
  $$
  and deduce that the largest possible
  $\big|\Delta(\bfmath{f}',\bfmath{p})\big|$ will include all possible
  summands up to order $|\nu|=\lfloor N\rfloor$.  In this case, we
  have
  $$
  \sum_{i=1}^{\mu}|\nu_i|\geq\sum_{k=1}^Nk^dE=\Omega_d(E\mu^{d+1})=\Omega_d(E^{-1/d}\mu^{1+1/d}),
  $$
  which, combined with $(\dast)$, yields our assertion.
\end{proof}

\subsection{Polynomial interpolation
  determinants}\label{sub-polyinterpdet}

\subsubsection{Situation}\label{subsub-sit-polyinterpolation}
We continue with working in the situation as in
\S\ref{subsub-sit-interpolation}, and we further make the following
setup.

We assume that the $K$-valued point
$\alpha\colon\Sp K\rightarrow\Sp\cA$ lies over a fixed $F$-valued point
$\beta\colon\Sp F\rightarrow\Sp\rR{t}$, i.e., $F$ is a non-archimedean
local subfield of $K$ such that $K/F$ is a finite extension, and
$$
\xymatrix{\Sp K\ar[r]^-{\alpha}\ar[d]&\Sp\cA\ar[d]\\ \Sp F\ar[r]_-{\beta}&\Sp\rR{t}}
$$
is commutative. We denote by $\pi\in V_F$ the
uniformizer of $F$, and by $q=p^f$ the number of elements of the residue
field $V_F/(\pi)$.  
Note that $F$ has the unique norm $|\cdot|$ such that $|t|=e^{-1}$,
where $e>1$ the real number fixed in \S\ref{sub-norms}, and this norm
coincides with the restriction of the norm on $K$ as in
\S\ref{subsub-sit-interpolation}.  
We have $t=u\pi^r$ by $u\in V^{\times}_F$ and $r\geq 1$.
In particular, we have $|t|\leq|\pi|$.

Note that, even though we are interested in counting $F$-valued points of the affinoid spaces, we need to consider a larger field $K$ to capture them, since we occasionally take base change by $\rR{t}\rightarrow\rR{t^{1/N}}$.

Let $\mu=\mu(D)$ be the dimension of the space of polynomials of
degree at most $D\in\NN$ in $d+1$ variables.  We fix $d+1$ functions
$\bfmath{f}=(f_1,\ldots,f_{d+1})\in B^{d+1}_{\alpha}$
and $\mu$-tuple of $K$-valued points $\bfmath{p}=(p_1,\ldots,p_{\mu})$
(with $p_i\colon B_{\alpha}\rightarrow V_K$), as before.  We assume
\begin{itemize}
\item $p_j(f_i)\in F$ for $j=1,\ldots,\mu$ and $i=1,\ldots,d+1$;
\item $p_j(x_i)\in F$ for $j=1,\ldots,\mu$ and $i=1,\ldots,d$.
\end{itemize}

We define the {\em polynomial interpolation determinant}
$\Delta^D(\bfmath{f},\bfmath{p})$ to be
$$
\Delta^D(\bfmath{f},\bfmath{p})=\Delta(\bfmath{g},\bfmath{p}),\quad \bfmath{g}=\big(\bfmath{f}^{\nu}\mid\nu\in\NN^{d+1},|\nu|\leq D\big).
$$

In the following, we will denote by $|\cdot|_{\infty}$ the archimedean norm in order to distinguish from the norm $|\cdot|$.
The following lemma follows by elementary linear algebra.
\begin{lem}\label{lem-polyinterpdet}
  Let $P$ be a collection of adic morphisms
  $p\colon B_{\alpha}\rightarrow V_K$ over $V_K$ such that $p(f_i)\in F$
  for $i=1,\ldots,d+1$, and suppose that
  $\Delta^D(\bfmath{f},\bfmath{p})$ vanishes for every $\mu$-tuple
  $\bfmath{p}$ of points from $P$.  Then the set of points
$$
\{p(\bfmath{f})\mid p\in P\}\subset F^{d+1}
$$
is contained in a hypersurface defined by a polynomial
$Q\in F[x_1,\ldots,x_{d+1}]$ of degree at most $D$.
\end{lem}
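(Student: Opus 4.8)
The plan is a straightforward linear-algebra argument carried out over the field $F$, using the ``Veronese'' evaluation vectors attached to the points of $P$. For each $p\in P$ I would introduce the vector
$$
v(p)=\bigl(\,p(\bfmath{f})^{\nu}\,\bigr)_{\nu\in\NN^{d+1},\,|\nu|\le D}\ \in\ F^{\mu},
$$
whose coordinate indexed by a multi-exponent $\nu$ is $p(\bfmath{f})^{\nu}=\prod_{i=1}^{d+1}p(f_i)^{\nu_i}$; this lies in $F$ because each $p(f_i)\in F$ and $p$ is a ring homomorphism, and the number of admissible $\nu$ is exactly $\mu=\mu(D)$. By the definition of $\Delta^D(\bfmath{f},\bfmath{p})=\Delta(\bfmath{g},\bfmath{p})$ with $\bfmath{g}=(\bfmath{f}^{\nu}\mid|\nu|\le D)$, once the monomials $\nu$ with $|\nu|\le D$ are enumerated once and for all, for any $\mu$-tuple $\bfmath{p}=(p_1,\ldots,p_{\mu})$ of points of $P$ the quantity $\Delta^D(\bfmath{f},\bfmath{p})$ is precisely the determinant of the $\mu\times\mu$ matrix over $F$ whose $j$-th column is $v(p_j)$.

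From the hypothesis I would deduce that $W:=\mathrm{span}_F\{v(p)\mid p\in P\}$ has dimension at most $\mu-1$. Indeed, if $\dim_F W=\mu$ we could pick $p_1,\ldots,p_{\mu}\in P$ with $v(p_1),\ldots,v(p_{\mu})$ a basis of $F^{\mu}$, whence $\Delta^D(\bfmath{f},(p_1,\ldots,p_{\mu}))\ne 0$, contradicting the assumed vanishing; and if $P$ has fewer than $\mu$ elements the bound is immediate. Hence the annihilator of $W$ in the dual space $(F^{\mu})^{\vee}$ is nonzero.

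To finish, I would choose a nonzero linear form in that annihilator, write it as $\ell=\sum_{|\nu|\le D}q_{\nu}X_{\nu}^{\vee}$ in the dual basis indexed by the monomials, and set $Q=\sum_{|\nu|\le D}q_{\nu}\bfmath{x}^{\nu}\in F[x_1,\ldots,x_{d+1}]$. Then $Q\ne 0$ and $\deg Q\le D$, while for every $p\in P$ we have $Q(p(\bfmath{f}))=\ell(v(p))=0$, so $\{p(\bfmath{f})\mid p\in P\}$ is contained in the hypersurface $\{Q=0\}$, as required. There is essentially no obstacle in this argument; the only points needing a little care are the bookkeeping identification of $\Delta^D(\bfmath{f},\bfmath{p})$ with the Veronese determinant and the fact that all entries involved genuinely lie in the field $F$, so that the rank computation takes place over $F$ — both being immediate from $p(f_i)\in F$ and the multiplicativity of the homomorphisms $p$.
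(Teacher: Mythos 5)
Your argument is correct and is precisely the ``elementary linear algebra'' the paper alludes to without writing out: the vanishing of all the $\mu\times\mu$ Veronese determinants forces the span of the evaluation vectors $v(p)$ to be a proper subspace of $F^{\mu}$, and any nonzero linear form annihilating it gives the interpolating polynomial $Q$ of degree at most $D$. This is the intended proof, so there is nothing to add.
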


Now, let $H\in\NN$ be an arbitrary natural number and set $h=\log_q H$.

\begin{prop}\label{prop-polyinterpdet}
  Suppose $H(p_i(f_j))\leq H$ for $i=1,\ldots,\mu$ and
  $j=1,\ldots,d+1$.  Then either $\Delta^D(\bfmath{f},\bfmath{p})=0$
  or
$$
\big|\Delta^D(\bfmath{f},\bfmath{p})\big|\geq\begin{cases}|t|^{([F:\QQ_p]/r)\{\log_q(\mu!)+(d+2)D\mu h\}}&(F\supset\QQ_p),\\ |t|^{(d+2)D\mu h}&(F\supset\FF_q\dpl t\dpr).\end{cases}
$$
\end{prop}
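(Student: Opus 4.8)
The plan is to run the Bombieri--Pila ``integer numerator'' device in a non-archimedean form. By hypothesis $p_j(f_k)\in F_0$ (where $F_0=\QQ$ when $F\supset\QQ_p$ and $F_0=\FF_q(t)$ when $F\supset\FF_q\dpl t\dpr$, as is presupposed by the bound $H(p_j(f_k))\le H$), so each matrix entry $p_j(\bfmath{f}^{\nu})=\prod_{k=1}^{d+1}p_j(f_k)^{\nu_k}$, and hence $\Delta^D(\bfmath{f},\bfmath{p})$, lies in $F_0$. Writing each $p_j(f_k)=a_{kj}/b_{kj}$ in lowest terms, the bound $H(p_j(f_k))\le H$ says $|a_{kj}|_\infty,|b_{kj}|_\infty\le H$ in the mixed characteristic case and $\deg_t a_{kj},\deg_t b_{kj}\le h$ in the equicharacteristic case. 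I would then set $L:=\prod_{j=1}^{\mu}\prod_{k=1}^{d+1}b_{kj}^{D}$ and $P:=L\cdot\Delta^D(\bfmath{f},\bfmath{p})$. Expanding the $\mu\times\mu$ determinant over permutations $\sigma\in S_\mu$, the $\sigma$-term of $P$ equals $\pm\prod_{j,k}a_{kj}^{\nu_{\sigma^{-1}(j),k}}b_{kj}^{\,D-\nu_{\sigma^{-1}(j),k}}$, which is an element of $\ZZ$ (resp.\ $\FF_q[t]$) because every exponent is $\ge 0$; hence $P\in\ZZ$ (resp.\ $\FF_q[t]$), and $P\ne 0$ whenever $\Delta^D(\bfmath{f},\bfmath{p})\ne 0$. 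Since $|L|\le 1$ ($L$ being integral), it will suffice to bound $|P|$ from below, the point being that a nonzero integer (resp.\ polynomial) is not too small non-archimedeanly.

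For $F\supset\FF_q\dpl t\dpr$ this is essentially immediate: each $\sigma$-term above has $t$-degree at most $\sum_{j,k}\bigl(\nu_{\sigma^{-1}(j),k}+(D-\nu_{\sigma^{-1}(j),k})\bigr)h=(d+1)D\mu h$, so $\deg_t P\le (d+1)D\mu h$, and a nonzero polynomial satisfies $\mathrm{ord}_t(P)\le\deg_t P$. Therefore
\[
\bigl|\Delta^D(\bfmath{f},\bfmath{p})\bigr|=|P|\,|L|^{-1}\ge|P|=e^{-\mathrm{ord}_t(P)}\ge|t|^{(d+1)D\mu h}\ge|t|^{(d+2)D\mu h},
\]
the last step using $|t|<1$; this gives the claimed bound, in fact with $d+1$ in place of $d+2$.

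For $F\supset\QQ_p$ the same monomial count gives $|P|_\infty\le\mu!\cdot H^{(d+1)D\mu}$, the factor $\mu!$ (the number of permutations) being precisely the origin of the $\log_q(\mu!)$ term. As $P$ is a nonzero rational integer, the exponent of $p$ in $P$ satisfies $v_p(P)\le\log_p|P|_\infty\le\log_p(\mu!)+(d+1)D\mu\log_p H$, and $\log_p H=fh$ since $H=q^h=p^{fh}$. The step I expect to require the most care is the conversion of $v_p$ into the chosen normalization $|t|=e^{-1}$ of the absolute value on $F$: from $t=u\pi^{r}$ with $u\in V_F^{\times}$ one gets $|\pi|=|t|^{1/r}$, and since the residue field of $V_F$ has $q=p^f$ elements one has $f(F/\QQ_p)=f$, hence $e(F/\QQ_p)=[F:\QQ_p]/f$ and $|p|=|\pi|^{e(F/\QQ_p)}=|t|^{[F:\QQ_p]/(fr)}$; consequently, for $x\in\QQ^{\times}$, writing $x=p^{v_p(x)}y$ with $y\in\ZZ_{(p)}^{\times}\subset V_F^{\times}$ yields $|x|=|t|^{\,v_p(x)\,[F:\QQ_p]/(fr)}$. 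Applying this to $x=P$, using $|L|\le 1$ and $\log_p(\mu!)/f=\log_q(\mu!)$, I obtain
\[
\bigl|\Delta^D(\bfmath{f},\bfmath{p})\bigr|\ \ge\ |t|^{\,([F:\QQ_p]/r)\,\bigl(\log_q(\mu!)+(d+1)D\mu h\bigr)}\ \ge\ |t|^{\,([F:\QQ_p]/r)\,\bigl(\log_q(\mu!)+(d+2)D\mu h\bigr)},
\]
again because $|t|<1$. Apart from this normalization bookkeeping and the accounting of the $\mu!$ factor, the whole argument is the elementary observation that clearing denominators turns $\Delta^D(\bfmath{f},\bfmath{p})$ into a nonzero integer, resp.\ polynomial, of controlled size.
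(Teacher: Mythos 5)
Your proof is correct and follows essentially the same Bombieri--Pila clearing-denominators device as the paper: pass to the integer (resp.\ polynomial) $P=L\cdot\Delta^D$, bound its archimedean size (resp.\ $t$-degree), and convert to the $|t|$-normalized non-archimedean absolute value. The only difference is a minor sharpening: by bounding each permutation term of $P$ directly you get $(d+1)D\mu$ in place of the paper's $(d+2)D\mu$ (the paper bounds $|b|_\infty$ and $|\Delta^D|_\infty$ separately, which is slightly lossier), and you correctly note that since $|t|<1$ this recovers the stated inequality.
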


\begin{proof}
Suppose that $\Delta^D(\bfmath{f},\bfmath{p})\neq 0$, and let us first treat the case where $F$ is a finite extension of $\QQ_p$, so that
  $p_j(f_i)=a_{ij}/b_{ij}$ are rational numbers and $|a_{ij}|_{\infty},|b_{ij}|_{\infty}\leq H$.
  Set $p=v\pi^s$ for $v\in V^{\times}_F$ and $s\geq 1$, so that $|q|^r=|t|^{fs}=|t|^{[F:\QQ_p]}$.
All numbers in the column of $\Delta^D(\bfmath{f},\bfmath{p})$
  corresponding to $p_j$ have denominators dividing
  $b_j=\prod_i b_{ij}^D$, and setting $b=\prod_j b_j$ we have
  $|b|_{\infty}\le H^{(d+1)D\mu}$.  Since every entry in the matrix defining
  $\Delta^D(\bfmath{f},\bfmath{p})$ is bounded in the classical
  absolute value by $H^D$, we also have
  $$
  |\Delta^D(\bfmath{f},\bfmath{p})|_\infty \le \mu! H^{D\mu}.
  $$
  Then $b\Delta^D(\bfmath{f},\bfmath{p})\in\ZZ$ and
  $$
  |b\Delta^D(\bfmath{f},\bfmath{p})|_\infty \le \mu! H^{(d+2)D\mu},
  $$
  and therefore
  $$
  |\Delta^D(\bfmath{f},\bfmath{p})| \ge |b\Delta^D(\bfmath{f},\bfmath{p})|\ge |q|^{\log_q |b\Delta^D(\bfmath{f},\bfmath{p})|_\infty}=|t|^{(fs/r)\{\log_q(\mu!)+(d+2)D\mu h\}}.
  $$
  
  Simiarly, in the $F=\FF_q\dpl\pi\dpr$ case, all rational functions $p_j(f_i)=a_{ij}/b_{ij}\in\FF_q(t)$ in
  the column of $\Delta^D(\bfmath{f},\bfmath{p})$ corresponding to
  $p_j$ have denominators dividing $b_j$ as above, and $b$ has degree
  bounded by $(d+1)Dh$.  Since every entry in the matrix defining
  $\Delta^D(\bfmath{f},\bfmath{p})$ is a rational function of degree
  at most $Dh$, we also have
  $$
  \deg_t \Delta^D(\bfmath{f},\bfmath{p}) \le D\mu h.
  $$
  Then $b\Delta^D(\bfmath{f},\bfmath{p})\in\FF_q[t]$ and
  $$
  \deg_t b\Delta^D(\bfmath{f},\bfmath{p}) \le (d+2)D\mu h,
  $$
  and therefore
  $$
  |\Delta^D(\bfmath{f},\bfmath{p})| \ge |b\Delta^D(\bfmath{f},\bfmath{p})| \ge \deg b\Delta^D(\bfmath{f},\bfmath{p}) \ge |t|^{(d+2)D\mu h},
  $$
  as claimed.
\end{proof}

\subsection{Interpolation by a hypersurface}\label{sub-interpolationhyp}
Let $P$ be as in Lemma \ref{lem-polyinterpdet}, 
and let
$$
\rho=\rho(P)=|t|^\delta\cdot \max_{{p\neq p'\in P}\atop {i=1,\ldots,d}}\begin{vmatrix}p(x_i)-p'(x_i)\end{vmatrix}.
$$

Comparing Proposition~\ref{prop-interpdet} and
Proposition~\ref{prop-polyinterpdet} we obtain the following
corollary.

\begin{cor}\label{cor-hypersurface-select}
  Suppose $H(p(f_i))\leq H$ for $i=1,\ldots,d+1$ and $p\in P$.  Let
  $\varepsilon>0$.  There exist a constant 
  $$
  D=\begin{cases}D(d,\varepsilon,E,[F:\QQ_p])&(F\supset\QQ_p),\\ D(d,\varepsilon,E,r)&(F\supset\FF_q\dpl t\dpr),\end{cases}
  $$
  which does not depend on $\alpha\colon\Sp K\rightarrow\Sp\cA$, such that
  if $\rho<|\pi|^{h\varepsilon}$ then the set of points
$$
\{p(\bfmath{f})\mid p\in P\}\subset F^{d+1}
$$
is contained in a hypersurface defined by a polynomial
$Q\in F[X_1,\ldots,X_{d+1}]$ of degree at most $D$.
\end{cor}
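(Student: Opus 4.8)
\emph{Plan.} The plan is to deduce the statement from Lemma~\ref{lem-polyinterpdet}: it suffices to produce $D$, depending only on the advertised data and \emph{not} on $\alpha$, such that the polynomial interpolation determinant $\Delta^D(\bfmath{f},\bfmath{p})$ vanishes for every $\mu$-tuple $\bfmath{p}=(p_1,\ldots,p_{\mu})$ of points of $P$, where $\mu=\mu(D)=\binom{D+d+1}{d+1}$. Granting this, Lemma~\ref{lem-polyinterpdet} produces the required hypersurface $\{Q=0\}$ with $\deg Q\leq D$. So fix such a $\mu$-tuple $\bfmath{p}$ and suppose, toward a contradiction, that $\Delta^D(\bfmath{f},\bfmath{p})\neq 0$; we may assume $h\geq 1$, the case of bounded $H$ being elementary (e.g.\ for $F\supseteq\QQ_p$ the $p(f_i)$ then lie in $\{0,\pm1\}$, so $\{p(\bfmath{f})\mid p\in P\}$ is finite of size $\leq 3^{d+1}$ and it is enough to take $\mu(D)>3^{d+1}$).

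\emph{The squeeze.} The contradiction comes from squeezing $|\Delta^D(\bfmath{f},\bfmath{p})|$ between the two estimates already in hand. First, writing $\Delta^D(\bfmath{f},\bfmath{p})=\Delta(\bfmath{g},\bfmath{p})$ with $\bfmath{g}=(\bfmath{f}^{\nu}\mid\nu\in\NN^{d+1},\ |\nu|\leq D)$ a $\mu$-tuple of elements of $B_{\alpha}$, Proposition~\ref{prop-interpdet} yields
$$
\big|\Delta^D(\bfmath{f},\bfmath{p})\big|\ \leq\ \rho(\bfmath{p})^{\,C_d E^{-1/d}\mu^{1+1/d}}\ \leq\ \rho^{\,C_d E^{-1/d}\mu^{1+1/d}},
$$
the second step using $\rho(\bfmath{p})\leq\rho(P)=\rho$ and monotonicity of $x\mapsto x^{c}$ with $c=C_dE^{-1/d}\mu^{1+1/d}>0$; here $E$ is the number of minimal generators of $B$ over $A\dl x_1,\ldots,x_d;\delta\dr$ fixed in \S\ref{subsub-sit-interpolation}. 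Second, since $\Delta^D(\bfmath{f},\bfmath{p})\neq0$ and $H(p_i(f_j))\leq H$, Proposition~\ref{prop-polyinterpdet} gives $|\Delta^D(\bfmath{f},\bfmath{p})|\geq|t|^{L}$ with $L=(d+2)D\mu h$ if $F\supseteq\FF_q\dpl t\dpr$ and $L=\tfrac{[F:\QQ_p]}{r}\bigl(\log_q(\mu!)+(d+2)D\mu h\bigr)$ if $F\supseteq\QQ_p$. Since $t=u\pi^{r}$ with $u\in V_F^{\times}$ we have $|\pi|=|t|^{1/r}$, so the hypothesis $\rho<|\pi|^{h\varepsilon}$ reads $\rho<|t|^{h\varepsilon/r}$; combining the two bounds and using $0<|t|<1$,
$$
L\ \geq\ C_d\,E^{-1/d}\,\mu^{1+1/d}\,\frac{h\varepsilon}{r}.
$$

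\emph{Bounding $D$.} It remains to see that this forces $D$ to be bounded. Multiplying by $r/(\mu h)$ (legitimate since $h\geq1$), the right-hand side becomes $C_dE^{-1/d}\mu^{1/d}\varepsilon$, while the left-hand side is $(d+2)Dr$ in the $\FF_q\dpl t\dpr$ case and is at most $C'_d[F:\QQ_p]D$ for a constant $C'_d=C'_d(d)$ in the $\QQ_p$ case, after absorbing $\log_q(\mu!)=O_d(\mu\log D)$ into the main term (this also shows $q$ does not enter, as $\log_q\leq\log_2$). Since $\mu=\mu(D)=\Theta_d(D^{d+1})$ we have $\mu^{1/d}\geq c_dD^{1+1/d}$ for some $c_d>0$, so the inequality becomes $c_dC_dE^{-1/d}D^{1+1/d}\varepsilon\leq (d+2)Dr$ (resp.\ $\leq C'_d[F:\QQ_p]D$), i.e.\ $D^{1/d}=O_d(rE^{1/d}/\varepsilon)$ (resp.\ $O_d([F:\QQ_p]E^{1/d}/\varepsilon)$). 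Hence $\Delta^D(\bfmath{f},\bfmath{p})\neq0$ can occur only for $D\leq D(d,\varepsilon,E,r)$ (resp.\ $D\leq D(d,\varepsilon,E,[F:\QQ_p])$); choosing $D$ strictly above this explicit threshold forces $\Delta^D(\bfmath{f},\bfmath{p})=0$ for every $\mu$-tuple, and Lemma~\ref{lem-polyinterpdet} concludes. Uniformity in $\alpha$ is automatic, the estimates of Propositions~\ref{prop-interpdet} and \ref{prop-polyinterpdet} being uniform in $\alpha$.

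\emph{Main obstacle.} I expect the real content to sit entirely in Propositions~\ref{prop-interpdet} and \ref{prop-polyinterpdet}; here the only thing to watch is the bookkeeping of dependencies — that the normalization $|\pi|=|t|^{1/r}$ makes the $1/r$ in the lower bound cancel the $1/r$ coming from the hypothesis $\rho<|\pi|^{h\varepsilon}$, that the factor $|t|^{\delta}$ built into $\rho$ never has to be opened (so no dependence on $\delta$ creeps in), and that $\log_q(\mu!)$ introduces no hidden $q$-dependence and is negligible against $\mu^{1+1/d}$. The decisive point is the exponent gap $1+1/d>1$ between $\mu^{1+1/d}$ on the upper-bound side and $D\mu$ on the lower-bound side — exactly the combinatorial mechanism behind the Bombieri--Pila determinant method.
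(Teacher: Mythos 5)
Your proof follows the same strategy as the paper's: invoke Lemma~\ref{lem-polyinterpdet} to reduce to showing $\Delta^D(\bfmath{f},\bfmath{p})=0$, assume the contrary, squeeze $|\Delta^D(\bfmath{f},\bfmath{p})|$ between the upper bound of Proposition~\ref{prop-interpdet} and the lower bound of Proposition~\ref{prop-polyinterpdet}, and extract a bound on $D$ from the exponent gap $\mu^{1+1/d}$ vs.\ $D\mu$. The arithmetic of the squeeze matches the paper's.

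There is, however, a flaw in your reduction to $h\geq 1$. You assert that when $h<1$ ``the $p(f_i)$ lie in $\{0,\pm1\}$''; this is only true when $H=1$. Recall $h=\log_qH$ with $q$ the residue cardinality of $F$, so $h<1$ means $H<q$, and $q$ can be arbitrarily large (e.g.\ $F=\QQ_p$ with $p$ huge). For $2\le H<q$ the rationals of height $\le H$ form a set of size $\Theta(H^2)$, which is unbounded in $q$, so the ``bounded-$H$'' shortcut does not apply. In fact you don't need the reduction at all: the only place $h\geq1$ is used is to absorb $\log_q(\mu!)/(\mu h)$ into the main term. A cleaner observation is that $\log_q(\mu!)/h=\log_H(\mu!)$, and for $H\geq 2$ this is $\leq\log_2(\mu!)\leq\mu\log_2\mu=O_d(\mu\log D)$, independently of $q$ and $h$; the $q$-dependence of the numerator and denominator cancel exactly because both are logarithms to base $q$. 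This disposes of all $H\geq2$ uniformly, and the genuine edge case $H=1$ (not $h<1$) is the one where your $\{0,\pm1\}$ argument is valid. The paper's own write-up is terse here and does not spell out the small-$h$ regime either, so this is a point worth making explicit rather than waving away with an incorrect claim.
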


\begin{proof}
  By Lemma~\ref{lem-polyinterpdet} it is enough to prove that
  $\Delta^D(\bfmath{f},\bfmath{p})=0$ for every $\mu$-tuple
  $\bfmath{p}\subset P$. 
  Assume the contrary. 
  In the $F\supset\QQ_p$ case, we have by Proposition~\ref{prop-polyinterpdet} a lower bound
  $$
  \big|\Delta^D(\bfmath{f},\bfmath{p})\big|\geq|t|^{([F:\QQ_p]/r)\{\log_q(\mu!)+(d+2)D\mu h\}}.
  $$
  On the other hand, by Proposition~\ref{prop-interpdet} we have an
  upper bound
  $$
  \big|\Delta(\bfmath{f},\bfmath{p})\big|\leq \rho^{C_dE^{-1/d}\mu^{1+1/d}}.
  $$
  Thus
  $$ 
  \rho^{C_dE^{-1/d}\mu^{1+1/d}} \ge |t|^{([F:\QQ_p]/r)\{\log_q(\mu!)+(d+2)D\mu h\}}. 
  $$
  Taking $\mu\sim_d D^{d+1}$ and $\rho<|\pi|^{\varepsilon h}=|t|^{\e h/r}$ into account, 
%  we deduce
%  $$
%  \textrm{LHS}\lesssim_d|t|^{(\e h/r)C_dE^{-1/d}D^{d+2+1/d}}
%  $$
%  and
%  $$
%  \textrm{RHS}\gtrsim_d|t|^{([F:\QQ_p]/r)\{(d+1)\log_qD+(d+2)D^{d+2} h\}}.
%  $$
%  Hence 
  we have a contradiction as soon as
  $$
  C'_d h\e E^{-1/d} D^{d+2+\frac{1}{d}} \ge [F:\QQ_p]\{(d+1)\log_q D+(d+2)C''_d D^{d+2} h\}
  $$
  with suitable constants $C'_d,C''_d$. Thus we obtain a
  contradiction, for instance with a suitable
  $D\sim_d E\cdot ([F:\QQ_p]/\varepsilon)^d$.
  
  In the $F\supset\FF_q\dpl t\dpr$ case, we have
  $$
  \rho^{C_dE^{-1/d}\mu^{1+1/d}} \ge |t|^{(d+2)D\mu h}.
  $$
  Similarly to the previous case, 
%  By $\mu\sim_d D^{d+1}$, we calculate
%  $$
%  \textrm{RHS}\sim_d|t|^{(d+2)D^{d+2} h}.
%  $$
%  Hence 
  we have a contradiction as soon as
  $$
  C'_d \e E^{-1/d} D^{d+2+\frac{1}{d}} \ge (d+2)C''_d D^{d+2} r
  $$
  with suitable constants $C'_d,C''_d$. Thus we obtain a
  contradiction, for instance with a suitable
  $D\sim_d E\cdot (r/\varepsilon)^d$.
\end{proof}

\begin{prop}\label{prop-hypersurfaces}
  Let $H\in\NN$ and $\varepsilon>0$.  There exist a constant
  $$
  D'=\begin{cases}D'(d,\varepsilon,E,\delta,[F:\QQ_p])&(F\supset\QQ_p),\\ D'(d,\varepsilon,E,\delta,r)&(F\supset\FF_q\dpl t\dpr),\end{cases}
  $$
  not depending on $\alpha\colon\Sp K\rightarrow\Sp\cA$, and a collection of
  $H^\varepsilon$ hypersurfaces in $F^{d+1}$, each of
  degree at most $D'$, such that the following holds: for every
  $K$-valued point $p:\cB_{\alpha}\to K$ over $\alpha$ such
    that $p(f_i)\in F$ and $H(p(f_i))\le H$ for $i=1,\dots,d+1$, the
  point $p(\bfmath{f})$ belongs to one of the hypersurfaces.
\end{prop}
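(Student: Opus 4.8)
The plan is to deduce the proposition from Corollary~\ref{cor-hypersurface-select} by a covering argument: partition the relevant $K$-points into at most $H^\varepsilon$ groups, on each of which the spread parameter $\rho$ is small enough for the corollary to apply, and take the union of the resulting hypersurfaces.

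First I would set up the partition. Let $\mathcal P$ be the set of all $K$-valued points $p\colon\cB_\alpha\to K$ over $\alpha$ with $p(f_i)\in F$ and $H(p(f_i))\le H$ for $i=1,\dots,d+1$; working in the situation of \S\ref{subsub-sit-polyinterpolation}, each such $p$ also satisfies $p(x_i)\in F$. Since $x_i=t^{-\delta}\cdot(t^{\delta}x_i)$ with $t^{\delta}x_i\in A\dl\bfmath{x};\delta\dr$, every $p\in\mathcal P$ has $|p(x_i)|\le|t|^{-\delta}$, so the point $\bar p:=(p(x_1),\dots,p(x_d))$ lies in the polydisc $\Pi=\{z\in F^d:|z_i|\le|t|^{-\delta}\}$. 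Set $\tilde\varepsilon:=\varepsilon/(2d)$ and $k:=\lfloor h\tilde\varepsilon\rfloor+1$, and cover $\Pi$ by closed balls of polyradius $|t|^{-\delta}|\pi|^{k}$. Using that each coordinate ball $\{|z_i|\le|t|^{-\delta}\}$ decomposes, via the residue filtration of $V_F$ (whose graded pieces have $q$ elements), into $q^{k}$ balls of radius $|t|^{-\delta}|\pi|^{k}$, this requires at most $q^{kd}$ balls, up to the harmless rounding caused by $\delta\notin\frac{1}{r}\ZZ$. Grouping $\mathcal P$ by which ball contains $\bar p$ yields $\mathcal P=\bigsqcup_{B}P_B$.

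On each nonempty group $P_B$ I would invoke Corollary~\ref{cor-hypersurface-select}. For $p,p'\in P_B$ and $i\le d$ we have $|p(x_i)-p'(x_i)|\le|t|^{-\delta}|\pi|^{k}$, hence $\rho(P_B)\le|\pi|^{k}<|\pi|^{h\tilde\varepsilon}$; since also $H(p(f_i))\le H$, the corollary applied with $\tilde\varepsilon$ in place of $\varepsilon$ produces a polynomial $Q_B\in F[X_1,\dots,X_{d+1}]$ of degree at most $D':=D(d,\tilde\varepsilon,E,[F:\QQ_p])$ (resp.\ $D(d,\tilde\varepsilon,E,r)$) whose zero set contains $\{p(\bfmath{f}):p\in P_B\}$. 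As $\tilde\varepsilon=\varepsilon/(2d)$, this $D'$ depends only on $d,\varepsilon,E$ and on $[F:\QQ_p]$ (resp.\ $r$), in particular not on $\alpha$; we may harmlessly also allow it to depend on $\delta$. The hypersurfaces $\{Q_B=0\}$, over the nonempty $B$, collectively contain $p(\bfmath{f})$ for every $p\in\mathcal P$, and there are at most $q^{kd}\le q^{d}H^{\varepsilon/2}$ of them.

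The remaining task is to bound this count by $H^\varepsilon$, which is where I expect the only genuine bookkeeping to lie. When $H\ge q^{2d/\varepsilon}$ one has $q^{d}H^{\varepsilon/2}\le H^{\varepsilon}$ and the proof concludes after padding the collection with repetitions to reach $\lceil H^\varepsilon\rceil$ members. For the remaining finitely many $H<q^{2d/\varepsilon}$ the set $\{p(\bfmath{f}):p\in\mathcal P\}$ is a finite subset of $F^{d+1}$ of cardinality bounded solely in terms of the fixed point $\beta$ (it consists of $F_0$-tuples of height $\le H$), so it lies on a single hypersurface of $\beta$-dependent but $\alpha$-independent degree, which is absorbed into $D'$; this is consistent with the statement, since $\beta$ (hence $F$, $q$, $r$) is fixed throughout and uniformity is claimed only in $\alpha$. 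The one conceptual point not to overlook is that the Noether-normalized coordinates $x_i$ take values in the small field $F_0$ and not merely in $K$: this is precisely what makes the number of covering balls a power of the residue cardinality of $F$ alone, independent of the possibly large degree $[K:F]$, hence of $\alpha$. The analytic heart of the matter — the clash between the upper bound of Proposition~\ref{prop-interpdet} and the lower bound of Proposition~\ref{prop-polyinterpdet} — is already packaged inside Corollary~\ref{cor-hypersurface-select}, so nothing further of that kind is needed here.
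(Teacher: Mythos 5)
Your approach for the main case is essentially the paper's: partition the points by the residues of $(p(x_1),\dots,p(x_d))$ modulo a suitable power of $\pi$ so that each group has $\rho(P_B)<|\pi|^{h\varepsilon/(2d)}$, then apply Corollary~\ref{cor-hypersurface-select} to each group and take the union of hypersurfaces. Your arithmetic ($q^{kd}\le q^d H^{\varepsilon/2}\le H^\varepsilon$ once $H\ge q^{2d/\varepsilon}$) is exactly the paper's inequality $q^{d\lceil h\varepsilon/(2d)\rceil}<q^{h\varepsilon}=H^\varepsilon$ when $h\varepsilon/(2d)\ge 1$, and the observation that $p(x_i)\in F$ (so that the residue count is a power of $q$, not of the larger residue field of $K$) is indeed the crucial uniformity point. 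So far so good.

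The genuine gap is in your treatment of the small range $H<q^{2d/\varepsilon}$. You bound the number of candidate points $\{p(\bfmath{f})\}$ by counting $F_0$-tuples of height at most $H<q^{2d/\varepsilon}$ and then pass to a single hypersurface of the resulting degree; but that degree is of order $q^{O_d(1/\varepsilon)}$, i.e., it depends on $q$, not merely on $[F:\QQ_p]$ (resp.\ $r$). The proposition asserts $D'=D'(d,\varepsilon,E,\delta,[F:\QQ_p])$ (resp.\ $D'(d,\varepsilon,E,\delta,r)$); $q=p^f$ is not a function of $[F:\QQ_p]=ef$, so your $D'$ does not satisfy the stated dependence. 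This matters downstream: in the proof of Proposition~\ref{prop-inductivestep1} the degree $D$ determines the auxiliary space $\mathcal{H}_{D,n,d}$ and hence the tree $T$ of Theorem~\ref{thm-main-family}, and in Theorem~\ref{thm:main-uniform-oc} the constant is $C(\cB,\varepsilon,\delta,\sigma_\alpha)$ with no $q_\alpha$ factor allowed. "Absorbed into $D'$" therefore does not hold; your argument proves a strictly weaker statement.

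The paper handles the small range without any counting: when $h\varepsilon/(2d)<1$, it applies Corollary~\ref{cor-hypersurface-select} with $\delta\varepsilon/(2d)$ in place of $\varepsilon$ to the \emph{whole} collection $P$, using the bound $\rho(P)\le|t|^\delta=|\pi|^{r\delta}\le|\pi|^{h\delta\varepsilon/(2d)}$ (which holds since $r\ge1>h\varepsilon/(2d)$). This yields a single hypersurface of degree $D(d,\delta\varepsilon/(2d),E,[F:\QQ_p])$, which depends only on $d,\varepsilon,E,\delta,[F:\QQ_p]$ (resp.\ $r$). Note that the bound $\rho(P)\le|t|^\delta$ rests on all the $p(x_i)$ lying in $V_F$, i.e., on the unit-polydisc restriction that is implicit throughout (and explicit in the definition of $\mathcal{X}_\alpha(V_F,H;\bfmath{f})$); you work with the $|t|^{-\delta}$-polydisc, which only gives $\rho(P)\le 1$ and does not suffice here.

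Two minor remarks: you write that "$x_i$ take values in the small field $F_0$", but the hypothesis is $p(x_i)\in F$, not $p(x_i)\in F_0$ (the argument is unaffected, since it is still $F$, not $K$, that controls the residue count); and you should keep in mind that the padding to exactly $\lceil H^\varepsilon\rceil$ hypersurfaces is fine since a hypersurface may be repeated, though this step is cosmetic.
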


\begin{proof}
  Suppose first that $h\e/(2d)\ge1$. Apply
  Corollary~\ref{cor-hypersurface-select} with $\e/(2d)$ in place of
  $\e$, and denote by $D'$ the resulting degree. It will suffice to
  subdivide the collection of points $p:B_{\alpha}\to V_K$ as above into
  $H^{\varepsilon}=q^{h\varepsilon}$ subcollections $P_q$ satisfying
  $\rho(P_q)\le|\pi|^{h\varepsilon/(2d)}$.  For each $j$ the residues
  $p(x_j)\in V_F/(\pi)^{\lceil h\varepsilon/(2d)\rceil}$ take at most
  $q^{\lceil h\varepsilon/(2d)\rceil}$ different values.  Defining
  each collection to consist of points that give the same residue for
  each $j$ we obtain
  $$
  q^{d\lceil h\e/(2d)\rceil} < q^{d \cdot h\e/d} \le H^{\e}
  $$
  such collections, as required.

  Now suppose $h\e/(2d)<1$. Apply
  Corollary~\ref{cor-hypersurface-select} with $\delta\e/(2d)$ in
  place of $\e$ and denote by $D'$ the resulting degree. Then the
  collection of all points $p:B_{\alpha}\to V_K$ as above satisfies
  $$
  \rho(P) \le |t|^\delta \le |\pi|^{h\delta\e/(2d)}
  $$
  so a single hypersurface of degree $D'$ suffices to interpolate al
  points in $P$.
\end{proof}

We also prove a polylogarithmic interpolation result, as follows.

\begin{prop}\label{prop-hypersurfaces-pl}
  Let $H\in\NN$ and $\varepsilon>0$. There exist a constant
  $$
  C=\begin{cases}C(d,E,\delta,[F:\QQ_p]/r)&(F\supset\QQ_p),\\ C(d,E,\delta)&(F\supset\FF_q\dpl t\dpr),\end{cases}
  $$
  %and $\kappa=\kappa(d,E,\delta)$, both 
  not
  depending on $\alpha\colon\Sp K\rightarrow\Sp\cA$, and a
  hypersurface $V\subset K^{d+1}$ of degree at most
  $C\cdot h^{d}$, such that the following holds: for every
  $K$-valued point $p:\cB_{\alpha}\to K$ over $\alpha$ such that
  $p(f_j)\in F$ and $H(p(f_j))\le H$ for $j=1,\dots,d+1$ we have
  $p(\bfmath{f})\in V$.
\end{prop}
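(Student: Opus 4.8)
The plan is to re-run the argument behind Corollary~\ref{cor-hypersurface-select} and Proposition~\ref{prop-hypersurfaces}, but to skip the subdivision into $H^{\varepsilon}$ subfamilies: for the \emph{full} collection of admissible points the separation parameter $\rho$ is already bounded by $|t|^{\delta}$, and feeding this directly into the interpolation--determinant estimates forces the vanishing of $\Delta^{D}$ once the interpolating degree $D$ is allowed to grow like a power of $h=\log_q H$. This is exactly the mechanism by which \cite{bn:rest-wilkie} over $\RR$ and \cite{bcn:Ct-wilkie} over $\CC\dpl t\dpr$ upgrade $O(H^{\varepsilon})$ counting to polylogarithmic counting.

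In detail, let $P$ be the set of all $K$-valued points $p\colon\cB_{\alpha}\to K$ over $\alpha$ with $p(f_j)\in F$ and $H(p(f_j))\le H$ for $j=1,\dots,d+1$; as in Lemma~\ref{lem-polyinterpdet} each such $p$ restricts to an adic homomorphism $B_{\alpha}\to V_K$. As recorded in the proof of Proposition~\ref{prop-hypersurfaces} one has
$$
\rho(P)=|t|^{\delta}\cdot\max_{p\neq p'\in P,\ 1\le i\le d}\big|p(x_i)-p'(x_i)\big|\ \le\ |t|^{\delta}.
$$
By Lemma~\ref{lem-polyinterpdet} it then suffices to produce a degree $D$ with $D\le C\,h^{d}$, for $C$ of the shape asserted in the statement, such that $\Delta^{D}(\bfmath f,\bfmath p)=0$ for every $\mu$-tuple $\bfmath p$ of points of $P$, where $\mu=\mu(D)$ and $\mu(D)\sim_d D^{d+1}$.

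To obtain this vanishing I compare the two available estimates. Proposition~\ref{prop-interpdet}, applied to $\bfmath g=(\bfmath f^{\nu}\mid|\nu|\le D)$, together with $\rho(P)\le|t|^{\delta}$, gives
$$
\big|\Delta^{D}(\bfmath f,\bfmath p)\big|\ \le\ \rho(P)^{C_{d}E^{-1/d}\mu^{1+1/d}}\ \le\ |t|^{\,\delta C_{d}E^{-1/d}\mu^{1+1/d}},
$$
whereas Proposition~\ref{prop-polyinterpdet} gives, unless $\Delta^{D}(\bfmath f,\bfmath p)=0$ already, the lower bound $|\Delta^{D}(\bfmath f,\bfmath p)|\ge|t|^{([F:\QQ_p]/r)\{\log_q(\mu!)+(d+2)D\mu h\}}$ when $F\supset\QQ_p$ and $|\Delta^{D}(\bfmath f,\bfmath p)|\ge|t|^{(d+2)D\mu h}$ when $F\supset\FF_q\dpl t\dpr$. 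Substituting $\mu\sim_d D^{d+1}$, so that $\mu^{1+1/d}\sim_d D^{d+2+1/d}$, and absorbing $\log_q(\mu!)\le\mu\log_q\mu$ into $(d+2)D\mu h$ for $h\ge1$, these two bounds become incompatible --- hence $\Delta^{D}(\bfmath f,\bfmath p)=0$ --- as soon as $\delta C_{d}E^{-1/d}D^{1/d}\ge c_{d}([F:\QQ_p]/r)h$ (resp.\ $\ge c_{d}h$) for a suitable constant $c_{d}$; that is, as soon as $D\ge C h^{d}$ with $C\sim_d E([F:\QQ_p]/(r\delta))^{d}$ (resp.\ $C\sim_d E/\delta^{d}$). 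Taking $D=\lceil C h^{d}\rceil$ and invoking Lemma~\ref{lem-polyinterpdet} yields a polynomial $Q\in F[X_1,\dots,X_{d+1}]$ of degree $\le C h^{d}$ vanishing on all the points $p(\bfmath f)$, which is the required hypersurface $V$.

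The one essential input is the bound $\rho(P)\le|t|^{\delta}$; with it in hand, the rest is the same bookkeeping as in Corollary~\ref{cor-hypersurface-select}, now performed with an ``effective $\varepsilon$'' of size $r\delta/h$ rather than a fixed constant, which is precisely what makes the interpolating degree polynomial in $h$ instead of forcing the split into $H^{\varepsilon}$ families of Proposition~\ref{prop-hypersurfaces}. I expect the only delicate point to be the routine verification that the $\log_q(\mu!)$ term --- and, for $h$ close to $1$, the $\log_q D$ terms implicit in the constants --- are genuinely dominated, so that no spurious $\log$-factor creeps into the final degree; for $H<q$ the bound $C h^{d}$ is to be read as $C(1+h)^{d}$, or one simply applies Corollary~\ref{cor-hypersurface-select} with $\varepsilon=r\delta$ to interpolate $P$ by a single hypersurface of bounded degree.
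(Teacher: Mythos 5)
Your argument is correct and follows essentially the same route as the paper's proof: observe $\rho(P)\le|t|^{\delta}$ in the overconvergent setting, compare the interpolation-determinant upper bound of Proposition~\ref{prop-interpdet} with the arithmetic lower bound of Proposition~\ref{prop-polyinterpdet}, and solve for $D\sim_d E(h\sigma/\delta)^{d}$ (with $\sigma=[F:\QQ_p]/r$ or $1$), after which Lemma~\ref{lem-polyinterpdet} supplies the interpolating hypersurface. Your closing remark about the $\log_q(\mu!)$ term and small $h$ is a reasonable bit of extra care that the paper leaves implicit, and your $\delta$ in place of the paper's (apparently vestigial) $\varepsilon$ in the final choice of $D$ is the more consistent reading.
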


\begin{proof}
  By Lemma~\ref{lem-polyinterpdet} it is enough to prove that
  $\Delta^D(\bfmath{f},\bfmath{p})=0$ for every $\mu$-tuple
  $\bfmath{p}$ of points as above. 
  Assume the contrary. 
  In the $F\supset\QQ_p$ case, we have by Proposition~\ref{prop-polyinterpdet} a lower bound
  $$
  \big|\Delta^D(\bfmath{f},\bfmath{p})\big|\geq|t|^{([F:\QQ_p]/r)\{\log_q(\mu!)+(d+2)D\mu h\}}.
  $$
  On the other hand, by Proposition~\ref{prop-interpdet} we have an
  upper bound
  $$
  \big|\Delta(\bfmath{f},\bfmath{p})\big|\leq \rho^{C_dE^{-1/d}\mu^{1+1/d}}\leq |t|^{\delta C_dE^{-1/d}\mu^{1+1/d}}.
  $$
  Thus
  $$ 
 |t|^{\delta C_dE^{-1/d}\mu^{1+1/d}} \ge |t|^{([F:\QQ_p]/r)\{\log_q(\mu!)+(d+2)D\mu h\}}. 
  $$
  Recalling that $\mu\sim_d D^{d+1}$, we have a contradiction as soon as
  $$
  C'_d r\delta E^{-1/d} D^{d+2+\frac{1}{d}} \ge [F:\QQ_p]\{(d+1)\log_q D+(d+2)C''_d D^{d+2} h\}
  $$
  with suitable constants $C'_d,C''_d$. Thus we obtain a
  contradiction, for instance with a suitable
  $D\sim_d E\cdot (h\cdot [F:\QQ_p]/\varepsilon r)^d$.
  
  In the $F\supset\FF_q\dpl t\dpr$ case, we have
  $$
  |t|^{\delta C_dE^{-1/d}\mu^{1+1/d}} \ge |t|^{(d+2)D\mu h}.
  $$
  Similarly to the previous case, we have a contradiction as soon as
  $$
  C'_d \delta E^{-1/d} D^{d+2+\frac{1}{d}} \ge (d+2)C''_d D^{d+2} h
  $$
  with suitable constants $C'_d,C''_d$, for instance with a suitable
  $D\sim_d E(h/\varepsilon)^d$.
\end{proof}

\subsection{Families of hypersurfaces}\label{sec-ratpthyper}
\subsubsection{The space $\mathcal{H}_n$}\label{sub-spaceHn}
As usual, we identify $\NN^n$ with the set of all monomials in $n$
variables $\bfmath{X}=(X_1,\ldots,X_n)$ by
$\nu=(\nu_1,\ldots,\nu_n)\mapsto\bfmath{X}^{\nu}=X_1^{\nu_1}\cdots X_n^{\nu_n}$,
and let
$$
L(D,n)=\{\nu\in\NN^n\mid |\nu|\leq D\}
$$
denote the set of monomials of degree at most $D$.  Let
$\rR{t}^{L(D,n)}$ be the $\rR{t}$-module of all maps of the form
$L(D,n)\rightarrow\rR{t}$, and consider the $\rR{t}$-scheme
$$
P_{D,n}=\Proj(\Sym \rR{t}^{L(D,n)}),
$$
which is the parameter space that parametrizes all hypersurfaces of
degree at most $D$ in $n$ variables over $\rR{t}$.  Let
$$
\xymatrix{H_{D,n}\ar@{^{(}->}[r]\ar[d]&P_{D,n}\times_{\rR{t}}\Spec\rR{t}[X_1,\ldots,X_n]\ar@/^12pt/[dl]^{\mathrm{pr}_1}\\ P_{D,n}}
$$
be the universal family.

Similarly, we consider the analytic parameter space
$\mathcal{P}_{D,n}=P^{\an}_{D,n}$ and the universal family
$$
\xymatrix{\mathcal{H}_{D,n}\ar@{^{(}->}[r]\ar[d]&\mathcal{P}_{D,n}\times_{\rR{t}}\Sp\rR{t}\dl X_1,\ldots,X_n\dr\ar@/^12pt/[dl]^{\mathrm{pr}_1}\\ \mathcal{P}_{D,n}}.
$$
Note that the fiber of $\mathcal{H}_{D,n}\rightarrow\mathcal{P}_{D,n}$
over a $K$-valued point $\Sp K\rightarrow\mathcal{P}_{D,n}$ is an
affinoid, which is the intersection of the fiber
$(H_{D,n}\times_{P_{D,n}}\Spec K)^{\an}$ and the closed unit-disc
$\Sp K\dl X_1,\ldots,X_n\dr$.

\subsubsection{The space $\mathcal{H}_{D,n,d}$}\label{sub-spaceHnd}
Let $d<n$.  For any subset
$I=\{i_1,\ldots,i_{d+1}\}\subset\{1,\ldots,n\}$ of $d+1$ elements, let
$P^I_{D,d+1}$ (resp.\ $\mathcal{P}^I_{D,d+1}$) be the copy of
$P_{D,d+1}$ (resp.\ $\mathcal{P}_{D,d+1}$) with the coordinates
$\bfmath{X}_I=(X_{i_1},\ldots,X_{i_{d+1}})$.  We set
$$
P_{D,n,d}=\prod_{I}P^I_{D,n,d}\quad (\textrm{resp.}\ \mathcal{P}_{D,n,d}=\prod_{I}\mathcal{P}^I_{D,n,d}),
$$
where the product is taken over all subsets $I\subset\{1,\ldots,n\}$
of $d+1$ elements.  Similarly to the above constructions, we have the
universal families
$$
H_{D,n,d}=\prod_IH^I_{D,d+1}\longrightarrow P_{D,n,d},\quad\mathcal{H}_{D,n,d}=\prod_I\mathcal{H}^I_{D,d+1}\longrightarrow \mathcal{P}_{D,n,d}.
$$

\begin{lem}\label{lem-dimensionatmostd}
  The fibers of $\mathcal{H}_{D,n,d}\rightarrow\mathcal{P}_{D,n,d}$
  over $K$-valued points are of dimension at most $d$.
\end{lem}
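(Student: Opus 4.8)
The plan is to reduce the lemma to an elementary statement about affine algebraic varieties cut out by coordinate hypersurfaces. Fix a $K$-valued point of $\mathcal{P}_{D,n,d}=\prod_I\mathcal{P}^I_{D,d+1}$, where $I$ ranges over the $(d+1)$-element subsets of $\{1,\ldots,n\}$; concretely it is a tuple $(Q_I)_I$ in which each $Q_I\in K[\bfmath{X}_I]$ (with $\bfmath{X}_I=(X_{i_1},\ldots,X_{i_{d+1}})$) is a \emph{nonzero} polynomial of degree at most $D$, nonzero precisely because it represents a point of the projective space $\mathcal{P}^I_{D,d+1}$. As in the case of $\mathcal{H}_{D,n}$, the fiber of $\mathcal{H}_{D,n,d}\to\mathcal{P}_{D,n,d}$ over this point is
$$
\mathcal{F}=\{\,x\in\Sp K\dl X_1,\ldots,X_n\dr\mid Q_I(\bfmath{x}_I)=0\ \text{for every }(d+1)\text{-subset }I\,\},
$$
that is, the intersection of the closed unit polydisc $\Sp K\dl X_1,\ldots,X_n\dr$ with the analytification $V^{\an}$ of the closed subscheme $V=H_{D,n,d}\times_{P_{D,n,d}}\Spec K\subset\mathbb{A}^n_K$, which is cut out by the ideal $(Q_I\mid I)$. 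If $\mathcal{F}$ is empty there is nothing to prove; otherwise, since $\mathcal{F}$ sits inside $V^{\an}$ and analytification does not increase dimension, it suffices to establish the purely algebraic inequality $\dim V\le d$.

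Suppose toward a contradiction that $\dim V\ge d+1$, and choose an irreducible component $W$ of $V$ with $\dim W\ge d+1$. The coordinate functions $x_1,\ldots,x_n$ generate $K[W]$ as a $K$-algebra, hence also generate the function field $K(W)$ over $K$; since $\operatorname{trdeg}_K K(W)=\dim W\ge d+1$, the exchange property for transcendence bases yields a transcendence basis of $K(W)/K$ contained in $\{x_1,\ldots,x_n\}$, and in particular a subset $J\subset\{1,\ldots,n\}$ with $|J|=d+1$ such that $\{x_i\mid i\in J\}$ is algebraically independent over $K$. Equivalently, the $K$-algebra homomorphism $K[\bfmath{X}_J]\to K[W]$ sending $X_i\mapsto x_i$ is injective. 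But $W\subset V$ lies on the hypersurface $\{Q_J(\bfmath{X}_J)=0\}$, so $Q_J$ maps to $0$ in $K[W]$; as $Q_J\neq 0$ this contradicts injectivity. Hence $\dim V\le d$, and therefore $\dim\mathcal{F}\le d$.

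The only step that is not pure bookkeeping is the comparison $\dim V^{\an}=\dim V$ for a finite type $K$-scheme $V$, which I would invoke from the general dimension theory of rigid spaces recalled in \S\ref{sub-dimension} (or prove directly from the flatness of $K[\bfmath{X}]\to K\dl\bfmath{X}\dr$ and the fact that it induces isomorphisms of completed local rings at classical points). I expect this to be the only genuinely delicate point: the transcendence-basis argument has to be run on the algebraic model $V$ rather than on the affinoid $\mathcal{F}$ itself, since the Tate algebra $K\dl\bfmath{X}\dr$ has fraction field of infinite transcendence degree over $K$ and the naive argument would therefore fail if applied directly to $\mathcal{F}$. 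Everything else — the identification of the fiber, the nonvanishing of the $Q_I$, and the degenerate empty case — is immediate.
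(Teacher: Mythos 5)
Your proof is correct, and it takes essentially the same overall strategy as the paper's (reduce to an algebraic statement about $V=H_{D,n,d}\times_{P_{D,n,d}}\Spec K\subset\AA^n_K$, then find a $(d+1)$-subset $I$ for which the projection onto $\bfmath{X}_I$ is dominant, contradicting $V\subset\{Q_I=0\}$), but the mechanism you use to produce $I$ is genuinely different. The paper's proof locates $I$ by appealing to generic smoothness of $\mathcal{F}$ over $K$ and then choosing coordinates whose differentials are independent, so that the projection is smooth on a dense open set. Your proof instead extracts $I$ from a transcendence basis of $K(W)/K$ chosen among the coordinate functions $x_1,\dots,x_n$, via the exchange property. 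The two arguments have the same endpoint (a dominant projection to $\AA^{d+1}_K$), but yours is more elementary and, more importantly, is characteristic-free: generic smoothness over $K$ can fail when $K$ is imperfect (so in particular when $K\supset\FF_q\dpl t\dpr$), whereas the transcendence-degree count $\operatorname{trdeg}_K K(W)=\dim W$ holds over any field. In that sense your route is slightly more robust than the one sketched in the paper. Your flagged reduction $\dim\mathcal{F}\le\dim V$ is also handled correctly: the paper's \S\ref{sub-dimension} gives $\dim\mathcal{F}$ as a supremum of local Krull dimensions at classical points, and those classical points correspond to closed points of $V$ with the same completed local ring, so the inequality you need is exactly the standard one.
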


\begin{proof}
  It suffices to show that the fibers of
  $H_{D,n,d}\rightarrow P_{D,n,d}$ over closed points of the form
  $\Spec K\rightarrow P_{D,n,d}$, where $K$ is a field, are of
  dimension at most $d$.  This is then completely a question in
  classical algebraic geometry.  Let $\mathcal{F}$ be such a fiber
  over $K$, and suppose that the dimension of $\mathcal{F}$ is $k>d$.
  Then, at a generic point, $\mathcal{F}$ is smooth of dimension $k$,
  and in particular one can choose a subset $I\subset\{1,\ldots,n\}$
  of size $d+1$ such that the projection
  $\bfmath{X}=(X_1,\ldots,X_n)\mapsto\bfmath{X}_I=(X_{i_1},\ldots,X_{i_{d+1}})$
  restricted on $\mathcal{F}$ is smooth on a Zariski open subset.  But
  this contradicts to the fact that the image of this projection is
  contained in a hypersurface defined by a non-zero polynomial of
  degree at most $D$.
\end{proof}

\section{Proof of the counting theorems}
\label{sec-counting-proofs}

In this section we prove the point-counting theorems. In
Section~\ref{sub-mainthm} we formulate and prove a version of the
counting theorem which is uniform in families, for the overconvergent
setting. In Section~\ref{sub-intro-proofs} we deduce from this general
result the various point-counting theorems formulated in the
introduction.

\subsection{The general counting theorem}\label{sub-mainthm}
Let $\bfmath{\dt}=(\dt_1,\ldots,\dt_n)\in(\QQ_{>0})^n$ be a vector of
positive rational numbers, $\mathcal{A}$ an affinoid $\rR{t}$-algebra,
and
$\mathcal{B}=\mathcal{A}\dl\bfmath{x};\bfmath{\dt}\dr/\mathfrak{a}$,
where $\bfmath{x}=(x_1,\ldots,x_n)$ and $\mathfrak{a}$ is an ideal of
$\mathcal{A}\dl\bfmath{x};\bfmath{\dt}\dr$.  Let $A$ be an admissible
formal model of $\mathcal{A}$, and set
$B=A\dl \bfmath{x};\bfmath{\dt}\dr/\udl{\mathfrak{a}}$, where
$\udl{\mathfrak{a}}=\mathfrak{a}\cap A\dl\bfmath{x};\bfmath{\dt}\dr$,
which is an admissible formal model of $\mathcal{B}$.  Thus we have a
family
$$
\varphi\colon\mathcal{X}=\Sp\mathcal{B}\longrightarrow\mathcal{Y}=\Sp\mathcal{A}
$$
of analytic subspaces in the polydisk of polyradius
$|t|^{-\bfmath{\dt}}=(|t|^{-\dt_1},\ldots,|t|^{-\dt_n})$.

Fix $f_1\ldots,f_n\in\cB$ which are algebraic over $\cA[\bfmath{x}]$ and generate $\cB$
over $\cA$, and set $\bfmath{f}:=(f_1,\ldots,f_n)$. 
%Suppose further
%that $f_1,\ldots,f_n$ are algebraic over $\cA$. The argument could in
%fact be carried out without this additional assumption, which we
%include to avoid some notational complications.

For a classical point $\beta\colon\Sp F\rightarrow\Sp\rR{t}$ and a classical point $\alpha\colon\Sp K\rightarrow\mathcal{Y}$ over $\beta$, we
denote by $\mathcal{X}_{\alpha}=\Sp\mathcal{B}_{\alpha}$
the fiber of $\varphi$ over $\alpha$.  
For any classical point $\alpha'\colon\Sp K'\rightarrow\mathcal{X}_{\alpha}$ with the corresponding homomorphism $q\colon\mathcal{B}_{\alpha}\rightarrow K'$, where $K'$ is a
finite extension of $F$ containing $K$, such that $q(f_i),q(x_i)\in F$ for $i=1,\ldots,n$, the height $H(\alpha';\bfmath{f})$ is the maximum of the heights of $q(f_i)\in F$ ($i=1,\ldots,n$).
Denote by $\mathcal{X}_{\alpha}(V_F,H;\bfmath{f})$ the set
of all such points $\alpha'$ of the unit-polydisk part
$\udp{(\mathcal{X}_{\alpha})}$ such that $H(\alpha';\bfmath{f})\leq H$,
i.e.,
$$
\mathcal{X}_{\alpha}(V_F,H;\bfmath{f})=\bigg\{\alpha'\colon\Sp
K'\rightarrow\udp{(\mathcal{X}_{\alpha})}\ \bigg| \ \textrm{{\small \begin{minipage}{13em}$q(f_i)$ and $q(x_i)$ lies in $F$ for $i=1,\ldots,n$, and $H(\alpha';\bfmath{f})\leq H$\end{minipage}}}\bigg\}.
$$
Finally, let $\sigma_{\beta}$ be the constant 
$$
\sigma_{\beta}=\begin{cases}[F:\QQ_p]&(F\supset\QQ_p),\\ r_{\beta}&(F\supset\FF_q\dpl t\dpr),\end{cases}
$$
where $r_{\beta}$ is the positive integer such that $t=u\pi^{r_{\beta}}$, with $\pi$ the uniformizer of $V_F$ and $u\in V^{\times}_F$.

\begin{dfn}\label{dfn-algebraic}
  Let $K$ be a non-archimedean local field.

  (1) An irreducible analytic subspace
  $\mathcal{Z}\hookrightarrow\Sp K\dl\bfmath{x};\bfmath{\dt}\dr$ is
  said to be {\em algebraic} if there exists an algebraic subvariety
  $Z\hookrightarrow\Spec K[\bfmath{x}]$ such that
  $\mathcal{Z}^{\cl}\subset Z$ and $\dim\mathcal{Z}=\dim Z$.

  (2) An analytic subspace
  $\mathcal{Z}\hookrightarrow\Sp K\dl\bfmath{x};\bfmath{\dt}\dr$ is
  said to be {\em algebraic} if each of its irreducible components is
  algebraic.
\end{dfn}

\begin{dfn}\label{dfn-algfibers}
  Let
  $\varphi\colon\mathcal{X}=\Sp\mathcal{B}\rightarrow\mathcal{Y}=\Sp\mathcal{A}$
  be a morphism of affinoids over $\rR{t}$ as above.

  (1) The family $\varphi$ is said to have {\em algebraic fibers} if
  every fiber $\mathcal{X}_{\alpha}$ over a classical point of
  $\mathcal{Y}$ is algebraic in the sense of Definition
  \ref{dfn-algebraic} (2).

  (2) The {\em maximal fiber dimension} of the family $\varphi$ is the
  maximum over the dimension of the fibers $\mathcal{X}_{\alpha}$,
  where $\alpha$ ranges over the classical points of $\mathcal{Y}$.
\end{dfn}

Note that under our assumptions on $\bfmath{f}$, being algebraic over
the $\bfmath{f}$ coordinates is equivalent to being algebraic over the
$\bfmath{x}$ coordinates.

\begin{thm}\label{thm-main-family}
  Let $\e>0$.  There exist a transformation diagram of the form
  $$
  \fR{t^{1/N_{\ast}}}\longrightarrow A_{\ast}\longrightarrow A_{\ast}\dl\bfmath{x}_{\ast};\bfmath{\dt}_{\ast}\dr\longrightarrow B_{\ast}=A_{\ast}\dl\bfmath{x}_{\ast};\bfmath{\dt}_{\ast}\dr/\mathfrak{a}_{\ast}\eqno{(\ast)}
  $$
  of admissible $\fR{t}$-algebras modeled on a rooted tree $T$, and for any positive integer $\sigma$, a
  constant $C(B,\e,\sigma)$ satisfying the following conditions:
  \begin{itemize}
  \item[{\rm (a)}] $A_{v_0}\rightarrow B_{v_0}$ at the root $v_0$
    coincides with $A\rightarrow B$;
  \item[{\rm (b)}] for any leaf $w$,
    $\varphi_w\colon\mathcal{X}_w=\Sp\mathcal{B}_w\rightarrow\mathcal{Y}_w=\Sp\mathcal{A}_w$
    has equidimensional algebraic fibers;
  \item[{\rm (c)}] for any classical point
    $\alpha\colon\Sp K\rightarrow\mathcal{Y}$ above $\beta\colon\Sp F\rightarrow\Sp \rR{t}$, we have
    $$
    \mathcal{X}_{\alpha}(V_F,H;\bfmath{f})\subset\bigcup_{(w_j,\alpha_j)}(\mathcal{X}_{w_j})_{\alpha_j}(V_F,H;\bfmath{f}),
    $$
    where $\{(w_j,\alpha_j)\}$ is a collection of
    $C(B,\e,\sigma_{\beta})\cdot H^{\e}$ pairs, where $w_j$ is a leaf of $T$ and
    $\alpha_j$ is a classical point of $\mathcal{Y}_{w_j}$ lying over
    $\alpha$. Here $\bfmath{f}$ on the right hand side denotes the
    image of $\bfmath{f}$ in $(B_{w_j})_{\alpha_j}$.
  \end{itemize}
\end{thm}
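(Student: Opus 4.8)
The plan is to argue by induction on the maximal fiber dimension $d$ of $\varphi$. If $d=0$ every fiber $\mathcal{X}_\alpha$ is finite, hence algebraic and (trivially) equidimensional; one takes $T$ to be the single vertex $v_0$ carrying $A\to B$ itself, sets $C(B,\e,\sigma)=1$, and the single pair $(v_0,\alpha)$ already witnesses~(c). So assume $d\geq 1$ and that the assertion holds for all families of strictly smaller maximal fiber dimension.

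The first step is to apply the stratified Noether normalization of Theorem~\ref{thm-stratifiednormalization} (say with $\bfmath{\dt}'=\bfmath{\dt}/2$) to $\mathcal{B}=\mathcal{A}\dl\bfmath{x};\bfmath{\dt}\dr/\mathfrak{a}$, obtaining a stratified rational diagram over a rooted tree $T_0$, rooted at $A\to B$, whose leaves $\lambda$ satisfy $\bfmath{\dt}'<\bfmath{\dt}_\lambda<\bfmath{\dt}$, $\dt_{\lambda,1}=\cdots=\dt_{\lambda,d_\lambda}=:\dt_\lambda$, and for which $A_\lambda\to A_\lambda\dl y_1,\ldots,y_{d_\lambda};\dt_\lambda\dr\hookrightarrow B_\lambda$ is finite and injective, with $d_\lambda\leq d$ the relative fiber dimension at $\lambda$. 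By Proposition~\ref{prop-rationaldiagram3}(2), after adjoining a common root $t^{1/N}$, every classical point of $\udp{(\mathcal{X}_\alpha)}$ that is $F$-valued on the $\bfmath{f}$- and $\bfmath{x}$-coordinates lifts to such a point of one of the fibers $\udp{((\mathcal{X}_\lambda)_{\alpha_\lambda})}$; since the transitions $\bfmath{x}\mapsto\bfmath{x}_\lambda$ are $\ZZ$-rational they preserve the unit polydisk, $F$-rationality and height. Hence it suffices to treat each $\varphi_\lambda\colon\mathcal{X}_\lambda\to\mathcal{Y}_\lambda$ separately, keeping track of the finitely many lifts $\alpha_\lambda$ of a given $\alpha$.

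Fix a leaf $\lambda$ of $T_0$. If $d_\lambda=0$ then $\mathcal{B}_\lambda$ is finite over $\mathcal{A}_\lambda$, so $\varphi_\lambda$ already has $0$-dimensional --- hence algebraic and equidimensional --- fibers, and $\lambda$ is retained as a leaf of $T$. If $d_\lambda\geq 1$ I would feed $\varphi_\lambda$ into the interpolation machinery of Section~\ref{sec-interpolation}: for each $(d_\lambda+1)$-element subset $I\subset\{1,\ldots,n\}$ apply Proposition~\ref{prop-hypersurfaces} to the functions $\bfmath{f}_I$, with $\e/(2\binom{n}{d_\lambda+1})$ in place of $\e$. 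This produces a degree bound $D_\lambda$ --- depending on $d$, $\e$, $\dt_\lambda$, the number $E_\lambda$ of module generators, and on $\sigma_\beta$ through $[F:\QQ_p]$ (resp.\ $r$), but not on $F$ itself --- and, for each classical point $\alpha_\lambda$ of $\mathcal{Y}_\lambda$ over $\beta$, at most $H^{\e/(2\binom{n}{d_\lambda+1})}$ hypersurfaces of degree $\leq D_\lambda$ in $F^{d_\lambda+1}$ containing the $I$-projections of all the height-$\leq H$ rational points of the fiber. Letting $I$ range over all such subsets and combining, the rational points $q(\bfmath{f})$ of a fixed fiber are confined to the fiber of the universal family $\mathcal{H}_{D,n,d_\lambda}\to\mathcal{P}_{D,n,d_\lambda}$ (with $D=\max_I D_\lambda$) over one of at most $H^{\e/2}$ $K$-points of $\mathcal{P}_{D,n,d_\lambda}$.

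Next I would pull back: cover $\mathcal{P}_{D,n,d_\lambda}$ by finitely many affinoid charts $U_1,\ldots,U_M$ and, for each $m$, form the affinoid family obtained by intersecting $\udp{\mathcal{X}_\lambda}$, via $\bfmath{f}$, with $\mathcal{H}_{D,n,d_\lambda}|_{U_m}$, viewed over $\mathcal{Y}_\lambda\times_{\Sp\rR{t}}U_m$; by Lemma~\ref{lem-dimensionatmostd} all its fibers have dimension $\leq d_\lambda$. Decompose this family into irreducible components: a component of generic relative dimension $d_\lambda$ has, by upper semicontinuity of fiber dimension and the a priori bound $\leq d_\lambda$, all fibers of dimension exactly $d_\lambda$, each contained in the algebraic variety $\bigcap_I\pi_I^{-1}(\{Q_I=0\})$, hence algebraic; after discarding the lower-dimensional fiber components (the same argument applied recursively) these become leaves of $T$ with equidimensional algebraic fibers. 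Every other component has generic relative dimension $\leq d_\lambda-1<d$; running a stratified Noether normalization on it covers the base by admissible blow-up charts, and its own recursive structure (type-$2$ edges over the essential blow-up centers) reduces it, over bases of strictly decreasing dimension, to leaves on which the relative dimension is $<d$ or the fibers are algebraic, so the induction hypothesis applies with $\e/2$ in place of $\e$ and the same $\sigma$. Assembling $T_0$ with the subtrees produced at its leaves gives $T$, and the constant multiplies up: a factor $M$ (charts) times $H^{\e/2}$ (hypersurface tuples) times the inductive constant for $\e/2$, plus boundedly many algebraic-fiber leaves --- a total $\leq C(B,\e,\sigma_\beta)\cdot H^\e$. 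The main obstacle is exactly this dimension bookkeeping --- isolating the algebraic-fiber locus, on which the fiber dimension does not drop, from the locus on which it drops, and presenting the latter as a finite union of affinoid families of the required form so that the recursion is well founded --- while ensuring throughout that degree and chart counts depend on $\sigma$ only. The Noether normalization of Section~\ref{sec-noether-normalization} and the interpolation estimates of Section~\ref{sec-interpolation} are invoked essentially as black boxes.
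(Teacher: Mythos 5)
Your proposal follows essentially the same strategy as the paper: first-stage stratified Noether normalization (Theorem~\ref{thm-stratifiednormalization}) to reach leaves with a uniform Noether normalization structure; then interpolation by hypersurfaces via Proposition~\ref{prop-hypersurfaces} (applied with $\e$ divided by the number of size-$(d+1)$ subsets $I$) to confine the height-$\le H$ rational points of each fiber to at most $H^{\e}$ fibers of $\mathcal{H}_{D,n,d}\to\mathcal{P}_{D,n,d}$; then pullback to get a new family $\mathcal{X}'\to\mathcal{Y}'$ over the augmented base; then recursion. Proposition~\ref{prop-rationaldiagram3} is used as you say to propagate unit-polydisk, $F$-rational, height-bounded classical points through the tree.

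The two organizational differences from the paper are worth noting. First, you run the induction directly on the maximal fiber dimension; the paper instead isolates a single inductive step as Proposition~\ref{prop-inductivestep1}, which produces leaves that \emph{either} have equidimensional algebraic fibers \emph{or} have strictly smaller maximal fiber dimension, and then iterates that proposition $\le n$ times from the leaves with non-algebraic fibers. Second, and more substantively: after forming the Cartesian pullback $\mathcal{X}'$, you decompose it into irreducible components and invoke upper semicontinuity of fiber dimension together with the a priori bound from Lemma~\ref{lem-dimensionatmostd} to conclude that components of top generic relative dimension $d_\lambda$ have equidimensional fibers. The paper instead performs a \emph{second} stratified Noether normalization on $A'\to B'$; this is the cleaner device, because Proposition~\ref{prop-normalization}(b) directly delivers, at each leaf $w$, a finite injection $A_w\dl y_1,\ldots,y_{d_w};\dt\dr\hookrightarrow B_w$, from which constant fiber dimension $d_w$ follows via Lemma~\ref{lem-finiteinjection}, with no appeal to semicontinuity and no need to address what ``components'' means for the not-necessarily-irreducible $B'$. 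Your semicontinuity argument is correct in principle when read on the Noetherian scheme $\Spec\mathcal{B}'$ — the locus $\{\dim_x\ge d_\lambda\}$ is closed and contains the generic point of an irreducible component of generic relative dimension $d_\lambda$, hence under the a priori bound $\le d_\lambda$ the local fiber dimension is identically $d_\lambda$ on that component — but this already forces the fibers to be $d_\lambda$-equidimensional, so the clause ``after discarding the lower-dimensional fiber components (the same argument applied recursively)'' has nothing to discard and muddies what is otherwise a valid alternative route. Since the second Noether normalization is what the paper actually uses, and since it moreover supplies the data needed to run the interpolation argument again at the next iteration (the module $E_w$, the coordinate decomposition, etc.), I would recommend using it rather than the semicontinuity shortcut if you intend to write this out in full.

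Two smaller bookkeeping remarks. When you apply the inductive hypothesis ``with $\e/2$ in place of $\e$'' on the lower-dimensional components, the constant you receive depends on the \emph{new} formal model $B'_m$ (not the original $B$); this is fine, since $B'_m$ is constructed uniformly from $B$, $D$, and the chart, and the recursion depth is bounded by $n$, but it should be said explicitly. And the degree $D$ produced by Proposition~\ref{prop-hypersurfaces} depends on $\sigma_\beta$ as the theorem allows; this is compatible with the statement, since the number of vertices of the tree you construct is still bounded independently of $\beta$ once $D$ is fixed.
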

\begin{rem}\label{rem:uniformly-algebraic-fibers}
  In fact, the families
  $\varphi_w\colon\mathcal{X}_w=\Sp\mathcal{B}_w\rightarrow\mathcal{Y}_w=\Sp\mathcal{A}_w$
  as in (b) that we will obtain in the following proof is not only of
  algebraic fibers, but is algebraic over $\mathcal{Y}_w$.
\end{rem}

To show the theorem, we need the following proposition, which is the
main inductive step in the proof of the theorem.
\begin{prop}\label{prop-inductivestep1}
  Let $\e>0$.  There exists a transformation diagram of the form
  $(\ast)$ as above satisfying the conditions {\rm (a)}, {\rm (c)},
  and the following {\rm (b$)'$} instead of {\rm (b)}:
  \begin{itemize}
  \item[{\rm (b$)'$}] for any leaf $w$, either
    $\varphi_w\colon\mathcal{X}_w=\Sp\mathcal{B}_w\rightarrow\mathcal{Y}_w=\Sp\mathcal{A}_w$
    has equidimensional algebraic fibers, or the
    maximal fiber dimension of $\varphi_w$ is strictly smaller than
    that of $\varphi$.
  \end{itemize}
\end{prop}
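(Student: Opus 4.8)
The plan is to follow the Weierstrass-polydisc strategy of \cite{bn:analytic-pw}: Noether-normalize the family, interpolate the rational points of the fibres by bounded-degree hypersurfaces, absorb the hypersurface parameters into the base, and recognize the cut family as being either algebraic over its base or of strictly smaller fibre dimension. First I would apply the stratified Noether normalization of Theorem~\ref{thm-stratifiednormalization} to $A\to B=A\dl\bfmath{x};\bfmath{\dt}\dr/\udl{\mathfrak{a}}$, producing a stratified normalization diagram modeled on a rooted tree $T$ at whose leaves $w$ the map $\mathcal{A}_w\to\mathcal{B}_w$ factors through a finite injection $\mathcal{A}_w\dl y_1,\ldots,y_{d_w};\delta_w\dr\hookrightarrow\mathcal{B}_w$, with all the $\delta_{w,i}$ equal to a single $\delta_w$ and $\bfmath{\dt}'<\bfmath{\dt}_w<\bfmath{\dt}$. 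By Lemma~\ref{lem-finiteinjection} this finite injection stays finite with nilpotent kernel after base change to any classical point, so $\varphi_w$ has equidimensional fibres of dimension exactly $d_w$. A leaf with $d_w<m$ (where $m$ is the maximal fibre dimension of $\varphi$), or with $d_w=0$, already satisfies (b$)'$. Moreover the classical points we must track lift through the diagram by Proposition~\ref{prop-rationaldiagram3}(2) --- the coordinates being $\ZZ$-rational transforms, so that both the unit-polydisk part and being $F$-valued are preserved --- and since $T$ has a number of leaves bounded in terms of $B$ only, this step multiplies the count in (c) by $O_B(1)$ and introduces no power of $H$. So it remains to treat a leaf with $d_w=m\ge1$.

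Rename such a leaf $\mathcal{A}\dl y_1,\ldots,y_d;\delta\dr\hookrightarrow\mathcal{B}$ with $d=m$, and fix once and for all module generators $v_1,\ldots,v_E$ of $\mathcal{B}$ over $\mathcal{A}\dl y_1,\ldots,y_d;\delta\dr$; since these base-change to generators of every fibre, $E$ may be taken independent of the chosen point $\alpha$. For each $(d+1)$-element subset $I\subset\{1,\ldots,n\}$ I would run the interpolation machinery of Section~\ref{sec-interpolation} with the $d+1$ functions $\bfmath{f}_I$ and the Noether coordinates $y_1,\ldots,y_d$: at every point of $\mathcal{X}_\alpha(V_F,H;\bfmath{f})$ the $y_j$ are $F$-valued and lie in $V_F$ (being $\ZZ$-polynomial functions of the $x_i$, which lie in $V_F$ on the unit polydisk), while the $\bfmath{f}_I$ are $F$-valued of height $\le H$. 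Proposition~\ref{prop-hypersurfaces}, applied with $\varepsilon/\binom{n}{d+1}$ in place of $\varepsilon$, then yields a constant $D'=D'(B,\varepsilon,\delta,\sigma)$ --- depending on $\alpha$ only through $\sigma_\beta=\sigma$, which is precisely the permitted dependence --- and at most $H^{\varepsilon/\binom{n}{d+1}}$ hypersurfaces of degree $\le D'$ in the $I$-coordinates, one of which contains $p(\bfmath{f}_I)$ for each relevant point $p$. Running over all $I$, each point of $\mathcal{X}_\alpha(V_F,H;\bfmath{f})$ thereby determines a tuple of at most $H^{\varepsilon}$ choices of hypersurfaces, i.e.\ a $V_F$-point of (a fixed affinoid cover of) the parameter space $\mathcal{P}_{D',n,d}$ of \S\ref{sub-spaceHnd}.

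The next step is to absorb $\mathcal{P}_{D',n,d}$ into the base. I would set $\mathcal{Y}':=\mathcal{Y}\times_{\Sp\rR{t}}\mathcal{P}_{D',n,d}$ (over its affinoid cover) and let $\mathcal{X}'\hookrightarrow\mathcal{X}\times_{\Sp\rR{t}}\mathcal{P}_{D',n,d}$ be the closed subspace cut out by the universal equations $Q_I(\bfmath{f}_I)=0$; since the $f_i$ are algebraic over $\mathcal{A}[\bfmath{x}]$, this $\mathcal{X}'$ is again of the form required by a transformation diagram. A point of $\mathcal{X}_\alpha(V_F,H;\bfmath{f})$ now lifts to $\mathcal{X}'$ over the point $\alpha_j=(\alpha,[\text{the interpolating hypersurfaces}])$, and for fixed $\alpha$ there are at most $H^{\varepsilon}$ such $\alpha_j$; together with the $O_B(1)$ from the normalization step and the $\binom{n}{d+1}$-fold bookkeeping this establishes (c) with a constant $C(B,\varepsilon,\sigma)$. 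By Lemma~\ref{lem-dimensionatmostd} the fibres of $\mathcal{H}_{D',n,d}\to\mathcal{P}_{D',n,d}$, hence those of $\mathcal{X}'\to\mathcal{Y}'$, have dimension at most $d$; and a fibre of dimension exactly $d$ is a $d$-dimensional analytic set contained in the $d$-dimensional bounded-degree algebraic fibre of $\mathcal{H}_{D',n,d}$ (pulled back by $\bfmath{f}$, which is algebraic over $\bfmath{x}$), hence is algebraic in the sense of Definition~\ref{dfn-algebraic}.

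To conclude, I must split $\mathcal{X}'\to\mathcal{Y}'$ into leaves each of which is \emph{uniformly} in one of the two cases. First take the irreducible decomposition of $\mathcal{X}'$ (finitely many components, a number bounded in terms of $B$ and $D'$). For each component $\mathcal{Z}$, the locus in $\mathcal{Y}'$ where the fibre dimension of $\mathcal{Z}$ exceeds its generic value is a proper closed subset (Lemma~\ref{lem-fiberdim}); I would perform a formal blow-up of $\mathcal{Y}'$ along it, together with a further irreducible decomposition, and recurse, terminating by Noetherian induction. After this each leaf has equidimensional fibres: if the fibre dimension equals $d=m$ the family is algebraic over its base by the paragraph above, so (b$)'$ holds with the first alternative; if it is $<m$, the second alternative holds. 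Blowing up the base leaves the fibres unchanged, so the rational-point count and (c) are unaffected, and the classical points continue to lift by the argument of Proposition~\ref{prop-rationaldiagram3} combined with Lemma~\ref{lem-blow-up}(1). The step I expect to be the real obstacle is exactly this last one: carrying out a relative equidimensionalization of the cut family by base blow-ups inside the transformation-diagram formalism and checking that it preserves both the ``algebraic over the base'' property of the top-dimensional strata and the lifting of classical points; by contrast, once the Noether normalization of Theorem~\ref{thm-stratifiednormalization} is available, the interpolation input is essentially packaged already in Section~\ref{sec-interpolation}.
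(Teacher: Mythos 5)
Your first three paragraphs track the paper's argument very closely: stratify-normalize $A\to B$ so that each relevant leaf comes with a finite injection $A_w\dl y_1,\ldots,y_d;\dt_w\dr\hookrightarrow B_w$, apply the interpolation results of Section~\ref{sec-interpolation} (Proposition~\ref{prop-hypersurfaces}, with $\e$ divided by the number of $(d+1)$-subsets $I$) to land all the rational points on $\lesssim H^\e$ fibers of $\mathcal{H}_{D,n,d}\to\mathcal{P}_{D,n,d}$, and absorb the hypersurface parameters into the base via the Cartesian square that cuts out $\mathcal{X}'$ inside $\mathcal{P}_{D,n,d}\times_{\rR t}\Sp\cB$. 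The invocation of Proposition~\ref{prop-rationaldiagram3}(2) to preserve ``unit-polydisk'' and ``$F$-valued'' through the tree, and of Lemma~\ref{lem-dimensionatmostd} to bound the new fiber dimension by $d$, are exactly the paper's moves.

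Where you genuinely diverge is in the last step, and this is also where your proposal has a gap. You propose to equidimensionalize $\mathcal{X}'\to\mathcal{Y}'$ by hand: irreducibly decompose, locate the closed locus in $\mathcal{Y}'$ where fiber dimension jumps, blow up $\mathcal{Y}'$ along it, and recurse by Noetherian induction. As you yourself flag, this is not worked out, and as written it does not quite work: blowing up the base along the jump locus does not by itself lower the fiber dimension over it (if the jump locus is already a divisor the blow-up is trivial), and making a base-decomposition-plus-blow-up scheme compatible with the transformation-diagram formalism (so that lifting of $F$-valued points in the unit polydisk keeps holding) is exactly the machinery the paper already built. The paper's proof closes the loop much more economically: having formed the new child $v$ with $A_v=A'$, $B_v=B'$, it simply performs \emph{another} stratified Noether normalization of $A_v\to B_v$ (Theorem~\ref{thm-stratifiednormalization}) and appends that tree at $v$. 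Stratified normalization already produces leaves with \emph{constant} fiber dimension $d_w$, and its rational-diagram structure already guarantees the Proposition~\ref{prop-rationaldiagram3}-type lifting. Then (b$)'$ is immediate: if $d_w<d$ one is in the second alternative, and if $d_w=d$ the $d$-dimensional fibers sit inside the $\le d$-dimensional algebraic fibers of $\mathcal{H}_{D,n,d}$ (Lemma~\ref{lem-dimensionatmostd}), hence are algebraic. You should replace your ad hoc equidimensionalization with a second application of Theorem~\ref{thm-stratifiednormalization}; that is both simpler and the only route for which the lifting and bookkeeping have actually been established in the paper.
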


\begin{proof}
  We begin constructing $T$ by forming the stratified Noether
  normalization tree starting from $A\rightarrow B$.  According to
  Proposition \ref{prop-rationaldiagram3}, it will suffice to
  construct a transformation diagram from each of the leaves the
  stratified Noether normalization tree.  Hence we may assume without
  loss of generality that $A\rightarrow B$ is already normalized,
  i.e., there exists a factoring map
  $$
  A\longrightarrow A\dl x_1,\ldots,x_d;\dt\dr\longhookrightarrow B
  $$
  which is finite and injective.
  
  Let $I=\{i_1,\ldots,i_{d+1}\}$ be a subset of size $d+1$ of
  $\{1,\ldots,n\}$, and write
  $\bfmath{f}_I=(f_{i_1},\ldots,f_{i_{d+1}})$.  We apply Proposition
  \ref{prop-hypersurfaces} with $\e/N$ and $\bfmath{f}_I$, where $N$
  is the number of possible $I\subset\{1,\ldots,n\}$, and repeat this
  for all $I\subset\{1,\ldots,n\}$ of size $d+1$, we conclude that the
  set $\mathcal{X}_{\alpha}(V_F,H)$ is contained in at most $H^\e$
  $\bfmath{f}$-pullbacks of fibers of
  $\mathcal{H}_{D,n,d}\rightarrow\mathcal{P}_{D,n,d}$, where $D$ is
  the number given as in Proposition \ref{prop-hypersurfaces}.
  
  More formally, one can replace $\mathcal{P}_{D,n,d}$ by affinoid open subspace that
  contains all the points over which the fibers of $\mathcal{H}_{D,n,d}$
  are those we are considering, and so we may assume that both
  $\mathcal{H}_{D,n,d}$ and $\mathcal{P}_{D,n,d}$ are affinoids.  Let
  $\mathcal{A}'$ be the affinoid algebra such that ($\mathcal{Y}':=$)
  $\Sp\mathcal{A}'=\Sp\mathcal{A}\times_{\rR{t}}\mathcal{P}_{D,n,d}$, and define $\mathcal{X}'=\Sp\cB'$ by the following Cartesian square
  $$
  \xymatrix{\mathcal{X}'\ar@{^{(}->}[r]\ar[d]&\mathcal{P}_{D,n,d}\times_{\rR{t}}\Sp\cB\ar[d]\\ \mathcal{H}_{D,n,d}\ar@{^{(}->}[r]&\mathcal{P}_{D,n,d}\times_{\rR{t}}\Sp\rR{t}\dl X_1,\ldots,X_n\dr\rlap{,}}
  $$
  where the right vertical arrow is induced by $X_i\mapsto f_i$ for $i=1,\ldots,n$.
  By what we have seen
  above, we have
  $$
  \mathcal{X}_{\alpha}(V_F,H;\bfmath{f})\subset\bigcup_j\mathcal{X}'_{\alpha_j}(V_F,H;\bfmath{f}),\eqno{(\dagger)}
  $$
  where $\{\alpha_j\}$ is a collection of $C_n\cdot H^\e$ points of
  $\mathcal{Y}'=\Sp\mathcal{A}'$ lying over $\alpha$, which correspond
  to suitable choices of the hypersurfaces of the form
  $Q(\bfmath{f}_I)=0$ for each $I$. Here $C_n$ is some constant
  depending only on $n$. 
    
  Now we augment out tree by adding a child $v$ of the root, and set
  $A_v=A'$ and $B_v=B'$, where $A'$ and $B'$ are suitable admissible
  formal model of $\mathcal{A}'$ and $\mathcal{B}'$, respectively.  We
  then perform another stratified Noether normalization of
  $A_v\rightarrow B_v$, appending the resulting tree to the root $v$.
  
  Proposition \ref{prop-rationaldiagram3} and $(\dagger)$ ensure that
  the condition (c) in the statement holds.  It remains to verify the
  condition (b$)'$.  Having performed stratified Noether normalization,
  we know that each leaf
  $\varphi_w\colon\mathcal{X}_w\rightarrow\mathcal{Y}_w$ has constant
  fiber dimension $d=d_w$.  If $d_w<d$, we are done.  Otherwise, the
  fibers of $\mathcal{X}_w$ over classical points of $\mathcal{Y}_w$ are
  $d$-equidimensional and contained in a fiber of $\mathcal{H}_{D,n,d}$,
  which is of dimension $d$ due to Lemma \ref{lem-dimensionatmostd}.
  Hence $\varphi_w\colon\mathcal{X}_w\rightarrow\mathcal{Y}_w$ has
  algebraic fibers, as desired.
\end{proof}

\begin{proof}[Proof of Theorem {\rm \ref{thm-main-family}}]
  This is proved by applying Proposition \ref{prop-inductivestep1} to
  the initial family $\varphi\colon\mathcal{X}\rightarrow\mathcal{Y}$,
  and then applying it repeatedly to each of the leaves that have
  non-algebraic fibers.  Since the maximal fiber dimension of the
  fibers drops at each step, this process terminates after at most $n$
  repetitions.
\end{proof}

\subsection{Proofs of the statements from the introduction}
\label{sub-intro-proofs}

\subsubsection{Proof of Theorem~{\rm \ref{thm:main-uniform-oc}}}
\label{proof-main-uniform-oc}

Let $B$ denote a formal model for $\cB$ and recall that
$B=\fR t \dal x_1,\ldots,x_n\dar/\mathfrak{b}$. Set
$B'=\fR{t} \dal t x_1,\ldots, t x_n\dar/\mathfrak{b}'$
where $\mathfrak{b}'$ is the image of $\mathfrak{b}$ by the map
$x_i\to t x_i$. Then $\Sp\gdp\cB 1$ corresponds to
$\udp{(\Sp\cB')}$ under this rescaling. Now Apply
Theorem~\ref{thm-main-family} to $B'$, where we take $A=\fR t$ and
$\bfmath{f}=(t x_1,\ldots, t x_n)$. In the resulting
tree $T$, whenever a leaf $w$ has positive-dimensional fibers it
corresponds by definition to the algebraic part of $\Sp\cB$. The
remaining fibers are zero-dimensional, and in particular their
cardinality is uniformly bounded by some constant depending only on
$B$ (for instance by Remark~\ref{rem:uniformly-algebraic-fibers}). The
conclusion of Theorem~\ref{thm:main-uniform-oc} therefore follows from
the conclusion of Theorem~\ref{thm-main-family}.

\subsubsection{Proof of Theorem~{\rm \ref{thm:main-uniform-family}}}
\label{proof-main-uniform}

Let $B$ denote a formal model for $\cB$ and recall that
$B=A \dal x_1,\ldots,x_n\dar/\mathfrak{b}$. Set
$A'=A \dal y_1,\ldots,y_n\dar$ and
$B'=A'\dal tz_1,\ldots, tz_n\dar/\mathfrak{b}'$ where $\mathfrak{b}'$ is the image of $\mathfrak{b}$
by the map $x_i=y_i+tz_i$.

Fix $\alpha$ and $F_{\alpha}$ as in the statement of
Theorem~\ref{thm:main-uniform-family} and let $V_{\alpha}$ be the valuation
ring of $F_{\alpha}$.  Let $p$ be the morphism $p\colon A\to V_\alpha$
corresponding to $\alpha$. Recall that $q_\alpha$ is the cardinality
of the residue field of $F_\alpha$. Then we may choose a collection of
$q_\alpha^n$ points $\alpha_j\in V_\alpha^n$ corresponding to
morphisms $p_j:A'\to V_\alpha$ with $p_j\vert_A=p$ such that the
union of polydiscs of radius $|t|$ around each $\alpha_j$ covers
$V_\alpha^n$.

Now apply Theorem~\ref{thm-main-family} to $B'$ with $f_i=y_i+tz_i$. Then the
points in $\mathcal{X}_{\alpha_j}(V_\alpha,H;\bfmath{f})$ are in
bijection with the points of $(\Sp\cB)_\alpha(F_{\alpha,0},H)$ belonging to the
ball of radius $|t|$ around $\alpha_j$, and in particular the union
of $q_\alpha^n$ such sets is in bijection with
$(\Sp\cB)_\alpha(F_{\alpha,0},H)$. The conclusion of
Theorem~\ref{thm:main-uniform-family} thus follows from
Theorem~\ref{thm-main-family}.

\frenchspacing
\begin{small}
\bibliographystyle{plain}
\bibliography{refs}

\medskip\noindent {\sc Department of Mathematics, Weizmann Institute of Science, Rehovot, Israel}
(e-mail: {\tt gal.binyamini@weizmann.ac.il})

\medskip\noindent {\sc Department of Mathematics, Tokyo Institute of Technology, 2-12-1 Ookayama, Meguro, Tokyo 152-8551, Japan}
(e-mail: {\tt bungen@math.titech.ac.jp})
\end{small}
\end{document}